\theoremstyle{definition}
\newtheorem{theorem}{Theorem}[section]
\newtheorem{definition}{Definition}[section]
\newtheorem{lemma}[theorem]{Lemma}
\newtheorem{proposition}[theorem]{Proposition}
\newtheorem{corollary}[theorem]{Corollary}
\newtheorem{remark}[theorem]{Remark}
\newtheorem{example}[theorem]{Example}
\numberwithin{equation}{section}
\newcommand{\mc}{\mathcal}
\newcommand{\mb}{\mathbb}
\newcommand{\ra}{\rightarrow}
\newcommand{\rra}{\rightrightarrows}
\newcommand{\ghta}{(G,H,\tau, \alpha)}
\newcommand{\hrtag}{H \rtimes_{\alpha} G}
\newcommand{\Lrrw}{\Longrightarrow}
\DeclareMathOperator{\ad}{ad}
\def\og{\leavevmode\raise.3ex\hbox{$\scriptscriptstyle\langle\!\langle$~}}
\def\fg{\leavevmode\raise.3ex\hbox{~$\!\scriptscriptstyle\,\rangle\!\rangle$}}
\begin{document}
	
	\title
	{Parallel transport on a Lie 2-group bundle over a Lie groupoid along Haefliger paths}

	\author[Saikat Chatterjee]{Saikat Chatterjee}
	\author[Adittya Chaudhuri]{Adittya Chaudhuri}
	\address{School of Mathematics,
		Indian Institute of Science Education and Research Thiruvananthapuram,
		Maruthamala P.O., Vithura, Kerala 695551, India}
	\email{saikat.chat01@gmail.com,chaudhuriadittya@gmail.com, adittyachaudhuri15@iisertvm.ac.in}

	\subjclass[2020]{Primary 53C08, Secondary 22A22, 58H05}
	
	\keywords{Lie groupoid fibrations, principal $2$-bundles, Haefliger paths, parallel transport, thin homotopy}

	\begin{abstract}

		We prove a Lie 2-group torsor version of the well-known one-one correspondence between fibered categories and pseudofunctors. Consequently, we obtain a weak version of the principal Lie group bundle over a Lie groupoid. The correspondence also enables us to extend a particular class of principal 2-bundles to be defined over differentiable stacks. We show that the differential geometric connection structures introduced in the authors' previous work, combine nicely with the underlying fibration structure of a principal 2-bundle over a Lie groupoid. This interrelation allows us to derive a notion of parallel transport in the framework of principal 2-bundles over Lie groupoids along a particular class of Haefliger paths. The corresponding parallel transport functor is shown to be smooth. We apply our results to examine the parallel transport on an associated VB-groupoid.

	%	Due to Grothendieck, there is a well-known one-one correspondence between fibered categories and pseudofunctors. We proved a Lie 2-group torsor version of it in the framework of principal 2-bundles over Lie groupoids. Consequently, we obtain a new geometric object, a principal Lie group bundle over a Lie groupoid up to the data of a Lie crossed module. The correspondence also enabled us to extend a particular class of principal 2-bundles to be defined over differentiable stacks. 
%	
%	Due to Haefliger, there is a notion of "path in a Lie groupoid." We introduce a notion of thin homotopy between particular classes of such paths. This allows us to generalize the classical notion of thin homotopy and to introduce a notion of a thin fundamental groupoid of a Lie groupoid. In the setting of principal 2-bundles over Lie groupoids, we introduce a notion of parallel transport of a connection along such paths. We construct the corresponding parallel transport functor from the thin fundamental groupoid of the base Lie groupoid that satisfies certain smoothness conditions. We also show that our theory of parallel transport in a 2-bundle framework will induce a similar one in the setup of VB-groupoids.  
	\end{abstract}

	\maketitle
	\addtocontents{toc}{\setcounter{tocdepth}{1}} 
	\tableofcontents
	
	\section{Introduction}
	For the last few decades or so, higher gauge theories provided frameworks for describing the dynamics of string-like extended "higher dimensional objects." Typically, they involve an appropriately categorified version of a smooth fiber bundle equipped with a connection structure that induces a notion of parallel transport consistent with this categorification. The precise description of the categorified objects depends largely on the framework. Our object of interest in this paper is, in particular, a categorified principal bundle that lives in the realm of Lie groupoids. In our earlier paper, we introduced such an object as a principal 2-bundle over a Lie groupoid [Definition 3.6,  \cite{chatterjee2022atiyah}]. More precisely, it consists of a morphism of Lie groupoids $\pi:=(\pi_1, \pi_0) \colon [E_1 \rra E_0] \ra [X_1 \rra X_0]$ with a functorial action of a Lie 2-group $\mb{G}:=[G_1 \rra G_0]$ on $\mb{E}:=[E_1 \rra E_0]$ such that both $\pi_1 \colon E_1 \ra X_1$ and $\pi_0 \colon E_0 \ra X_0$ are classical principal $G_1$-bundle and principal $G_0$-bundle over $X_1$ and $X_0$ respectively. In \cite{chatterjee2022atiyah}, we studied connection structures and gauge transformations on these objects, characterizing certain subclasses and constructing several interesting examples. Recently, Herrera-Carmona and Oritz in \cite{herreracarmona2023chernweillecomte}  introduced a Chern-Weil map for these objects by an application of an extended version of the Chern-Weil-Lecomte characteristic map in the setting of $L_{\infty}$-algebras using a similar connection structure as ours. \cite{MR4598921} refers to our principal 2-bundles as principal bundle groupoids and studied their associated nerves. We ought to mention here that Ginot and Stiénon in \cite{MR3480061}, introduced a slightly more general notion of a principal 2-bundle over a Lie groupoid interms of a generalized morphism, though their objective and approach are quite distinct from ours. Moreover, the definition of a principal Lie group bundle over a Lie groupoid considered in \cite{MR2270285} can be viewed as our special case.  
	
	Motivated by the  connection induced horizontal path lifting property of traditional principal bundles, in  \cite{chatterjee2022atiyah}, we introduced a structure on our principal 2-bundles by the name categorical connection [Definition 3.21,\cite{chatterjee2022atiyah}]. Although, when the base of a 2-bundle is a sort of "path groupoid," the notion, in particular, turned out to be quite interesting as we see in \cite{MR3126940} however, the categorical connection introduced in \cite{chatterjee2022atiyah} is analogous to the set-theoretic notion of a "splitting cleavage" in a fibered category. We in the present article introduce a weaker notion of a categorical connection and call it a \textit{quasi connection} whose set theoretic analog is a general cleavage, i.e., a cleavage not closed under composition, and not necessarily sends an identity to identity. Like the way a VB-groupoid is a groupoid object in the category of vector bundles (Proposition 3.5, \cite{MR3696590}), our principal 2-bundle is also a groupoid object in the category of principal bundles (Remark 1.16, \cite{MR4598921}). But they are still a much less explored and a newcomer in the literature, compared to a VB -groupoid which has already proven its worth through its intimate connection with Poisson geometry \cite{mackenzie1999symplectic, MR1262213, mackenzie2000integration} and representation theory of Lie groupoids \cite{MR3696590, MR4126305}. In particular, Gracia-Saz and Mehta in [Theorem 6.1,\cite{MR3696590}] showed that the 2-term representations up to homotopy of a Lie groupoid $\mb{X}$ is in one-one correspondence with the VB-groupoids over $\mb{X}$ equipped with right horizontal lifts (a term they used for unital linear cleavages). Further, del Hoyo and Oritz improved the result to a categorical equivalence in [Theorem 2.7, \cite{MR4126305}]. From the groupoid object perspective, a VB-groupoid over a Lie groupoid $\mb{X}$ with a linear cleavage corresponds to a principal 2-bundle over $\mb{X}$ equipped with a quasi connection in our framework (that is indeed the case as we show in the current paper through an associated bundle construction). We call them \textit{quasi-principal 2-bundles}. Natural questions arise, 
	
	"Can we realize a quasi-principal 2-bundle as a Grothendieck construction of some object? What is that particular object? Are their respective categories equivalent?"

	We constructed the object explicitly, which appears to be a weakened version of a principal Lie group bundle over a Lie groupoid (as studied in \cite{MR2270285}), whose underlying action of the base Lie groupoid on the total space is now replaced by a quasi-action(which is not closed under composition and unit map)  and "is an action upto a data of a Lie crossed module." Then, we produced a quasi-principal 2-bundle from it, and extended this association to an equivalence between their respective categories. Moreover, as a byproduct of this categorical equivalence, we obtained a subclass of principal 2-bundles which can be defined over a differentiable stack represented by its base Lie groupoid.
	In other words, we provided a Lie 2-group torsor version of the classic Grothendieck correspondence between general fibered categories and pseudofunctors, which, according to the best of our knowledge, is a new addition to the existing literature. This forms the first part of this paper. 
	
	 Rest of the paper is devoted to the development of a parallel transport theory on a quasi-principal 2-bundle (a principal 2-bundle over a Lie groupoid equipped with a quasi connection) induced from the differential geometric connection structures developed by us in \cite{chatterjee2022atiyah} along "certain class" of Haefliger paths in Lie groupoids (\cite{guruprasad2006closed, bridson2013metric, haefliger1982groupoides, MR1950946, colman20111, gutt2005poisson}). This also involves the construction of a parallel transport functor between appropriate source and target categories. Before we summarize our results and main achivements, we will provide a brief overview of some works in the development of parallel transport theory on a categorified framework of bundles and connections below:

Before delving into the existing works on principal bundles over appropriately categorified bases, we mention some works concerning higher principal bundles over manifolds.

 Baez\cite{baez2002higher}, Baez-Schreiber\cite{MR2342821}, Picken-Martins\cite{martins2010two}, Mackaay-Picken \cite{MR1932333}, Schreiber-Waldorf's \cite{MR2520993, schreiber2011smooth, MR3084724} along with some other papers cited therein comprises some of the earliest work in this area. In particular, Schreiber-Waldorf developed a general model-independent axiomatic approach to the theory of parallel transport. An interesting aspect of their approach is the axiomatic characterization of the smoothness condition for parallel transport functors/2-functors of geometric objects like a connection on principal bundles and non-abelian gerbes over a manifold. In \cite{MR3521476}, Collier, Lerman, and Wolbert introduced an alternative notion of smoothness for transport functors [Definition 3.5,\cite{MR3521476}] and argued its equivalence with the one mentioned in 
	\cite{MR2520993}. They also showed its validity by proving the smoothness (in their terms) of the  connection induced parallel transport functor of a classical principal bundle. More recently, in \cite{MR3894086}
	and \cite{MR3917427}, Waldorf introduced a notion of parallel transport on a model of principal Lie 2-group bundle over a manifold, induced from a global connection data. Also, a few years back, again in a framework of principal 2-bundle over a manifold, Kim and Saemann, via adjusted Weil algebra in \cite{kim2020adjusted},  introduced a notion of generalized higher holonomy functor that has been successfully kept free from the usual fake-flatness condition. One can find an aprroach to parallel transport in terms of  Lie crossed module cocycles over a manifold in \cite{MR3357822}, and \cite{MR3529236} explores its relation with the knot theory. A gluing algorithm for local 2-holonomies has been provided by \cite{MR3645839}. Also, double category theoretic approaches to higher gauge theories can be found in both Morton-Picken's \cite{MR4037666} and Zucchini-Soncini's \cite{MR3357822}. Although our overview list is far from complete, in all the approaches we mentioned so far, the base space of the concerned categorifed geometric object is manifold.
	However, since our current framework admits a categorified base space, i.e., a Lie groupoid, we must mention at least some works in this direction.

	Gengoux, Tu, and Xu in \cite{MR2270285} introduced a notion of holonomy map along a generalized pointed loop for a principal Lie group bundle over a Lie groupoid equipped with a flat connection. More recently, in \cite{MR3521476}, Collier, Lerman, and Wolbert studied parallel transport on a principal Lie group bundle over a differentiable stack. In particular, they defined their principal bundles, connection structures, and parallel transports as 1-morphisms of stacks. There are also other approaches, such as \cite{MR3126940} and \cite{MR3566125}, where authors considered the base as a path groupoid and as an affine 2-space, respectively. In order to contrast with our approach and motivation, we emphasize here that we consider parallel transport on a Lie 2-group bundle over a Lie groupoid along a certain class of Haefliger paths, which, despite of its interesting properties, seems to be unexplored so far.

%		constructed a holonomy map along generalized pointed loops(see Section 3.5,\cite{MR2270285}) for a principal Lie group bundle over a Lie groupoid equipped with a flat connection. In particular, their holonomy map defines a group homomorphism from the fundamental group of the base Lie groupoid to the structure group}{\color{blue}. More recently, in \cite{MR3521476}, Collier, Lerman, and Wolbert studied parallel transport on a principal Lie group bundle over a differentiable stack. In particular, they defined their principal bundles, connection structures, and parallel transports as 1-morphisms of stacks[Definition 6.1, Definition 6.2, Definition 6.3, respectively, \cite{MR3521476}]. Although a principal Lie group bundle over a Lie groupoid is a particular case of ours,' the precise relationships between the approaches to holonomy along generalized pointed loops in \cite{MR2270285}, as a 1-morphism of stacks in \cite{MR3521476} and our approach of parallel transport along lazy Haefliger paths are yet to be explored}. There are also other approaches, such as \cite{MR3126940} and \cite{MR3566125}, where, in particular, authors considered the base as a path groupoid and as an affine 2-space, respectively.

	Now, let us come back to the current paper! 
	
	There is a notion of a path in a Lie groupoid due to Haefliger (\cite{haefliger1971homotopy, haefliger1990orbi, guruprasad2006closed}), along with a notion of homotopy between such paths, which yields a notion of fundamental groupoid of a Lie groupoid (for example see \cite{colman20111, mehta2011double, gutt2005poisson, MR2166453}). In the existing literature, these paths are usually known by the names \textit{Haefliger paths} or \textit{$\mc{G}$-paths in a Lie groupoid $\mc{G}$}. Consistent with our notations, we call them \textit{$\mb{X}$-paths} in a Lie groupoid $\mb{X}=[X_1 \rra X_0]$. Loosely speaking, an $\mb{X}$-path is a sequence of the form $(\gamma_0, \alpha_1,\gamma_1, \cdots,\alpha_n, \gamma_n)$ for some $n \in \mb{N}$, where each $\gamma_i \in X_1$ and each $\alpha_i$ is a path in $X_0$, such that they are compatible with each others' source-target, initial-finial point in an appropriate way. In this paper, we will introduce a notion of \textit{thin homotopy between "lazy Haefliger paths" or lazy $\mb{X}$-paths}, a suitable generalization of the classical notion of thin homotopy between paths with sitting instants, i.e., a smooth path in a manifold which is constant near the endpoints,  in the setting of Lie groupoids. Our notion of thin homotopy between lazy Haefliger paths is a minor variant of the existing notion of homotopy between Haefliger paths (\cite{colman20111, mehta2011double, gutt2005poisson, MR2166453}). We show that our notion of thin homotopy will naturally yield a notion of \textit{thin fundamental groupoid of a Lie groupoid}, which possesses a diffeological structure. Interestingly, we found out that the multiplicative nature of a connection 1-form(introduced by us in \cite{chatterjee2022atiyah})  and a quasi connection structure combine nicely to produce a reasonable notion of parallel transport on a quasi-principal 2-bundle along such lazy Haefliger paths. 
%	This parallel transport generalizes the classical case and coincides with the one discussed in \cite{MR3917427} when the base Lie groupoid is discrete. 
	This parallel transport turns out to be invariant under our notion of thin homotopy, and as a consequence produces
	 a functor from the thin fundamental groupoid of the base Lie groupoid to a quotient category of $\mb{G}$-torsors, where $\mb{G}$ is the structure Lie 2-group of the quasi-principal 2-bundle.
	 The parallel transport functor behaves naturally with respect to the pullback and connection preserving quasi-principal 2-bundle morphisms. Moreover, we showed this parallel transport functor is "smooth" in a way that generalizes the notion of smoothness introduced by Collier, Lerman, and Wolbert for the traditional principal bundle in [Definition 3.5,\cite{MR3521476}]. Fixing a Lie 2-group $\mb{G}$ and a Lie groupoid $\mb{X}$, we then extend this parallel transport functor to define a functor 
	\begin{equation}\label{Main functor}
		\mc{F} \colon \rm{Bun}_{\rm{quasi}}^{\nabla}(\mb{X}, \mb{G}) \ra \rm{Trans}(\mb{X},\mb{G}).
	\end{equation}
	In \Cref{Main functor}, $\rm{Bun}_{\rm{quasi}}^{\nabla}(\mb{X}, \mb{G})$ is the category of quasi-principal -$\mb{G}$-bundles equipped with strict connections (introduced in \cite{chatterjee2022atiyah}) over the Lie groupoid $\mb{X}$. On the other side, $\rm{Trans}(\mb{X},\mb{G})$ is the category of functors from the thin fundamental groupoid of $\mb{X}$ to the priorly mentioned quotiented category of $\mb{G}$-torsors. Finally, we developed a notion of induced parallel transport theory on VB-groupoids through associated bundle construction. 
	
	Except for a cursory mention of an idea in [Subsection 4.1.3, \cite{guruprasad2006closed}], where the setup and context are very different, according to the best of our knowledge, we believe this approach to parallel transport (especially along Haefliger paths in Lie groupoids) is new to the existing ones in higher gauge theory literature body. We hope our approach will bring a new insight to the parallel transport theory (induced from suitable connection data) on any geometric object that has an underlying Lie groupoid fibration structure with an appropriate cleavage. Nonetheless, we also must mention certain worthy topics which we have not covered in this paper such as  the construction of a quasi-principal 2-bundle with connection from the data of a parallel transport functor, a notion of parallel transport along higher dimensional objects like surfaces, etc. We will include these topics in a forthcoming paper that hopefully will make the relation with other existing notions of higher parallel transport theories more transparent.	
	\subsection*{The outline and organization of the paper}
	\Cref{Section 2} covers some standard results related to Lie 2-groups, fibered categories and diffeological spaces. We also recall some necessary results from our earlier work \cite{chatterjee2022atiyah}.
	
	The notions of quasi connection and a quasi-principal 2-bundle are introduced in \Cref{quasi-principal 2-bundles and their characterisations}. In \Cref{Examples of quasiprincipal 2-bundles} we construct some examples of the same. \Cref{Grothendieck construction subsection} establishes one of the key results where we realize a quasi-principal 2-bundle as a Grothendieck construction, and provide a Lie 2-group torsor version of the correspondence between fibered categories and pseudofunctors. In the following subsection, a characterization for quasi connections has been given in terms of retractions. We end the section by extending a class of principal 2-bundles over a differentiable stack.
	
	In \Cref{Section Lazy Haefliger paths and thin fundamental groupoid of a Lie groupoid}, we introduce the definitions of a lazy Haefliger path in a Lie groupoid and a  thin homotopy between them, resulting in a thin fundamental groupoid of a Lie groupoid. \Cref{Subsection Smoothness of thin fundamental groupoid of a Lie groupoid} provides a diffeological structure on the thin fundamental groupoid.
	
	\Cref{Section: Parallel Transport on principal 2-bundles} develops a notion of thin homotopy invariant parallel transport on a quasi-principal $\mb{G}$-bundle over a Lie groupoid $\mb{X}$ along a lazy Haefliger path, which results in a functor, namely the \textit{parallel transport functor}, from the thin fundamental groupoid of $\mb{X}$ to a quotiented category of $\mb{G}$-torsors. The main result of this section is \Cref{Equivalence of quasi and functors}, where we establish \Cref{Main functor}. We observe the naturality of the parallel transport functor with respect to pullback constructions and connection-preserving bundle morphisms in \Cref{Naturality of the parallel transport functor}. It is crucial to note that the parallel transport functor enjoys appropriate smoothness properties discussed in \Cref{Smoothness of parallel transport}.

	In \Cref{Associated PAPER VERSION}, we produce a VB-groupoid associated to a quasi-principal 2-bundle, and study the parallel transport on associated VB-groupoid along a lazy Haefliger path.

	\subsection{Notations and conventions}\label{Notations and Conventions} 
	Here, we fix some conventions and notations which we will follow throughout this paper. 
	
	We assume all our manifolds to be smooth, second countable and Hausdorff. We will denote the category of such manifolds by \rm Man. For any smooth map $f\colon M\to N,$ the differential at $m\in M$ will be denoted as
	$f_{*,m}\colon T_m M\ra T_{f(m)}N.$ A smooth right (resp. left) action of a Lie group $G$ on a smooth manifold $P$ is denoted by $(p, g)\mapsto p g$ (resp. $(g, p)\mapsto g p$), for $g\in G$ and $p\in P$. If the Lie group $G$ acts on a manifold $M$, such that the action is free and transitive, then we will call $M$ a $G$-torsor, and the groupoid of $G$-torsors will be denoted by $G$-Tor.  
	To make the concatenation of smooth paths in a manifold smooth, we will restrict ourselves to only \textit{paths with sitting instants} i.e. a smooth map $\alpha :[0,1] \rightarrow M$ to a manifold $M$, such that there exists an $\epsilon \in (0,1/2)$ and satisfying $\alpha(t)= \alpha(0)$ for $t \in [0,\epsilon)$ and $\alpha(t)=\alpha(1)$ for $t \in (1- \epsilon, 1]$, see [Definition 2.1, \cite{MR2520993}]. We will use the notation $PM$ to denote the set of smooth paths with sitting instants in the manifold $M$. For any path $\alpha$, the notation $\alpha^{-1}$ denotes the path defined by $t \mapsto \alpha(1-t)$ for all $t \in [0,1]$.

	Unless otherwise stated, for any category, we denote the source, target and unit maps by $s, t$ and $u$ respectively. For any object $p$ in a category, $1_P$ denotes the element $u(p)$. We write the composition of a pair of morphisms $f_2, f_1$ as $f_2\circ f_1,$ when $t(f_1)=s(f_2).$ In a groupoid, we denote the inverse map by $\mathfrak{i}$ and for any morohism $\gamma$, we shorten the notation by writing $\gamma^{-1}$ instead of $\mathfrak{i} (\gamma)$. 
	
	\textit{Lie groupoids} are groupoid object in $\rm Man,$ such that source and target maps are surjective submersions. We use the blackboard bold notation to denote a Lie groupoid; that is, $\mb{E}$ for $[E_1\rra E_0]$ and so on. A \textit{morphism of Lie groupoids} $F: \mb{X} \ra \mb{Y}$ is a functor such that both the object level and morphism level maps are smooth, and we denote them respectively by $F_0 \colon X_0 \ra Y_0$ and $F_1 \colon X_1 \ra Y_1$. A \textit{smooth natural transformation} from a morphism of Lie groupoids $F \colon \mb{X} \ra \mb{Y}$ to another $F' \colon \mb{X} \ra \mb{Y}$ is a natural transformation $\eta$ between the underlying functors such that associated map $X_0 \ra Y_1$  is smooth and we denote it by $\eta \colon F \Longrightarrow F'$. For a Lie groupoid $\mb{X}$, we denote the associated \textit{tangent Lie groupoid} by $T\mb{X}=[TX_1\rra TX_0],$ and define the structure maps by the differentials of the respective structure maps of $\mb{X}.$  For a compoasable pair of morphisms $(\gamma_2, X_2), (\gamma_1, X_1)$ in  $T\mb{X},$ the composition $(\gamma_2, X_2)\circ (\gamma_1, X_1)=\bigl(m(\gamma_2, \gamma_1), m_{* (\gamma_2, \gamma_1)}(X_2, X_1)\bigr)$  will be denoted by $(\gamma_2\circ \gamma_1, X_2\circ X_1),$ where $m$ is the composition map of the Lie groupoid $\mb{X}.$

\section{Some background materials}\label{Section 2}
To make this paper self-readable, in this section, we briefly recall some notions that are either already well established in the existing literature or we have already introduced such notions in our earlier paper \cite{chatterjee2022atiyah}.
\subsection{Lie 2-groups and Lie crossed modules}\label{subsection: Lie 2-groups and Lie crossed modules}
Here, we recall the basics of Lie 2-groups, Lie crossed modules, and related notions. The material in this subsection is standard, and we refer to the following papers \cite{{MR2342821}, {MR2068521}, {MR2068522}, {MR3126940}, {MR3504595}, {MR2805195}} for further reading in these topics.

A \textit{(strict) Lie 2-group} is a Lie groupoid $\mb{G}$, along with a morphism of Lie groupoids $$\otimes : \mb{G} \times \mb{G} \ra \mb{G}$$ inducing Lie group structures on both objects and morphisms. A \textit{ Lie crossed module},  is a  $4$-tuple $(G,H,\tau,\alpha)$ where $G,H$ are Lie groups, $\alpha:G\times H\ra H$ is a smooth action of $G$ on $H$ such that $\tau:H\ra G$ is a morphism of Lie groups  and for each $g \in G$, $\alpha(g,-):H \ra H$ is a Lie group homomorphism such that 
\begin{equation}\label{E:Peiffer}
	\begin{split}
		& \tau(\alpha(g,h))=g\tau(h)g^{-1} \, \textit {\rm for all} \, (g,h) \in G\times H,\\
		& \alpha(\tau(h),h')=hh'h^{-1} \, \textit {\rm for all}\, h,h'\in H.
	\end{split}
\end{equation}
It is well known that one can identify a Lie 2-group with a Lie crossed module. In particular, given a Lie crossed module $(G,H,\tau,\alpha)$ the associated  Lie $2$-group is given by the Lie groupoid $ \mb{G}=[H \rtimes_{\alpha} G \rightrightarrows G]$, where $\hrtag$ denotes the semidirect product of Lie groups $H$ and $G$ with respect to the action $\alpha$ of $G$ on $H$. We list the structure maps of this Lie $2$-group below:
\begin{itemize}
	\item the source map is given by $s(h,g)=g$,
	\item the target map is given by $t(h,g)=\tau(h)g$,
	\item the composition map is given by $m((h_2, g_2),(h_1, g_1))=(h_2 h_1,g_1)$,
	\item the identity map is given by $1_g=(e,  g)$,
	\item the inverse map is given by $\mathfrak{i}(h,g)=(h^{-1},\tau(h)g)$,
	\item the group properties of  $H\rtimes_{\alpha} G$ are those of the standard semidirect product of the groups, that is 
	the bi-functor $\otimes: \mb{G} \times\mb{G} \rightarrow \mb{G}$ is defined as 
	\begin{equation}\label{E:grouppro}
		\begin{split}
			&\otimes_0: (g_1, g_2) \mapsto g_1g_2,
			\\
			&\otimes_{1}:  ((h_2, g_2), (h_1, g_1)) \mapsto (h_2 \alpha(g_2,h_1), g_2 g_1),
		\end{split}
	\end{equation}
\end{itemize}
whereas the group inverse and the identity elements are respectively given as $(h,g)^{-1}=(\alpha(g^{-1}, h^{-1}), g^{-1})$ and  $(e,e)$. Note that we use the same notation $e$ for identity elements in both $H$ and $G$, and the distinction should be made according to the context.
On the other hand, given a Lie 2-group $\mb{G}=[G_1 \rra G_0]$, the associated Lie crossed module is given by the $4$-tuple
	$(G_0, \ker(s), t|_{\ker(s)}:\ker(s)\ra G_0,\alpha:G_0\times \ker(s)\ra \ker(s))$ where $\ker(s)=\{\gamma\in G_1: s(\gamma)=1_{G_0}\}$ and the morphism $\alpha:G_0\times \ker(s)\ra \ker(s)$ is defined as $(a,\gamma)\mapsto 1_a\cdot \gamma\cdot 1_{a^{-1}}$. One can show that the above association defines a one-one correspondence between Lie 2-groups and Lie crossed modules. 
	
	We will denote the Lie 2-group associated to a Lie crossed module $(G, H, \tau, \alpha)$ as $[H \rtimes_{\alpha}G \rra G]$.

Given a Lie 2-group $\mb{G}=[G_1 \rra G_0]$, we can associate the Lie groupoid $L(\mb{G})=[L(G_1) \rra L(G_0)]$ whose structure maps are obtained by taking differentials of the structure maps of $\mb{G}$ at the identity. 

A \textit{right action of a Lie 2-group $\mb{G}=[G_1 \rra G_0]$ on a Lie groupoid }$\mb{X}= [X_1 \rra X_0]$ is given by a morphism of Lie groupoids $\rho: \mb{X} \times \mb{G} \ra \mb{X}$, such that it induces classical Lie group actions on both objects and morphisms. Suppose $\mb{G}$ acts on a pair of Lie groupoids $\mb{X}$ and  $\mb{Y}.$ Then a morphism of Lie groupoids $F:=(F_1, F_0): \mb{X} \rightarrow \mb{Y}$ is said  to be \textit{$\mb{G}$-equivariant }if $F_0$ is $G_0$-equivariant and $F_1$ is $G_1$-equivariant.  A smooth natural transformation between two such functors 
$\eta\colon F\Lrrw F'$  is said to be a \textit{$\mb{G}$-equivariant natural transformation}  if $\eta(x g)=\eta(x)1_g$ for all $x \in X_0$ and $g \in G_0$.
\subsection{Fibered categories and pseudofunctors}\label{subsection fibered categories and pseudofunctors} 
We suggest \cite{MR2223406} for readers interested in further reading on this topic.

A category $\mathcal{E}$ equipped with a functor $\pi: \mathcal{E} \ra \mathcal{X}$ is called a \textit{category over $\mc{X}$.} A morphism $f:x \ra y$ in $\mathcal{E}$ is called \textit{cartesian} if for any morphism $h:z \ra y$ in $\mathcal{E}$ and any morphism $u:\pi(z) \ra \pi(x)$ in $\mathcal{X}$ with $\pi(h)=\pi(f) \circ u$ , there exists a unique morphism $\tilde{u}:z \ra x$ with $\pi(\tilde{u})=u$ and $f \circ \tilde{u}=h$. Observe that if both $\mc{E}$ and $\mc{X}$ are groupoids, then every morphism in $\mc{E}$ is cartesian. 

If $\pi: \mathcal{E} \ra \mathcal{X}$ has the property that for any morphism $\gamma$ in $\mc{X}$ and an element $p \in \mc{E}$ such that $\pi(p)=t(\gamma)$, there is a cartesian morphism $\tilde{\gamma} \in \mc{E}$ satisfying $\pi(\tilde{\gamma})= \gamma$ and $t(\tilde{\gamma})=p$, then it will be called as a \textit{ fibered category over $\mc{X}$}. For each object $x \in \mc{X}$, the \textit{fibre $\pi^{-1}(x)$ of $\pi$ over $x$} is defined as the subcategory of $\mc{E}$, whose objects are the objects $p$ of $\mc{E}$ such that $\pi(p)=x$ and whose morphisms are the morphism $\delta$ of $\mc{E}$ such that $\pi(\delta)=1_x$. 

A \textit{cleavage} on the fibered category $\pi: \mathcal{E} \ra \mathcal{X}$  consists of a class $\kappa$ of cartesian morphisms in $\mc{E}$ such that for each morphism $\gamma : x \ra y$ in $\mc{X}$ and each object $p \in \pi^{-1}(y)$, there exists a unique morphism in $\kappa$ with target $p$, mapping to $\gamma$ in $\mc{X}$. If the cleavage contains all the identities and is closed under composition, then it is called a \textit{splitting cleavage}. It is well known that fibered categories over $\mc{X}$ equipped with a splitting cleavage are in one-one correspondence with category valued contravariant functors over $\mc{X}$. The correspondence naturally extends to a one-one correspondence (due to Grothendieck) between fibered categories over $\mc{X}$ equipped with cleavage and \textit{pseudofunctors over $\mc{X}$} (see\textbf{ sections} \textbf{3.1.2} and \textbf{3.1.3} of \cite{MR2223406} for the proof). We define a pseudofunctor below:
\begin{definition}\label{pseudofunctor}
	A \textit{pseudofunctor $\mc{F} \colon \mc{X}^{\rm{op}} \ra \rm{Cat}$ over a category $\mc{X}$} consists of the  following data:
	\begin{itemize}
		\item[(i)] a category $\mc{F}(x)$ for each $x \in \mc{X}$,
		\item[(ii)] a functor  $\gamma^{*}: \mc{F}(y) \ra \mc{F}(x)$ for each morphism $x \xrightarrow {\gamma} y$  in $\mc{X}$,
		\item[(iii)] for each object  $x$ in $\mc{X}$, we have a  natural isomorphism $I_x: id^{*}_x \Longrightarrow \rm{id}_{\mc{F}(x)}$,
		\item[(iv)] for each pair of composable morphisms 
		\begin{tikzcd}
			x \arrow[r, "\gamma_{1}"] & y \arrow[r, "\gamma_2"] & z
		\end{tikzcd},
		we have a natural isomorphism $\alpha_{\gamma_1,\gamma_2}: \gamma_1^{*} \gamma_2^{*} \ra (\gamma_2 \gamma_1)^{*}$, where the adjacency of $\gamma_2, \gamma_1$ denotes the composition.
	\end{itemize}
	These data satisfy the following coherence laws:
	\begin{itemize}
		\item[(i)] If $x \xrightarrow {\gamma} y$  is a morphism in $\mc{X}$, and $p$ is an object in $F(y)$, then we have 
		\begin{equation}\label{Coherence diagram 1}
			\begin{split}
				& \alpha_{\rm{id}_x, \gamma}(p)= I_{x}(\gamma^{*}(p)) \colon \rm{id}^{*}_{x}(\gamma^{*}(p)) \Longrightarrow \gamma^{*}(p)\\
				& \alpha_{\gamma, \rm{id}_{y}}(p)= \gamma^{*}(I_{y}(p)) \colon \gamma^{*}\rm{id}^{*}_{y}(p) \Longrightarrow \gamma^{*}(p).
			\end{split}
		\end{equation}
		\item[(ii)] For composable morphisms 
		\begin{tikzcd}
			x \arrow[r, "\gamma_3"] & y \arrow[r, "\gamma_2"] & z \arrow[r, "\gamma_1"] & w
		\end{tikzcd}
		and any object $p$ in $F(x)$, the following diagram is commutative:
		\begin{equation}\label{Coherence diagram 2}
			\begin{tikzcd}
				\gamma_3^{*}\gamma_2^{*}\gamma_1^{*}(p) \arrow[d, "\gamma_3^{*}\big( \alpha_{\gamma_2,\gamma_1}(p) \big)"'] \arrow[r, "\alpha_{\gamma_3,\gamma_2}(\gamma_1^{*}(p))"] & (\gamma_2  \gamma_3)^{*}\gamma_1^{*} (p) \arrow[d, "\alpha_{ \gamma_2 \gamma_3,\gamma_1}(p)"] \\
				\gamma_3^{*}(\gamma_1  \gamma_2)^{*} (p) \arrow[r, "\alpha_{\gamma_3,  \gamma_1  \gamma_2 }(p)"']                & (\gamma_1  \gamma_2  \gamma_3)^{*} (p)               
			\end{tikzcd}
		\end{equation}
	\end{itemize}
\end{definition}
The construction of a fibered category from a pseudofunctor is usually known as the \textit{Grothendieck construction }in literature (see \textbf{Chapter 10} of \cite{MR4261588}).
\subsection{Diffeological spaces and some of its properties}\label{Diffeological spaces and some of its properties:}
This subsection will briefly review the notion of a diffeological space and some of its standard properties. Interested readers can find more details on diffeology in \cite{iglesias2013diffeology} and \cite{baez2011convenient}.
\begin{definition}
	A \textit{diffeology} on a set $S$ is a collection of functions $D_{S} \subseteq \lbrace p \colon U \ra S : U \subseteq \mb{R}^{n}$, where $U$ is an open subset of $\mb{R}^{n}, n \in \mb{N} \rbrace$ which staisfy the following conditions:
	\begin{itemize}
		\item[(i)]Every constant function is in $D_S$;
		\item[(ii)] If $V \subseteq \mb{R}^{n}$ is open, $p \colon U \ra S$ is in $D_S$ and $f \colon V \ra U$ is a smooth map, then $p  \circ f \colon V \ra S$ is in $D_S$;
		\item[(iii)] If $\lbrace U_i \rbrace_{i \in I}$ is an open cover of $U \subseteq \mb{R}^{n}$ and $p \colon U \ra S$ is a function such that $p |_{U_i} \colon U_i \ra S$ is in $D_S$ for all  $i \in I$, then $p  \colon U \ra S$ is in $D_S$.
	\end{itemize}
\end{definition}
The pair $(S,D_S)$ is called a \textit{diffeological space} and the elements of $D_S$ are called \textit{plots}. 
\begin{definition}
	A \textit{map of diffeological spaces} from a diffeological space $(X,D_X) $ to a diffeological space  $(Y,D_Y)$ is a map of sets $f \colon X \ra Y$,  such that for any  $p \in D_X$, $f \circ p$ is an element of $D_Y$.
\end{definition}
Collection of diffeological spaces and maps of diffeological spaces between them naturally forms a category, which we denote by \textbf{Diffeol}.
\begin{example}[Smooth manifiold]\label{manifold diffeology}
	Any smooth manifold $M$ has a natural diffeological space structure, given by $D_M:= \lbrace p \colon U \ra M : U$ is an open subset of $\sqcup^{\infty}_{n=0}\mb{R}^{n}$ and $p$ is smooth.$\rbrace$. Any smooth map $f \colon M \ra N$ between manifolds is a map of diffelogical spaces $f \colon (M,D_M) \ra (N, D_N)$.
\end{example}

\begin{example}\label{subsapce diffeology}
	Given a diffeological space $(X, D_{X})$ and a subset $S \subseteq X$, the \textit{subspace diffeology} $D_{S}$ on $S$ is defined as the set $D_{S}:= \lbrace (p \colon U \ra X) \in D_{X} : p(U) \subseteq S \rbrace$.
\end{example}
\begin{example}[Path space diffeology]\label{Path space diffeology}
	For any smooth manifold $M$, the set of paths with sitting instants $PM$ is a diffeological space, equipped with diffeology $D_{PM}:= \lbrace p \colon U \ra PM : \bar{p} \colon U \times [0,1] \ra M, (u,x) \mapsto p(u)(x)$, is smooth.$\rbrace$. $D_{PM}$ is called the \textit{path space diffeology}. The evaluation maps $ev_0,ev_1 \colon PM \ra M$ at $0$ and $1$ respectively, are maps of diffeological spaces.
\end{example}
\begin{example}[Fibre product diffeology]\label{Fibre product diffeology}
	Given maps of diffeological spaces $f \colon (Y,D_Y) \ra (X,D_X)$ and $g \colon (Z,D_Z) \mapsto (X,D_X)$, the fibre product $Y \times_{f,X,g} Z$ is a diffeological space with \textit{fiber product diffeology} $D_{Y \times_{f,X,g} Z}:= \lbrace(p_Y,p_Z) \in D_Y \times D_Z : f \circ p_Y= g \circ p_Z \rbrace$.  Note that the projection maps are the maps of diffeological spaces.
\end{example}
\begin{example}[Quotient diffeology ]\label{quotient diffeology}
	Given a diffeological space $(X, D_{X})$ and an equivalence relation $\sim$ on $X$, the quotient $q \colon X \ra \frac{X}{\sim}$ induces a diffeological structure with the diffeology as given below [\textbf{Construction A.15}, \cite{MR3521476}]:
	
	$D_{\frac{X}{\sim}}:= \lbrace p \colon U \ra \frac{X}{\sim}$: $U \subseteq  \mb{R}^{n}$ is ${\rm{open}}, n \in \mb{N}, p $ is a function such that for every $u \in U$, there is an open neighbourhood $V$ of $u$ in $U$ and a plot $\bar{p} \colon V \ra X$ with $q \circ \bar{p}=p|_{V}\rbrace$. $D_{\frac{X}{\sim}}$ is called the \textit{quotient diffeology}. Then, the quotient map becomes a map of diffeological spaces.
\end{example}
\begin{example}\label{sum diffeology}
	Let $(S_i, D_{S_i})_{i \in I}$ be an arbitrary family of diffeological spaces. Then the disjoint union $S=\sqcup_{i \in I} S_i$ is a diffeological space with the diffeology $D:= \lbrace p \colon U \ra  S: U \subseteq  \mb{R}^{n}$ is ${\rm{open}}, n \in \mb{N}, p $ is a function such that for any $x \in U$ there exists an open neighbourhood $U_x$ of $x$ and an index $i \in I$, with $P|_{U_x} \in D_{S_i}.  \rbrace$. The diffeology $D$ is called the \textit{sum diffeology} on the family $\lbrace S_{i} \rbrace_{i 
	\in I}$, (see \textbf{Section 1.39} of \cite{iglesias2013diffeology}).
	\end{example}
\begin{lemma}\label{Technical 1}
	Let $q \colon (A,D_A) \ra (B,D_B)$ be a quotient map between two diffeological spaces and $(C,D_C)$   another diffeological space. Then a map $f \colon (B,D_B) \ra (C,D_C)$ is a map of diffeological spaces if and only if for any plot $p \colon U \ra A$, the composite $f \circ q \circ p  \in D_C$.
\end{lemma}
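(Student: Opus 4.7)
The plan is to prove the two directions separately, with the backward direction being the substantive content since it encodes the universal property of quotient diffeology.

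For the forward implication, assume $f \colon (B,D_B) \to (C,D_C)$ is a map of diffeological spaces. Given any plot $p \colon U \to A$, I would first observe that $q \colon (A,D_A) \to (B,D_B)$ is itself a map of diffeological spaces (as noted in \Cref{quotient diffeology}), so $q \circ p \in D_B$. Applying the hypothesis on $f$ to this plot yields $f \circ q \circ p = f \circ (q \circ p) \in D_C$, which is exactly what is required.

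For the backward implication, assume that $f \circ q \circ p \in D_C$ for every plot $p$ of $A$. I need to show that for an arbitrary plot $\tilde{p} \colon U \to B$, the composite $f \circ \tilde{p}$ lies in $D_C$. Fix $u \in U$. By the defining property of the quotient diffeology $D_B$ spelled out in \Cref{quotient diffeology}, there exists an open neighborhood $V_u \subseteq U$ of $u$ together with a plot $\bar{p}_u \colon V_u \to A$ in $D_A$ such that $q \circ \bar{p}_u = \tilde{p}|_{V_u}$. Then on $V_u$ we have the identity
\[
(f \circ \tilde{p})|_{V_u} = f \circ q \circ \bar{p}_u,
\]
and the right-hand side belongs to $D_C$ by hypothesis applied to the plot $\bar{p}_u$.

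Finally, since $u \in U$ was arbitrary, the open sets $\{V_u\}_{u \in U}$ form an open cover of $U$ on each of which $f \circ \tilde{p}$ restricts to a plot of $C$. Invoking the third (locality) axiom of a diffeology on $C$ then gives $f \circ \tilde{p} \in D_C$, completing the proof. I do not anticipate a genuine obstacle here; the only subtle point is recognizing that the quotient diffeology is defined via local lifts rather than global ones, which is precisely why the local-to-global axiom of diffeologies must be used in the final step.
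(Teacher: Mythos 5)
Your proof is correct. The paper itself gives no argument for this lemma, deferring entirely to \textbf{Lemma A.16} of \cite{MR3521476}; your two-direction argument --- using that $q$ is a map of diffeological spaces for the forward implication, and combining the local lifts from the quotient diffeology with the locality axiom of $D_C$ for the backward implication --- is the standard proof and fills in exactly what the cited reference contains.
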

\begin{proof}
	See \textbf{Lemma A.16} in \cite{MR3521476}.
\end{proof}
\subsection{A principal Lie 2-group bundle over a Lie groupoid}\label{subsection:A principal Lie 2-group bundle over a Lie groupoid}
In \cite{chatterjee2022atiyah}, we introduced a notion of a principal Lie 2-group bundle over a Lie groupoid and characterized a particular subfamily of them. In this subsection, we give a quick review of such.

\begin{definition}[Definition 3.6 , Definition 3.8, \cite{chatterjee2022atiyah}]\label{Definition Principal Lie 2-group bundle over a Lie groupoid}
	Let $\mb{G}$ be a Lie 2-group.
	\begin{itemize}
		\item[(i)] A \textit{principal $\mb{G}$-bundle over a Lie groupoid} $\mathbb{X}$ is given by a morphism of Lie groupoids $\pi: \mb{E} \rightarrow \mb{X}$ along with a right action $\rho: \mb{E} \times \mb{G} \rightarrow \mb{E}$ of Lie $2$-group $\mb{G}$ on $\mb{E}$ such that $\pi_0: E_0 \ra X_0$ and $\pi_1: E_1 \ra X_1$ are principal $G_0$-bundle and principal $G_1$-bundle respectively.
		\item[(ii)] A\textit{ morphism  of principal $\mb{G}$-bundles from $\pi: \mb{E} \ra \mb{X}$ to $\pi': \mb{E}' \ra \mb{X}$} is given by a smooth $\mb{G}$-equivariant morphism $F:\mb{E} \rightarrow \mb{E'}$ such that the following diagram commutes on the nose
		\[
		\begin{tikzcd}
			\mb{E} \arrow[r, "F"] \arrow[d, "\pi"'] & \mb{E'} \arrow[ld, "\pi'"] \\
			\mb{X}                               &                  
		\end{tikzcd}.\]
	\end{itemize}
\end{definition}
The Lie 2-group $\mb{G}$ above is said to be the \textit{structure 2-group of the principal $\mb{G}$-bundle.} Several examples of such principal 2-bundles have been given in \cite{chatterjee2022atiyah}. 

In particular when the structure 2-group is a discrete 2-group $[G \rra G]$, then a principal $[G \rra G]$-bundle over a Lie groupoid $\mb{X}$ coincides with a principal $G$-bundle over $\mb{X}$ (as in \cite{MR2270285, MR2119241, MR1950948}), that we showed in \textbf{Example 3.14}, \cite{chatterjee2022atiyah}.
\begin{definition}[Definition 2.2,\cite{MR2270285}]\label{Definition: Principal Lie group bundle over a Lie groupoid}
	For a Lie group $G$, a \textit{principal $G$-bundle over a Lie groupoid} $\mb{X}$ is given by a principal $G$-bundle $\pi\colon E_G \rightarrow X_0$ along with a  smooth 
	map $\mu\colon s^{*}E_G:= (X_1 \times_{s, X_0, \pi} E_G) \rightarrow E_G$, which satisfy the following conditions:
	
	\begin{itemize}
		\item[(i)] $\mu(1_{\pi(p)}, p)=p$ for all $p \in E_G$,
		\item[(ii)] for each $(\gamma, p) \in s^{*}E_G $, we have $\bigl(\gamma, \mu(\gamma,p)\bigr) \in t^{*}E_G$,
		\item[(iii)] if $\gamma_2 , \gamma_1 \in X_1$ such that $t(\gamma_1)=s(\gamma_2)$ and $(\gamma_1,p) \in s^{*}E_G$, then 
		$\mu(\gamma_2 \circ \gamma_1,p)=\mu(\gamma_2,\mu(\gamma_1,p))$,
		\item[(iv)] for all $p \in E_G, g \in G$ and $\gamma \in X_1$ we have $\mu(\gamma, p)g=\mu(\gamma, pg).$			
	\end{itemize}
\end{definition}
In particular the conditions(i)-(iii)  tells that $\mu$ is an \textit{action} of $\mb{X}$ on $E_G$. The fourth condition tells that it commutes with the right $G$-action on $E_G$. We denote a principal $G$-bundle over the Lie groupoid $\mathbb{X}= [X_1 \rightrightarrows X_0]$  by  $\bigl(\pi\colon E_G \rightarrow X_0, \mu, \mb{X} \bigr)$. Given a Lie group $G$ and a Lie groupoid $\mb{X}$, the collection of principal $G$-bundles over $\mb{X}$ form a groupoid which we denote by ${\rm{Bun}}(\mb{X},G)$. An element in ${\rm{Hom}} \bigl((\pi\colon E_G \rightarrow X_0, \mu, \mb{X}), (\pi'\colon E'_G \rightarrow X_0, \mu', \mb{X})  \bigr)$ is a morphism of principal $G$-bundles $f \colon E_G \ra E'_G$ over $X_0$ such that $f(\mu(\gamma,p))= \mu'(\gamma,f(p))$ for all $(\gamma,p) \in s^{*}E_G$. 

For a Lie crossed module $(G,H,\tau,\alpha)$ and a principal $G$-bundle over a Lie groupoid $\mb{X}$, we constructed in [Proposition 3.18, \cite{chatterjee2022atiyah}], a particular class of principal $[H \rtimes_{\alpha} G \rra G]$-bundle over $\mb{X}$ called \textit{decorated principal 2-bundles} and have extensively studied. Below, we briefly recall the construction:
\begin{proposition}[Proposition 3.18 ,\cite{chatterjee2022atiyah}]\label{Prop:Decoliegpd}
	Given a Lie crossed module $(G, H, \tau, \alpha)$ and a principal $G$-bundle $\bigl(\pi\colon E_G \rightarrow X_0, \mu, \mb{X} \bigr)$ over $\mb{X}$ we have the following:
	\begin{enumerate}
		\item the manifolds $(s^{*}E_G)^{\rm{dec}}:=s^{*}E_G \times H$ and $E_G$ determines a Lie groupoid $[(s^{*}E_G)^{\rm{dec}} \rightrightarrows E_G]$ whose structure maps are given as 
		\begin{itemize}
			\item[(i)] source map $s$: $(\gamma, p, h) \mapsto p$,
			\item[(ii)] target map $t$: $(\gamma, p, h) \mapsto \mu(\gamma, p) \tau(h^{-1})$,
			\item[(iii)] composition map $m \colon \big((\gamma_2, p_2, h_2), (\gamma_1, p_1, h_1) \big) \mapsto (\gamma_2 \circ \gamma_1, p_1 ,h_2h_1)$, 
			\item[(iv)] unit map $u : p \mapsto (1_{\pi(p)},p,e)$,
			\item[(v)] inverse map $ \mathfrak{i} \colon \bigl(\gamma, p, h) \mapsto (\gamma^{-1}, \mu(\gamma,p)\tau(h^{-1}), h^{-1}\bigr)$.
		\end{itemize}
		\item The Lie groupoid  $\mb{E}^{\rm{dec}}:=[(s^{*}E_G)^{\rm{dec}} \rightrightarrows E_G]$ forms a principal $\mb{G}$-bundle over $\mb{X}$ such that the action of the Lie 2-group $[H \rtimes_{\alpha}G \rra G]$ and bundle projection are respectively defined as
		\begin{equation}\label{E:Actionondeco}
			\begin{split}
				\rho\colon &\mb{E}^{\rm dec}\times \mb{G}\ra \mb{E}^{\rm dec}\\
				&(p, g) \mapsto p\, g\\
				\bigl((\gamma, p, h)&, (h', g)\bigr)\mapsto \bigl(\gamma, p g, \alpha_{g^{-1}}(h'^{-1}\, h)\bigr),
			\end{split}
		\end{equation}
		and
		\begin{equation}\label{E:Projondeco}
			\begin{split}
				\pi^{\rm{dec}} \colon &\mb{E}^{\rm dec}\ra \mb{X}\\
				& p \mapsto \pi(p),\\
				\bigl(\gamma,&  p, h\bigr) \mapsto \gamma.
			\end{split}
		\end{equation}
	\end{enumerate}
\end{proposition}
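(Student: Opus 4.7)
The plan is to separately establish the two assertions by direct verification of the relevant axioms, exploiting throughout the crossed module identities \eqref{E:Peiffer} and the $G$-equivariance of $\mu$ (condition (iv) of \Cref{Definition: Principal Lie group bundle over a Lie groupoid}).

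For part (1), I would first check that the smooth manifold structures are in order: $s^*E_G$ is a fiber product along a surjective submersion, hence a smooth manifold, and so $(s^*E_G)^{\rm{dec}} = s^*E_G \times H$ is a smooth manifold. The maps $s,t,m,u,\mathfrak i$ are manifestly smooth provided they are well-defined. The key well-definedness check is that $m$ lands in $(s^*E_G)^{\rm{dec}}$, which requires $(\gamma_2\circ\gamma_1, p_1)\in s^*E_G$; this follows from $(\gamma_1,p_1)\in s^*E_G$. Next I would verify the groupoid axioms in the following order. Source-target compatibility for composition: given $(\gamma_2,p_2,h_2), (\gamma_1,p_1,h_1)$ with $p_2 = \mu(\gamma_1,p_1)\tau(h_1^{-1})$, one computes
\[ t(\gamma_2\circ\gamma_1, p_1, h_2h_1) \;=\; \mu(\gamma_2\circ\gamma_1, p_1)\tau((h_2h_1)^{-1}) \]
and matches it with $t(\gamma_2,p_2,h_2)$ using the functoriality of $\mu$ (condition (iii) of \Cref{Definition: Principal Lie group bundle over a Lie groupoid}), the $G$-equivariance of $\mu$, and the fact that $\tau$ is a group homomorphism. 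Associativity of $m$ reduces to associativity of the composition in $\mb{X}$ and of multiplication in $H$. The unit axioms use condition (i) on $\mu$ together with $\tau(e)=e$; and the inverse axioms follow by a similar short calculation.

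For part (2), I would split the verification into three blocks. First, that $\pi^{\rm{dec}}$ is a morphism of Lie groupoids: compatibility with source, target, and units is immediate from the structure maps defined in part (1); compatibility with composition follows because the first coordinate of $m$ is simply $\gamma_2\circ\gamma_1$. Second, that $\rho$ is a morphism of Lie groupoids and defines a right action of $\mb{G}$. The object-level map is the given principal $G$-bundle action on $E_G$. For the morphism-level map I would verify (a) the source/target match: on the source we get $pg=s\bigl((\gamma,p,h)\bigr)\cdot g$ trivially; on the target, using $\mu(\gamma,pg)=\mu(\gamma,p)g$, the Peiffer identity $\tau(\alpha(g,h))=g\tau(h)g^{-1}$, and $\tau$ being a homomorphism, one checks that $t\bigl((\gamma,p,h)\cdot(h',g)\bigr) = t(\gamma,p,h)\cdot t(h',g) = \mu(\gamma,p)\tau(h^{-1})\tau(h')g$. (b) Compatibility with composition: an element-wise computation comparing $\rho(m\otimes m)$ with $m\rho\otimes m\rho$ on composable quadruples, which again uses the Peiffer relations to commute $\alpha_{g^{-1}}$ past products. (c) Preservation of units. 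The action axioms (associativity and unit) reduce to the corresponding axioms for the $G$-action on $E_G$ together with the semidirect product multiplication \eqref{E:grouppro}.

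Third, I would verify principality at the morphism level, i.e.\ that $\pi^{\rm{dec}}_1\colon (s^*E_G)^{\rm{dec}}\to X_1$ is a principal $H\rtimes_\alpha G$-bundle. Freeness on fibers: if $(\gamma, pg, \alpha_{g^{-1}}(h'^{-1}h)) = (\gamma,p,h)$, then $pg=p$ forces $g=e$ by freeness of the $G$-action on $E_G$, and then $h'^{-1}h=h$ forces $h'=e$. Transitivity on fibers: the fiber over $\gamma$ is $\pi^{-1}(s(\gamma))\times H$ and the action is clearly transitive on this. Local triviality: starting from a local trivialization $U\times G\xrightarrow{\sim}\pi^{-1}(U)$ over some $U\subseteq X_0$, one can pull it back along $s$ to obtain a local trivialization of $s^*E_G\to X_1$ over $s^{-1}(U)$, and multiplying by $H$ produces a local trivialization of $\pi^{\rm{dec}}_1$ as a principal $H\rtimes_\alpha G$-bundle. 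Finally the commutativity of $\rho$ with $\pi^{\rm{dec}}$ is automatic from the formulas.

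The main obstacle, I expect, is neither a single difficult step nor a conceptual hurdle but rather the careful bookkeeping in the target and composition checks of part (2): the interaction of $\tau$, $\alpha$, and the semidirect product multiplication \eqref{E:grouppro} with the formulas defining $s,t,m$ requires each of the crossed module relations \eqref{E:Peiffer} at precisely the right moment. In particular, showing that acting and then composing equals composing and then acting for the morphism-level $\rho$ is where the Peiffer identity $\alpha(\tau(h),h')=hh'h^{-1}$ is forced to enter.
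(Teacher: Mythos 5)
Your verification is correct, and the route you take — direct element-wise checking of the groupoid axioms, the functoriality of the action (with the Peiffer identities entering exactly in the target and interchange computations), and principality at the morphism level — is the standard one. Note that the paper itself offers no proof of this statement: it is recalled verbatim from [Proposition 3.18, \cite{chatterjee2022atiyah}]; however, the paper's proof of the generalization to the quasi-decorated case (\Cref{Theorem:quasi-bundle construction}) proceeds by precisely the same kind of direct verification, so your argument matches the authors' approach in all essentials.
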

The principal $[H \rtimes_{\alpha}G \rra G]$-bundle $ \pi^{\rm{dec}} \colon \mb{E}^{\rm{dec}}\ra \mb{X}$ is said to be the \textit{decorated principal $\mb{G}$-bundle associated to $(\pi \colon E_G\ra X_0, \mu, \mb{X})$ and the  Lie crossed module $(G,H, \tau, \alpha)$}. They can be characterzed as prinicipal 2-bundles admitting  \textit{categorical connections} (\cite{chatterjee2022atiyah},\textbf{Proposition 3.25}), a notion we recall below:
\begin{definition}[Definition 3.21,\cite{chatterjee2022atiyah}]\label{Def:categorical connection}
	Let $\mb{G}$ be a Lie 2-group and $\pi\colon \mb{E} \ra \mb{X}$ a principal $\mb{G}$-bundle over $\mb{X}$. A \textit{categorical connection} $\mathcal{C}$ on $\pi: \mb{E} \ra \mb{X}$  is defined as a smooth map $\mathcal{C}\colon {s}^{*}E_0 \ra E_1$ which satisfies the following conditions:
	\begin{enumerate}[(i)]
		\item $s(\mathcal{C}(\gamma,p))=p$  for all $(\gamma,p) \in s^{*}E_0$;
		\item $\pi_1(\mathcal{C}(\gamma,p))= \gamma$ for all $(\gamma,p) \in s^{*}E_0$;
		\item $\mathcal{C}(\gamma, p g)= \mathcal{C}(\gamma, p) 1_g$ for all $(\gamma, p) \in {s}^{*}E_0$ and $g \in G_0$;
		\item $\mathcal{C}(1_x,p)=1_p$  for any $x\in X_0$ and $p\in \pi^{-1}(x)$;
		\item if $(\gamma_2, p_2), (\gamma_1, p_1) \in {s}^{*}E_0$ such that ${s}(\gamma_2)={t}(\gamma_1)$ and $p_2=t\bigl({\mathcal C}(\gamma_1, p_1)\bigr),$ then $\mathcal{C}(\gamma_2 \circ \gamma_1 , p_1)= \mathcal{C}(\gamma_2, p_2) \circ \mathcal{C}(\gamma_1, p_1)$.
	\end{enumerate}

\end{definition}

\begin{example}[Corollary 3.26, \cite{chatterjee2022atiyah}]\label{Cat connection over discrete base}
	Let $\mb{G}$ be a Lie 2-group. Any principal $\mb{G}$-bundle $\pi \colon \mb{E} \ra [M \rra M]$ over a discrete Lie groupoid $[M \rra M]$ admits a unique categorical connection given by  $(1_x, p)\mapsto 1_p$ for $p\in E_0, x=\pi(p).$
\end{example}
\subsection{Connection structures on a principal 2-bundle over a Lie groupoid}\label{subsection:Connection structures on a principal 2-bundle over a Lie groupoid}

In this subsection, we briefly recall the connection structures on a principal 2-bundle over a Lie groupoid (\Cref{Definition Principal Lie 2-group bundle over a Lie groupoid}), that we introduced in  \cite{chatterjee2022atiyah}.
\begin{proposition}[Proposition 3.2,\cite{chatterjee2022atiyah}]\label{Associated action on Lie 2-algebra and Tangent 2space}
	For a Lie 2-group $\mb{G}$, we have the following:
	\begin{itemize}
		\item[(i)] There is an action of $\mb{G}$ on the Lie groupoid $L(\mb{G})=[L(G_1) \rra L(G_0)]$ given by the adjoint action.
		\item[(ii)]  Suppose there is an action of $\mb{G}$ on a Lie groupoid $\mb{X}$. Then there is an action of $\mb{G}$ on the tangent Lie groupoid $T\mb{X}:=[TX_1 \rra TX_0]$, given by the differential of the action.
	\end{itemize}
	
\end{proposition}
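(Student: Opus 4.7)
My plan is to construct each action explicitly and then verify two compatibilities: (a) the assignment is a morphism of Lie groupoids, and (b) it satisfies the action axioms with respect to the group structure on $\mb{G}$. Both parts rest on two basic observations: every structure map of the Lie 2-group $\mb{G}$ (source, target, unit, composition, inverse) is a Lie group homomorphism, and for any Lie group homomorphism $\phi \colon A \ra B$ the differential intertwines adjoint actions, $\phi_{*,e} \circ \Ad_a = \Ad_{\phi(a)} \circ \phi_{*,e}$, obtained by differentiating the naturality identity $\phi \circ C_a = C_{\phi(a)} \circ \phi$ of conjugation.

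For (i), I would define the action $L(\mb{G}) \times \mb{G} \ra L(\mb{G})$ on objects by $(X, g) \mapsto \Ad_{g^{-1}}(X)$ and on morphisms by $(Y, \gamma) \mapsto \Ad_{\gamma^{-1}}(Y)$, so as to match the right-action convention used throughout the paper. Each level is a right action of the ordinary Lie group on its Lie algebra by classical representation theory, so the new content is functoriality. Source, target and unit compatibility follow by applying the intertwining identity to the corresponding structure maps $s, t \colon G_1 \ra G_0$ and $u \colon G_0 \ra G_1$ of $\mb{G}$, each viewed as a Lie group homomorphism. For composition compatibility, I apply the same principle to the groupoid composition $m \colon G_1 \times_{s}^{t} G_1 \ra G_1$, which is itself a Lie group homomorphism from the fibre-product Lie group; differentiating the identity $m(\gamma_2, \gamma_1) \, m(\alpha_2, \alpha_1) \, m(\gamma_2, \gamma_1)^{-1} = m(\gamma_2 \alpha_2 \gamma_2^{-1}, \gamma_1 \alpha_1 \gamma_1^{-1})$ along curves $\alpha_i(t)$ through $e$ with $s(\alpha_2(t)) = t(\alpha_1(t))$ gives the required $\Ad_{m(\gamma_2,\gamma_1)}(m_*(Y_2,Y_1)) = m_*(\Ad_{\gamma_2}(Y_2), \Ad_{\gamma_1}(Y_1))$.

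For (ii), I would invoke the tangent functor, which is a product-preserving endofunctor on the category of Lie groupoids. Applying it to $\rho \colon \mb{X} \times \mb{G} \ra \mb{X}$ gives a morphism $T\rho \colon T\mb{X} \times T\mb{G} \ra T\mb{X}$. The zero-section $\mb{G} \hookrightarrow T\mb{G}$ is itself a morphism of Lie groupoids: for any smooth map $f$ one has $f_{*}(0)=0$, so source, target, unit and composition of $T\mb{G}$ all restrict correctly along the zero section. Composing $T\mb{X} \times \mb{G} \hookrightarrow T\mb{X} \times T\mb{G} \xrightarrow{T\rho} T\mb{X}$ yields the desired morphism; on $(v,g)$ with $v \in T_x X_0$ it is the usual $(R_g)_{*}(v)$, where $R_g$ denotes right translation on $X_0$, and analogously for $(w, \gamma) \mapsto (R_\gamma)_{*}(w)$ on morphisms. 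The right-action axiom transfers from that of $\rho$: differentiating $\rho_0(\rho_0(x, g), g') = \rho_0(x, gg')$ in $x$ yields $(R_{g'})_{*} \circ (R_g)_{*} = (R_{gg'})_{*}$, and the same calculation applied to $\rho_1$ gives it on $TX_1$.

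The main subtlety, such as it is, lies in distinguishing the two products on $G_1$ in (i)---the Lie group product (used implicitly by $\Ad$) and the groupoid composition $m$---and recognising that the interchange law, namely the fact that $m$ is a Lie group homomorphism, is exactly what reconciles them at the tangent level. Once this is internalised, both parts reduce to routine applications of the naturality of $\Ad$ together with the chain rule and functoriality of $T$.
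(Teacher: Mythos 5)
Your proof is correct. The paper itself states this proposition only as a recalled background result (citing Proposition 3.2 of the authors' earlier work) and gives no proof here; your argument --- using that the structure maps of $\mb{G}$, including the groupoid composition $m \colon G_1 \times_{s,G_0,t} G_1 \ra G_1$, are Lie group homomorphisms and hence intertwine adjoint actions after differentiation, and obtaining the tangent action by applying the product-preserving tangent functor to $\rho$ and restricting along the zero section --- is exactly the standard verification the cited result relies on.
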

\begin{definition}[Definition 5.4,\cite{chatterjee2022atiyah} ]
	\label{Definition:LGvaluedformOnLiegroupoid}
	Let $\mb{G}=[G_1\rra G_0]$ be a Lie $2$-group, and $\mb{E}=[E_1\rra E_0]$ a Lie groupoid. An \textit{$L(\mb{G})$-valued $1$-form on the Lie groupoid $\mb{E}$} is a morphism of Lie groupoids 
	$\omega:=(\omega_1,\omega_0)\colon T\mb{E} \rightarrow L(\mb{G})$ such that  $\omega_i$ is an $L(G_i)$-valued differential $1$-form on $E_i,$ for $i\in\{0, 1\}.$
	If $\mb{G}$ acts on $\mb{E}$ and $\omega\colon T\mb{E} \ra L(\mb{G})$ is  $\mb{G}$-equivariant with respect to the actions defined in \Cref{Associated action on Lie 2-algebra and Tangent 2space}, then $\omega$ is called a \textit{$\mb{G}$-equivariant $1$-form}.
\end{definition}
\begin{definition}[Proposition 5.11, \cite{chatterjee2022atiyah}]\label{strict and semistrict connections}
	For a Lie 2-group $\mb{G}$, let $\pi: \mb{E} \ra \mb{X}$ be a principal $\mb{G}$-bundle over a Lie groupoid $\mb{X}$. A $\mb{G}$-equivariant 1-form $\omega: T\mb{E} \ra L(\mb{G})$ on $\mb{E}$ is defined as a \textit{strict connection} (resp. \textit{semistrict connection}) if the following diagram of morphisms of Lie groupoids
	\begin{equation}\nonumber
		\begin{tikzcd}
			T{\mb{E}} \arrow[r, "\omega"]                  & L(\mb{G}) \\
			\mb{E} \times L(\mb{G}) \arrow[u, "\delta"] \arrow[ru, "\rm{pr_2}"'] &  
		\end{tikzcd},
	\end{equation}
	commutes on the nose (resp. upto a $\mb{G}$-equivariant, fibre-wise linear natural isomorphism). Here, $\delta$ and $\rm{pr_2}$ are, respectively, the functor induced by the vertical vector fields and the 2nd projection functor.
\end{definition}

As we see next, these connection structures behave well with the pullback along a morphism of principal 2-bundles over a Lie groupoid.
\begin{example}[Lemma 5.27, \cite{chatterjee2022atiyah}]\label{Lemma:Pullback connection}
	Given a Lie 2-group $\mb{G}$ and a Lie groupoid $\mb{X}$, let  $\pi \colon \mb{E} \ra \mb{X}$ and $\pi' \colon \mb{E}' \ra \mb{X}$ be a pair of principal $\mb{G}$ bundles over a Lie groupoid $\mb{X}$. Suppose $F:=(F_1, F_0): \mb{E} \ra \mb{E}'$ be a morphism of prinicpal $\mb{G}$-bundles over $\mb{X}$.  If $\omega:=(\omega_1,\omega_0):T\mb{E}' \ra L(\mb{G})$ is a strict connection on $\mb{E}'$ then $F^{*}\omega:=(F_1^{*}\omega_1, F_0^{*}\omega_0):T \mb{E} \ra L(\mb{G})$ is a strict connection on the principal $\mb{G}$-bundle $\pi \colon \mb{E} \ra \mb{X}$. 
\end{example}
We call $F^{*}\omega$ above as the \textit{pullback connection of $\omega$ along $F$}.
\begin{example}[\textbf{Example 5.16}, \cite{chatterjee2022atiyah}]\label{Classical as 2-connection}
		For a Lie group $G$, let $P\ra M$ be a classical principal $G$-bundle over a manifold $M.$  Then a  connection 1-form $\omega$ on $P\ra M$ defines a strict connection 1-form $(\omega, \omega)$  on the principal Lie $2$-group $[G\rra G]$-bundle $[P\rra P]\ra [M\rra M]$ over  the Lie groupoid $[M\rra M].$
\end{example}
\begin{example}[\textbf{Proposition 5.24}, \cite{chatterjee2022atiyah}]\label{Prop:ConOnDeco}
	Let $\bigl(\pi\colon E_G \rightarrow X_0, \mu, \mb{X} \bigr)$ 
	be a principal $G$-bundle over the Lie groupoid $\mb{X}$ and $\omega$ a connection on the principal $G$-bundle $\pi \colon E_G \ra X_0$ such that $s^{*}\omega=t^{*}\omega$. Then, for a Lie crossed module $\ghta$, the pair $(\omega^{\rm dec}, \omega)$ is a strict connection $1$-form on principal $[H \rtimes_{\alpha} G \rra G]$-bundle $\pi^{{\rm{dec}}} \colon  \mb{E}^{\rm dec} \ra \mb{X}$ (\Cref{Prop:Decoliegpd}), where $\omega^{\rm dec}$  is defined as $\omega^{\rm dec}{(\gamma,\, p, h)}=\ad_{(h, e)}\bigl((s^*\omega){(\gamma, p, h)}\bigr)-{\Theta}_h,$ and $\Theta_h$ is the Maurer-Cartan form on $H$.

%	For a Lie crossed module $\ghta$, the pair  $(\omega^{\rm dec}, \omega)$  is a strict connection $1$-form on $[H \rtimes_{\alpha G} \rra G]$-bundle $\mb{E}^{\rm dec}$ over $\mb{X},$ where $\omega^{\rm dec}$ is as defined in \Cref{E:connectiongeneraal}.		Appropriately modifying the connection $(\omega^{\rm{dec}},\omega)$ using \Cref{cor:stricconnto semi}, we obtain a semi-strict connection.
\end{example}
We refer to our previous paper \cite{chatterjee2022atiyah} for several other examples of principal Lie 2-group bundles over Lie groupoids and connections.
\begin{remark}
	Originally in [\textbf{Definition 5.1}, \cite{chatterjee2022atiyah}],  strict and semistrict connections on a principal 2-bundle over a Lie groupoid were described in terms of the splitting of a short exact sequence of VB-groupoids called the \textit{Atiyah sequence associated to the principal 2-bundle}, which is equivalent to the \Cref{strict and semistrict connections} [\textbf{Proposition 5.11}, \cite{chatterjee2022atiyah}].
\end{remark}

\section{Quasi-principal 2-bundles and their characterizations}\label{quasi-principal 2-bundles and their characterisations}
In this section, we introduce a a \textit{quasi-principal 2-bundle over a Lie groupoid} (\Cref{Definition:Quasicategorical Connection}) and a \textit{pseudo-principal Lie crossed module-bundle over a Lie groupoid} (\Cref{pseudo principal Lie crossed module}). We show that the respective categories  are equivalent (\Cref{Grothendieck construction}) via a Lie 2-group torsor version of the classical Grothendieck construction (\Cref{subsection fibered categories and pseudofunctors}). Consequently we also extend a class of principal 2-bundle over a Lie groupoid to be defined over a differentiable stack (\Cref{Morita})

%In this section, we simultaneously weaken the following two notions:
%\begin{itemize}
%\item[(1)]  A  principal Lie group bundle over a Lie groupoid  (\Cref{Definition: Principal Lie group bundle over a Lie groupoid});
%\item[(2)] A principal 2-bundle over a Lie groupoid equipped with a categorical connection (\Cref{Def:categorical connection}),
%\end{itemize}
%and obtain respectively two new notions viz: a \textit{pseudo-principal Lie crossed module -bundle over a Lie groupoid} (\Cref{pseudo principal Lie crossed module}) and a \textit{quasi-principal 2-bundle over a Lie groupoid} (\Cref{Definition:Quasicategorical Connection}).
%
%We show that the respective categories of these two weak notions are equivalent (\Cref{Grothendieck construction}) through a Lie 2-group torsor version of the classical Grothendieck construction (\Cref{subsection fibered categories and pseudofunctors}). Consequently we also extend a class of principal 2-bundle over a Lie groupoid to be defined over a differentiable stack (\Cref{Morita}).

\subsection{Quasi-principal 2-bundles}
Let $\mb{G}:=[G_1 \rra G_0]$ be a Lie 2-group. Given a principal $\mb{G}$-bundle $\pi \colon \mb{E} \ra \mb{X}$ over a Lie groupoid $\mb{X}$, there is a canonical morphism $P \colon s^{*}E_0 \ra E_1$ of principal bundles, from the pull-back principal $G_0$-bundle $\pi_0^{*} \colon s^{*}E_0 \ra X_1$ to the principal $G_1$-bundle $\pi_1 \colon E_1 \ra X_1$ given as $\delta \mapsto (\pi_1(\delta), s(\delta)).$ Adapting same notation as above, we define the following:
\begin{definition}\label{Definition:Quasicategorical Connection}
	A \textit{quasi connection} on a principal $\mb{G}$-bundle $\pi: \mb{E} \ra \mb{X}$ is defined as a smooth section $\mc{C}: s^{*}E_0 \ra E_1$ of the morphism of principal bundles $P : E_1 \ra s^{*}E_0$, such that $\mc{C}$ is itself a morphism of principal bundles over $X_1$ along the unit map $u \colon G_0 \ra G_1$. We will call the pair $(\pi: \mb{E} \ra \mb{X}, \mc{C})$, a \textit{quasi principal $\mb{G}$-bundle over $\mb{X}$}.
	
	\[
	\begin{tikzcd}
		E_1 \arrow[d, "\pi_1"'] \arrow[r, "P", bend left] & s^{*}E_0 \arrow[ld, "\pi_0^{*}", bend left] \arrow[l, "\mc{C}"', bend left=49] \\
		X_1                                          &                                                           
	\end{tikzcd}\]
\end{definition}
%\begin{definition}\label{Definition:quasiprincipal2-bundle}
%	A principal $\mb{G}$-bundle $\pi \colon \mb{E} \ra \mb{X}$ equipped with a quasi connection $\mc{C} \colon s^{*}E_0 \ra E_1$ is called a \textit{quasi-principal $\mb{G}$-bundle} over $\mb{X}$, which we will denote as $(\pi \colon \mb{E} \ra \mb{X}, \mc{C})$.
%	
%\end{definition}
An analogous notion to a quasi connection in the setup of VB-groupoids can be found in \cite{MR3696590} and \cite{MR4126305}. One main distinguishing feature of our setup is the Lie 2-group equivariance. In fact, both of these definitions are special cases of a "cleavage" (a smooth version of the one mentioned in \Cref{subsection fibered categories and pseudofunctors}) in the setup of Lie groupoid fibrations \cite{MR3968895}.

The following proposition is obvious:
\begin{proposition}\label{Proposition: Quasi-Cat}
	Let $\pi \colon \mb{E} \ra \mb{X}$ be a principal $\mb{G}$-bundle over a Lie groupoid $\mb{X}$. Every categorical connection $\mc{C} \colon s^*E_0 \ra E_1$ is a quasi connection. Conversely, any quasi connection $\mc{C} \colon s^{*}E_0 \ra E_1$, which satisfies the following two properties:
	\begin{enumerate}[(i)]
		\item $\mathcal{C}(1_x,p)=1_p$  for any $x\in X_0$ and $p\in \pi^{-1}(x)$,
		\item if $(\gamma_2, p_2), (\gamma_1, p_1) \in {s}^{*}E_0$ such that ${s}(\gamma_2)={t}(\gamma_1)$ and $p_2=t\bigl({\mathcal C}(\gamma_1, p_1)\bigr),$ then $\mathcal{C}(\gamma_2 \circ \gamma_1 , p_1)= \mathcal{C}(\gamma_2, p_2) \circ \mathcal{C}(\gamma_1, p_1)$,
	\end{enumerate}
	is a categorical connection \Cref{Def:categorical connection}.
\end{proposition}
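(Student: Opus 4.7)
The plan is straightforward unpacking of definitions, showing that the three structural conditions (i)--(iii) of \Cref{Def:categorical connection} are exactly equivalent to being a quasi connection in the sense of \Cref{Definition:Quasicategorical Connection}, after which conditions (iv)--(v) of \Cref{Def:categorical connection} correspond verbatim to the two supplementary conditions listed in the proposition.

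For the first implication, suppose $\mc{C}\colon s^*E_0\ra E_1$ is a categorical connection. I would verify the two properties that together amount to being a quasi connection. That $\mc{C}$ is a section of $P\colon E_1\ra s^*E_0,\ \delta \mapsto (\pi_1(\delta), s(\delta))$ is exactly the content of conditions (i) and (ii) of \Cref{Def:categorical connection}: one says $s(\mc{C}(\gamma,p))=p$ and the other says $\pi_1(\mc{C}(\gamma,p))=\gamma$, so $P\circ \mc{C}=\mathrm{id}$. That $\mc{C}$ is a morphism of principal bundles from $\pi_0^*\colon s^*E_0\ra X_1$ to $\pi_1\colon E_1\ra X_1$ along the unit map $u\colon G_0\ra G_1$ amounts to covering the identity on $X_1$ (already noted) and the equivariance $\mc{C}(\gamma, p\cdot g)=\mc{C}(\gamma,p)\cdot 1_g$, which is precisely condition (iii) of \Cref{Def:categorical connection}. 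Smoothness of $\mc{C}$ is part of the hypothesis.

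For the converse, suppose $\mc{C}$ is a quasi connection satisfying the two additional conditions stated in the proposition. Reading the quasi-connection data in the other direction, the section property $P\circ\mc{C}=\mathrm{id}_{s^*E_0}$ immediately yields conditions (i) and (ii) of \Cref{Def:categorical connection}, and the fact that $\mc{C}$ is a principal bundle morphism over $X_1$ along $u$ yields condition (iii). The two listed hypotheses are verbatim conditions (iv) and (v) of \Cref{Def:categorical connection}. Hence $\mc{C}$ is a categorical connection.

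There is no real obstacle here: the proposition merely records that a categorical connection is a quasi connection with two extra axioms (unitality and functoriality under composition), which explains why quasi connections are the honest Lie 2-group torsor analogue of a general cleavage while categorical connections correspond to splitting cleavages, as anticipated in the introduction.
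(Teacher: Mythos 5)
Your unpacking is correct and is exactly the intended argument: the paper itself states this proposition without proof, introducing it with "The following proposition is obvious," and the content is precisely that conditions (i)--(iii) of \Cref{Def:categorical connection} encode the section and bundle-morphism properties of a quasi connection while (iv)--(v) are the two listed extras. Nothing is missing.
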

\begin{definition}\label{Unital connection}
	A quasi connection $\mc{C} \colon s^{*}E_0 \ra E_1$ satisfying (i) of \Cref{Proposition: Quasi-Cat} will be called a \textit{unital connection} and a principal 2-bundle equipped with an unital connection will be called a \textit{unital-principal 2-bundle}. In the same way, a principal 2-bundle equipped with a categorical connection will be called a \textit{categorical-principal 2-bundle}. \\ We do not notationally distinguish between quasi, unital or categorical-principal 2-bundles. 
\end{definition}
It is evident that the category of quasi-principal 2-bundles over a Lie groupoid forms a groupoid.
\begin{proposition}\label{Groupoid of quasi principal 2-bundles}
	Given a Lie 2-group $\mb{G}$ and a Lie groupoid $\mb{X}$, the category $\rm{Bun}_{\rm{quasi}}(\mb{X}, \mb{G})$ whose objects are quasi-principal $\mb{G}$-bundles $(\pi \colon \mb{E} \ra \mb{X}, \mc{C})$ over $\mb{X}$ and an arrow from $(\pi \colon \mb{E} \ra \mb{X}, \mc{C})$ to $(\pi' \colon \mb{E}' \ra \mb{X}, \mc{C}')$ is a morphism of principal $\mb{G}$-bundles $F \colon \mb{E} \ra \mb{E}'$ satisfying $F_1(\mc{C}(\gamma,p))= \mc{C}'(\gamma, F_0(p))$ for all $(\gamma,p) \in s^{*}E_0$, forms a groupoid.
	Similarly, unital principal $\mb{G}$-bundles and categorical principal $\mb{G}$-bundles over $\mb{X}$ forms the respective groupoids $\rm{Bun}_{\rm{unital}}(\mb{X}, \mb{G})$ and $\rm{Bun}_{\rm{Cat}}(\mb{X}, \mb{G})$.
\end{proposition}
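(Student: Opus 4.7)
The plan is to reduce the statement to the well-known fact that a morphism of classical principal bundles over the same base is automatically an isomorphism, and then verify that the extra data (the quasi connection, or its unital/categorical refinements) is preserved both under composition and under inversion.

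First I would verify the category axioms. Given composable morphisms $F\colon (\pi,\mc{C})\to(\pi',\mc{C}')$ and $F'\colon(\pi',\mc{C}')\to(\pi'',\mc{C}'')$ of principal $\mb{G}$-bundles, the composite $F'\circ F$ is again a morphism of principal $\mb{G}$-bundles over $\mb{X}$ by the standard groupoid structure on principal 2-bundles recalled from \cite{chatterjee2022atiyah}. For the quasi-connection condition, a direct diagram chase
\[
(F'_1\circ F_1)\bigl(\mc{C}(\gamma,p)\bigr)=F'_1\bigl(\mc{C}'(\gamma,F_0(p))\bigr)=\mc{C}''\bigl(\gamma,(F'_0\circ F_0)(p)\bigr)
\]
on every $(\gamma,p)\in s^{*}E_0$ closes the category under composition. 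Identity morphisms are trivially quasi-connection preserving, and associativity is inherited from composition of smooth maps.

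Next I would establish invertibility of an arbitrary arrow $F=(F_1,F_0)$. The key input is the classical fact that a $G_i$-equivariant smooth bundle map $F_i\colon E_i\to E'_i$ covering the identity of $X_i$ is automatically a diffeomorphism, and its inverse $F_i^{-1}$ is again smooth, $G_i$-equivariant, and covers $\mathrm{id}_{X_i}$. Applying this to $i=0,1$ and observing that the pair $(F_1^{-1},F_0^{-1})$ commutes with source, target, unit, composition and inverse of $\mb{E},\mb{E}'$ (since $(F_1,F_0)$ does), one obtains a well-defined inverse morphism of principal $\mb{G}$-bundles over $\mb{X}$. It remains to check that $F^{-1}$ preserves the quasi connection: for $(\gamma,q)\in s^{*}E'_0$ write $q=F_0(p)$ and use the hypothesis on $F$ to compute
\[
F_1^{-1}\bigl(\mc{C}'(\gamma,q)\bigr)=F_1^{-1}\bigl(\mc{C}'(\gamma,F_0(p))\bigr)=F_1^{-1}\bigl(F_1(\mc{C}(\gamma,p))\bigr)=\mc{C}(\gamma,p)=\mc{C}\bigl(\gamma,F_0^{-1}(q)\bigr),
\]
which is exactly the quasi-connection preservation for $F^{-1}$.

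Finally, for the unital and categorical versions, note that being unital (condition (i) of \Cref{Proposition: Quasi-Cat}) and being a categorical connection (both (i) and (ii) of that proposition) are properties of the object $(\pi,\mc{C})$ alone. Hence $\rm{Bun}_{\rm{unital}}(\mb{X},\mb{G})$ and $\rm{Bun}_{\rm{Cat}}(\mb{X},\mb{G})$ are full subcategories of $\rm{Bun}_{\rm{quasi}}(\mb{X},\mb{G})$, and the groupoid structure established above restricts to them without modification.

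The only mildly subtle point is the bookkeeping for inversion: one must be sure that a morphism in the sense of \Cref{Definition Principal Lie 2-group bundle over a Lie groupoid} really is invertible in the smooth $\mb{G}$-equivariant sense. This is, however, purely the classical statement at the object level $\pi_0\colon E_0\to X_0$ and at the morphism level $\pi_1\colon E_1\to X_1$, together with the functoriality check that the two inverses assemble into a Lie groupoid morphism; I do not anticipate any genuine obstacle beyond this routine verification.
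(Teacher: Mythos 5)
Your verification is correct and is exactly the routine check the paper leaves implicit — the paper states this proposition without proof, prefacing it only with ``It is evident that\ldots''. Your reduction to the classical fact that a $G_i$-equivariant bundle map over the identity is automatically a diffeomorphism, followed by the quasi-connection bookkeeping for the inverse and the observation that the unital and categorical conditions cut out full subcategories, is precisely what the paper relies on.
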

In the same spirit, we propose a weaker version of \Cref{Definition: Principal Lie group bundle over a Lie groupoid}:
\begin{definition}\label{quasi-principal G-bun dle}
	For a Lie group $G$, a \textit{quasi-principal $G$-bundle over a Lie groupoid} $\mb{X}$ is given by a principal $G$-bundle $\pi\colon E_G \rightarrow X_0$ along with a  smooth 
	map $\mu\colon s^{*}E_G \rightarrow E_G$ which satisfy the following conditions:
	\begin{enumerate}[(i)]
		\item for each $(\gamma, p) \in s^{*}E_G$, we have $\bigl(\gamma, \mu(\gamma,p)\bigr) \in X_1 \times_{t, X_0, \pi}E_G$,
		\item for all $p \in E_G, g \in G$ and $\gamma \in X_1$ we have $\mu(\gamma, p)g=\mu(\gamma, pg).$			
	\end{enumerate}
\end{definition}
Depending on the context, $\bigl(\pi\colon E_G \rightarrow X_0, \mu, \mb{X} \bigr)$ may either denote a quasi-principal $\mb{G}$-bundle or a principal $G$-bundle (\Cref{Definition: Principal Lie group bundle over a Lie groupoid}).

A similar notion in the linear framework appeared in \cite{MR3107517} and \cite{MR3696590}  by the name \textit{quasi-action of a Lie groupoid on a vector bundle}.

\begin{example}\label{underlying quasi-principal bundle}
	Given a quasi-principal $\mb{G}$-bundle $(\pi \colon \mb{E} \ra \mb{X},\mc{C})$ over a Lie groupoid $\mb{X}$,  $(\pi_0 \colon E_0 \ra X_0, \mu_{\mc{C}}:=t \circ \mc{C}, \mb{X})$ is a quasi-principal $G_0$-bundle over $\mb{X}$, which we call the \textit{underlying quasi-principal $G_0$-bundle of the quasi-principal $\mb{G}$-bundle $\pi \colon \mb{E} \ra \mb{X}$.}
\end{example}
\subsection{Examples of quasi-principal 2-bundles}\label{Examples of quasiprincipal 2-bundles}
By \Cref{Proposition: Quasi-Cat}, any categorical principal 2-bundle is a quasi-principal 2-bundle. Here, we construct under certain conditions, non-trivial examples of quasi-principal 2-bundles which fail to be categorical-principal 2-bundles.

%For a given categorical principal 2-bundle, if the corresponding Lie crossed module satisfies certain conditions one can generate a class of quasi-principal 2-bundles which fail to be a categorical principal 2-bundle. This subsection will be devoted 
%
% categorical principal 2-bundle along with certain conditions on the Lie crossed module
%
%
%We now consider certain conditions on a categorical-principal 2-bundle, which will yield a quasi-principal 2-bundle.
\begin{lemma}\label{QuasiExamples}
 Let $(\pi \colon \mb{E} \ra \mb{X}, \mc{C})$ be a categorical-principal $[H \rtimes_{\alpha} G \rra G]$-bundle over a Lie groupoid $\mb{X}$. If there exists a smooth map $\mc{H} \colon s^{*}E_0 \ra H$ such that it satisfies $\alpha_{g}(\mc{H}(\gamma,pg))= \mc{H}(\gamma,p)$ for all $(\gamma,p) \in s^{*}E_0$ and $g \in G$, then for $\mc{C}_{\mc{H}}:= \mc{C}(\gamma,p) \big( \mc{H}(\gamma,p) ,e\big)$,  $(\pi \colon \mb{E} \ra \mb{X},\mc{C}_{\mc{H}})$ is a quasi-principal $[H \rtimes_{\alpha} G \rra G]$-bundle over $\mb{X}$. Moreover, $\mc{C}_{\mc{H}}$ is a categorical connection if and only if we have
	\begin{itemize}
		\item[(i)] $\mc{H}\big(1_{\pi(p)},p \big)=e$ for all $p \in E_0$ and 
		\item[(ii)] $\mc{H}(\gamma_2 \circ \gamma_1,p)= \mc{H}(\gamma_2,t(\mc{C}(\gamma_1,p))) \mc{H}(\gamma_1,p_1)$ for all $\gamma_2,\gamma_1 \in X_1$, such that $s(\gamma_2)=t(\gamma_1)$ and $(\gamma_1,p) \in s^{*}E_0$.
	\end{itemize}
\end{lemma}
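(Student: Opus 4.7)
The plan is to verify first the two defining conditions of a quasi connection for $\mc{C}_{\mc{H}}$ (section of $P$ together with equivariance along the unit map $u$), and then, via \Cref{Proposition: Quasi-Cat}, to translate the unitality and multiplicativity conditions into the stated conditions on $\mc{H}$. Smoothness of $\mc{C}_{\mc{H}}$ is automatic from smoothness of $\mc{C}$, $\mc{H}$, and the action.

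For the quasi connection axioms, invariance of $\pi_1$ under the $G_1$-action immediately gives $\pi_1(\mc{C}_{\mc{H}}(\gamma,p))= \pi_1(\mc{C}(\gamma,p)) = \gamma$, and since the source map $s \colon E_1 \to E_0$ intertwines the $G_1$- and $G_0$-actions (as part of $\rho$ being a morphism of Lie groupoids), $s(\mc{C}_{\mc{H}}(\gamma,p)) = s(\mc{C}(\gamma,p)) \cdot s(\mc{H}(\gamma,p),e) = p \cdot e = p$. For equivariance along $u$, using the $G_0$-equivariance of $\mc{C}$ and the semidirect product rule $(e,g) \otimes (h,e) = (\alpha(g,h), g)$,
\[
\mc{C}_{\mc{H}}(\gamma, pg) = \mc{C}(\gamma, p) \cdot (e,g) \cdot (\mc{H}(\gamma, pg), e) = \mc{C}(\gamma, p) \cdot (\alpha_g(\mc{H}(\gamma, pg)), g);
\]
the hypothesis $\alpha_g(\mc{H}(\gamma,pg)) = \mc{H}(\gamma,p)$ then collapses this to $\mc{C}(\gamma, p) \cdot (\mc{H}(\gamma,p), e) \cdot (e,g) = \mc{C}_{\mc{H}}(\gamma,p) \cdot 1_g$, since $(\mc{H}(\gamma,p),e)\otimes(e,g)=(\mc{H}(\gamma,p),g)$.

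For the ``moreover'' part I would invoke \Cref{Proposition: Quasi-Cat}. The unital condition $\mc{C}_{\mc{H}}(1_x, p) = 1_p \cdot (\mc{H}(1_x, p),e) = 1_p$ is, by freeness of the $G_1$-action, equivalent to $\mc{H}(1_x, p) = e$, which is (i). For the composition axiom, abbreviate $c_i = \mc{C}(\gamma_i, \cdot)$, $h_i = \mc{H}(\gamma_i, \cdot)$, and set $p'_2 = t(c_1(p_1))$ and $p_2 = t(\mc{C}_{\mc{H}}(\gamma_1, p_1)) = p'_2 \cdot \tau(h_1(p_1))$. Equivariance of $\mc{C}$ gives $\mc{C}(\gamma_2, p_2) = c_2(p'_2) \cdot (e, \tau(h_1(p_1)))$; then functoriality of the action together with the semidirect product rule yields
\[
\mc{C}_{\mc{H}}(\gamma_2, p_2) \circ \mc{C}_{\mc{H}}(\gamma_1, p_1)= \bigl(c_2(p'_2) \circ c_1(p_1)\bigr) \cdot \bigl((\alpha(\tau(h_1(p_1)), h_2(p_2)), \tau(h_1(p_1))) \circ (h_1(p_1), e)\bigr).
\]
The Peiffer identity $\alpha(\tau(h_1(p_1)), h_2(p_2)) = h_1(p_1) h_2(p_2) h_1(p_1)^{-1}$ and the groupoid composition rule $m((h'_2, g'_2), (h'_1, g'_1)) = (h'_2 h'_1, g'_1)$ reduce the second factor to $(h_1(p_1) h_2(p_2), e)$; multiplicativity of $\mc{C}$ turns the first factor into $\mc{C}(\gamma_2 \circ \gamma_1, p_1)$. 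Freeness of the $H$-action then makes the composition axiom for $\mc{C}_{\mc{H}}$ equivalent to $\mc{H}(\gamma_2 \circ \gamma_1, p_1) = h_1(p_1) h_2(p_2)$. A final application of the hypothesis in the form $\mc{H}(\gamma_2, p_2) = \alpha_{\tau(h_1(p_1))^{-1}}(\mc{H}(\gamma_2, p'_2)) = h_1(p_1)^{-1} \mc{H}(\gamma_2, p'_2) h_1(p_1)$ (again by Peiffer) converts this into the form $\mc{H}(\gamma_2 \circ \gamma_1, p_1) = h_2(p'_2) h_1(p_1)$ stated in (ii).

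The main obstacle I anticipate is bookkeeping in the composition step: the $H$-factors are evaluated at different basepoints depending on whether one tracks targets under $\mc{C}$ or under $\mc{C}_{\mc{H}}$, and the two equivalent forms of the multiplicativity relation must be reconciled via the $G$-equivariance of $\mc{H}$ and the Peiffer identity. Once that translation is made explicit, the rest is a routine assembly of the semidirect product and groupoid composition rules together with freeness of the principal action.
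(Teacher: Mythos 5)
Your proposal is correct and follows essentially the same route as the paper: the paper verifies the quasi-connection axioms by exactly the computation $\mc{C}_{\mc{H}}(\gamma,pg)=\mc{C}(\gamma,p)\bigl(\alpha_g(\mc{H}(\gamma,pg)),g\bigr)$ and then declares the "moreover" equivalence "easy to verify." Your explicit treatment of that equivalence — reducing the composition axiom to $\mc{H}(\gamma_2\circ\gamma_1,p_1)=h_1(p_1)h_2(p_2)$ via freeness and then reconciling it with the stated form (ii) through the hypothesis on $\mc{H}$ and the Peiffer identity — is exactly the bookkeeping the paper omits, and it checks out.
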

\begin{proof}
	Since, for $(\gamma,p) \in s^{*}E_0, g \in G$, we have $\pi \Big(\mc{C}(\gamma,p) \big( \mc{H}(\gamma,p) ,e\big), s \Big( \mc{C}(\gamma,p) \big( \mc{H}(\gamma,p) ,e\big) \Big) \Big)= (\gamma,p)$ and $\mc{C}_{\mc{H}}(\gamma,pg) =\mc{C}(\gamma,p)\Big(\alpha_{g}\big(\mc{H}(\gamma,pg)  \big),g\Big)$, it follows immediately that $(\pi \colon \mb{E} \ra \mb{X}, \mc{C}_{\mc{H}})$ is a quasi-principal $[H \rtimes_{\alpha} G \rra G]$-bundle over $\mb{X}$. It is easy to verify that $\mc{C}_{\mc{H}}$ is a categorical connection if and only if (i) and (ii) holds.

%	The characterizing conditions for $\mc{C}_{\mc{H}}$ to be categorical connections is a direct consequence of the following two easily verifiable identities:
%	\begin{itemize}
%		\item[(i)] $\mc{C}_{\mc{H}}(1_{\pi(p)},p)=1_p\big(\mc{H}(1_{\pi(p)},p)\,,e \big)$ for p $\in E_0$ and
%		\item[(ii)] $\mc{C}_{\mc{H}}\Big(\gamma_2, t \big(\mc{C}_{\mc{H}}(\gamma_1,p) \big) \Big) \circ \mc{C}_{\mc{H}}(\gamma_1,p)=\mc{C}(\gamma_2 \circ \gamma_1,p) \Big(\mc{H}\big( \gamma_2,t(\mc{C}(\gamma_1,p_1)) \big) \mc{H}(\gamma_1,p), e \Big)$ for $\gamma_2,\gamma_1 \in X_1$ with $s(\gamma_2)=t(\gamma_1)$ and $(\gamma_1,p) \in s^{*}E_0$.
%	\end{itemize}	
\end{proof}
Using \Cref{QuasiExamples}, next, we will construct some concrete examples of quasi-principal 2-bundles, which are not categorical-principal 2-bundles. 

\begin{example}\label{concrete family of example}
	Let $\pi^{\rm{dec}} \colon \mb{E}^{\rm{dec}} \ra \mb{X}$ be the decorated principal $[H \rtimes_{\alpha}G \rra G ]$-bundle over a Lie groupoid $\mb{X}$, obtained from a Lie crossed module  $(G,H,\tau,\alpha)$ and a principal $G$-bundle $(\pi_G \colon E_G \ra X_0, \mu, \mb{X})$, such that there is a non-identity element $h$ in $H$ satisfying $\alpha(g)(h)=h$ for all $g \in G$. Define a map $\mc{H} \colon  s^{*}E_0 \ra H$ by $(\gamma,p) \mapsto h$ for all $(\gamma,p) \in s^{*}E_0$. Since, the assignment $(\gamma,p) \mapsto (\gamma,p,e)$ for all $(\gamma,p) \in s^{*}E_G$ defines a categorical connection on $\pi^{\rm{dec}} \colon \mb{E}^{\rm{dec}} \ra \mb{X}$, it follows immediately from \Cref{QuasiExamples}   $\mc{C}_h \colon s^{*}E_G \ra s^{*}E_G \times H$ defined by $(\gamma,p) \mapsto (\gamma,p, e)(h,e)$ is a quasi connection on $\pi^{\rm{dec}} \colon \mb{E}^{\rm{dec}} \ra \mb{X}$. As $h \neq e$, $\mc{C}_h$ is not a categorical connection.
\end{example}
As a special case of \Cref{concrete family of example}, we obtain the following example:

\begin{example}\label{Hquasi}
	Let $\mb{X}$ be a Lie groupoid. Observe that the identity map $ {\rm{id}} \colon X_0 \ra X_0$ is a principal $\lbrace e \rbrace$-bundle over $X_0$ under the natural action of the trivial Lie group $\lbrace e \rbrace $. Now define 
	\begin{equation}\nonumber
		\begin{split}
			& \mu \colon s^{*} X_0 \ra X_0\\
			& (\gamma,p) \mapsto t(\gamma).
		\end{split}
	\end{equation}
	Clearly, $({\rm{id}} \colon X_0 \ra X_0, \mu, \mb{X})$ defines a principal $\lbrace e \rbrace$-bundle over $\mb{X}$. For an abelian Lie group $H \neq \lbrace e \rbrace$, consider the decorated principal $[H \rra \lbrace e \rbrace]$-bundle over $\mb{X}$, obtained from the Lie crossed module  $(\lbrace e \rbrace, H, \tau ,\alpha)$ (where $\tau$ is trivial and $\alpha$ is ${\rm{id}}_{H}$) and the principal  $\lbrace e \rbrace$-bundle $({\rm{id}} \colon X_0 \ra X_0, \mu, \mb{X})$. Since $H$ is not trivial and $\alpha$ is ${\rm{id}}_H$, it follows from \Cref{concrete family of example} that for any non-identity $h$ in $H$, the map $\mc{C}_{h} \colon s^{*}X_0 \ra X_0$ given by $(\gamma,p) \mapsto (\gamma,p,e)(h,e)$ is a quasi connection which is not a categorical connection.
	
\end{example}
\begin{example}\label{qusi bundle over discrete groupoid}
 Consider a principal $[H \rtimes_{\alpha}G \rra G]$-bundle $\pi \colon \mb{E} \ra [M \rra M]$ over a discrete Lie groupoid $[M \rra M]$, such that there exists $h \in H$, $h \neq e$ and $\alpha(g)(h)=h$ for all $g \in G$. Then  it follows from \Cref{QuasiExamples} that the map $\mc{C}_{h} \colon s^{*}E_0 \ra E_1, (1_x,p) \mapsto 1_p(h,e)$ defines a quasi connection, which is not a categorical connection. Hence, contrast to the existence of a unique categorical connection (see \Cref{Cat connection over discrete base}), it may admit many quasi connections. 	

%	For a Lie crossed module $(G,H,\tau,\alpha)$, let $\pi \colon \mb{E} \ra [M \rra M]$ be a principal $[H \rtimes_{\alpha}G \rra G]$-bundle over a discrete Lie groupoid $[M \rra M]$. From \Cref{Cat connection over discrete base}, we know it admits a unique categorical connection $\mc{C}$ given by $(1_x,p) \mapsto 1_p$ for $(1_x,p) \in s^{*}E_0$.
%	Now, if we assume that there is a non-identity element $h$ in $H$, such that $\alpha(g)(h)=h$ for all $g \in G$, then it follows from \Cref{QuasiExamples} that the map $\mc{C}_{h}$ given by $\mc{C}_{h}(1_x,p)= 1_p(h,e)$ for $(1_x,p) \in s^{*}E_0$, defines a quasi connection which is not a categorical connection. 
\end{example}
%\begin{remark}\label{Many quasi connections on 2-bundle over discrete base}
%	\Cref{qusi bundle over discrete groupoid} shows that though a principal 2-bundle over a discrete Lie groupoid admits a unique categorical connection, interestingly, it may admit many quasi connections.
%\end{remark}
\subsection{A quasi-principal 2-bundle as a Grothendieck Construction}\label{Grothendieck construction subsection}
In this subsection, we will obtain the paper's first main result (\Cref{Grothendieck construction}). We begin by observing some properties of the underlying quasi-principal Lie group bundle of a quasi-principal Lie 2-group bundle (\Cref{underlying quasi-principal bundle}). 
\begin{proposition}\label{Lemma: cohenrence of canonical quasi action}
 Let $(\pi \colon  \mb{E} \ra \mb{X}, \mc{C})$ be a quasi-principal $[H \rtimes_{\alpha}G \rra G]$-bundle over a Lie groupoid $\mb{X}$. Consider the underlying quasi-principal $G$-bundle $(\pi_0 \colon E_0 \ra X_0, \mu_{\mc{C}}:=t \circ \mc{C}, \mb{X})$ over $\mb{X}$.
	%	Let $(\pi \colon  \mb{E} \ra \mb{X}, \mc{C})$ be a quasi-principal $\mb{G}$-bundle over a Lie groupoid $\mb{X}$. Let $(G,H, \tau, \alpha)$ be the Lie crossed module associated to the Lie group $\mb{G}$. 
	Then there exist smooth maps $\mc{H}_{u,\mc{C}} \colon E_0 \ra H$ and $\mc{H}_{m ,\mc{C}} \colon X_1 \times_{s,X_0,t} X_1 \ra H$ satisfying the following properties:
	\begin{itemize}
		\item[(a)] $\mu_{\mc{C}}(1_{\pi(p)},p)=p \tau(\mc{H}_{u,\mc{C}}(p))$ for all $p \in E_0$.
		\item[(b)] $\mu_{\mc{C}}(\gamma_2, \mu_{\mc{C}}(\gamma_1,p))= \mu_{\mc{C}}(\gamma_2 \circ \gamma_1,p) \tau(\mc{H}_{m,\mc{C}}(\gamma_2, \gamma_1))$ for all appropriate $\gamma_2, \gamma_1 \in X_1, p \in E_0$.
		\item[(c)] [\textit{Right unitor}] $\mc{H}_{m,\mc{C}}(\gamma, 1_{\pi(p)})=  \mc{H}_{u,\mc{C}}(p)$ for all $\gamma \in X_1$ such that $s(\gamma)= \pi(p)$.
		\item[(d)][\textit{Left unitor}] $\mc{H}_{m,\mc{C}}(1_{\pi(\mu_{\mc{C}}(\gamma,p))},\gamma)=  \mc{H}_{u, \mc{C}}(\mu_{\mc{C}}(\gamma,p))$ for $(\gamma,p) \in s^{*}E_0$.
		\item[(e)] $\mc{H}_{u,\mc{C}}$ is $G$ invariant.
		\item[(f)] $\alpha_{g^{-1}}(\mc{H}_{u,\mc{C}}(p))=\mc{H}_{u,\mc{C}}(p)$ for all $g \in G$ and $p \in E_0$.
		\item[(g)] $\mc{H}_{u,\mc{C}}(p) \in Z(H)$ for all $p \in E_0$, where $Z(H)$ is the centre of $H$.
		\item[(h)] $\alpha_{g^{-1}}(\mc{H}_{m,\mc{C}}^{-1}(\gamma_2, \gamma_1))= \mc{H}_{m,\mc{C}}^{-1}(\gamma_2, \gamma_1)$ for all composable $\gamma_2, \gamma_1 \in X_1$.
		\item[(i)]  $\mc{H}_{m,\mc{C}}(\gamma_2, \gamma_1) \in Z(H)$ for all $\gamma_2, \gamma_1 \in X_1 \times_{s,X_0,t} X_1$.
		\item[(j)] [\textit{Associator}]For $\gamma_3, \gamma_2, \gamma_1 \in X_1$ such that $s(\gamma_3)= t(\gamma_2)$ and $s(\gamma_2)= t(\gamma_1)$, we have 
		$$\mc{H}_{m,\mc{C}}^{-1}(\gamma_3,\gamma_2) \mc{H}_{m,\mc{C}}^{-1}(\gamma_3 \circ \gamma_2, \gamma_1)= \mc{H}_{m,\mc{C}}^{-1}(\gamma_2, \gamma_1) \mc{H}_{m,\mc{C}}^{-1}(\gamma_3, \gamma_2 \circ \gamma_1).$$
		\item[(k)][\textit{Invertor}] If $(\gamma,p) \in s^{*}E_0$, then we have
		$$\mc{H}_{m,\mc{C}}({\gamma^{-1}, \gamma}) \mc{H}_{m,\mc{C}}(\gamma,\gamma^{-1})^{-1}= H_{u,\mc{C}}(p)^{-1}H_{u,\mc{C}}(\mu_{\mc{C}}(\gamma,p)).$$
	\end{itemize}
\end{proposition}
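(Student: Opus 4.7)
The plan is to define $\mc{H}_{u,\mc{C}}$ and $\mc{H}_{m,\mc{C}}$ as the $H$-valued obstructions to $\mc{C}$ being strictly functorial at units and composable pairs, and then read off each of (a)--(k) by routine computation inside $H\rtimes_\alpha G$ using the Peiffer identities and the semi-direct product rule $(h_1,g_1)(h_2,g_2)=(h_1\alpha_{g_1}(h_2),g_1g_2)$. For $p\in E_0$, both $\mc{C}(1_{\pi(p)},p)$ and $1_p$ lie in $\pi_1^{-1}(1_{\pi(p)})$ and share source $p$; freeness of the $G_1$-action together with $s(h,g)=g$ pins down a unique $\mc{H}_{u,\mc{C}}(p)\in H$ with
\[\mc{C}(1_{\pi(p)},p)=1_p\cdot(\mc{H}_{u,\mc{C}}(p),e).\]
Similarly, for a composable pair $(\gamma_2,\gamma_1)$ and any $p\in\pi_0^{-1}(s(\gamma_1))$, both $\mc{C}(\gamma_2,\mu_\mc{C}(\gamma_1,p))\circ\mc{C}(\gamma_1,p)$ and $\mc{C}(\gamma_2\circ\gamma_1,p)$ lie over $\gamma_2\circ\gamma_1$ with source $p$, so they differ by a unique pair $(h,e)$; set $\mc{H}_{m,\mc{C}}(\gamma_2,\gamma_1):=h$. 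Smoothness of both maps is inherited from $\mc{C}$ via local trivialisations, and applying $t(h,g)=\tau(h)g$ to the two defining equalities yields (a) and (b).

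To obtain (c), I would specialise the $\mc{H}_{m,\mc{C}}$-equation to $\gamma_1=1_{\pi(p)}$: the left-hand side becomes
\[\mc{C}(\gamma,p\,\tau(\mc{H}_{u,\mc{C}}(p)))\circ\bigl[1_p\cdot(\mc{H}_{u,\mc{C}}(p),e)\bigr],\]
which, by $u$-equivariance and the interchange law for the action, collapses to $\mc{C}(\gamma,p)\cdot(\mc{H}_{u,\mc{C}}(p),e)$; comparing with $\mc{C}(\gamma,p)\cdot(\mc{H}_{m,\mc{C}}(\gamma,1_{\pi(p)}),e)$ on the right gives (c). Statement (d) is the dual argument specialising to $\gamma_2=1_{t(\gamma)}$. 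For the equivariance claims (e)--(i), I would substitute $p\mapsto pg$ in the defining equalities, use $u$-equivariance $\mc{C}(\gamma,pg)=\mc{C}(\gamma,p)\cdot 1_g$, and compare $H$-components via the semi-direct rule; this forces $\mc{H}_{u,\mc{C}}(pg)=\alpha_{g^{-1}}(\mc{H}_{u,\mc{C}}(p))$ and the analogous twist for $\mc{H}_{m,\mc{C}}$. Combined with (c), which exhibits $\mc{H}_{u,\mc{C}}(p)$ as $\mc{H}_{m,\mc{C}}(\gamma,1_{\pi(p)})$ and hence as a function of $\pi(p)$ alone, this yields (e) and (f); the first Peiffer relation $\tau(\alpha_g(h))=g\tau(h)g^{-1}$ together with the second, $\alpha_{\tau(h)}(h')=hh'h^{-1}$, then upgrades $\alpha$-fixity to centrality (g, i), and (h) is entirely parallel.

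The coherence (j) follows from strict associativity of $\circ$ applied to the triple composite $\mc{C}(\gamma_3,\cdot)\circ\mc{C}(\gamma_2,\cdot)\circ\mc{C}(\gamma_1,p)$: expanding along the two bracketings via (b) reproduces exactly $\mc{H}_{m,\mc{C}}^{-1}(\gamma_3,\gamma_2)\mc{H}_{m,\mc{C}}^{-1}(\gamma_3\circ\gamma_2,\gamma_1)$ on the one side and $\mc{H}_{m,\mc{C}}^{-1}(\gamma_2,\gamma_1)\mc{H}_{m,\mc{C}}^{-1}(\gamma_3,\gamma_2\circ\gamma_1)$ on the other, and centrality from (i) lets me cancel in $H$ rather than only through $\tau$. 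For (k) I apply (b) to the pairs $(\gamma,\gamma^{-1})$ and $(\gamma^{-1},\gamma)$, invoking invertibility of $\mc{C}(\gamma,p)$ inside the groupoid $\mb{E}$ together with (a) evaluated at both $p$ and $\mu_\mc{C}(\gamma,p)$, and collect terms. The anticipated main obstacle is the $p$-independence of $\mc{H}_{m,\mc{C}}(\gamma_2,\gamma_1)$ needed in (b): a priori the defining equation gives a map depending on $p$, and $u$-equivariance only shows this dependence is by an $\alpha_{g^{-1}}$-twist along fibres of $\pi_0$, so the proof that this twist is trivial is inseparable from the proof of (h); consequently (b), (h), and (i) must be established \emph{in tandem} rather than sequentially.
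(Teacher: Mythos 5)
Your definitions of $\mc{H}_{u,\mc{C}}$ and $\mc{H}_{m,\mc{C}}$ as the $H$-valued defects of $\mc{C}$ at units and at composable pairs are exactly the paper's, and your routes to (a)--(d), (g) and (i)--(k) coincide with the paper's proof: apply $t$ to the defining identities for (a)--(b), specialise one arrow to a unit for (c)--(d), pass from $\alpha$-fixity to centrality via the Peiffer identities, and compare the two bracketings of a triple composite for (j).

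The genuine gap is at exactly the point you flag, and your proposed remedy does not close it. The axioms of a quasi connection give only $\pi_1(\mc{C}(\gamma,p))=\gamma$, $s(\mc{C}(\gamma,p))=p$ and $\mc{C}(\gamma,pg)=\mc{C}(\gamma,p)1_g$; substituting $p\mapsto pg$ into the defining identity therefore yields, as you observe, $h(\gamma_2,\gamma_1,pg)=\alpha_{g^{-1}}\bigl(h(\gamma_2,\gamma_1,p)\bigr)$ and nothing more. There is no further relation with which to establish (b), (h) and (i) ``in tandem'': triviality of the twist is an independent condition, and it can fail. For instance, take the Lie crossed module $(G,H,\tau,\alpha)=(\mb{Z}_2,\mb{Z}_3,\tau\equiv e,\alpha_{\sigma}=\mathrm{inversion})$ (a valid crossed module since $H$ is abelian and $\tau$ is trivial), the Lie groupoid $[\mb{Z}\rra\ast]$, the principal $\mb{Z}_2$-bundle $E_G=\mb{Z}_2\ra\ast$ with $\mu(n,p)=p$, and on the associated decorated bundle the quasi connection $\mc{C}_{\mc{H}}$ of \Cref{QuasiExamples} built from $\mc{H}(n,1)=1$, $\mc{H}(n,\sigma)=2$; this $\mc{H}$ satisfies the required twisted invariance $\alpha_g(\mc{H}(\gamma,pg))=\mc{H}(\gamma,p)$. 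A short computation in $[(s^{*}E_G)^{\rm dec}\rra E_G]$ shows that both the unital defect at $p$ and the compositional defect computed at $p$ equal $\mc{H}(0,p)$, which is $1$ at $p=1$ and $2$ at $p=\sigma$. Hence $\mc{H}_{m,\mc{C}}$ is not well defined as a map on $X_1\times_{s,X_0,t}X_1$, and (e), (f), (h) fail for this quasi connection.

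You should also be aware that the paper's own proof has the same lacuna: it writes $h_{\gamma_2,\gamma_1}$ with no reference to $p$ from \Cref{Equation: Construction 2} onward, proves (h) by comparing that identity at $p$ and at $pg^{-1}$ (which presupposes the very independence in question), and deduces (e) from (c) using the same presupposition. So you have correctly isolated the weak point rather than missed an argument present in the paper; but, as written, neither your proof nor the paper's establishes (e), (f), (h) or the well-definedness of $\mc{H}_{m,\mc{C}}$. Repairing the statement requires either an additional hypothesis on $\mc{C}$ (e.g.\ that its defects are $\alpha$-invariant, which is precisely what the converse construction in \Cref{Theorem:quasi-bundle construction} demands of its input data) or a restriction of the class of quasi connections considered.
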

\begin{proof}
	Define $\mc{H}_{u,\mc{C}} \colon E_0 \ra H$  by $p \mapsto h_p$ and $\mc{H}_{m,\mc{C}} \colon X_1 \times_{s,X_0,t} X_1 \ra H$ by $(\gamma_2, \gamma_1) \mapsto h_{\gamma_2, \gamma_1}$, where $h_p$ and  $h_{\gamma_2, \gamma_1}$  are respectively unique elements in $H$ satisfying
	
	\begin{equation}\label{Equation: Construction 1}
		\mc{C}(1_{\pi(p)},p) = 1_p (h_p,e)
	\end{equation}
	for $p \in E_0$,
	\begin{equation}\label{Equation: Construction 2}
		\mc{C}(\gamma_2 , \mu_{\mc{C}}(\gamma_1,p)) \circ \mc{C}(\gamma_1,p)= \mc{C}(\gamma_2 \circ \gamma_1, p) (h_{\gamma_2, \gamma_1}, e) 
	\end{equation}
	for composable $\gamma_2, \gamma_1$.
	
	\subsection*{Proof of (a) and (b):} 
	(a) and (b) follow directly by taking target map $t$ on both sides of \Cref{Equation: Construction 1} and \Cref{Equation: Construction 2} respectively.
	\subsection*{Proof of (c) and (d):}
	To prove (c), note that from \Cref{Equation: Construction 2}, immediately we get $\mc{C}(\gamma,p) \big( h_{\gamma, 1_{\pi(p)}}, e \big)=\mc{C} \big( \gamma, \mu_{\mc{C}}(1_{\pi(p)},p) \big) \circ \mc{C} \big(1_{\pi}(p),p \big)$ for all $(\gamma,p) \in s^{*}E_0$. Then (c) follows easily from (a) and the freeness of the action of $H \rtimes_{\alpha}G$ on $E_1$. (d) can be proved using similar techniques as in (c).
	\subsection*{Proof of (e):}
	A straightforward consequence of (c). 
	\subsection*{Proof of (f):}
	(f) follows by observing that, using (e) we can rewrite the identity $\mc{C}(1_{\pi(pg^{-1})},pg^{-1})= 1_{pg^{-1}}(h_{pg^{-1}},e)$ (from \Cref{Equation: Construction 1}) as $1p(h_p,e) (e,g^{-1})=1_p(e,g^{-1})(h_p,e)$  for $p \in E_0$ and $g \in G$. 
	\subsection*{Proof of (g):}
	(g) follows from (f) using the Peiffer identity \Cref{E:Peiffer}. 
	\subsection*{Proof of (h):}
	For composable $\gamma_2, \gamma_1 \in X_1$, $g \in G$ and  $(\gamma_1,p) \in s^{*}E_0$, we have $\mc{C}(\gamma_2 , \mu_{\mc{C}}(\gamma_1,pg^{-1})) \circ \mc{C}(\gamma_1,pg^{-1})= \mc{C}(\gamma_2 \circ \gamma_1, pg^{-1}) (h_{\gamma_2, \gamma_1}, e)$. Then (h) follows straightforwardly from \Cref{Equation: Construction 2}. 
	\subsection*{Proof of (i):}
	(i)  follows from (h) and the Peiffer identity \Cref{E:Peiffer}. 
	\subsection*{Proof of  (j):}
	To prove (j), consider $\gamma_3, \gamma_2, \gamma_1 \in X_1$, a sequence of composable morphisms such that $(\gamma_1,p) \in s^{*}E_0$. Then we  have
	\begin{equation}\nonumber
		\begin{split}
			& \mc{C}(\gamma_3 \circ \gamma_2 \circ \gamma_1,p)(h_{\gamma_3 \circ \gamma_2, \gamma_1},e) \\
			&= \mc{C}(\gamma_3 \circ \gamma_2, \mu_{\mc{C}}(\gamma_1,p)) \circ \mc{C}(\gamma_1,p) [{\textit{by \Cref{Equation: Construction 2}}}].  \\
			&= \underbrace{\big( \mc{C}(\gamma_3, \mu_{\mc{C}}(\gamma_2 \circ \gamma_1,p)\tau(h_{\gamma_2,\gamma_1})) \circ \mc{C}(\gamma_2,\mu_{\mc{C}}(\gamma_1,p)) \big)(h^{-1}_{\gamma_3,\gamma_2},e)}_{\mc{C}(\gamma_3 \circ \gamma_2, \mu_{\mc{C}}(\gamma_1,p))[{\textit{by \Cref{Equation: Construction 2}}}]} \circ \big( \mc{C}(\gamma_1,p)(e,e) \big) \\
			&= \big( \mc{C}(\gamma_3, \mu_{\mc{C}}(\gamma_2 \circ \gamma_1,p)\tau(h_{\gamma_2,\gamma_1})) \circ \underbrace{\mc{C}(\gamma_2,\mu_{\mc{C}}(\gamma_1,p)) \circ \mc{C}(\gamma_1,p) \big) (h^{-1}_{\gamma_3,\gamma_2},e)}_{\textit{by functoriality of the action.}}  \\
			&= \underbrace{\big( \mc{C}(\gamma_3, \mu_{\mc{C}}(\gamma_2 \circ \gamma_1,p)) 1_{\tau(h_{\gamma_2,\gamma_1})}}_{[{\textit{by (iii), \Cref{Def:categorical connection}}}]} \circ \underbrace{\mc{C}(\gamma_2 \circ \gamma_1,p)(h_{\gamma_2,\gamma_1},e) \circ \mc{C}(\gamma_1,p)^{-1}}_{\mc{C}(\gamma_2,\mu_{\mc{C}}(\gamma_1,p))[{\textit{by \Cref{Equation: Construction 2}}}]}  \circ \mc{C}(\gamma_1,p) \big) (h^{-1}_{\gamma_3,\gamma_2},e) \\
			&= \big( \mc{C}(\gamma_3, \mu_{\mc{C}}(\gamma_2 \circ \gamma_1,p)) 1_{\tau(h_{\gamma_2,\gamma_1})} \circ \mc{C}(\gamma_2 \circ \gamma_1,p)(h_{\gamma_2,\gamma_1},e) \big)(h^{-1}_{\gamma_3,\gamma_2},e)  \\ 
			&= \underbrace{\bigg(\mc{C}(\gamma_3, \mu_{\mc{C}}(\gamma_2 \circ \gamma_1,p)) \circ \mc{C}(\gamma_2 \circ \gamma_1,p)(h_{\gamma_2,\gamma_1},e) \bigg)}_{\textit{by functoriality of the action.}}(h^{-1}_{\gamma_3,\gamma_2},e)  \\
			&= \underbrace{\bigg( \mc{C}(\gamma_3 \circ \gamma_2 \circ \gamma_1,p) (h_{\gamma_3,\gamma_2\circ \gamma_1 },e)\bigg)}_{[\textit{{by \Cref{Equation: Construction 2}}}]} (h_{\gamma_2,\gamma_1},e)(h^{-1}_{\gamma_3,\gamma_2},e) .
		\end{split}
	\end{equation}
	which concludes the proof of (j).
	\subsection*{Proof of (k)}
	In order to prove (k), note that we have $\mc{C}\big(\gamma,\mu_{\mc{C}}(\gamma^{-1}, \mu_{\mc{C}}(\gamma, p)) \big)\circ \mc{C}(\gamma^{-1}, \mu_{\mc{C}}(\gamma, p))= 1_{\mu_{\mc{C}}(\gamma,p)}(h_{\mu_{\mc{C}}(\gamma,p)}h_{\gamma,\gamma^{-1}},e)$ for $(\gamma,p) \in s^{*}E_0$. On the other hand, consider 
	\begin{equation}\label{Equation inverse 2}\nonumber
		\begin{split}
			& \mc{C}\big(\gamma,\mu_{\mc{C}}(\gamma^{-1}, \mu_{\mc{C}}(\gamma,p)) \big) \circ \mc{C}(\gamma^{-1}, \mu_{\mc{C}}(\gamma,p))\\
			&= \mc{C} \big( \gamma, \underbrace{\mu_{\mc{C}}(1_{\pi(p)},p)\tau(h_{\gamma^{-1},\gamma})}_{[{\textit{by (b), \Cref{Lemma: cohenrence of canonical quasi action}}}]} \big) \circ \mc{C}(\gamma^{-1}, \mu_{\mc{C}}(\gamma,p)) \\
			&= \underbrace{\mc{C} \big( \gamma, p.\tau(h_ph_{\gamma^{-1},\gamma}) \big)}_{[{\textit{by (a), \Cref{Lemma: cohenrence of canonical quasi action}}}]} \circ \mc{C}(\gamma^{-1}, \mu_{\mc{C}}(\gamma,p))\\
			&= \underbrace{\mc{C}(\gamma,p)1_{\tau(h_ph_{\gamma^{-1},\gamma})}}_{[{\textit{by (iii), \Cref{Def:categorical connection}}}]} \circ \underbrace{\big(\mc{C}(1_{\pi(p)},p)(h_{\gamma^{-1},\gamma}, e) \circ \mc{C}(\gamma,p)^{-1} \big)}_{[{\rm{by \Cref{Equation: Construction 2}}}].}\\
			&= \mc{C}(\gamma,p)1_{\tau(h_ph_{\gamma^{-1},\gamma})} \circ \underbrace{\big(1_p(h_ph_{\gamma^{-1},\gamma},e) \big)}_{[{\textit{by \Cref{Equation: Construction 1}}}].} \circ \mc{C}(\gamma,p)^{-1}\\
			&= \underbrace{\big( \mc{C}(\gamma,p)(h_ph_{\gamma^{-1},\gamma},e) \big)}_{\textit{by functoriality of the action.}} \circ  \mc{C}(\gamma,p)^{-1}\\
			&= 1_{(\mu_{\mc{C}}(\gamma,p))}(h_ph_{\gamma^{-1},\gamma},e)[\textit{by functoriality of the action}].
		\end{split}
	\end{equation}
	which completes the proof of (k).
\end{proof}
Properties (a)-(k) listed above will be called the \textit{coherence properties}.

Conversely, we have the following result.

\begin{proposition}\label{Theorem:quasi-bundle construction}
	For a Lie group $G$, let $(\pi \colon E_G \ra X_0, \mu, \mb{X})$ be a quasi-principal $G$-bundle over a Lie groupoid $\mb{X}$. Let $(G,H, \tau, \alpha)$  be a Lie crossed module along with a pair of smooth maps $\mc{H}_u \colon E_G \ra H$ and $\mc{H}_{m} \colon X_1 \times_{s,X_0,t} X_1 \ra H$ satisfying the coherence properties in \Cref{Lemma: cohenrence of canonical quasi action}, then we have the following:
	\begin{enumerate}
		\item	The manifolds $(s^{*}E_G)^{q-\rm{dec}}:= s^{*}E_G \times H $ and $E_G$ define a Lie groupoid $[(s^{*}E_G)^{\rm{q-dec}} \rightrightarrows E_G]$ whose structure maps are given as 
		\begin{itemize}
			\item[(i)] source map $s$: $(\gamma, p, h) \mapsto p$,
			\item[(ii)] target map $t$: $(\gamma, p, h) \mapsto \mu(\gamma, p) \tau(h^{-1})$,
			\item[(iii)] composition map $m \colon \big((\gamma_2, p_2, h_2), (\gamma_1, p_1, h_1) \big) \mapsto \big( \gamma_2 \circ \gamma_1, p_1 ,h_2h_1\mc{H}^{-1}_m({\gamma_2,\gamma_1}) \big)$, 
			\item[(iv)] unit map $u : p \mapsto (1_{\pi(p)},p, \mc{H}_{u}(p))$,
			\item[(v)] inverse map $ \mathfrak{i} \colon \bigl(\gamma, p, h) \mapsto (\gamma^{-1}, \mu(\gamma,p)\tau(h^{-1}), \mc{H}_{u}(p)\mc{H}_m({\gamma^{-1}, \gamma})h^{-1}\bigr)$.
		\end{itemize}
		\item The Lie groupoid  $\mb{E}^{\rm{q-dec}}:=[(s^{*}E_G)^{\rm{q-dec}} \rightrightarrows E_G]$ forms a quasi-principal $[H \rtimes_{\alpha}G \rra G]$-bundle $\pi^{\rm{q-dec}}\colon \mb{E}^{\rm q-dec}\ra \mb{X}$ over $\mb{X}$ equipped with the quasi connection $\mc{C}^{\rm{q-dec}} \colon s^{*}E_G \ra (s^{*}E_G)^{\rm{q-dec}}$, $(\gamma,p) \mapsto (\gamma,p, e)$. The action of $[H \rtimes_{\alpha}G \rra G]$ on $\mb{E}^{\rm{q-dec}}$ and the bundle projection coincide with that of the decorated case (See \Cref{Prop:Decoliegpd}).
		\item The quasi connection $\mc{C}^{\rm{q-dec}}$ is a categorical connection if and only if the maps $\mc{H}_u$ and $\mc{H}_m$ are constant maps to $e$.
		
	\end{enumerate}
	
\end{proposition}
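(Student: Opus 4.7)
The plan is to verify parts (1)--(3) directly from the coherence properties (a)--(k) of \Cref{Lemma: cohenrence of canonical quasi action}, matching each Lie groupoid or principal $2$-bundle axiom to a single coherence identity. Smoothness of all structure maps is manifest from the smoothness of $\mu$, $\mc{H}_u$, $\mc{H}_m$ and of the operations in $H$, while surjective submersivity of the source and target maps is inherited from the underlying pulled-back principal bundles.

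For part (1), the nontrivial axioms are well-definedness of $t \circ u$, associativity of $m$, the left and right unit laws, and the inverse law. That $t(u(p)) = \mu(1_{\pi(p)}, p)\tau(\mc{H}_u(p))^{-1} = p$ follows at once from (a). Associativity of $m$ on a triple of composable arrows reduces, after using the centrality (i) to commute each $\mc{H}_m^{-1}$-factor past the $h_i$'s, to the associator identity (j). The right and left unit laws use (c) and (d) respectively, in each case combined with the centrality (g) of $\mc{H}_u$ so that the correction cancels cleanly against $h$. For the inverse law one computes the target of $\mathfrak{i}(\gamma, p, h)$ via (a) and (b), applies the composition formula, uses centrality (g) and (i) to rearrange the third component, and finally invokes the invertor (k) to identify it with $\mc{H}_u(\mu(\gamma, p))$; the $G$-invariance (e) then shows this equals the required $\mc{H}_u(\mu(\gamma, p)\tau(h^{-1}))$.

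For part (2), the action and the bundle projection are copied verbatim from \Cref{E:Actionondeco} and \Cref{E:Projondeco}, so every statement about the underlying principal $G_0$- and $G_1$-bundle structures, together with compatibility of $\pi^{q\text{-dec}}$ with source, target, and units, is inherited from \Cref{Prop:Decoliegpd}. The one new check is functoriality of the action with respect to the twisted composition $m$. Expanding both sides of
\begin{equation}\nonumber
\bigl((\gamma_2, p_2, h_2)(h_2', g_2)\bigr) \circ \bigl((\gamma_1, p_1, h_1)(h_1', g_1)\bigr) = \bigl((\gamma_2, p_2, h_2) \circ (\gamma_1, p_1, h_1)\bigr)\bigl((h_2', g_2) \circ (h_1', g_1)\bigr),
\end{equation}
using the composability constraint $g_2 = \tau(h_1') g_1$ and absorbing the shift via the Peiffer identity, the comparison of the third components collapses to $\alpha_{g_1^{-1}}(\mc{H}_m^{-1}(\gamma_2, \gamma_1)) = \mc{H}_m^{-1}(\gamma_2, \gamma_1)$, which is exactly property (h). Finally, $\mc{C}^{q\text{-dec}}(\gamma, p) = (\gamma, p, e)$ is plainly a smooth section of the canonical map $P \colon (s^{*}E_G)^{q\text{-dec}} \to s^{*}E_G$ and is equivariant along the unit $u \colon G \to H \rtimes_{\alpha} G$, so it is a quasi connection in the sense of \Cref{Definition:Quasicategorical Connection}.

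For part (3), if $\mc{H}_u \equiv e$ and $\mc{H}_m \equiv e$ then every formula in (1) and (2) reduces to the corresponding formula in \Cref{Prop:Decoliegpd}, so $\pi^{q\text{-dec}}$ is the decorated bundle and $\mc{C}^{q\text{-dec}}$ is its standard categorical connection $(\gamma, p) \mapsto (\gamma, p, e)$. Conversely, if $\mc{C}^{q\text{-dec}}$ is categorical, then evaluating condition (i) of \Cref{Proposition: Quasi-Cat} at $p$ gives $(1_{\pi(p)}, p, e) = 1_p = (1_{\pi(p)}, p, \mc{H}_u(p))$, forcing $\mc{H}_u \equiv e$; and condition (ii) read off in the third slot for a composable pair forces $e = e \cdot e \cdot \mc{H}_m^{-1}(\gamma_2, \gamma_1)$, hence $\mc{H}_m \equiv e$. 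The main subtlety throughout the argument is that the centrality conditions (g), (i) are indispensable: without them the associator (j) and invertor (k) alone would not imply associativity of $m$ nor the inverse law, because the $\mc{H}$-correction terms could not be freely commuted past arbitrary $h \in H$. This is why centrality appears as a coherence property on equal footing with the associator and the invertor.
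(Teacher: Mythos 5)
Your proposal is correct and follows essentially the same route as the paper: each groupoid axiom is matched to the corresponding coherence property ((a) for the unit's target, (c)/(d) with centrality for the unit laws, (i)+(j) for associativity, (k) with (e) for the inverse law), part (2) is reduced to the decorated case plus property (h), and part (3) is read off from the formulas for $\mc{C}^{\rm{q-dec}}$ applied to units and composable pairs. The one step you omit is the compatibility of the $[H \rtimes_{\alpha}G \rra G]$-action with the \emph{unit maps}: since $u(p)=(1_{\pi(p)},p,\mc{H}_u(p))$ now carries the correction $\mc{H}_u(p)$, one must check $u(pg)=u(p)\,1_g$, which requires $\mc{H}_u(pg)=\alpha_{g^{-1}}(\mc{H}_u(p))$ and hence uses properties (e) and (f) of \Cref{Lemma: cohenrence of canonical quasi action}; property (f) never appears in your argument. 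This is easily filled in and does not affect the validity of the rest.
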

\begin{proof}:
	
	\subsection*{Proof of (1):}
	From \Cref{Prop:Decoliegpd}, it follows that the source and target maps are surjective submersions. Let $(\gamma_2,p_2,h_2),(\gamma_1,p_1,h_1)$ be a composable pair of morphisms. To show that the source is compatible with the composition, observe that $s((\gamma_2,p_2,h_2) \circ (\gamma_1,p_1,h_1))= s \big( \gamma_2 \circ \gamma_1, p_1, h_2h_1 \mc{H}^{-1}_m({\gamma_2,\gamma_1}) \big)= p_1= s(\gamma_1, p_1, h_1)$, whereas the target consistency follows easily from the condition (b) in \Cref{Lemma: cohenrence of canonical quasi action}. For the unit map to make sense, note that we have $t(u(p))= t(1_{\pi(p)},p, \mc{H}_u(p))=\mu(1_{\pi(p)},p) \tau(\mc{H}_u(p)^{-1})=p \tau(\mc{H}_u(p))\tau(\mc{H}_u(p)^{-1})=p.$  
	The fact that $u$ is indeed a unit map follows from the right and left unitor (conditions (c) and (d) in \Cref{Lemma: cohenrence of canonical quasi action}). More precisely, right unitor implies $\big(\gamma,p,h \big) \circ (1_{\pi(p)},p, \mc{H}_u(p))= (\gamma, p, h \mc{H}_u(p) \mc{H}^{-1}_m(\gamma,1_{\pi(p)}))= \big(\gamma,p,h \big).$ On the other hand, $G$-invariance of $\mc{H}_u$ (condition (e) in \Cref{Lemma: cohenrence of canonical quasi action} ) and the left unitor property imply $\big( 1_{\pi(\mu(\gamma,p)\tau(h^{-1}))}, \mu(\gamma,p)\tau(h^{-1}), \mc{H}_u(\mu(\gamma,p)\tau(h^{-1})) \big) \circ \big((\gamma,p),h \big)= \big(\gamma,p, \mc{H}_u(\mu(\gamma,p))h \mc{H}^{-1}_m(1_{\pi(\mu(\gamma,p)}, \gamma) \big)= \big((\gamma,p),h \big)$. 
	
	To check the associativity of the composition, consider a sequence of composable morphisms $(\gamma_3,p_3,h_3), (\gamma_2,p_2,h_2), (\gamma_1,p_1,h_1) \in (s^{*}E_G)^{\rm{q-dec}}$. Now, $\big( (\gamma_3,p_3,h_3) \circ(\gamma_2,p_2,h_2) \big) \circ (\gamma_1,p_1,h_1)= \big( \gamma_3 \circ \gamma_2, p_2, h_3h_2 \mc{H}_m(\gamma_3, \gamma_2)^{-1} \big) \circ(\gamma_1, p_1 ,h_1)$ which is same as $\big( \gamma_3 \circ \gamma_2 \circ \gamma_1, p_1, h_3h_2 \mc{H}_m(\gamma_3, \gamma_2)^{-1}h_1 \mc{H}_m^{-1}(\gamma_3 \circ  \gamma_2,\gamma_1) \big)$. Whereas, $(\gamma_3,p_3,h_3) \circ \big((\gamma_2,p_2,h_2)  \circ  (\gamma_1,p_1,h_1) \big)= (\gamma_3,p_3,h_3) \circ \big( \gamma_2 \circ \gamma_1, p_1, h_2h_1 \mc{H}^{-1}_m(\gamma_2, \gamma_1) \big)$ which is equal to $\big( \gamma_3 \circ \gamma_2 \circ \gamma_1, p_1, h_3h_2h_1 \mc{H}^{-1}_m(\gamma_2, \gamma_1) \mc{H}^{-1}_m(\gamma_3,\gamma_2 \circ \gamma_1) \big)$. The associativity of the composition then follows from (condition (i) and (j) in \Cref{Lemma: cohenrence of canonical quasi action}). The compatibility of the inverse with the target is clear. For verifying $\mathfrak{i}$ is indeed the inverse, first observe
	$$\bigl( \gamma^{-1}, \mu(\gamma,p)\tau(h^{-1}), \mc{H}_{u}(p)\mc{H}_m({\gamma^{-1}, \gamma})h^{-1}\bigr) \circ \big(\gamma,p,h \big)= (1_{\pi(p)},p, \mc{H}_u(p)).$$ Then, from the invertor (condition (k) in \Cref{Lemma: cohenrence of canonical quasi action}), it follows $\big((\gamma,p),h \big) \circ \mathfrak{i}(\gamma,p,h)= u({\pi(\mu(\gamma,p) \tau(h^{-1}))})$. Finally, since its structure maps are smooth by definition, it follows $\mb{E}^{\rm{q-dec}}$ is a Lie groupoid.
	\subsection*{Proof of (2) and (3):}
	Since the action 
	\begin{equation}\nonumber
		\begin{split}
			\rho\colon &\mb{E}^{\rm q-dec}\times [H \rtimes_{\alpha}G \rra G] \ra \mb{E}^{\rm q-dec}\\
			&(p, g) \mapsto pg,\\
			\bigl((\gamma, p, h)&, (h', g)\bigr)\mapsto \bigl(\gamma, p g, \alpha_{g^{-1}}(h'^{-1}\, h)\bigr),
		\end{split}
	\end{equation}
	coincides with the decorated case as in \Cref{E:Actionondeco}, to prove $\rho$ defines a right action of $[H \rtimes_{\alpha}G \rra G]$ on $\mb{E}^{\rm q-dec}$, we just need to check the compatibility of $\rho$ with unit maps and the composition. But these are direct consequences of condition (f) and condition (h) in \Cref{Lemma: cohenrence of canonical quasi action} respectively, combined with the functoriality of the action in the decorated case. Also, since the bundle projection functor coincides with the decorated case as in \Cref{E:Projondeco}, it follows $\pi \colon \mb{E}^{\rm{q-dec}} \ra \mb{X}$ is a $[H \rtimes_{\alpha}G \rra G]$-bundle over $\mb{X}$. Now, as for each $p \in E_G$, we have
	$\mc{C}^{\rm{q-dec}}(1_{\pi(p)},p)= u(p)(\mc{H}_u(p),e)$ and for any composable sequence of morphisms $\gamma_2, \gamma_1 \in X_1$ such that $(\gamma_1,p) \in s^{*}E_G$, we have $\mc{C}^{\rm{q- dec}}(\gamma_2,t(\mc{C}(\gamma_1,p))) \circ \mc{C}^{\rm{q- dec}}(\gamma_1,p)= \mc{C}^{\rm{q- dec}}(\gamma_2 \circ \gamma_1,p)(\mc{H}_m(\gamma_2,\gamma_1),e)$, it follows that $\mc{C}^{\rm{q-dec}}$ is a quasi connection. Observe that \textbf{(3)} is an immediate consequence of the above two identities.
\end{proof}
\begin{definition}\label{pseudo principal Lie crossed module}
	Given a Lie crossed module $(G,H, \tau, \alpha)$, a \textit{pseudo-principal $(G,H, \tau, \alpha)$-bundle over a Lie groupoid $\mb{X}$} is defined as a quasi-principal $G$-bundle $(\pi_G \colon E_G \ra X_0, \mu, \mb{X})$ over the Lie groupoid $\mb{X}$ (\Cref{quasi-principal G-bun dle}), equipped with a pair of smooth maps $\mc{H}_{u} \colon E_0 \ra H$   and $\mc{H}_{m} \colon X_1 \times_{s,X_0,t} \times X_1 \ra H$, satisfying the coherence properties (a)-(k) in \Cref{Lemma: cohenrence of canonical quasi action}.
	
%	\begin{itemize}
%		\item[(a)] $\mu(1_{\pi(p)},p)=p \tau(\mc{H}_u(p))$ for all $p \in E_G$.
%		\item[(b)] $\mu(\gamma_2, \mu(\gamma_1,p))= \mu(\gamma_2 \circ \gamma_1,p) \tau(\mc{H}_m(\gamma_2, \gamma_1))$ for all appropriate $\gamma_2, \gamma_1 \in X_1, p \in E_G$. 
%		\item[(c)] [\textit{Right unitor}] $\mc{H}_m(\gamma, 1_{\pi(p)})=  \mc{H}_{u}(p)$ for all $\gamma \in X_1$ such that $s(\gamma)= \pi(p)$.
%		\item[(d)][\textit{Left unitor}] $\mc{H}_m(1_{\pi(\mu(\gamma,p))},\gamma)=  \mc{H}_{u}(\mu(\gamma,p))$.
%		\item[(e)] $\mc{H}_u$ is $G$ invariant.
%		\item[(f)] $\alpha_{g^{-1}}(\mc{H}_u(p))=\mc{H}_u(p)$ for all $g \in G$ and $p \in E_G$.
%		\item[(g)] $\mc{H}_u(p) \in Z(H)$ for all $p \in E_0$, where $Z(H)$ is the centre of $H$.
%		\item[(h)] $\alpha_{g^{-1}}(\mc{H}_{m}^{-1}(\gamma_2, \gamma_1))= \mc{H}_{m}^{-1}(\gamma_2, \gamma_1)$ for composable $\gamma_2, \gamma_1$.
%		\item[(i)]  $\mc{H}_m(\gamma_2, \gamma_1) \in Z(H)$ for all $\gamma_2, \gamma_1 \in X_1 \times_{s,X_0,t} X_1$.
%		\item[(j)] [\textit{Associator}] For $\gamma_3, \gamma_2, \gamma_1 \in X_1$ such that $s(\gamma_3)= t(\gamma_2)$ and $s(\gamma_2)= t(\gamma_1)$, we have 
%		$$\mc{H}_m^{-1}(\gamma_3,\gamma_2) \mc{H}_{m}^{-1}(\gamma_3 \circ \gamma_2, \gamma_1)= \mc{H}_{m}^{-1}(\gamma_2, \gamma_1) \mc{H}_{m}^{-1}(\gamma_3, \gamma_2 \circ \gamma_1).$$
%		\item[(k)][\textit{Invertor}] If $(\gamma,p) \in s^{*}E_0$, then we have
%		$\mc{H}_m({\gamma^{-1}, \gamma}) \mc{H}_{m}(\gamma,\gamma^{-1})^{-1}= H_{u}(p)^{-1}H_{u}(\mu(\gamma,p))$.
%	\end{itemize}
\end{definition}
We  denote a pseudo-principal $(G,H, \tau, \alpha)$-bundle by the notation $(\pi \colon E_G\ra X_0,\mu, \mc{H}_u, \mc{H}_m,\mb{X})$, and call the smooth maps $\mc{H}_{u}$ and $\mc{H}_m$ as  \textit{unital deviation} and \textit{compositional deviation} respectively.

\begin{example}\label{Definition makes sense}
	It follows directly from \Cref{Lemma: cohenrence of canonical quasi action}, that the underlying quasi-principal $G$-bundle $(\pi_0 \colon E_0 \ra X_0,  \mu_{\mc{C}}, \mb{X})$ (\Cref{underlying quasi-principal bundle})  of a quasi-principal-$[H \rtimes_{\alpha}G \rra G]$-bundle $(\pi \colon \mb{E} \ra \mb{X}, \mc{C})$, equipped with the pair of smooth maps $\mc{H}_{u,\mc{C}}$ and $\mc{H}_{m,\mc{C}}$ (as defined in \Cref{Lemma: cohenrence of canonical quasi action}) is a pseudo-principal $(G,H, \tau, \alpha)$-bundle over the Lie groupoid $\mb{X}$. We call $(\pi_0 \colon E_0 \ra X_0,  \mu_{\mc{C}}, \mc{H}_{u,\mc{C}}, \mc{H}_{m,\mc{C}}, \mb{X})$ as the \textit{underlying  pseudo-principal $(G,H, \tau, \alpha)$-bundle of the quasi-principal $[H \rtimes_{\alpha}G \rra G]$-bundle $(\pi \colon \mb{E} \ra \mb{X}, \mc{C})$.} Unital deviation $\mc{H}_{u,\mc{C}}$ and compositional deviation $\mc{H}_{m,\mc{C}}$ together precisely mesaure the amount by with the quasi connection $\mc{C}$ differs from being a categorical connection.
\end{example}

A quasi-principal $[H \rtimes_{\alpha}G \rra G]$-bundle $( \pi^{\rm{q-dec}} \colon \mb{E}^{\rm{q-dec}}\ra \mb{X}, \mc{C}^{\rm{q-dec}})$ in \Cref{Theorem:quasi-bundle construction} associated to a pseudo-principal $(G,H, \tau, \alpha)$\-bundle  $(\pi \colon E_G\ra X_0,\mu, \mc{H}_u, \mc{H}_m,\mb{X})$ will be called a \textit{quasi-decorated  $[H \rtimes_{\alpha}G \rra G]$-bundle over $\mb{X}$.} 
%Observe that the unital deviation $\mc{H}_{u}$ and the compositional deviation $\mc{H}_m$ precisely measure the amount by which  a quasi-decorated $[H \rtimes_{\alpha}G \rra G]$-bundle differs from a decorated-$[H \rtimes_{\alpha}G \rra G]$ bundle in \Cref{Prop:Decoliegpd}].

The following is evident:

\begin{proposition}\label{Groupoid of pseudo crossed module principal bundle}
	Given a Lie groupoid $\mb{X}$ and a Lie crossed module $(G,H, \tau, \alpha)$, there is a groupoid $\rm{Pseudo} (\mb{X}$,$(G,H,\tau, \alpha)$$)$, whose objects are pseudo-principal$(G,H, \tau, \alpha)$-bundles over $\mb{X}$  and morphisms are defined in the following way:
	
	For any pair of pseudo-principal$(G,H, \tau, \alpha)$-bundles $(\pi \colon E_G\ra X_0,\mu, \mc{H}_u, \mc{H}_m,\mb{X})$ and $(\pi' \colon E'_G\ra X_0,\mu', \mc{H}'_u, \mc{H}'_m,\mb{X})$,
	\begin{itemize}
		\item[(i)]if $\mc{H}_{m} \neq \mc{H}'_{m}$, then $${\rm{Hom}}\Big((\pi \colon E_G\ra X_0,\mu, \mc{H}_u, \mc{H}_m,\mb{X}),(\pi' \colon E'_G\ra X_0,\mu', \mc{H}'_u, \mc{H}'_m,\mb{X}) \Big) = \emptyset ,$$
		\item[(ii)] if $\mc{H}_{m}=\mc{H}'_{m}$, 
		
		then an element of $ {\rm{Hom}}\Big((\pi \colon E_G\ra X_0,\mu, \mc{H}_u, \mc{H}_m,\mb{X}),(\pi' \colon E'_G\ra X_0,\mu', \mc{H}'_u, \mc{H}'_m,\mb{X}) \Big)$ is defined as a morphism of principal $G$-bundles $f \colon E_G \ra E'_G$, satisfying the following conditions:
		\begin{itemize}
			\item[(a)] $f(\mu(\gamma,p))= \mu'(\gamma,f(p))$ and
			\item[(b)] $\mc{H}_u= \mc{H}'_u \circ f$.
		\end{itemize}
	\end{itemize}
\end{proposition}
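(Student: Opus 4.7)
The statement is essentially a bookkeeping claim; I would verify the four groupoid axioms (well-defined composition, identity, associativity, invertibility) one at a time, leveraging the known groupoid structure on the category $\rm{Bun}(\mb{X}_0,G)$ of classical principal $G$-bundles over $X_0$ and the fact that pseudo-bundle morphisms are just principal $G$-bundle morphisms with two extra conditions.

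\textbf{Composition.} Given $f \colon (\pi,\mu,\mc{H}_u,\mc{H}_m) \to (\pi',\mu',\mc{H}'_u,\mc{H}_m)$ and $f' \colon (\pi',\mu',\mc{H}'_u,\mc{H}_m) \to (\pi'',\mu'',\mc{H}''_u,\mc{H}_m)$ (note both morphisms exist only when the compositional deviations agree, so this is transitive and the composite makes sense), I would take $f' \circ f$ as the underlying morphism of principal $G$-bundles (well-defined in $\rm{Bun}(X_0,G)$). For condition (a), a two-step chase
\[
(f'\circ f)(\mu(\gamma,p)) = f'(\mu'(\gamma,f(p))) = \mu''(\gamma,(f'\circ f)(p)),
\]
and for (b), $\mc{H}''_u \circ (f'\circ f) = (\mc{H}''_u \circ f')\circ f = \mc{H}'_u \circ f = \mc{H}_u$. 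Associativity is inherited from $\rm{Bun}(X_0,G)$.

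\textbf{Identity and invertibility.} The identity map $\rm{id}_{E_G}$ visibly satisfies (a) and (b), giving the unit. For invertibility, I would use the standard fact that any morphism of principal $G$-bundles over a common base is automatically an isomorphism. Given $f$ in $\rm{Hom}$, the inverse $f^{-1}$ is a principal $G$-bundle morphism, and
\[
f^{-1}(\mu'(\gamma,p')) = f^{-1}\bigl(\mu'(\gamma, f(f^{-1}(p')))\bigr) = f^{-1}\bigl(f(\mu(\gamma,f^{-1}(p')))\bigr) = \mu(\gamma,f^{-1}(p')),
\]
verifying (a), while $\mc{H}'_u \circ f^{-1} = \mc{H}_u \circ f^{-1}$ composed on the right by $f$ gives $\mc{H}_u$, so (b) holds.

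\textbf{What, if anything, needs attention.} There is no genuine obstacle: the content of the proposition is chiefly to \emph{record} that the two conditions (a), (b) together with the requirement $\mc{H}_m = \mc{H}'_m$ are closed under composition and inversion, so that the hom-sets assemble into a groupoid rather than merely a category. The only mildly subtle point is the dichotomy in the definition of $\rm{Hom}$ based on whether $\mc{H}_m = \mc{H}'_m$; I would remark that this is consistent because the identity morphism of any object trivially preserves its own compositional deviation, and because (b) passes through composition, so no arrow ever connects objects with differing $\mc{H}_m$. The resulting structure is therefore a disjoint union (over the set of admissible compositional deviations) of groupoids, hence itself a groupoid.
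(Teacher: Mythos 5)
Your proof is correct, and the paper itself offers no argument here — it simply declares the proposition evident — so your routine verification of composition, identities, associativity, and invertibility (the last using the standard fact that principal-bundle morphisms over a fixed base are isomorphisms) supplies exactly the checks the paper leaves implicit. The only cosmetic issue is the phrasing of condition (b) for $f^{-1}$: what you mean is that precomposing $\mc{H}_u=\mc{H}'_u\circ f$ with $f^{-1}$ yields $\mc{H}_u\circ f^{-1}=\mc{H}'_u$, which is the required identity.
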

Now, we are ready to state and prove our first main result of the paper.
\begin{theorem}\label{Grothendieck construction}
	For a Lie crossed module $(G,H,\tau, \alpha)$ and a Lie groupoid $\mb{X}$, the groupoid $\rm{Bun}_{\rm{quasi}}(\mb{X}$, $[H \rtimes_{\alpha}G \rra G])$ is equivalent to the groupoid $\rm{Pseudo} (\mb{X}$,$(G,H,\tau, \alpha)$$)$.
\end{theorem}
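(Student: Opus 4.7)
The plan is to construct mutually quasi-inverse functors using the ``underlying pseudo-principal bundle'' construction of \Cref{Definition makes sense} in one direction and the ``quasi-decorated bundle'' construction of \Cref{Theorem:quasi-bundle construction} in the other, and then to verify the required natural isomorphisms.

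First, I would define $\Phi \colon \mathrm{Bun}_{\mathrm{quasi}}(\mb{X}, [H \rtimes_\alpha G \rra G]) \to \mathrm{Pseudo}(\mb{X}, (G, H, \tau, \alpha))$ by sending $(\pi \colon \mb{E} \to \mb{X}, \mc{C})$ to its underlying pseudo-principal bundle $(\pi_0, \mu_{\mc{C}}, \mc{H}_{u, \mc{C}}, \mc{H}_{m, \mc{C}}, \mb{X})$. On a morphism $F \colon \mb{E} \to \mb{E}'$ in $\mathrm{Bun}_{\mathrm{quasi}}$ (hence satisfying $F_1 \circ \mc{C} = \mc{C}' \circ (\mathrm{id} \times F_0)$), set $\Phi(F) := F_0$. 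Applying $F_1$ to the defining identities \Cref{Equation: Construction 1} and \Cref{Equation: Construction 2} and using the equivariance of $F_1$ together with $F_1(1_p) = 1_{F_0(p)}$ yields the identities $\mc{H}_{u,\mc{C}} = \mc{H}_{u,\mc{C}'} \circ F_0$ and $\mc{H}_{m,\mc{C}} = \mc{H}_{m,\mc{C}'}$, while $F_0 \circ \mu_{\mc{C}} = \mu_{\mc{C}'} \circ (\mathrm{id} \times F_0)$ follows from $t \circ F_1 = F_0 \circ t$. Hence $F_0$ is a valid morphism in $\mathrm{Pseudo}$.

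In the opposite direction, $\Psi$ is built from \Cref{Theorem:quasi-bundle construction}. On a morphism $f$ in $\mathrm{Pseudo}$, which by the very definition of the groupoid requires $\mc{H}_m = \mc{H}'_m$ and $\mc{H}_u = \mc{H}'_u \circ f$, I set $\Psi(f)_0 := f$ and $\Psi(f)_1 \colon (\gamma, p, h) \mapsto (\gamma, f(p), h)$. Agreement of compositional deviations makes $\Psi(f)$ compatible with composition in $\mb{E}^{\mathrm{q}\text{-}\mathrm{dec}}$, agreement of unital deviations makes it compatible with the unit map, and equivariance under the $[H \rtimes_\alpha G \rra G]$-action together with the preservation of $\mc{C}^{\mathrm{q}\text{-}\mathrm{dec}}$ are immediate from the formula $\mc{C}^{\mathrm{q}\text{-}\mathrm{dec}}(\gamma, p) = (\gamma, p, e)$.

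The composite $\Phi \circ \Psi$ is easily seen to be the identity on the nose: the quasi-action $t \circ \mc{C}^{\mathrm{q}\text{-}\mathrm{dec}}$ on $\mb{E}^{\mathrm{q}\text{-}\mathrm{dec}}$ recovers $\mu$, and solving for $\mc{H}_{u, \mc{C}^{\mathrm{q}\text{-}\mathrm{dec}}}$ and $\mc{H}_{m, \mc{C}^{\mathrm{q}\text{-}\mathrm{dec}}}$ in the semidirect product recovers $\mc{H}_u$ and $\mc{H}_m$ respectively. The main obstacle is a natural isomorphism $\eta \colon \Psi \circ \Phi \Lrrw \mathrm{id}$. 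Given $(\pi \colon \mb{E} \to \mb{X}, \mc{C})$, for any $\delta \in E_1$ with $\pi_1(\delta) = \gamma$ and $s(\delta) = p$, freeness and transitivity of the $G_1$-action on $\pi_1^{-1}(\gamma)$ together with the source condition $s(\delta) = p = s(\mc{C}(\gamma,p))$---which forces the $G_0$-component of the group element to be trivial---produce a unique $h(\delta) \in H$ with $\delta = \mc{C}(\gamma, p)(h(\delta), e)$. I would define $\eta_0 = \mathrm{id}_{E_0}$ and $\eta_1 \colon s^*E_0 \times H \to E_1$ by $(\gamma, p, h) \mapsto \mc{C}(\gamma, p)(h^{-1}, e)$, the inverse on $h$ being dictated by the target formula $t(\gamma, p, h) = \mu_\mc{C}(\gamma, p)\tau(h^{-1})$. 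Compatibility of $\eta$ with source, target, inverse, unit, and 2-group action each reduce to one of the coherence properties (a)--(k) of \Cref{Lemma: cohenrence of canonical quasi action}; the preservation of the quasi connection is immediate from $\mc{C}^{\mathrm{q}\text{-}\mathrm{dec}}(\gamma,p) = (\gamma, p, e)$. The delicate step is compatibility with composition: for a composable pair in $\mb{E}^{\mathrm{q}\text{-}\mathrm{dec}}$ we have $p_2 = \mu_\mc{C}(\gamma_1, p_1)\tau(h_1^{-1})$, so one first rewrites $\mc{C}(\gamma_2, p_2) = \mc{C}(\gamma_2, \mu_\mc{C}(\gamma_1, p_1))\cdot (e,\tau(h_1^{-1}))$ via the quasi-connection property, then applies the Peiffer identity $\alpha_{\tau(h_1^{-1})}(h_2^{-1}) = h_1^{-1} h_2^{-1} h_1$ to move the semidirect-product factor past $(h_2^{-1}, e)$, invokes functoriality of the action to combine the two factors into $(h_1^{-1} h_2^{-1}, e)$, and finally uses \Cref{Equation: Construction 2} to identify the result with $\mc{C}(\gamma_2 \circ \gamma_1, p_1)\cdot(\mc{H}_m(\gamma_2,\gamma_1) h_1^{-1} h_2^{-1}, e)$, which matches $\eta_1$ of the composition in $\mb{E}^{\mathrm{q}\text{-}\mathrm{dec}}$. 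Smoothness of $\eta_1$ and $\eta_1^{-1}$ follows from smoothness of $\mc{C}$ and the local triviality of $\pi_1$, and naturality in $\mb{E}$ is automatic. This completes the equivalence.
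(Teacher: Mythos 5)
Your proposal is correct, and the functor from quasi-principal bundles to pseudo-principal bundles, as well as every substantive computation, coincides with the paper's: the unique decomposition $\delta=\mc{C}(\pi_1(\delta),s(\delta))(h_\delta,e)$, and the composition check that combines $G_0$-equivariance of $\mc{C}$, the Peiffer identity \Cref{E:Peiffer}, functoriality of the action, and \Cref{Equation: Construction 2} to produce the factor $\mc{H}_{m,\mc{C}}(\gamma_2,\gamma_1)h_1^{-1}h_2^{-1}$, are exactly the steps the paper performs.

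The only difference is organizational. The paper certifies the equivalence by showing its functor $\mc{F}$ is essentially surjective (via \Cref{Theorem:quasi-bundle construction}), faithful (via the unique $h_\delta$), and full (by explicitly building $F_1$ from $f$ fibrewise over the cleavage); you instead exhibit an explicit quasi-inverse $\Psi$ and a natural isomorphism $\eta\colon \Psi\circ\Phi\Lrrw \mathrm{id}$ with $\eta_1(\gamma,p,h)=\mc{C}(\gamma,p)(h^{-1},e)$. That $\eta$ is precisely the canonical isomorphism $\theta_{\mb{E}}$ that the paper records afterwards as \Cref{Splitting fibration}; your route proves that corollary as part of the theorem, at the cost of verifying functoriality of $\eta_1$ directly, whereas the paper's fullness argument carries the same computation inside the construction of $F_1$. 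The two packagings are interchangeable, and your verification that $\Phi\circ\Psi$ is the identity on the nose (recovering $\mu$, $\mc{H}_u$, $\mc{H}_m$ from $\mc{C}^{\mathrm{q}\text{-}\mathrm{dec}}(\gamma,p)=(\gamma,p,e)$) is correct and is implicitly what makes the paper's essential-surjectivity step work.
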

\begin{proof}
	Define
	\begin{equation}\nonumber
		\begin{split}
			\mc{F}\colon &{\rm{Bun}}_{\rm{quasi}}(\mb{X}, [H \rtimes_{\alpha}G \rra G])\ra {\rm{Pseudo}}\Big(\mb{X},(G,H, \tau, \alpha) \Big),\\
			& (\pi \colon \mb{E} \ra \mb{X}, \mc{C}) \mapsto (\pi_0 \colon E_0 \ra X_0,  \mu_{\mc{C}}, \mc{H}_{u,\mc{C}}, \mc{H}_{m,\mc{C}}, \mb{X})\\
			&(F \colon \mb{E} \ra \mb{E}') \mapsto (F_0 \colon E_0 \ra E'_0).
		\end{split}
	\end{equation}
	We claim  $\mc{F}$ is an essentially surjective, faithful, and full functor. 
	
	Suppose  $(\pi \colon \mb{E} \ra \mb{X}, \mc{C})$ is a quasi-principal $[H \rtimes_{\alpha}G \rra G])$-bundle over $\mb{X}$. Then from \Cref{Definition makes sense}, it directly follows that $(\pi_0 \colon E_0 \ra X_0,  \mu_{\mc{C}}, \mc{H}_{u,\mc{C}}, \mc{H}_{m,\mc{C}}, \mb{X})$ is indeed a pseudo-principal $(G,H, \tau, \alpha)$-bundle over $\mb{X}$.  Now, let $F \in {\rm{Hom}}\Big((\pi \colon \mb{E} \ra \mb{X}, \mc{C}), (\pi' \colon \mb{E}' \ra \mb{X})) \Big)$, then as a direct consequence of the identities 
	$F_1(\mc{C}(\gamma,p))= \mc{C}'(\gamma, F_0(p))$ and  $F_1 \big( \mc{C}(1_{\pi(p)},p) \big)= 1_{F_0(p)} \Big(\mc{H}_{u,\mc{C}'}\big(F(p)\big),e \Big)$, we have a well-defined $\mc{F}(F)$. Functoriality of $\mc{F}$ is a straightforward verification. By \Cref{Theorem:quasi-bundle construction}, $\mc{F}$ is essentially surjective. Now, consider $F,\bar{F} \in {\rm{Hom}} \Big( (\pi \colon \mb{E} \ra \mb{X}, \mc{C}) ,(\pi' \colon \mb{E}' \ra \mb{X}, \mc{C}')  \Big)$. Suppose $\mc{F}(F)= \mc{F}(\bar{F})$, that is $F_0=\bar{F}_0$. Now, for  any $\delta \in E_1$, there exists a unique $h_{\delta} \in H$, such that $\delta= \mc{C}\big(\pi_1(\delta),s(\delta)\big)(h_{\delta},e)$. Then the equality $F_1(\delta)=\bar{F}_1(\delta)$ follows from the compatiblity condition  of $F$ and $\bar{F}$ with quasi connections $\mc{C}$ and $\mc{C}'$ in \Cref{Groupoid of quasi principal 2-bundles}. Hence, $\mc{F}$ is faithful. To show $\mc{F}$ is full, consider an element $f$ in ${\rm{Hom}}\Big((\pi_0 \colon E_0 \ra X_0,  \mu_{\mc{C}}, \mc{H}_{u,\mc{C}}, \mc{H}_{m,\mc{C}}, \mb{X}), (\pi'_0 \colon E'_0 \ra X_0,  \mu_{\mc{C}'}, \mc{H}_{u,\mc{C}'}, \mc{H}_{m,\mc{C}'}, \mb{X}) \Big)$ for a pair of quasi-principal $[H \rtimes_{\alpha}G \rra G]$-bundles $(\pi \colon \mb{E} \ra \mb{X}, \mc{C})$ and $(\pi' \colon \mb{E}' \ra \mb{X}, \mc{C}')$ over $\mb{X}$. Now, define 
	\begin{equation}\nonumber
		\begin{split}
			&F \colon \mb{E} \ra \mb{E}'\\
			& p \mapsto f(p), p \in E_0,\\
			&\delta \mapsto \mc{C}'\big(\pi_1(\delta), f(s(\delta)) \big)(h_{\delta},e), \delta \in E_1,
		\end{split}
	\end{equation}
	where $h_{\delta}$ is the unique element in $H$, such that $\delta= \mc{C}\big(\pi_1(\delta),s(\delta)\big)(h_{\delta},e)$. We need to show $F:=(F_1,F_0)$ is a morphism of quasi-principal $[H \rtimes_{\alpha}G \rra G])$-bundles over $\mb{X}$. Note that $\pi'_0 \circ F_0= \pi'_0 \circ f=\pi_0$ and for any $\delta \in E_1$, $\pi'_1 \circ F_1(\delta)= \pi'_1 \Big(\mc{C}'\big(\pi_1(\delta), f(s(\delta)) \big)(h_{\delta},e) \Big)= \pi_1(\delta)$. $G$-equivariancy of $f$ implies the same for $F_0$. Now, observe that there exists a unique element $\bar{h} \in H$ such that $\mc{C}\big(\pi_1(\delta),s(\delta)\big)(h_{\delta},e)(h,g)= \mc{C} \big(\pi_1(\delta), s(\delta) \big)(e,g)(\bar{h},e)$ for $\delta \in E_1$ and $(h,g) \in H \rtimes_{\alpha} G$. Then, by a little calculation, we arrive at $$F_1 \big( \delta(h,g) \big)= \mc{C}' \big( \pi_1(\delta), f(s(\delta))\big)(\bar{h},g).$$ From the observation $\bar{h}=h_{\delta}h$, we get 	
	$F_1 \big( \delta(h,g) \big)= \mc{C}' \big( \pi_1(\delta), f(s(\delta))\big)(h_{\delta},e)(h,g)= F_1(\delta)(h,g)$. Hence, $F_1$ is $H \rtimes_{\alpha}G$-equivariant. Now, note that from the definition itself, it is evident that $F_1(\mc{C}(\gamma,p))= \mc{C}'(\gamma, F_0(p))$ for all $(\gamma,p) \in s^{*}E_0$. Since both $F_0$ and $F_1$ are smooth by definition, in order to prove $\mc{F}$ is full, it is sufficient to prove that $F \colon \mb{E} \ra \mb{E}'$ is a functor. Source map consistency is trivial, whereas the target consistency follows from the compatibility condition of $f$ with $\mu_{\mc{C}}$ and $\mu_{\mc{C}'}$ as mentioned in \Cref{Groupoid of pseudo crossed module principal bundle}. Now, for any $p \in E_0$, we have $F_1(1_p)= \mc{C}' \big(1_{\pi'_0 \circ f(p)}, f(p) \big) \big((\mc{H}_{u,\mc{C}}(p))^{-1},e \big)$. By appropriately using \Cref{Equation: Construction 1}, we obtain the compatibility of $\mc{F}$ with unit maps. For compositional compatibility, consider $\delta_2, \delta_1 \in E_1$ such that $s(\delta_2)=t(\delta_1)$. It is not difficult to observe that to show the required compatibility, it is sufficient to prove the following identity:
	\begin{equation}\nonumber
		\mc{H}_{m, \mc{C}}(\gamma_2, \gamma_1)h_{\delta_1}h_{\delta_2}=h_{\delta_2 \circ \delta_1}.
	\end{equation}
	To prove the above identity, consider
	\begin{equation}\nonumber
		\begin{split}
			&\delta_2 \circ \delta_1\\
			&= \underbrace{\mc{C}\big(\pi_1(\delta_2),s(\delta_2)\big)(h_{\delta_2},e)}_{\delta_2} \circ \underbrace{\mc{C}\big(\pi_1(\delta_1),s(\delta_1)\big)(h_{\delta_1},e)}_{\delta_1} \\
			&= \underbrace{\mc{C}\Big(\pi_1(\delta_2), \underbrace{\mu_{\mc{C}} \big( \pi_1(\delta_1),s(\delta_1)}_{s(\delta_2)=t(\delta_1)} \big)\Big)(e, \tau(h_{\delta_1}))}_{[{\textit{by (iii), \Cref{Def:categorical connection}}}]}(h_{\delta_2},e) \circ \mc{C}\big(\pi_1(\delta_1),s(\delta_1)\big)(h_{\delta_1},e)\\
			&= \underbrace{ \bigg( \mc{C}\Big(\pi_1(\delta_2), \mu_{\mc{C}} \big( \pi_1(\delta_1),s(\delta_1) \big)\Big)\circ  \mc{C}\big(\pi_1(\delta_1),s(\delta_1)\big) \bigg) \bigg(\Big(\alpha_{\tau(h_{\delta_1})}(h_{\delta_2}),\tau(h_{\delta_1}) \Big) \circ(h_{\delta_1},e) \bigg)}_{\textit{by functoriality of the action.}} \\
			&= \bigg( \mc{C}\Big(\pi_1(\delta_2), \mu_{\mc{C}} \big( \pi_1(\delta_1),s(\delta_1) \big)\Big)\circ  \mc{C}\big(\pi_1(\delta_1),s(\delta_1)\big) \bigg)  \bigg( \underbrace{\Big(h_{\delta_1}h_{\delta_2}h_{\delta_1}^{-1}, \tau(h_{\delta_1}) \Big)}_{[{\textit{by \Cref{E:Peiffer}}}]}  \circ \Big(h_{\delta_1}, e \Big) \bigg)\\
			&= \mc{C}\Big( (\pi_1(\delta_2 \circ \delta_1),s(\delta_2 \circ \delta_1) \big)\Big) \Big(\mc{H}_{m, \mc{C}}(\gamma_2, \gamma_1)h_{\delta_1}h_{\delta_2},e \Big) [{\textit{by \Cref{Equation: Construction 2}}}].
		\end{split}	
	\end{equation}
	Hence, $\mc{H}_{m, \mc{C}}(\gamma_2, \gamma_1)h_{\delta_1}h_{\delta_2}=h_{\delta_2 \circ \delta_1}$. 
	
	Thus, we complete the proof of $\mc{F}$ is an equivalence of categories.
\end{proof}

%\begin{remark}
%	One may view  \textbf{Theorem 2.7} in \cite{MR4126305} as a smooth linear analog of the above theorem, which gives a similar correspondence between VB-groupoids and 2-term representations of a Lie groupoid.
%\end{remark}
 A crucial consequence of \Cref{Grothendieck construction}, is a complete characterization of quasi-principal 2-bundles. The proof is not so difficult, though tedious.

\begin{corollary}\label{Splitting fibration}
	 Any quasi-principal $[H \rtimes_{\alpha}G \rra G]$-bundle $(\pi \colon \mb{E} \ra \mb{X}, \mc{C})$ over a Lie groupoid $\mb{X}$ is canonically isomorphic to the quasi-decorated $[H \rtimes_{\alpha}G \rra G]$-bundle $(\pi^{\rm{q-dec}} \colon \mb{E}^{\rm{q-dec}} \ra \mb{X}, \mc{C}^{\rm{q-dec}})$ associated to the underlying pseudo-principal $(G,H,\tau,\alpha)$-bundle $(\pi_0 \colon E_0 \ra X_0,  \mu_{\mc{C}}, \mc{H}_{u,\mc{C}}, \mc{H}_{m,\mc{C}}, \mb{X})$. The canonical  isomorphism is explicitly given by 
	
	\begin{equation}\nonumber
		\begin{split}
			\theta_{\mb{E}} \colon & \mb{E}^{\rm{q-dec}} \ra \mb{E}\\
			& p \mapsto p,  p \in E_0 \\
			&\Big((\gamma,p),h \Big) \mapsto \mc{C}(\gamma,p)(h^{-1},e), \Big(\gamma,p,h \Big) \in s^{*}E_0 \times H.
		\end{split}
	\end{equation}
	Moreover, the functor $\mc{F}$ in \Cref{Grothendieck construction} restricts to an essentially surjective, full and faithful functor to the subcategory ${\rm{Bun}}_{\rm{Cat}}(\mb{X}, [H \rtimes_{\alpha}G \rra G]) )$ of ${\rm{Bun}}_{{\rm{quasi}}}(\mb{X}, [H \rtimes_{\alpha}G \rra G])$ and hence yielding an equivalence of categories between ${\rm{Bun}}_{\rm{Cat}}(\mb{X}, [H \rtimes_{\alpha}G \rra G]) )$ and ${\rm{Bun}}(\mb{X}, G)$.
\end{corollary}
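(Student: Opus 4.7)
The plan is to deduce both assertions from \Cref{Grothendieck construction} together with \Cref{Theorem:quasi-bundle construction}. Under the equivalence $\mc{F}$, both $(\pi \colon \mb{E}\ra \mb{X}, \mc{C})$ and the associated quasi-decorated bundle $(\pi^{\rm q-dec}\colon \mb{E}^{\rm q-dec}\ra \mb{X}, \mc{C}^{\rm q-dec})$ built from the underlying pseudo-principal $(G,H,\tau,\alpha)$-bundle are sent to the same object of ${\rm{Pseudo}}(\mb{X}, (G,H,\tau,\alpha))$. Fullness and faithfulness of $\mc{F}$ therefore force a unique isomorphism between them lifting the identity on $E_0$, and the claim is that the formula $\theta_{\mb{E}}(\gamma,p,h) = \mc{C}(\gamma,p)(h^{-1},e)$ realizes this isomorphism explicitly.

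To verify this I would check stepwise that $\theta_{\mb{E}}$ is a functor of Lie groupoids. Source-compatibility is immediate from $s(\mc{C}(\gamma,p)) = p$, and target-compatibility reduces to $t\bigl(\mc{C}(\gamma,p)(h^{-1},e)\bigr) = \mu_{\mc{C}}(\gamma,p)\tau(h^{-1})$ via the semidirect product structure. The key computation is compositional compatibility: expanding $\theta_{\mb{E}}$ on the twisted composition $\bigl(\gamma_2\circ \gamma_1, p_1, h_2 h_1 \mc{H}_{m,\mc{C}}^{-1}(\gamma_2,\gamma_1)\bigr)$ and using \Cref{Equation: Construction 2} together with the Peiffer identity \Cref{E:Peiffer}, one recovers $\theta_{\mb{E}}(\gamma_2,p_2,h_2)\circ\theta_{\mb{E}}(\gamma_1,p_1,h_1)$; this is essentially the final computation in the proof of \Cref{Grothendieck construction} run in reverse. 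Unit-compatibility is handled by \Cref{Equation: Construction 1}, while $\mb{G}$-equivariance and compatibility with both the bundle projection and the quasi connections reduce to straightforward rewriting via \Cref{E:Actionondeco}, \Cref{E:Projondeco} and the definition of $\mc{C}^{\rm q-dec}$. Invertibility of $\theta_{\mb{E}}$ follows from freeness of the principal $G_1$-action on $E_1$: any $\delta\in E_1$ has a unique presentation $\delta = \mc{C}(\pi_1(\delta), s(\delta))(h_\delta, e)$, so $\delta\mapsto \bigl(\pi_1(\delta), s(\delta), h_\delta^{-1}\bigr)$ is a smooth inverse.

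For the second assertion, the strategy is to combine \Cref{Proposition: Quasi-Cat} with part (3) of \Cref{Theorem:quasi-bundle construction}. By the canonical isomorphism just constructed, $\mc{C}$ is a categorical connection precisely when $\mc{C}^{\rm q-dec}$ is, and by \Cref{Theorem:quasi-bundle construction} this happens iff both $\mc{H}_{u,\mc{C}}$ and $\mc{H}_{m,\mc{C}}$ are constantly $e$. Under this triviality, the coherence properties (a)--(k) of \Cref{Lemma: cohenrence of canonical quasi action} collapse exactly to the axioms of \Cref{Definition: Principal Lie group bundle over a Lie groupoid}, and the morphism compatibility $\mc{H}_u = \mc{H}'_u\circ f$ from \Cref{Groupoid of pseudo crossed module principal bundle} becomes vacuous. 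Consequently $\mc{F}$ restricts to a functor ${\rm{Bun}}_{\rm{Cat}}(\mb{X}, [H \rtimes_\alpha G \rra G])\ra{\rm{Bun}}(\mb{X}, G)$ which is full and faithful by inheritance, and essential surjectivity is supplied by \Cref{Prop:Decoliegpd}, since the canonical categorical connection $(\gamma,p)\mapsto (\gamma,p,e)$ on a decorated $2$-bundle has trivial deviations.

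The main obstacle I foresee is the compositional verification for $\theta_{\mb{E}}$: tracking how the deformation factor $\mc{H}_{m,\mc{C}}^{-1}$ built into the composition on $\mb{E}^{\rm q-dec}$ cancels the semidirect product twists produced when moving $\mc{C}$ across a composition in $\mb{E}$ requires careful algebra, though all the needed ingredients are already in place from the proof of \Cref{Lemma: cohenrence of canonical quasi action}.
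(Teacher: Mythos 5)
Your proposal is correct and follows exactly the route the paper intends: the corollary is presented as a consequence of \Cref{Grothendieck construction} (the paper omits the verification as "tedious"), and your $\theta_{\mb{E}}$ is precisely the lift that the fullness argument in that theorem produces when applied to the identity on $E_0$, since $(\gamma,p,h)=\mc{C}^{\rm q-dec}(\gamma,p)(h^{-1},e)$ in $\mb{E}^{\rm q-dec}$. Your reduction of the second assertion to the triviality of the deviations via \Cref{Proposition: Quasi-Cat} and part (3) of \Cref{Theorem:quasi-bundle construction}, with essential surjectivity supplied by \Cref{Prop:Decoliegpd}, is likewise the intended argument.
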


\subsection{Quasi connections as retractions}\label{Homotopy}Recall, given morphisms of Lie groupoids $\phi \colon \mb{Y} \ra \mb{X}$ and $\psi \colon \mb{Z} \ra \mb{X}$, there is a topological groupoid  $\mb{Y} \times_{\phi, \mb{X}, \psi}^{h} \mb{Z}$. An object of this groupoid is a triple $(y, \gamma: \psi(z) \ra \phi(y) ,z )$ for $y \in Y_0, z \in Z_0$, whereas a  morphism from $(y, \gamma: \psi(z) \ra \phi(y) ,z )$ to $(y', \gamma': \psi(z') \ra \phi(y') ,z')$ is given by a pair $(\Gamma \colon y \ra y', \delta \colon z \ra z')$  such that $\phi(\Gamma) \gamma = \gamma' \circ \psi(\delta)$.
%such that the following diagram commutes:
%\[\begin{tikzcd}
%	\psi(z) \arrow[d, "\psi(\delta)"'] \arrow[r, "\gamma"]                 & \phi(y) \arrow[d, "\phi(\Gamma)"] \\
%	\psi(z') \arrow[r, "\gamma'"']  & \phi(y')               
%\end{tikzcd}\]
$\mb{Y} \times_{\phi, \mb{X}, \psi}^{h} \mb{Z}$ satisfies the usual universal property of a fiber product (but up to an isomorphism). Structure maps are the natural ones. If either of $\phi_0 \colon Y_0 \ra X_0$ or $\psi_0 \colon Z_0 \ra X_0$ is a submersion, then  $\mb{Y} \times_{\phi, \mb{X}, \psi}^{h} \mb{Z}$ has a natural Lie groupoid structure and is known as  \textit{weak pull-back} or \textit{weak fibered product}. Now, a morphism of Lie groupoids $F \colon \mb{X} \ra \mb{Y}$ has a canonical factorization through $\mb{Y} \times_{\mb{Y},F}^{h} \mb{X}:= \mb{Y} \times_{{\rm{id}}_{\mb{Y}}, \mb{Y}, F}^{h} \mb{X}$ as $F= F_\mb{Y} \circ F_{\mb{X}}$, where at the object level, the maps $F_{\mb{X}} \colon \mb{Y} \times_{\mb{Y},F}^{h} \mb{X}$ and $F_{\mb{Y}} \colon \mb{Y} \times_{\mb{Y},F}^{h} \mb{X} \ra \mb{Y}$ are given as $x \mapsto (F_(x), {\rm{id}}_{F(x)},x)$ and $(\eta \colon y \ra y', \delta \colon p \ra p') \mapsto \eta$ respectively, and can be extended to the morphism level in a natural way. For a detailed discussion on these, readers can look at  \cite{MR2012261}.
%A morphism of Lie groupoids $F \colon \mb{X} \ra \mb{Y}$ has a canonical factorization through 
%
%
%	$\mb{Y} \times_{\mb{Y},F}^{h} \mb{X}:= \mb{Y} \times_{{\rm{id}}_{\mb{Y}}, \mb{Y}, F}^{h} \mb{X}$, 
%	\[
%	\begin{tikzcd}
%		\mb{X} \arrow[d, "F"'] \arrow[r, "F_{\mb{X}}"] & \mb{Y} \times_{\mb{Y},F}^{h} \mb{X} \arrow[ld, "F_{\mb{Y}}"] \\
%		\mb{Y}                                           &                                                           
%	\end{tikzcd}\]	
%	and is given by
%	
%	\begin{equation}\label{piE}
%		\begin{split}
%			& F_{\mb{X}} \colon \mb{X} \ra \mb{Y} \times_{\mb{Y},F}^{h} \mb{X}\\
%			& x \mapsto \big( F(x), {\rm{id}}_{F(x)}, x \big)\\
%			&(\gamma \colon x \ra y) \mapsto \big(F(\gamma),\gamma\big) \in {\rm{Hom}}\Big( \big(F(x), {\rm{id}}_{F(x)}, x\big), \big(F(y), {\rm{id}}_{F(y)}, y \big)\Big).
%		\end{split}
%\end{equation}
%and 
%\begin{equation}\label{piX}
%	\begin{split}
%		& F_{\mb{Y}} \colon \mb{Y} \times_{\mb{Y},F}^{h} \mb{X} \ra \mb{Y} \\
%		& (y, \zeta,p) \mapsto y \\
%		&(\eta \colon y \ra y', \delta \colon p \ra p') \mapsto \eta.
%	\end{split}
%\end{equation}	
As a consequence of this factorization, we will characterize unital (\Cref{Unital connection}) and categorical connections (\Cref{Def:categorical connection}) in terms of certain retractions. The proof is a suitable adaptation of the one given in \textbf{Proposition 2.2.3} and \textbf{Corollary 2.2.4 } of  \cite{del2008homotopy} in the set-theoretic setup, and we skip it here.
\begin{proposition}
	For a Lie 2-group $\mb{G}$, let $\pi \colon \mb{E} \ra \mb{X}$ be a principal $\mb{G}$-bundle over a Lie groupoid $\mb{X}$. Then 
	\begin{itemize}
		\item[(a)] there is an induced right action of $\mb{G}$ on the Lie groupoid $\mb{X} \times_{\mb{X}, \pi}^{h} \mb{E}$, given by
		\begin{equation}\nonumber
			\begin{split}
				\rho\colon &(\mb{X} \times_{\mb{X}, \pi}^{h} \mb{E}) \times \mb{G} \ra \mb{X} \times_{\mb{X}, \pi}^{h} \mb{E}\\
				&\big((x, \gamma, p), g \big) \mapsto \big(x, \gamma, pg \big), \\
				& \big((\Gamma \colon x \ra x', \delta \colon p \ra p'), \phi \big) \mapsto (\Gamma, \delta \phi),
			\end{split}
		\end{equation}
		\item[(b)] the set of $\mb{G}$-equivariant morphisms of Lie groupoids $r \colon \mb{X} \times_{\rm{id}_{\mb{X}}, \mb{X}, \pi}^{h} \mb{E} \ra \mb{E}$ satisfying $\pi \circ r= \pi_{\mb{X}}$, $r \circ \pi_{\mb{E}}= \rm{id}_{\mb{E}}$, is in one-one correspondence with the set of unital connections $\mc{C}$ on $\pi \colon  \mb{E} \ra \mb{X}$, 
%		where $\pi_{\mb{E}}$ and $\pi_{\mb{X}}$ are as defined in \Cref{piE} and \Cref{piX} respectively,
		\item[(c)] a unital connection $\mc{C}$ on $\pi \colon \mb{E} \ra \mb{X}$ is a categorical connection if and only if the image of morphisms of the form  $(\Gamma, 1_p)$ under the associated map $r_{\mc{C}} \colon \mb{X} \times_{\mb{X}, \pi}^{h} \mb{E} \ra \mb{E}$, lies in the image of $\mc{C}$.
	\end{itemize}
\end{proposition}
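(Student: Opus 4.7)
My plan is to handle (a), (b), (c) in order, with the bulk of the work in (b). For (a), I would just check that the stated formulas assemble into a right action functor $\rho\colon (\mb{X}\times_{\mb{X},\pi}^{h}\mb{E})\times \mb{G}\to \mb{X}\times_{\mb{X},\pi}^{h}\mb{E}$. The object-level formula is well-typed because $\pi(pg)=\pi(p)$, and the morphism-level formula is well-typed because $\pi_1$ annihilates the right $G_1$-action on $E_1$, so the cocycle $\Gamma\circ\gamma = \gamma'\circ\pi_1(\delta\phi) = \gamma'\circ\pi_1(\delta)$ continues to hold. Functoriality and the two action axioms descend from the corresponding axioms of the action of $\mb{G}$ on $\mb{E}$; smoothness is automatic since $\rho$ is assembled from smooth pieces.

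For the forward direction of (b), given a unital connection $\mc{C}$, I would define $r_{\mc{C}}$ on objects by $(x,\gamma,p)\mapsto \mu_{\mc{C}}(\gamma,p)$ and on morphisms by $r_{\mc{C}}(\Gamma,\delta) := \mc{C}(\gamma',p')\circ\delta\circ\mc{C}(\gamma,p)^{-1}$, where $(\Gamma,\delta)\colon(x,\gamma,p)\to(x',\gamma',p')$. This has source $\mu_{\mc{C}}(\gamma,p)$, target $\mu_{\mc{C}}(\gamma',p')$ and $\pi_1$-image $\gamma'\circ\pi_1(\delta)\circ\gamma^{-1}=\Gamma$ by the cocycle. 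The retraction identities $\pi\circ r_{\mc{C}}=\pi_{\mb{X}}$ and $r_{\mc{C}}\circ \pi_{\mb{E}}={\rm{id}}_{\mb{E}}$ follow from $\pi_1\circ\mc{C}=\pi_0^{*}$ and from the unital axiom $\mc{C}(1_{\pi(p)},p)=1_p$, respectively; $\mb{G}$-equivariance reduces to $\mc{C}(\gamma,pg)=\mc{C}(\gamma,p)\cdot 1_g$, which is built into the definition of a quasi connection as a morphism of principal bundles along $u\colon G_0\to G_1$. Conversely, given a retraction $r$, I would set $\mc{C}_r(\gamma,p):=r(\gamma,1_p)$, viewing $(\gamma,1_p)$ as the morphism $(\pi(p),1_{\pi(p)},p)\to(t(\gamma),\gamma,p)$; the section property of $\mc{C}_r$ follows from the retraction identities after taking source and $\pi_1$, the principal-bundle compatibility from $\mb{G}$-equivariance of $r$, and the unital axiom $\mc{C}_r(1_{\pi(p)},p)=1_p$ from the fact that $(1_{\pi(p)},1_p)$ is the identity endomorphism on $(\pi(p),1_{\pi(p)},p)$, together with functoriality of $r$ and $r\circ\pi_{\mb{E}}={\rm{id}}_{\mb{E}}$. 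The mutual inverseness amounts to the identity $r(\Gamma,\delta)=\mc{C}_r(\gamma',p')\circ\delta\circ\mc{C}_r(\gamma,p)^{-1}$, which I would obtain from the decomposition $(\Gamma,\delta) = (\Gamma,1_{p'})\circ(1_x,\delta)$ followed by $(1_x,\delta) = (\tilde{\gamma},1_{p'})\circ(\pi_1(\delta),\delta)\circ(\gamma^{-1},1_p)$ with $\tilde{\gamma}:=\gamma\circ\pi_1(\delta)^{-1}$; applying the functor $r$ and using $r\circ\pi_{\mb{E}}(\delta)=\delta$ together with $r(\tilde{\gamma}^{-1},1_{p'})=\mc{C}_r(\tilde{\gamma},p')^{-1}$ makes the interior $\mc{C}_r(\tilde{\gamma},p')$ factors cancel in pairs.

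For (c), specialising to morphisms of the shape $(\Gamma,1_p)\colon(x,\gamma,p)\to(x',\Gamma\circ\gamma,p)$ yields $r_{\mc{C}}(\Gamma,1_p)=\mc{C}(\Gamma\circ\gamma,p)\circ\mc{C}(\gamma,p)^{-1}$. Requiring this to lie in the image of $\mc{C}$ and applying source and $\pi_1$ forces it to equal $\mc{C}(\Gamma,\mu_{\mc{C}}(\gamma,p))$, which is precisely axiom (v) of \Cref{Def:categorical connection}; conversely, that axiom gives the same equality. I expect the main obstacle across the three parts to be the factorisation argument for mutual inverseness in (b) — locating the right decomposition of a general morphism of the weak fibered product and keeping track of the cocycle relation between $\gamma$, $\gamma'$, $\Gamma$ and $\pi_1(\delta)$. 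Once that bookkeeping is in place, every remaining step is a formal consequence of the retraction identities and the functoriality of $r$.
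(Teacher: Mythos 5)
Your proposal is correct, and it is essentially the argument the paper has in mind: the paper itself skips the proof, deferring to the set-theoretic version in del Hoyo's Proposition 2.2.3 and Corollary 2.2.4, and your construction --- $r_{\mc{C}}(\Gamma,\delta)=\mc{C}(\gamma',p')\circ\delta\circ\mc{C}(\gamma,p)^{-1}$ one way, $\mc{C}_r(\gamma,p)=r(\gamma,1_p)$ the other, with mutual inverseness via the factorisation of a general morphism of the weak pullback through $\pi_{\mb{E}}(\delta)$ and cleavage lifts --- is exactly the natural adaptation being alluded to. All the details you flag check out, including the cocycle bookkeeping $\Gamma=\gamma'\circ\tilde{\gamma}^{-1}$ with $\tilde{\gamma}=\gamma\circ\pi_1(\delta)^{-1}$ and the use of the interchange law for $G_1$-equivariance of $r_{\mc{C}}$ on morphisms.
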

%A similar result in the general Lie groupoid fibrations framework can be found in \cite{MR3968895}. {\color{blue}\textbf{[R]}}

% However, to the best of our knowledge, the corresponding result in the framework of a Lie groupoid fibration equipped with an action of a Lie 2-group has not been explored in the existing literature.
\subsection{Towards a principal 2-bundle over a differentiable stack}\label{Morita}
According to the \textbf{Corollary 2.12} of \cite{MR2270285}, if the Lie groupoids $\mb{X}$ and $\mb{Y}$ are Morita equivalent, then for any Lie group $G$, the categories ${\rm{Bun}}(\mb{X},G)$ and ${\rm{Bun}}(\mb{Y},G)$ are equivalent.Then, \Cref{Splitting fibration} yields the following result:
\begin{proposition}\label{stack}
	Given a Lie 2-group $\mb{G}$, if Lie groupoids $\mb{X}$ and $\mb{Y}$ are Morita equivalent, then the category $\rm{Bun}_{\rm{Cat}}(\mb{X}, \mb{G})$ is equivalent to the category $\rm{Bun}_{\rm{Cat}}(\mb{Y}, \mb{G})$.
\end{proposition}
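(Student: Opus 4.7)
The strategy is to bootstrap the result from the Morita invariance of ordinary principal bundles using the equivalence recorded in Corollary \ref{Splitting fibration}. First, I would identify the Lie 2-group $\mb{G}$ with the Lie crossed module associated to it, so that $\mb{G} = [H \rtimes_{\alpha} G \rra G]$ for some Lie crossed module $(G, H, \tau, \alpha)$. This identification is permissible because the correspondence between Lie 2-groups and Lie crossed modules recalled in \Cref{subsection: Lie 2-groups and Lie crossed modules} is a bijection, and the statement of the proposition is invariant under choice of crossed module presentation.

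Next, by applying \Cref{Splitting fibration} to $\mb{X}$ and to $\mb{Y}$ separately, I obtain two equivalences of groupoids
\begin{equation*}
\mc{F}_{\mb{X}} \colon \rm{Bun}_{\rm{Cat}}(\mb{X}, \mb{G}) \xrightarrow{\simeq} \rm{Bun}(\mb{X}, G), \qquad \mc{F}_{\mb{Y}} \colon \rm{Bun}_{\rm{Cat}}(\mb{Y}, \mb{G}) \xrightarrow{\simeq} \rm{Bun}(\mb{Y}, G),
\end{equation*}
where $G$ is the base Lie group of the crossed module.

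Now I would invoke Corollary 2.12 of \cite{MR2270285} (cited in the sentence preceding the proposition): since $\mb{X}$ and $\mb{Y}$ are Morita equivalent, there is an equivalence of groupoids $\Phi \colon \rm{Bun}(\mb{X}, G) \xrightarrow{\simeq} \rm{Bun}(\mb{Y}, G)$. Composing with a chosen quasi-inverse $\mc{F}_{\mb{Y}}^{-1}$, the desired equivalence is realized as the composite
\begin{equation*}
\rm{Bun}_{\rm{Cat}}(\mb{X}, \mb{G}) \xrightarrow{\mc{F}_{\mb{X}}} \rm{Bun}(\mb{X}, G) \xrightarrow{\Phi} \rm{Bun}(\mb{Y}, G) \xrightarrow{\mc{F}_{\mb{Y}}^{-1}} \rm{Bun}_{\rm{Cat}}(\mb{Y}, \mb{G}).
\end{equation*}

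There is essentially no substantive obstacle here: the conceptual content of the proposition has already been packaged into \Cref{Splitting fibration}, which reduces the study of categorical-principal 2-bundles over a Lie groupoid to ordinary principal $G$-bundles over it, and Morita invariance then transports transparently across this reduction. The only mild points to verify are that a quasi-inverse to $\mc{F}_{\mb{Y}}$ exists (which is automatic because $\mc{F}_{\mb{Y}}$ is essentially surjective, full, and faithful) and that the result is independent of the chosen crossed module presentation of $\mb{G}$; both are harmless since the conclusion is only an equivalence of categories.
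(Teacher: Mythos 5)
Your proposal is correct and follows exactly the route the paper takes: the paper's own (one-line) argument is precisely to combine the Morita invariance of $\rm{Bun}(-,G)$ from \textbf{Corollary 2.12} of \cite{MR2270285} with the equivalence $\rm{Bun}_{\rm{Cat}}(\mb{X},[H\rtimes_{\alpha}G\rra G])\simeq \rm{Bun}(\mb{X},G)$ of \Cref{Splitting fibration}, applied over each of $\mb{X}$ and $\mb{Y}$. Your write-up simply makes explicit the composite of equivalences that the paper leaves implicit.
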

\Cref{stack} implies that the definition of a principal 2-bundle over a Lie groupoid, equipped with a categorical connection, can be extended over a differentiable stack represented by the base Lie groupoid. One can consult \cite{MR2817778} for a detailed discussion on differentiable stacks and Morita equivalent Lie groupoids. Our upcoming paper will deal with quasi-principal 2-bundle over a differentiable stack.

\section{Lazy Haefliger paths and thin fundamental groupoid of a Lie groupoid}\label{Section Lazy Haefliger paths and thin fundamental groupoid of a Lie groupoid} 
In this section, we introduce a notion of thin fundamental groupoid of a Lie groupoid, a generalization of the classical one, and will impose a diffeological structure on it.

We start by defining the notion of a lazy Haefliger path.

\begin{definition}\label{Definition:Haefliger path}
	Let $\mb{X}$ be a Lie groupoid and let $x,y \in X_0$. A \textit{lazy $\mb{X}$-path} or a \textit{lazy Haefliger path} $\Gamma$ \textit{from $x$ to $y$} is a sequence $\Gamma :=(\gamma_0, \alpha_1,\gamma_1, \cdots,\alpha_n, \gamma_n)$ for some $n \in \mb{N}$ where 
	\begin{itemize}
		\item[(i)] $\alpha_i:[0,1] \ra X_0$ is a path with sitting instants for all $1 \leq i \leq n$ and 
		\item[(ii)] $\gamma_i \in X_1$ for all $0 \leq i \leq n$, 
	\end{itemize}
	such that the following conditions hold:
	\begin{itemize}
		\item[(a)] $s(\gamma_0)=x$ and $t(\gamma_n)=y$;
		\item[(b)] $s(\gamma_i)= \alpha_i(1)$ for all $0 < i \leq n$;
		\item[(c)] $t(\gamma_i)= \alpha_{i+1}(0)$ for all $0 \leq i < n$.
	\end{itemize}
\end{definition}

We will say that $\Gamma $ is a \textit{lazy $\mb{X}$-path of order $n$}. We define the \textit{source of $\Gamma$} as  $s(\gamma_0)=x$ and the \textit{target of $\Gamma$} as $t(\gamma_n)=y$. For a given Lie groupoid $\mb{X}$, we shall denote by $P\mb{X}$ the set of all lazy $\mb{X}$-paths of all orders. Observe that if we remove the sitting instants condition from (i), then we recover existing notion a Haefliger path as given in \cite{guruprasad2006closed, haefliger1982groupoides, colman20111, gutt2005poisson}.

We are interested in certain equivalence classes of lazy Haefliger paths. Such equivalences or its minor variations have already been studied, for example in \cite{guruprasad2006closed, gutt2005poisson, colman20111}.
\begin{definition}\label{Definition: Equivalence of X-paths}
	A lazy $\mb{X}$-path $\Gamma :=(\gamma_0, \alpha_1,\gamma_1, \cdots,\alpha_n, \gamma_n)$ is said to be \textit{equivalent} to another lazy $\mb{X}$-path $\bar{\Gamma}$, if one is obtained from the other by a finite sequence of all or some of the following operations:
	\begin{itemize}
		\item[(1)] \textit{Removing/adding a constant path}, that is if $\alpha_{i+1}$ is a constant path in the lazy $\mb{X}$-path $\Gamma$, then by removing it we obtain the lazy $\mb{X}$-path $(\gamma_0, \alpha_1,\gamma_1,..,\gamma_{i+1} \circ \gamma_i, \cdots,\alpha_n, \gamma_n)$, where $i \in \lbrace 1,2, \cdots n-1 \rbrace$. Replacing the word "removing" by "adding" one obtains the condition for "adding a constant path".
		\[\begin{tikzcd}
			\cdot \arrow[r, "\gamma_i"] & \cdot \arrow["\alpha_{i+1}=\rm{constant}"', dotted, loop, distance=2em, in=305, out=235] \arrow[r, "\gamma_{i+1}"] & \cdot
		\end{tikzcd}\]
		\item[(2)] \textit{Removing/adding an idenitity morphism}, that is if $\gamma_i$ is an identity morphism in the lazy $\mb{X}$-path $\Gamma$, then by removing it we obtain a lazy $\mb{X}$-path $(\gamma_0, \alpha_1,\gamma_1,..,\alpha_{i+1} * \alpha_i, \cdots,\alpha_n, \gamma_n)$, where $*$ is the concatenation of paths and $i \in \lbrace 1,2,..,n-1 \rbrace$. Replacing the word "removing" by "adding" one obtains the condition for "adding an identity morphism".
		\[\begin{tikzcd}
			\cdot \arrow[r, "\alpha_i", dotted] & \cdot \arrow["\gamma_i=\rm{identity}"', loop, distance=2em, in=305, out=235] \arrow[r, "\alpha_{i+1}", dotted] & \cdot
		\end{tikzcd}\]
		
		\item[(3)] \textit{Replacing $\alpha_i$ by $t \circ \zeta_i$, replacing $\gamma_{i-1}$ by $\zeta_i (0) \circ \gamma_{i-1}$ and $\gamma_{i}$ by $\gamma_i \circ (\zeta_i(1))^{-1}$ for a given path $\zeta_i \colon [0,1] \ra X_1$ with sitting instants, such that $s \circ \zeta_i= \alpha_i$} and $i \in \lbrace 1,2, \cdots, n \rbrace$, that is the portion $(\gamma_{i-1}, \alpha_i, \gamma_i)$ of the lazy $\mb{X}$-path $\Gamma$ is replaced by the portion $\big( \zeta_i(0) \circ \gamma_{i-1}, t \circ \zeta_i, \gamma_i \circ (\zeta(1))^{-1} \big)$ to obtain a lazy $\mb{X}$-path $(\gamma_0, \alpha_1,\gamma_1,..,\alpha_{i-1},\underbrace{\zeta_i(0) \circ \gamma_{i-1}, t \circ \zeta_i, \gamma_i \circ (\zeta(1))^{-1}},\alpha_{i+1}, \cdots,\alpha_n, \gamma_n).$ 
				See the diagram below: 
		\[
		\begin{tikzcd}
			\cdot \arrow[r, "\gamma_{i-1}"] & \cdot \arrow[r, "\alpha_i", dotted] \arrow[d, "\zeta_i(0)"'] & \cdot \arrow[r, "\gamma_i"]  & . \\
			& \cdot \arrow[r, "t \circ \zeta_i"', dotted]                & \cdot \arrow[u, "\zeta_i(1)^{-1}"'] &  
		\end{tikzcd}\]
	\end{itemize}
	
\end{definition}
Adapting the same conventions as in \cite{colman20111} and \cite{gutt2005poisson}, we  call the operations in \Cref{Definition: Equivalence of X-paths} as \textit{equivalences}.

Next, we introduce a notion of \textit{thin deformation of lazy $\mb{X}$-paths}, which can be thought of as a modified thin homotopy analog of the existing notion of deformation of $\mb{X}$-paths ( \cite{colman20111}, \cite{gutt2005poisson}). We start by briefly recalling the standard notion of thin homotopy equivalence class of paths on a manifold.
\begin{definition}[Definition 2.2, \cite{MR2520993}]\label{Classical thin homotopy}
	Let $\alpha , \beta \colon [0,1]  \ra M$ be elements of $PM$, the set of paths with sitting instants in the manifold $M$. Then $\alpha$ is said to be \textit{thin homotopic} to $\beta$ if there exists a smooth map $H:[0,1]^2 \rightarrow M$ with the following properties:		
	\begin{itemize}
		\item[(i)] $H$ has sitting instants i.e there exists an $\epsilon \in (0,1/2)$ with 
		\begin{itemize}
			\item[$\bullet$]  $H(s,t)=x$ for $t \in [0, \epsilon)$ and $H(s,t)=y$ for $t \in (1- \epsilon,1])$ and 
			\item[$\bullet$]$H(s,t)= \alpha(t)$ for $s \in [0,\epsilon)$ and $H(s,t)= \beta(t)$ for $s \in (1- \epsilon, 1]$;
		\end{itemize}
		\item[(ii)] the differential of $H$ has atmost rank 1 at all points.
	\end{itemize}
\end{definition}
It is well-known that under the thin homotopy quotient, $\Pi_{\rm{thin}}(M):= [\frac{PM}{\sim} \rra M]$ is a diffeological groupoid (see \textbf{Proposition A.25}, \cite{MR3521476}), known as the \textit{thin fundamental groupoid of $M$}.
\begin{definition}\label{Definition: Thin deformation}
	A \textit{thin deformation} from a lazy $\mb{X}$-path $\Gamma :=(\gamma_0, \alpha_1,\gamma_1, \cdots,\alpha_n, \gamma_n)$ to a lazy $\mb{X}$-path $\Gamma' :=(\gamma'_0, \alpha'_1, \gamma'_1 \cdots, \alpha'_n, \gamma'_n)$ of the same order is given by a sequence of smooth paths $ \lbrace \zeta_i: [0,1] \ra X_1 \rbrace_{i =0,1, \cdots,n}$ with $\zeta_i(0)= \gamma_i$ and $\zeta_i(1)= \gamma'_i$  such that 
	\begin{enumerate}[(i)]
		\item$\lbrace \zeta_i: [0,1] \ra X_1 \rbrace_{i =0,1, \cdots,n}$ are  paths with sitting instants;
		\item$\alpha_i$ is thin homotopic to $(s \circ \zeta_i)^{-1} * \alpha_i' * (t \circ \zeta_{i-1})$ for all $i=1,2 \cdots .,n$, where $*$ is the concatenation of paths, as illustrated by the following diagram:		
		\begin{equation}\nonumber
			\begin{tikzcd}
				\cdot \arrow[r] \arrow[r, "\gamma_{i-1}"] & \cdot \arrow[r, "\alpha_i", dotted] \arrow[d, "t \circ \zeta_{i-1}"', dotted] & \cdot \arrow[r, "\gamma_i"]                          & \cdot \\
				\cdot \arrow[r, "\gamma{\rm{'}}_{i-1}"']          & \cdot \arrow[r, "\alpha_i{\rm{'}}"', dotted]                        & \cdot \arrow[u, "(s \circ \zeta_i)^{-1}"', dotted] \arrow[r, "\gamma{\rm{'}}_i"'] & \cdot
			\end{tikzcd};
		\end{equation}
		\item$s \circ \zeta_0$ and $t \circ \zeta_n$ are constant paths in $X_0$.
	\end{enumerate}
	%	We say $\Gamma$ is \textit{thinly deformic} to $\Gamma'$.
\end{definition}
\begin{proposition}\label{thin homotopy of X-paths is an equivalence relation}
	Given a Lie groupoid $\mb{X}$, \Cref{Definition: Thin deformation} induces an equivalence relation on $P\mb{X}$.
\end{proposition}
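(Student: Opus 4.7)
The plan is to verify reflexivity, symmetry, and transitivity of the relation $\sim$ on $P\mb{X}$ generated by the moves of \Cref{Definition: Equivalence of X-paths} (equivalences) together with the thin deformations of \Cref{Definition: Thin deformation}: explicitly, $\Gamma \sim \Gamma'$ iff there is a finite chain $\Gamma = \Gamma_0, \ldots, \Gamma_k = \Gamma'$ with each consecutive pair related by one of these two types. Reflexivity holds by taking the empty chain, or equivalently the thin deformation where each $\zeta_i$ is the constant path at $\gamma_i$: conditions (i) and (iii) are immediate, and (ii) reduces to $\alpha_i \sim_{\mathrm{thin}} \mathrm{const}_{t(\gamma_{i-1})} * \alpha_i * \mathrm{const}_{s(\gamma_i)}$, a standard property of thin homotopy on the manifold $X_0$ recalled in \Cref{Classical thin homotopy}.

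For symmetry, the moves of \Cref{Definition: Equivalence of X-paths} are reversible by inspection: (1) and (2) are each paired remove/add, and (3) admits an inverse using the reversed path $\zeta_i^{-1}(t) := \zeta_i(1-t)$. For a thin deformation $\{\zeta_i\}_{i=0}^n$ from $\Gamma$ to $\Gamma'$, I set $\bar{\zeta}_i(t) := \zeta_i(1-t)$, which is smooth with sitting instants and satisfies $\bar{\zeta}_i(0) = \gamma'_i$, $\bar{\zeta}_i(1) = \gamma_i$, so that $s \circ \bar{\zeta}_i = (s \circ \zeta_i)^{-1}$ and $t \circ \bar{\zeta}_i = (t \circ \zeta_i)^{-1}$. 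The required condition $\alpha'_i \sim_{\mathrm{thin}} (s \circ \zeta_i) * \alpha_i * (t \circ \zeta_{i-1})^{-1}$ is obtained by pre- and post-composing both sides of $\alpha_i \sim_{\mathrm{thin}} (s \circ \zeta_i)^{-1} * \alpha'_i * (t \circ \zeta_{i-1})$ with the appropriate inverse segments, and invoking the standard fact that $\beta * \beta^{-1}$ is thin homotopic to a constant path.

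For transitivity within a single order, given $\{\zeta^1_i\}$ from $\Gamma$ to $\Gamma'$ and $\{\zeta^2_i\}$ from $\Gamma'$ to $\Gamma''$, I take $\zeta_i := \zeta^2_i * \zeta^1_i$ (reparametrized to keep sitting instants). Then $\zeta_i(0) = \gamma_i$, $\zeta_i(1) = \gamma''_i$, and using $(A*B)^{-1} = B^{-1} * A^{-1}$ one obtains $(s \circ \zeta_i)^{-1} * \alpha''_i * (t \circ \zeta_{i-1}) = (s \circ \zeta^1_i)^{-1} * (s \circ \zeta^2_i)^{-1} * \alpha''_i * (t \circ \zeta^2_{i-1}) * (t \circ \zeta^1_{i-1})$. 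Substituting the relation $(s \circ \zeta^2_i)^{-1} * \alpha''_i * (t \circ \zeta^2_{i-1}) \sim_{\mathrm{thin}} \alpha'_i$ from the second deformation and then $(s \circ \zeta^1_i)^{-1} * \alpha'_i * (t \circ \zeta^1_{i-1}) \sim_{\mathrm{thin}} \alpha_i$ from the first, and invoking preservation of thin homotopy under concatenation in $X_0$, yields $\alpha_i \sim_{\mathrm{thin}} (s \circ \zeta_i)^{-1} * \alpha''_i * (t \circ \zeta_{i-1})$. Transitivity across chains of varying orders is automatic from the chain formulation.

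The main technical point to watch is that each constructed path $\bar{\zeta}_i$ and $\zeta^2_i * \zeta^1_i$ must be smoothly reparametrized to retain sitting instants so that it remains a valid smooth path in $X_1$, and that such reparametrizations do not alter the thin homotopy class of the corresponding paths in $X_0$ under $s$ and $t$. Both features are inherited from the standard path-space-with-sitting-instants formalism on manifolds, so every step above is legitimate and the relation is indeed an equivalence on $P\mb{X}$.
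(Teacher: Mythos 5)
Your proposal is correct and takes essentially the same route as the paper's proof: reflexivity via constant paths, symmetry via the time-reversals $\zeta_i(1-r)$, and transitivity via concatenation of the deformations, with you additionally writing out the verification of condition (ii) of \Cref{Definition: Thin deformation} that the paper leaves to the reader. The only caveat is one of framing: the proposition concerns the thin-deformation relation alone (its combination with the equivalences of \Cref{Definition: Equivalence of X-paths} is only taken later, in \Cref{Definition: Thin homotopy of X-paths}), and phrasing it as the relation "generated by" both kinds of moves would make the claim vacuous — but since your constructions exhibit a single thin deformation witnessing each of reflexivity, symmetry and transitivity, the substance of the intended statement is fully established.
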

\begin{proof} 
	A thin deformation from a lazy $\mb{X}$-path $\Gamma$ to itself is given by the sequence of constant paths on  the elements of $X_1$ in $\Gamma$. Now,  if  $ \lbrace \zeta_{i} \colon [0,1] \ra X_1 \rbrace_i$  is a thin deformation from $\Gamma$ to another lazy $\mb{X}$-path $\Gamma'$, then the idenitites $s \circ \zeta_i^{-1}=(s \circ \zeta_i)^{-1}$ and $t \circ \zeta_i^{-1}=(t \circ \zeta_i)^{-1}$ imply that $ \lbrace \zeta_{i}^{-1} \colon [0,1] \ra X_1 \rbrace_{i=0,1 \cdots,n}$  defined by $r \mapsto \zeta_i(1-r)$, is a thin deformation from $\Gamma'$ to $\Gamma$.
		%		it is immediate that $ \lbrace \zeta_{i}^{-1} \colon [0,1] \ra X_1 \rbrace_{i=0,1 \cdots,n}$  defined by $r \mapsto \zeta(1-r)$, is a thin deformation from $\Gamma'$ to $\Gamma$ from the observation that $s \circ \zeta_i^{-1}=(s \circ \zeta_i)^{-1}$ and $t \circ \zeta_i^{-1}=(t \circ \zeta_i)^{-1}$.
		%		it is immediate that $ \lbrace \zeta_{i}^{-1} \colon [0,1] \ra X_1 \rbrace_i$ gives a thin deformation on the other direction. 
			To show the transitivity, suppose $ \lbrace \delta_{i} \colon [0,1] \ra X_1 \rbrace_i$ is a thin deformnation from $\Gamma'$ to $\Gamma''$, then one verifies that $ \lbrace \delta_{i} * \zeta_i \colon [0,1] \ra X_1 \rbrace_i$  is a thin deformation from $\Gamma$ to $\Gamma''$, where $*$ is the concatenation of paths.
	
%	Verification is as follows:
%	
%	\subsection*{Reflexive:}
%	
%	
%	
%	Let $\Gamma :=(\gamma_0, \alpha_1,\gamma_1, \cdots,\alpha_n, \gamma_n)$ be a lazy $\mb{X}$-path from $x$ to $y$. A thin deformation from $\Gamma$ to itself is given by the sequence of constant paths $ \lbrace \zeta_{i} \colon [0,1] \ra X_1 \rbrace_{i=0,1\cdots,  ,n}$ defined by $r \mapsto \gamma_i$ for all $r \in [0,1]$.

\end{proof} 
\begin{definition}\label{Definition: Thin homotopy of X-paths}
	A \textit{lazy $\mb{X}$-path thin homotopy} is defined as the equivalence relation on $P\mb{X}$ generated by the equivalence relations in \Cref{Definition: Equivalence of X-paths} and \Cref{thin homotopy of X-paths is an equivalence relation}.
\end{definition}
We denote the corresponding quotient set by $\frac{P\mb{X}}{\sim}$. Explicitly,	a pair of lazy $\mb{X}$-paths (with fixed endpoints) is related by a lazy $\mb{X}$-path thin homotopy if one is obtained from the other by a finite sequence of equivalences and thin deformations. 
\begin{proposition}\label{Propostioni:thin fundamental groupoid of a Lie groupoid}
	Given a Lie groupoid $\mb{X}= [X_1 \rra X_0]$, there is a groupoid $\Pi_{\rm{thin}}(\mb{X})$,  whose object set is $X_0$ and the morphism set is $\frac{P\mb{X}}{\sim}$. The structure maps are given as follows:
	\begin{enumerate}[(i)]
		\item Source: $s: \frac{P\mb{X}}{\sim} \ra X_0$ is defined by $[\Gamma = (\gamma_0, \alpha_1,\gamma_1, \cdots,\alpha_n, \gamma_n)] \mapsto s(\gamma_0);$ 
		\item Target: $t: \frac{P\mb{X}}{\sim} \ra X_0$ is defined by $[\Gamma = (\gamma_0, \alpha_1,\gamma_1, \cdots, \alpha_n, \gamma_n)] \mapsto t(\gamma_n);$ 
		\item Composition: if $s([\Gamma'= (\gamma'_0, \alpha'_1,\gamma'_1, \cdots,\alpha'_n, \gamma'_n)])= t([\Gamma = (\gamma_0, \alpha_1,\gamma_1, \cdots,\alpha_m, \gamma_m)])$, then define
		\begin{equation}\nonumber
			[(\gamma'_0, \alpha'_1,\gamma'_1, \cdots,\alpha'_n, \gamma'_n)] \circ [(\gamma_0, \alpha_1,\gamma_1, \cdots,\alpha_m, \gamma_m)] := [(\gamma_0, \alpha_1,\gamma_1, \cdots,\alpha_m, \gamma'_0 \circ  \gamma_m, \alpha'_1,\gamma'_1, \cdots,\alpha'_n, \gamma'_n)];
		\end{equation}
		\item Unit: $u : X_0 \ra  \frac{P\mb{X}}{\sim}$ is given by $x \mapsto [(1_x, c_x,1_x)]$ where $c_x \colon [0,1] \ra X_0$ is the constant path at $x \in X_0$;
		\item Inverse: $\mathfrak{i} \colon  \frac{P\mb{X}}{\sim} \ra  \frac{P\mb{X}}{\sim}$ is given by
		\begin{equation}\nonumber
			[(\gamma_0, \alpha_1,\gamma_1, \cdots,\gamma_{n-1},\alpha_n, \gamma_n)] \mapsto [(\gamma_n^{-1}, \alpha^{-1}_{n},\gamma^{-1}_{n-1}, \cdots,\gamma^{-1}_1, \alpha^{-1}_1, \gamma^{-1}_0)].
		\end{equation}
	\end{enumerate}
\end{proposition}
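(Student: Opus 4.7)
The plan is to verify first that each structure map descends to $\frac{P\mb{X}}{\sim}$ and then that the groupoid axioms hold on representatives. Since $\sim$ is generated by the three elementary equivalences of \Cref{Definition: Equivalence of X-paths} and the thin deformations of \Cref{Definition: Thin deformation}, each well-definedness check reduces to testing invariance under these generating moves. The source and target are preserved by inspection of moves (1)--(3) and by condition (iii) of \Cref{Definition: Thin deformation}; the unit is manifestly well-defined.

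For composition and inverse, I plan to show that each generating move on one factor $\Gamma$ (respectively on the sequence being reversed) extends to a generating move of the same type on the concatenation (respectively on the reverse). Moves (1) and (2) applied in the interior of a factor remain in the interior of the concatenation, while move (3) at the boundary index of a factor extends cleanly: if $\zeta_n$ modifies the rightmost morphism of $\Gamma$ to $\gamma_n \circ \zeta_n(1)^{-1}$, then in the concatenation the glued morphism becomes $\gamma'_0 \circ \gamma_n \circ \zeta_n(1)^{-1}$, which is again precisely the output of move (3) applied to the concatenated sequence. A thin deformation $\lbrace \zeta_i \rbrace$ on $\Gamma$ extends by taking constant $\zeta$'s on the morphisms of $\Gamma'$ and replacing the boundary $\zeta_n$ by $r \mapsto \gamma'_0 \circ \zeta_n(r)$; conditions (ii) and (iii) of \Cref{Definition: Thin deformation} are then straightforward. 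The inverse handles each $\zeta_i$ via $r \mapsto \zeta_{n-i}(1-r)$. Associativity is strict since composition is concatenation and composition in $\mb{X}$ is associative; the unit laws follow by stripping the trailing (or leading) segment of $(1_x, c_x, 1_x)$ using moves (1) and (2).

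The main obstacle is the inverse laws, which I will establish by an inward cancellation. The composite $\mathfrak{i}[\Gamma] \circ [\Gamma]$ has the representative $(\gamma_0, \alpha_1, \dots, \alpha_n, \gamma_n^{-1} \circ \gamma_n, \alpha_n^{-1}, \dots, \alpha_1^{-1}, \gamma_0^{-1})$. The central morphism $\gamma_n^{-1} \circ \gamma_n$ is the identity $1_{s(\gamma_n)}$, so move (2) removes it and fuses the adjacent paths into $\alpha_n^{-1} * \alpha_n$. The classical fact underlying \Cref{Classical thin homotopy} that $\alpha_n^{-1} * \alpha_n$ is thin homotopic to the constant path at $t(\gamma_{n-1})$ lifts to a thin deformation of the whole lazy $\mb{X}$-path by taking every other $\zeta_i$ constant at the unchanged morphism; move (1) then absorbs the new constant path into the merged identity $\gamma_{n-1}^{-1} \circ \gamma_{n-1}$. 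Iterating this three-step cycle from the outside in, after $n-1$ full cycles the composite is reduced to an order-$2$ representative with central identity $\gamma_1^{-1} \circ \gamma_1$; one more application of move (2) together with a thin deformation produces the order-one representative $(\gamma_0, c_{t(\gamma_0)}, \gamma_0^{-1})$. Finally, a single application of move (3) with the constant path $\zeta_1 \equiv \gamma_0^{-1}$ transforms this into the unit $(1_{s(\gamma_0)}, c_{s(\gamma_0)}, 1_{s(\gamma_0)}) = u(s(\gamma_0))$, since $\zeta_1(0) \circ \gamma_0 = \gamma_0^{-1} \circ \gamma_0 = 1_{s(\gamma_0)}$, $t \circ \zeta_1 = c_{s(\gamma_0)}$, and $\gamma_0^{-1} \circ (\zeta_1(1))^{-1} = \gamma_0^{-1} \circ \gamma_0 = 1_{s(\gamma_0)}$. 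The symmetric law $[\Gamma] \circ \mathfrak{i}[\Gamma] \sim u(t(\gamma_n))$ is handled identically, and the delicate point throughout is verifying that the classical thin homotopy of paths with sitting instants lifts to a valid thin deformation in the sense of \Cref{Definition: Thin deformation}, which is automatic once the untouched $\zeta_i$'s are taken constant.
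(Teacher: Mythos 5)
Your proposal is correct and follows essentially the same route as the paper: well-definedness of the structure maps is checked move-by-move against the generators of the equivalence relation, with thin deformations on one factor extended to the concatenation by padding with constant paths (the paper organizes this as four cases combining equivalences and thin deformations on the two factors, which your one-factor-at-a-time reduction generates). The paper declares the unit and inverse laws "straightforward," so your inward-cancellation argument for $\mathfrak{i}[\Gamma]\circ[\Gamma]\sim u(s(\gamma_0))$ simply supplies details the paper omits, and it is sound.
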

\begin{proof}
	A direct consequence of the definition is that $s$ and $t$ are well-defined. One can verify the well-definedness of the composition by considering the following four cases separately.
	\begin{itemize}
		\item[(i)] If $\tilde{\Gamma}'$ is obtained from $\Gamma'$ by an equivalence and if $\tilde{\Gamma}$ is obtained from $\Gamma$ by an equivalence, then $\Gamma' \circ \Gamma$ is lazy $\mb{X}$-path thin homotopic to $\tilde{\Gamma}' \circ \tilde{\Gamma}$.
		\item[(ii)] If $\tilde{\Gamma}'$ is obtained from $\Gamma'$ by a thin deformation and if $\tilde{\Gamma}$ is obtained from $\Gamma$ by a thin deformation, then $\Gamma' \circ \Gamma$ is lazy $\mb{X}$-path thin homotopic to $\tilde{\Gamma}' \circ \tilde{\Gamma}$.
		\item[(iii)] If $\tilde{\Gamma}'$ is obtained from $\Gamma'$ by an equivalence and if $\tilde{\Gamma}$ is obtained from $\Gamma$ by a thin deformation, then $\Gamma' \circ \Gamma$ is lazy $\mb{X}$-path thin homotopic to $\tilde{\Gamma}' \circ \tilde{\Gamma}$.
		\item[(iv)]If $\tilde{\Gamma}'$ is obtained from $\Gamma'$ by a thin deformation and if $\tilde{\Gamma}$ is obtained from $\Gamma$ by an equivalence, then $\Gamma' \circ \Gamma$ is lazy $\mb{X}$-path thin homotopic to $\tilde{\Gamma}' \circ \tilde{\Gamma}$.
	\end{itemize}
	\subsection*{Case (i):} 
	Straightforward succesive execution of operations on $\Gamma' \circ \Gamma$ that produce $\tilde{\Gamma}'$  and $\tilde{\Gamma}$ from $\Gamma'$  and $\Gamma$ respectively. 
	\subsection*{Case (ii):} Observe that if $ \lbrace \zeta_{i} \colon [0,1] \ra X_1 \rbrace_{i=0,1 \cdots, n}$ and $ \lbrace \zeta_{i}' \colon [0,1] \ra X_1 \rbrace_{i=0,1 \cdots, n}$ are thin deformations from $\Gamma$ to $\tilde{\Gamma}$ and $\Gamma'$ to $\tilde{\Gamma}'$ respectively, then $ \lbrace \zeta_0, \zeta_1, \cdots,\zeta_{m-1}, \zeta_0' \circ \zeta_m, \zeta'_1,\zeta'_2, \cdots,\zeta_n \rbrace$ is a thin deformation from $\Gamma' \circ \Gamma$ to $\tilde{\Gamma}' \circ \tilde{\Gamma}$. 
	
	\subsection*{Case (iii):}  Let $\lbrace \zeta_{i} \colon [0,1] \ra X_1 \rbrace_{i=0,1 \cdots,n}$ be a thin deformation from $\Gamma$ to $\tilde{\Gamma}$ and let $\epsilon$ be an equivalence operation on $\Gamma'$ to obtain $\tilde{\Gamma}'$. Then, $d:=\lbrace \zeta_0, \zeta_1, \cdots ,\zeta_m, c_{\gamma_0'},c_{\gamma_1'}, \cdots, c_{\gamma_n'}   \rbrace $ is a thin deformation from $\Gamma' \circ \Gamma$ to $\Gamma' \circ \tilde{\Gamma}$, where $\lbrace c_{{\gamma_i'}} \rbrace_{i=0,1 \cdots n}$ are constant paths in $X_1$ defined by $c_{\gamma_i'}(r)=\gamma_i'$ for all $r \in [0,1]$. Applying the equivalence operation $\epsilon$ on $\Gamma' \circ \tilde{\Gamma}$  we will obtain $\tilde{\Gamma}' \circ \tilde{\Gamma}$.

\subsection*{Case (iv):} It can be verified in the same way as Case(iii).

The verification of associativity of the composition and its compatibility with the unit and inverse map are straightforawrd.
	
%	\footnote{Verification of associativity is obvious. Can it be removed?}Let $\Gamma, \Gamma', \Gamma''$ be a sequence of three composable lazy $\mb{X}$-paths. For the verification of associativity of the composition, note that it is sufficient to verify only the following three cases:
%	\begin{itemize}
%		\item[(a)] When $\Gamma$ is of the form $(\gamma)$, $\Gamma'$ is of the form $(\gamma')$ and $\Gamma''$ is of the form $(\gamma'')$ for $\gamma, \gamma', \gamma'' \in X_1$;
%		\item[(b)] When $\Gamma$ is of the form $(\rm{id}, \alpha, \rm{id})$,  $\Gamma'$ is of the form $(\rm{id}, \alpha', \rm{id})$ and  $\Gamma''$ is of the form $(\rm{id}, \alpha'', \rm{id})$ where $\rm{id}$ are the identity morphisms;
%		\item[(c)] When $\Gamma$, $\Gamma'$ and $\Gamma''$ are not of the form mentioned in (a) and (b).
%	\end{itemize}
%	(a) follows directly from the associativity of the composition in $\mb{X}$, (b) follows as for any three composable paths $\alpha, \beta, \gamma$ with sitting instants in $X_0$, the path  $(\gamma* \beta)* \alpha$ is thin homotopic to $\gamma*(\beta* \alpha)$ and (c) is clear from definition of the composition itself. 
%	
%	The verification of the unit $u$  and the inverse $i$ are immediate. Hence, $\Pi_{\rm{thin}}(\mb{X})$ is a groupoid.
	
\end{proof}

\begin{definition} \label{Thin homotopy groupod of a Lie groupoid}
	Given a Lie groupoid $\mb{X}= [X_1 \rra X_0]$, the groupoid $\Pi_{\rm{thin}}(\mb{X})$ is called the \textit{thin fundamental groupoid of the Lie groupoid $\mb{X}$.}
\end{definition}
As an element $x$ and a path $\alpha$ in a manifold $M$ can respectively be identified with lazy $[M \rra M]$-paths $(1_x, c_x, 1_x)$ and $(1_{\alpha(0)}, \alpha, 1_{\alpha(1)})$ for a constant path $c_x$ at $x$, then $\Pi_{{\rm{thin}}}(M \rra M)$ reduces to $\Pi_{{\rm{thin}}}(M)$.

In the following subsection, we will show that $\Pi_{\rm{thin}}(\mb{X})$ is a diffeological groupoid for any Lie groupoid $\mb{X}$.

\subsection{Smoothness of the thin fundamental groupoid of a Lie groupoid}\label{Subsection Smoothness of thin fundamental groupoid of a Lie groupoid}
Given a Lie groupoid $\mb{X}$, define an infinite sequence of sets as $\lbrace P\mb{X}_n) \rbrace_{n \in \mb{N} \cup \lbrace 0 \rbrace}$, where $P\mb{X}_0:=X_1$ and $P\mb{X}_n:= X_1 \times_{t,X_0,ev_0}PX_0 \times_{ev_1,X_0,s}X_1 \times_{t,X_0,ev_0} \cdots \times_{t,X_0,ev_0}PX_0 \times_{ev_1,X_0,s} \times X_1$, for $n \in \mb{N}$. It is clear from the definition itself that $P\mb{X}_n$ has a natural identification with the set of lazy $\mb{X}$-paths of order $n$ for each $n \in \mb{N}$. Hence, as a set
$P\mb{X}= \cup_{i \in \mb{N} \cup \lbrace 0 \rbrace}P\mb{X}_i=\sqcup_{i \in \mb{N} \cup \lbrace 0 \rbrace}P\mb{X}_i$. 
% Now, in order to define a diffeology on $P\mb{X}$, we need to make the following easy observation:
%\begin{lemma}\label{Union of diffeological spaces}
%	For any family $\lbrace (S_i, D_{S_i}) \rbrace|_{i \in I}$ of diffelogical spaces, the set $\cup_{i \in I}S_i$ is a diffeological space with diffelogy $D_{\cup_{i \in I}D_{S_i}}:= (\cup_{i \in I}D_{S_i})\cup D_{S_I}$, where $D_{S_I}:= \lbrace p \colon U \ra \cup_{i \in I}S_i :  U \subseteq  \mb{R}^{n} {\rm{open}}, n \in \mb{N}$, such that there exists an open cover $ \lbrace U_j \rbrace_{j \in \alpha}$ of $U$, satisfying  $(p |_{U_j} \colon U_j \ra \cup_{i \in I} S_i) \in \cup_{i \in I}D_{S_i}$ for each $j \in \alpha \rbrace$.
%\end{lemma}
\begin{proposition}\label{PX is a diffeological space}
	For any Lie groupoid $\mb{X}$, the set of lazy $\mb{X}$-paths $P\mb{X}$ is a diffeological space.
\end{proposition}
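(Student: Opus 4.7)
The plan is to construct the diffeology on $P\mb{X}$ by assembling it from the standard diffeological constructions already recalled in \Cref{Diffeological spaces and some of its properties:}. The overall strategy is: first put a diffeology on each $P\mb{X}_n$ (the set of lazy $\mb{X}$-paths of order $n$) using fiber products, and then glue these together using the sum diffeology.

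More concretely, I would proceed in the following order. First, treat $X_1$ as a diffeological space via the canonical manifold diffeology (\Cref{manifold diffeology}) and $PX_0$ as a diffeological space via the path space diffeology (\Cref{Path space diffeology}). The source and target maps $s,t\colon X_1 \to X_0$ are smooth and hence diffeological, and the evaluation maps $ev_0, ev_1 \colon PX_0 \to X_0$ are diffeological by \Cref{Path space diffeology}. Second, form $P\mb{X}_n$ by iterated fiber products of the form
\[
P\mb{X}_n = X_1 \times_{t, X_0, ev_0} PX_0 \times_{ev_1, X_0, s} X_1 \times \cdots \times X_1,
\]
and endow it with the fiber product diffeology from \Cref{Fibre product diffeology}; this is well defined because at each step the two maps being pulled back are already diffeological. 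For $n=0$, set $P\mb{X}_0 = X_1$ with its manifold diffeology.

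Finally, since $P\mb{X} = \sqcup_{n \in \mb{N}\cup\{0\}} P\mb{X}_n$ as a set, I would equip $P\mb{X}$ with the sum diffeology from \Cref{sum diffeology} built from the family $\{P\mb{X}_n\}_{n\in \mb{N}\cup\{0\}}$. A plot $p\colon U \to P\mb{X}$ is then, locally on $U$, a plot into some single $P\mb{X}_n$; in concrete terms, this unwinds to the data of (locally constant) order $n$ together with a smooth family of $X_1$-morphisms at each node and a smooth family in $U\times [0,1]\to X_0$ at each interval position, compatible with the source/target constraints.

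The only potential subtlety, rather than a genuine obstacle, is checking that the three axioms of a diffeology indeed survive the combination of fiber products and a sum. Both are already explicitly recorded as closure operations in the diffeology literature cited (\Cref{Fibre product diffeology} and \Cref{sum diffeology}), so the verification amounts to quoting these constructions in sequence. No smoothness of the quotient is needed at this stage; that will only be an issue in the subsequent result that establishes a diffeology on $\frac{P\mb{X}}{\sim}$ via \Cref{quotient diffeology} and \Cref{Technical 1}.
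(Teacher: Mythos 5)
Your proposal is correct and follows essentially the same route as the paper: endow each $P\mb{X}_n$ with the fiber product diffeology (using that the source, target, and evaluation maps are maps of diffeological spaces by \Cref{manifold diffeology} and \Cref{Path space diffeology}), and then glue via the sum diffeology of \Cref{sum diffeology} on $P\mb{X}=\sqcup_n P\mb{X}_n$. No gaps.
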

\begin{proof} 
	Since by \Cref{manifold diffeology} and \Cref{Path space diffeology} respectively, source-target and evaluation maps are maps of diffeological spaces, the fiber product diffeology (\Cref{Fibre product diffeology}) ensures that for each $n \in \mb{N}$, $P\mb{X}_n$ is a diffeologial space with diffeology given by
	$D_{P\mb{X}_n}:= \biggl\{(p^0_{X_1},p^{1}_{PX_0},p^1_{X_1}, \cdots,p^{n}_{PX_0},p^n_{X_1}) \in D_{X_1} \times D_{PX_0} \times D_{X_1} \times \cdots \times D_{PX_0} \times D_{X_1}: t \circ p^0_{X_1}=ev_0 \circ p^{1}_{PX_0}, ev_1 \circ p^{1}_{PX_0}=s \circ p^1_{X_1}, \cdots, ev_1 \circ p^{n}_{PX_0} =s \circ p^n_{X_1} \biggr\}$. Then, \Cref{sum diffeology}, induces  the sum diffeology on $P\mb{X}$.
%	defined by $D_{P\mb{X}}= (\cup_{n \in \mb{N} \cup \lbrace 0 \rbrace}D_{P\mb{X}_n})\cup D_{P\mb{X}_{\mb{N} \cup \lbrace 0 \rbrace}}$ where $D_{P\mb{X}_{\mb{N} \cup \lbrace 0 \rbrace}}:= \lbrace p \colon U \ra \cup_{n \in \mb{N} \cup \lbrace 0 \rbrace}P\mb{X}_n :  U \subseteq  \mb{R}^{n} \, {\rm{open}}, n \in \mb{N}$, such that there exists an open cover $ \lbrace U_j \rbrace_{j \in \alpha}$ of $U$, satisfying  $(p |_{U_j} \colon U_j \ra \cup_{n \in \mb{N} \cup \lbrace 0 \rbrace}P\mb{X}_n) \in \cup_{n \in \mb{N} \cup \lbrace 0 \rbrace}D_{P\mb{X}_n}$ for each $j \in \alpha \rbrace$. 
\end{proof}
\begin{corollary}\label{Diffeology of quotient of Xpath space}
	$\frac{P\mb{X}}{\sim}$ is a diffeological space.	
\end{corollary}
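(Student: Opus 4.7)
The proof is essentially an immediate consequence of \Cref{PX is a diffeological space} combined with the quotient diffeology construction recalled in \Cref{quotient diffeology}. My plan is as follows.

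First, note that \Cref{Definition: Thin homotopy of X-paths} defines a genuine equivalence relation $\sim$ on $P\mb{X}$ (this was already verified in \Cref{thin homotopy of X-paths is an equivalence relation} for the thin deformation part, while the equivalences of \Cref{Definition: Equivalence of X-paths} generate an equivalence relation by construction, and the join of the two is again an equivalence relation). Next, by \Cref{PX is a diffeological space}, the set $P\mb{X}$ carries the sum diffeology $D_{P\mb{X}}$ obtained from the fibre-product diffeologies $D_{P\mb{X}_n}$ on each stratum $P\mb{X}_n$.

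Given this, consider the set-theoretic quotient map $q \colon P\mb{X} \to \frac{P\mb{X}}{\sim}$. Applying the quotient diffeology recipe of \Cref{quotient diffeology} to the diffeological space $(P\mb{X}, D_{P\mb{X}})$ and the equivalence relation $\sim$, one obtains the diffeology
\begin{equation*}
D_{\frac{P\mb{X}}{\sim}} := \Big\{ p \colon U \to \tfrac{P\mb{X}}{\sim} \,\Big|\, \text{locally around every } u \in U \text{ there exists } \bar{p} \in D_{P\mb{X}} \text{ with } q \circ \bar{p} = p \Big\},
\end{equation*}
and $\frac{P\mb{X}}{\sim}$ becomes a diffeological space with respect to which $q$ is a map of diffeological spaces. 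This gives the statement.

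Since the argument is a direct appeal to two already-established facts, I do not anticipate any genuine obstacle; the only point that deserves a brief verification is that $\sim$ is a bona fide equivalence relation on all of $P\mb{X}$ (not only on each fixed-order stratum $P\mb{X}_n$), since the operations (1) and (2) of \Cref{Definition: Equivalence of X-paths} change the order of the lazy $\mb{X}$-path. This is however immediate from the way $\sim$ is defined as the equivalence relation \emph{generated} by the listed operations together with thin deformation, so reflexivity, symmetry, and transitivity hold automatically.
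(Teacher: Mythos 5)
Your proof is correct and follows exactly the paper's route: the paper's own proof is the one-line observation that the claim is a direct consequence of \Cref{PX is a diffeological space} together with the quotient diffeology of \Cref{quotient diffeology}. Your additional remark that $\sim$ is a genuine equivalence relation on all of $P\mb{X}$ (across strata of different orders) is a harmless and reasonable elaboration, not a departure.
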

\begin{proof}
	A direct consequence of \Cref{PX is a diffeological space} and \Cref{quotient diffeology}.
\end{proof}

\begin{lemma}\label{composition of PX}
	For any Lie groupoid $\mb{X}$, 
	\begin{itemize}
		\item[(a)] the multiplication map 
		\begin{equation}\nonumber
			\begin{split}
				\tilde{m} \colon  & P\mb{X} \times_{s,X_0,t} P\mb{X} \ra P\mb{X}\\
				& \Big((\gamma'_0, \alpha'_1, \cdots,\alpha'_n, \gamma'_n), (\gamma_0, \alpha_1, \cdots,\alpha_m, \gamma_m) \Big) \mapsto (\gamma_0, \alpha_1, \cdots,\alpha_m, \gamma'_0 \circ  \gamma_m, \alpha'_1, \cdots,\alpha'_n, \gamma'_n),
			\end{split}
		\end{equation}
		\item[(b)] the unit map 
		\begin{equation}\nonumber
			\begin{split}
				\tilde{u} \colon & X_0 \ra P\mb{X}\\
				& x \mapsto (1_x,c_x,1_x),	
			\end{split}
		\end{equation}
		where $c_x$ is the constant path at $x$,
		\item[(c)] the inverse map
		\begin{equation}\nonumber
			\begin{split}
				\tilde{\mathfrak{i}} \colon & P\mb{X} \ra P\mb{X}\\
				& (\gamma_0, \alpha_1,\gamma_1, \cdots,\gamma_{n-1},\alpha_n, \gamma_n) \mapsto (\gamma_n^{-1}, \alpha^{-1}_{n},\gamma^{-1}_{n-1}, \cdots,\gamma^{-1}_1, \alpha^{-1}_1, \gamma^{-1}_0),
			\end{split}
		\end{equation}
	\end{itemize}
	are maps of diffeological spaces.		
\end{lemma}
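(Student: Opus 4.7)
The plan is to verify each of the three maps at the level of plots, reducing to smoothness on each component $P\mb{X}_n$ via the sum diffeology. Recall that a map out of a diffeological space is smooth iff composition with every plot is a plot, and a plot into $P\mb{X}=\sqcup_n P\mb{X}_n$ with the sum diffeology locally factors through a single $P\mb{X}_n$. Thus it suffices to check smoothness on the disjoint pieces.

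For part (a), I would start with a plot $p\colon U\to P\mb{X}\times_{s,X_0,t}P\mb{X}$. By the definition of the fiber-product diffeology (\Cref{Fibre product diffeology}) combined with the sum diffeology on each factor, for every $u_0\in U$ there is a neighborhood $V\ni u_0$ and indices $n,m$ such that $p|_V$ factors through $P\mb{X}_n \times_{s,X_0,t} P\mb{X}_m$. Writing the plot componentwise as $(p^0_{X_1},p^{1}_{PX_0},\ldots,p^m_{X_1})$ for the second factor and $({p'}^0_{X_1},{p'}^{1}_{PX_0},\ldots,{p'}^n_{X_1})$ for the first, the composite $\tilde m\circ p|_V$ is obtained by juxtaposing these components and replacing the ``joint'' entry by $u\mapsto {p'}^0_{X_1}(u)\circ p^m_{X_1}(u)$. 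Each juxtaposed component is already a plot of the appropriate factor; the joint entry is a plot of $X_1$ because the composition map $m\colon X_1\times_{s,X_0,t}X_1\to X_1$ of the Lie groupoid is smooth (and the fiber product condition of the original plot guarantees the pair is composable). Hence $\tilde m\circ p|_V$ is a plot of $P\mb{X}_{m+n}$, and by the sum diffeology a plot of $P\mb{X}$.

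For part (b), given a plot $p\colon U\to X_0$, the composite $\tilde u\circ p$ factors through $P\mb{X}_1=X_1\times_{t,X_0,ev_0}PX_0\times_{ev_1,X_0,s}X_1$. The first and third components are $u\mapsto 1_{p(u)}=u\circ p$, a plot of $X_1$ by smoothness of the unit of $\mb{X}$. The middle component $u\mapsto c_{p(u)}$ is a plot of $PX_0$ because its associated map $U\times[0,1]\to X_0$, $(u,t)\mapsto p(u)$, is smooth (it has sitting instants trivially). The source/target compatibilities hold on the nose, so $\tilde u\circ p$ is a plot of $P\mb{X}_1\subset P\mb{X}$.

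For part (c), fix a plot $p\colon U\to P\mb{X}$. By the sum diffeology, locally $p$ factors through some $P\mb{X}_n$, written componentwise as $(p^0_{X_1},p^1_{PX_0},p^1_{X_1},\ldots,p^n_{PX_0},p^n_{X_1})$. Then $\tilde{\mathfrak i}\circ p$ has components $u\mapsto (p^n_{X_1}(u))^{-1}$, $u\mapsto (p^n_{PX_0}(u))^{-1}$, etc., in reverse order. The groupoid inverse $\mathfrak i\colon X_1\to X_1$ is smooth, so each inverted morphism component is still a plot of $X_1$. Path reversal $\alpha\mapsto \alpha^{-1}$ is a self-map of $PX_0$ of diffeological spaces, since its associated map is the precomposition with the smooth map $(u,t)\mapsto (u,1-t)$. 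The source/target compatibilities of the reversed tuple follow directly from those of the original. Thus $\tilde{\mathfrak i}\circ p$ is again a plot of $P\mb{X}_n\subset P\mb{X}$. The main (minor) obstacle throughout is the book-keeping needed to confirm that the sum diffeology's locality really reduces everything to smoothness on the $P\mb{X}_n$'s; once that is set up, the verification is routine from the smoothness of the Lie groupoid structure maps and the standard fact that concatenation/reversal of paths with sitting instants is smooth.
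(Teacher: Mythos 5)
Your proposal is correct and follows essentially the same route as the paper: use the sum (and fiber-product) diffeology to localize a plot into a single $P\mb{X}_n$, then check componentwise smoothness using the smoothness of the groupoid structure maps and of concatenation/reversal of paths with sitting instants. You simply spell out the componentwise verifications that the paper leaves as "clear" or "similar techniques."
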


\begin{proof}:
Let $(p,p') \colon U \ra P\mb{X} \times_{s,X_0,t} P\mb{X}$ be a plot in $P\mb{X} \times_{s,X_0,t} P\mb{X}$ and $x \in U$. By the definition of sum diffeolgy \Cref{sum diffeology}, there exist open neighbourhoods $U^{n}_x$ and $U^{n'}_x$ of $x$ and indexes $n,n' \in \mb{N} \cup \lbrace 0 \rbrace$ such that $p|_{U^{n}_x} \in D_{P\mb{X}_n}$ and $p|_{U^{n'}_x} \in D_{P\mb{X}_{n'}}$. Hence, by the definition of $\tilde{m}$, it is clear that $ \big(m \circ (p,p') \big)|_{U_x} \in D_{P\mb{X}_{n+n'}}$, where $U_x=U^{n'}_x \cap U^{n}_x$, and hence, $\tilde{m}$ is smooth, and this proves (a). \\(b) and (c) can be proved using similar techniques used in the proof of (a).

\end{proof}
We show next that the thin fundamental groupoid of a Lie groupoid (\Cref{Thin homotopy groupod of a Lie groupoid}) is a \textit{diffeological groupoid} i.e., a groupoid object in the category of diffeological spaces, (\textbf{8.3}, \cite{iglesias2013diffeology}).
\begin{theorem}\label{Thin fundamental groupoid of a Lie groupoid is a diffeological groupoid}
	The thin fundamental groupoid $\Pi_{\rm{thin}}(\mb{X})$ of a Lie groupoid $\mb{X}$ is a diffeological groupoid.
\end{theorem}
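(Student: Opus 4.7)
The plan is to verify each of the five structure maps of $\Pi_{\rm{thin}}(\mb{X})$ is a map of diffeological spaces. The object set $X_0$ carries the manifold diffeology (\Cref{manifold diffeology}) and the morphism set $\frac{P\mb{X}}{\sim}$ is a diffeological space by \Cref{Diffeology of quotient of Xpath space}. The main tool is \Cref{Technical 1}: a map out of $\frac{P\mb{X}}{\sim}$ is smooth if and only if its precomposition with the quotient map $q \colon P\mb{X} \ra \frac{P\mb{X}}{\sim}$ sends plots of $P\mb{X}$ to plots of the target. The strategy will be to descend, via \Cref{Technical 1}, the smoothness results already established in \Cref{composition of PX}, using the well-definedness statements already proved in \Cref{Propostioni:thin fundamental groupoid of a Lie groupoid}.

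For source and target, observe that the candidates $\tilde{s}, \tilde{t} \colon P\mb{X} \ra X_0$ reading off the source of the first, respectively the target of the last, $X_1$-component are smooth on each stratum $P\mb{X}_n$ via the appropriate projection composed with $s, t \colon X_1 \ra X_0$. The sum diffeology on $P\mb{X}$ then gives smoothness on all plots, and \Cref{Technical 1} produces the smoothness of the induced maps $s, t \colon \frac{P\mb{X}}{\sim} \ra X_0$. The unit map equals $q \circ \tilde{u}$ and is smooth by composing \Cref{composition of PX}(b) with $q$. For the inverse, note that reversing each step of an equivalence in \Cref{Definition: Equivalence of X-paths} yields another equivalence, and $\{\zeta_i^{-1}\}$ is a thin deformation whenever $\{\zeta_i\}$ is, as already observed in the proof of \Cref{thin homotopy of X-paths is an equivalence relation}; hence $\tilde{\mathfrak{i}}$ descends to a well-defined $\mathfrak{i}$, whose smoothness then follows from \Cref{composition of PX}(c) and \Cref{Technical 1}.

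The only substantive step is composition. Write $F = \frac{P\mb{X}}{\sim} \times_{s, X_0, t} \frac{P\mb{X}}{\sim}$ endowed with the fiber-product diffeology (\Cref{Fibre product diffeology}). Well-definedness of $m \colon F \ra \frac{P\mb{X}}{\sim}$ is precisely Cases (i)--(iv) in the proof of \Cref{Propostioni:thin fundamental groupoid of a Lie groupoid}. To check smoothness, let $(p, p') \colon U \ra F$ be a plot and fix $u \in U$. By the definition of quotient diffeology (\Cref{quotient diffeology}), after shrinking to an open neighborhood $V \ni u$ we obtain local lifts $\bar{p}, \bar{p}' \colon V \ra P\mb{X}$ with $q \circ \bar{p} = p|_V$ and $q \circ \bar{p}' = p'|_V$. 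Because source and target factor through $q$ (as the defining equivalences and thin deformations visibly preserve endpoints), we automatically have $s \circ \bar{p} = s \circ p|_V = t \circ p'|_V = t \circ \bar{p}'$, so $(\bar{p}, \bar{p}')$ is a plot of $P\mb{X} \times_{s, X_0, t} P\mb{X}$. Then \Cref{composition of PX}(a) gives that $\tilde{m} \circ (\bar{p}, \bar{p}')$ is a plot of $P\mb{X}$, and well-definedness yields $q \circ \tilde{m} \circ (\bar{p}, \bar{p}') = m \circ (p, p')|_V$. This exhibits $m \circ (p, p')$ locally as $q$ composed with a plot of $P\mb{X}$, which is exactly the plot condition in $\frac{P\mb{X}}{\sim}$.

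The main obstacle is conceptual rather than technical: one must ensure plots of the fiber product $F$ lift locally to plots of $P\mb{X} \times_{s, X_0, t} P\mb{X}$, not merely to pairs of plots of $P\mb{X}$. The key observation that makes this transparent is that $s$ and $t$ already factor through $q$, so any pair of local lifts produced by the quotient diffeology automatically satisfies the fiber-product compatibility. With that point secured, the remainder is a direct reduction to \Cref{composition of PX} via \Cref{Technical 1}.
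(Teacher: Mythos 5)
Your proposal is correct and follows essentially the same route as the paper's proof: reduce each structure map to the corresponding map on $P\mb{X}$ from \Cref{composition of PX}, use the quotient diffeology to lift plots locally, and invoke \Cref{Technical 1} together with the well-definedness established in \Cref{Propostioni:thin fundamental groupoid of a Lie groupoid}. The only difference is that you spell out why the local lifts of a plot of the fiber product automatically satisfy the fiber-product compatibility (because $s$ and $t$ factor through $q$), a point the paper passes over with ``clearly''.
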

\begin{proof}
	Since we have already shown $\frac{P\mb{X}}{\sim}$ is a diffeological space (\Cref{Diffeology of quotient of Xpath space}), it remains to be shown that the structure maps descent to maps of diffeological spaces.
	
	\Cref{Technical 1} guarantee that the source-target are maps of diffeologial spaces. Now, suppose $(p_1,p_2) \colon U \ra \frac{P {X}}{\sim} \times_{s,X_0,t} \frac{P \mb{X}}{\sim}$ is a plot of $\frac{P \mb{X}}{\sim} \times_{s,X_0,t} \frac{P \mb{X}}{\sim}$. Hence, by the definition of quotient diffeology, there exists a cover $ \lbrace U_i \rbrace$ of $U$ such that for each $i$, we have 
	\begin{itemize}
		\item a plot $\bar{p}^{i}_1 \colon U_i \ra P \mb{X}$ and $q \circ \bar{p}^{i}_1 =p_1|_{U_i}$ and
		\item  a plot $\bar{p}^{i}_2 \colon U_i \ra P\mb{X}$ and $q \circ \bar{p}^{i}_2 =p_2|_{U_i}$,
	\end{itemize}
	where $q$ is the quotient map. Hence,  clearly $(\bar{p}^{i}_1, \bar{p}^{i}_2 ) \colon U_i \ra P\mb{X} \times_{s,X_0,t} P\mb{X}$ is a plot of $P\mb{X} \times_{s,X_0,t} P\mb{X}$. The commutativity of the diagram below
	\[
	\begin{tikzcd}
		P\mb{X} \times_{s,X_0,t} P\mb{X} \arrow[d, "{(q,q)}"'] \arrow[r, "\tilde{m}"] & P\mb{X} \arrow[d, "{q}"] \\
		\frac{P \mb{X}}{\sim} \times_{s,X_0,t} \frac{P \mb{X}}{\sim} \arrow[r, "m"']                & \frac{P \mb{X}}{\sim}               
	\end{tikzcd}\]
	ensures that the composition is a map of diffeological spaces. Here, $\tilde{m}$ is the multiplication map defined in \Cref{composition of PX}.
	
 Smoothness of the unit and the inverse maps can be verified in a similar fashion.
\end{proof}
For a morphism of Lie groupoids $F \colon \mb{X} \ra \mb{Y}$, one has the induced morphism of diffeological groupoids (see \textbf{8.3}, \cite{iglesias2013diffeology} for the definition of the morphism of diffeological groupoids) between the respective thin fundamental grouipoids  $F_{\rm{thin}} \colon \Pi_{\rm{thin}}(\mb{Y}) \ra \Pi_{\rm{thin}}(\mb{X})$.
\begin{lemma}\label{Morphism of thin fundamental groupoid}
	 $$F_{\rm{thin}} \colon \Pi_{\rm{thin}}(\mb{Y}) \ra \Pi_{\rm{thin}}(\mb{X})$$ is defined as $y \mapsto F(y)$ for each $y \in Y_0$, and a class of lazy $\mb{Y}$-path $[\Gamma]:=[(\gamma_0, \alpha_1, \gamma_1, \cdots \alpha_n,\gamma_n)]$ goes to the class of lazy $\mb{X}$-path $[F(\Gamma)]:=[ \big( F(\gamma_0), F \circ \alpha_1, F(\gamma_1), \cdots,F \circ \alpha_n, F(\gamma_n) \big)]$. 
\end{lemma}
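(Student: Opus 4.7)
The plan is to establish three separate things: (a) well-definedness of $F_{\rm{thin}}$ on thin homotopy classes, (b) the groupoid morphism axioms (source, target, composition, unit, inverse), and (c) smoothness in the diffeological sense. To streamline, I would first define the set-level map $\tilde F \colon P\mb{X} \to P\mb{Y}$ by sending a lazy $\mb{X}$-path $(\gamma_0,\alpha_1,\gamma_1,\dots,\alpha_n,\gamma_n)$ to $(F_1(\gamma_0), F_0\circ\alpha_1, F_1(\gamma_1),\dots, F_0\circ\alpha_n, F_1(\gamma_n))$, verify that it respects both kinds of equivalences introduced in \Cref{Definition: Haefliger path} and \Cref{Definition: Thin deformation}, and then check it descends to a diffeological map on the quotient.

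For well-definedness, I would handle the three equivalence operations in \Cref{Definition: Equivalence of X-paths} one at a time. Functoriality of $F$ gives $F_1(1_x)=1_{F_0(x)}$ and $F_1(\gamma_2\circ\gamma_1)=F_1(\gamma_2)\circ F_1(\gamma_1)$, so operations (1) and (2) are preserved because $F_0$ of a constant path is constant and $F_0$ commutes with concatenation. For operation (3), the path $\zeta_i\colon[0,1]\to X_1$ with sitting instants maps to $F_1\circ\zeta_i$, whose sitting instants are preserved and which satisfies $s\circ(F_1\circ\zeta_i)=F_0\circ s\circ\zeta_i$ and similarly for the target, so $\tilde F$ takes the operation (3) applied to $\Gamma$ to the same operation applied to $\tilde F(\Gamma)$ with the path $F_1\circ\zeta_i$. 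For thin deformations, given $\{\zeta_i\}$ relating $\Gamma$ and $\Gamma'$, the family $\{F_1\circ\zeta_i\}$ provides a candidate thin deformation between $\tilde F(\Gamma)$ and $\tilde F(\Gamma')$; the only non-formal point is that the thin homotopy $H$ between $\alpha_i$ and $(s\circ\zeta_i)^{-1}*\alpha'_i*(t\circ\zeta_{i-1})$ in $X_0$ must yield a thin homotopy $F_0\circ H$ in $Y_0$. The sitting instants of $F_0\circ H$ are inherited from $H$, and since $\mathrm{rank}(D(F_0\circ H))\le \mathrm{rank}(DH)\le 1$, the rank condition (\Cref{Classical thin homotopy}(ii)) passes through. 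This is the main structural step.

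The groupoid morphism axioms are then routine: source and target are preserved because $s\circ F_1=F_0\circ s$ and $t\circ F_1=F_0\circ t$; composition is preserved because inserting $F_1(\gamma'_0)\circ F_1(\gamma_m)=F_1(\gamma'_0\circ\gamma_m)$ matches the composition formula of \Cref{Propostioni:thin fundamental groupoid of a Lie groupoid}; unit and inverse follow from functoriality of $F$. For smoothness, I would first note that $\tilde F$ restricted to $P\mb{X}_n$ is assembled from the smooth maps $F_1\colon X_1\to Y_1$ and the post-composition map $F_0\circ (-)\colon PX_0\to PY_0$ (a map of diffeological spaces in the path space diffeology of \Cref{Path space diffeology}). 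Hence $\tilde F|_{P\mb{X}_n}$ is a map of diffeological spaces on each component, and the universal property of the sum diffeology (\Cref{sum diffeology}) promotes this to a diffeological map $\tilde F\colon P\mb{X}\to P\mb{Y}$. Finally, \Cref{Technical 1}, applied to the quotient $q\colon P\mb{X}\to \frac{P\mb{X}}{\sim}$, shows that $F_{\rm{thin}}=q_{\mb{Y}}\circ\tilde F\circ q_{\mb{X}}^{-1}$ is a diffeological map on $\frac{P\mb{X}}{\sim}$.

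The expected main obstacle is the thin deformation step, specifically checking that $F_0\circ H$ inherits both sitting instants and the rank-one differential condition, and that the composite sequence $\{F_1\circ\zeta_i\}$ together with the thin homotopies $\{F_0\circ H_i\}$ forms a bona fide thin deformation in $P\mb{Y}$. Everything else reduces to functoriality of $F$ and standard manipulations with the sum and quotient diffeologies already recalled in \Cref{Section 2}.
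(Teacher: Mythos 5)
Your proposal is correct. Note that the paper itself supplies no proof of this lemma --- it is stated as an observation, with only a pointer to the analogous result for fundamental groupoids in the literature --- so there is no argument of the authors' to compare against; your verifications (compatibility of $F$ with each of the three operations of \Cref{Definition: Equivalence of X-paths}, the rank inequality ${\rm rank}\,D(F_0\circ H)\le {\rm rank}\,DH\le 1$ for thin deformations, and the descent through the sum and quotient diffeologies via \Cref{sum diffeology} and \Cref{Technical 1}) are exactly the ones needed. Two cosmetic points: the expression $q_{\mb{Y}}\circ\tilde F\circ q_{\mb{X}}^{-1}$ should be read as the statement that $F_{\rm thin}\circ q_{\mb{X}}=q_{\mb{Y}}\circ\tilde F$ together with \Cref{Technical 1} (the quotient map is not invertible), and the direction of $\tilde F$ in your first paragraph is reversed relative to the lemma's statement --- an ambiguity inherited from the paper, whose preamble writes $F\colon\mb{X}\ra\mb{Y}$ while the displayed functor and its later use in \Cref{Pullback naturality} require $F\colon\mb{Y}\ra\mb{X}$.
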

A similar result exists for fundamental groupoids of Lie groupoids (See \cite{colman20111}).

\section{Parallel Transport on quasi-principal 2-bundles}\label{Section: Parallel Transport on principal 2-bundles}
In this section, we investigate the  parallel transport along lazy Haefliger paths in the framework of quasi-principal 2-bundles. More interestingly, this construction leads to a parallel transport functor defined on the thin homotopy classes of lazy Haefliger paths. Moreover, this parallel transport functor enjoys certain smoothness properties (\Cref{Smoothness of parallel transport functor}), as well as it is well behaved with respect to the pullback (\Cref{Pullback naturality} and the connection preserving morphims of quasi-principal 2-bundles (\Cref{Naturality of parallel transport}).

We will develop the notion of parallel transport in three steps.

Consider a strict connection $\omega$ on a quasi-principal $\mb{G}$-bundle $(\pi \colon \mb{E} \ra \mb{X} ,\mc{C})$ and a lazy $\mb{X}$-path $\Gamma= (\gamma_0, \alpha_1,\gamma_1, \cdots,\alpha_n, \gamma_n)$. 
\begin{itemize}
	\item[Step 1:] For every element $\gamma_i \colon x_i \ra y_i$ in $X_1$, we will define a $\mb{G}$-equivariant isomorhism of Lie groupoids $T_{\mc{C}, \pi} \colon \pi^{-1}(y_i) \ra \pi^{-1}(x_i)$ induced by the quasi connection $\mc{C}$.
	\item[Step 2:] For every path $\alpha_i \colon x'_i \ra y'_i $,  we define a $\mb{G}$-equivariant isomorphism of Lie groupoids $T_{\omega}^{\alpha_i} \colon \pi^{-1}(x'_i) \ra \pi^{-1}(y'_i)$ induced from the strict connection $\omega$.
	\item[Step 3:] We will compose the above $\mb{G}$-equivariant isomorphisms of Lie groupoids succesively to get $$T_{(\Gamma, \mc{C}, \omega)}: = T_{\mc{C}, \pi}(\gamma_n^{-1}) \circ  T_{\omega}^{\alpha_n} \circ \cdots \circ T_{\omega}^{\alpha_1} \circ  T_{\mc{C}, \pi}(\gamma_0^{-1}).$$
\end{itemize}
The novelty of this approach lies in showing how a differential geometric notion of connection induced horizontal path lifting property combines with a purely categorical notion of cartesian lifts in a fibered category to produce a notion of a higher differential geometric version of the classical parallel transport functor (\Cref{Theorem: Parallel transport on 2-bundles}). The non-trivial aspect of the construction lies in proving its lazy $\mb{X}$-path thin homotopy invariance (\Cref{Proposition: Equivalence invariance of parallel transport} and \Cref{Proposition:  X-path thin homotopy invariance of parallel transport}).
%\subsection{Definition of parallel transport along  a lazy Haefliger path}:

\textbf{Step 1:} \label{Classical fibrations}

We start with the following straightforward observation.
\begin{proposition}\label{Lemma: Quasiprincipal 2-bundle is a fibered category}
	The underlying projection functor of a quasi-principal $\mb{G}$-bundle $(\pi \colon \mb{E} \ra \mb{X}, \mc{C})$ is a fibered category over $\mb{X}$ equipped with a cleavage $\mc{K}_{\mc{C}}:= \lbrace \mc{C}(\gamma^{-1},p)^{-1} \colon (\gamma,p) \in t^{*}E_0 \rbrace$,(see \Cref{subsection fibered categories and pseudofunctors}).
\end{proposition}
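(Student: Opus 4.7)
The plan is to verify the two assertions of the proposition separately: that the underlying functor $\pi\colon \mb{E}\to\mb{X}$ is a fibered category over $\mb{X}$, and that $\mc{K}_{\mc{C}}$ is a cleavage in the sense of \Cref{subsection fibered categories and pseudofunctors}. A key simplification is that both $\mb{E}$ and $\mb{X}$ are (Lie) groupoids, so, as recalled in the background section, every morphism of $\mb{E}$ is automatically cartesian. Hence neither the cartesianness clause in the definition of a fibered category nor the cartesianness clause in the definition of a cleavage requires any work; only the existence (and, for the cleavage, uniqueness) of an appropriate lift has to be exhibited.

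First I would construct, for each $\gamma\in X_1$ and each $p\in E_0$ with $\pi_0(p)=t(\gamma)$, a morphism $\tilde{\gamma}\in E_1$ with $\pi_1(\tilde\gamma)=\gamma$ and $t(\tilde\gamma)=p$. The idea is to reverse $\gamma$ so as to enter the domain of the quasi connection: since $s(\gamma^{-1})=t(\gamma)=\pi_0(p)$, the pair $(\gamma^{-1},p)$ lies in $s^{*}E_0$, and the defining properties of a quasi connection (\Cref{Definition:Quasicategorical Connection}) give $\pi_1(\mc{C}(\gamma^{-1},p))=\gamma^{-1}$ together with $s(\mc{C}(\gamma^{-1},p))=p$. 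Taking the inverse in the groupoid $\mb{E}$ and using that $\pi$ is a morphism of Lie groupoids (hence commutes with the inverse map), I obtain $\tilde{\gamma}:=\mc{C}(\gamma^{-1},p)^{-1}$, which satisfies $\pi_1(\tilde\gamma)=\gamma$ and $t(\tilde\gamma)=p$. This establishes that $\pi\colon\mb{E}\to\mb{X}$ is a fibered category.

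To conclude, I need to verify that the set $\mc{K}_{\mc{C}}=\{\,\mc{C}(\gamma^{-1},p)^{-1} : (\gamma,p)\in t^{*}E_0\,\}$ is a cleavage, i.e.\ that for every $\gamma\colon x\to y$ in $X_1$ and every $p\in\pi_0^{-1}(y)$ there is a \emph{unique} element of $\mc{K}_{\mc{C}}$ lying above $\gamma$ with target $p$. Existence is the previous step applied to $(\gamma,p)\in t^{*}E_0$. For uniqueness, suppose $\mc{C}(\gamma_1^{-1},p_1)^{-1}\in \mc{K}_{\mc{C}}$ also projects to $\gamma$ and has target $p$; applying $\pi_1$ forces $\gamma_1=\gamma$, and the fact that $\mc{C}(\gamma_1^{-1},p_1)$ has source $p_1$ together with $t(\mc{C}(\gamma_1^{-1},p_1)^{-1})=p$ forces $p_1=p$. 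Thus $\mc{K}_{\mc{C}}$ parametrizes the cartesian lifts bijectively as required.

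The argument is essentially formal, so there is no substantial obstacle; the only thing to be careful about is distinguishing between the source-pullback $s^{*}E_0$ (the natural domain of $\mc{C}$) and the target-pullback $t^{*}E_0$ (which parametrizes $\mc{K}_{\mc{C}}$), and tracking how the groupoid inverse swaps source and target when passing from $\mc{C}(\gamma^{-1},p)$ to $\mc{C}(\gamma^{-1},p)^{-1}$.
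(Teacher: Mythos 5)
Your proof is correct and is exactly the verification the paper has in mind: the paper states this as a ``straightforward observation'' without proof, and your argument — using that every morphism in a groupoid is cartesian, that $\mc{C}$ being a section of $P$ gives $\pi_1(\mc{C}(\gamma^{-1},p))=\gamma^{-1}$ and $s(\mc{C}(\gamma^{-1},p))=p$, and then inverting to get the unique lift with target $p$ over $\gamma$ — fills in the omitted details correctly.
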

\begin{definition}\label{Lie 2-group torsor}
	For a Lie 2-group $\mb{G}$, a \textit{$\mb{G}$-torsor} is defined as a Lie groupoid $\mb{X}$ with an action of $\mb{G}$ such that manifolds $X_0$ and $X_1$ are $G_0$-torsor and $G_1$-torsor respectively. Collection of $\mb{G}$-torsors, $\mb{G}$-equivariant morphisms of Lie groupoids and $\mb{G}$-equivariant natural transformations form a 2-groupoid which we denote by $\mb{G}$-Tor.
\end{definition}
\begin{example}
If $\pi \colon \mb{E} \ra \mb{X}$ is a principal $\mb{G}$-bundle over a Lie groupoid $\mb{X}$, then for any $x \in X_0$, the \textit{fibre} $\pi^{-1}(x):=[\pi_1^{-1}(1_x) \rra \pi_0^{-1}(x_0)]$ is a $\mb{G}$-torsor.
\end{example}

Now, as a direct consequence of \Cref{Lemma: Quasiprincipal 2-bundle is a fibered category}, we get the following:
\begin{proposition}\label{T_C}
	For a quasi-principal $\mb{G}$-bundle $(\pi \colon \mb{E} \ra \mb{X}, \mc{C})$ over a Lie groupoid $\mb{X}$, there is an associated \textit{$\mb{G}$-${\rm{Tor}}$ valued pseudofunctor } $T_{\mc{C}} \colon \mb{X}^{\rm{op}} \ra \mb{G}$-Tor. Precisely,
	\begin{itemize}
		\item[(a)]each $x \in X_0$ is assigned to the $\mb{G}$-Torsor $T_{\mc{C}}(x):= \pi^{-1}(x)$,
		\item[(b)]each morphism $x \xrightarrow {\gamma} y$ is assigned  to an isomorphism of $\mb{G}$-torsors
		\begin{equation}\label{Pseudomor}
			\begin{split}
				T_{\mc{C}}(\gamma) \colon & \pi^{-1}(y) \ra \pi^{-1}(x)\\
				& p \mapsto \mu_{\mc{C}}(\gamma^{-1},p));\\
				& (p \xrightarrow[]{\zeta}q) \mapsto \mc{C}(\gamma^{-1},q) \circ \zeta \circ \big(\mc{C}(\gamma^{-1},p) \big)^{-1},
			\end{split}
		\end{equation} 
		\item[(c)]for each $x \in X_0$, we have a smooth $\mb{G}$-equivariant natural isomorphism 
		\begin{equation}\label{Pseudonat1}
			\begin{split}
				I_{x} \colon & T_{\mc{C}}(1_x) \Longrightarrow 1_{\pi^{-1}(x)}\\
				& p \mapsto \bigg( \mu_{\mc{C}}(1_x,p) \xrightarrow[]{\mc{C}(1_x,p)^{-1}}p \bigg),
			\end{split}
		\end{equation}
		\item[(d)]for each pair of composable arrows 
		\begin{tikzcd}
			x \arrow[r, "\gamma_{1}"] & y \arrow[r, "\gamma_2"] & z
		\end{tikzcd},
		we have a smooth $\mb{G}$-equivariant natural isomorphism 
		\begin{equation}\label{pseudonat2}
			\begin{split}
				\alpha_{\gamma_1, \gamma_2} \colon & T_{\mc{C}}(\gamma_1) \circ T_{\mc{C}}(\gamma_2) \Longrightarrow T_{\mc{C}}(\gamma_2 \circ \gamma_1)\\
				& p \mapsto \mc{C}(\gamma_1^{-1} \circ \gamma_2^{-1},p) \circ \mc{C}(\gamma_2^{-1},p)^{-1} \circ \mc{C}\big(\gamma_1^{-1}, t(\mc{C}(\gamma_2^{-1},p)) \big)^{-1},
			\end{split}
		\end{equation}
	\end{itemize}
	such that $\alpha_{\gamma_1,\gamma_2}$ and $I_x$ satisfy the necessary coherence laws of \Cref{Coherence diagram 2} and \Cref{Coherence diagram 1} respectively.
\end{proposition}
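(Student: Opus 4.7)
\textbf{Proof proposal for \Cref{T_C}.} My plan is to leverage \Cref{Lemma: Quasiprincipal 2-bundle is a fibered category}: the projection $\pi \colon \mb{E} \ra \mb{X}$ together with the cleavage $\mc{K}_{\mc{C}} = \{\mc{C}(\gamma^{-1},p)^{-1} : (\gamma,p) \in t^*E_0\}$ makes $\pi$ a fibered category over $\mb{X}$, so by the classical Grothendieck correspondence recalled in \Cref{subsection fibered categories and pseudofunctors}, the formulas in (a)--(d) automatically assemble into a pseudofunctor $T_{\mc{C}} \colon \mb{X}^{\rm op} \ra \rm{Cat}$ satisfying the coherence laws \Cref{Coherence diagram 1} and \Cref{Coherence diagram 2}. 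The remaining work is therefore to \emph{upgrade} this pseudofunctor to one valued in $\mb{G}$-Tor, that is, to verify (i) each fibre $\pi^{-1}(x)$ is a $\mb{G}$-torsor, (ii) each $T_{\mc{C}}(\gamma)$ is a smooth $\mb{G}$-equivariant isomorphism, and (iii) the structure 2-cells $I_x$ and $\alpha_{\gamma_1,\gamma_2}$ are smooth $\mb{G}$-equivariant.

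For (i), since $\pi_0, \pi_1$ are principal $G_0$- and $G_1$-bundles respectively, the fibres $\pi_0^{-1}(x)$ and $\pi_1^{-1}(1_x)$ are $G_0$- and $G_1$-torsors; the structure maps of $\mb{E}$ restrict to give $\pi^{-1}(x)$ the $\mb{G}$-torsor structure of \Cref{Lie 2-group torsor}. For (ii), first, functoriality of $T_{\mc{C}}(\gamma)$ on morphisms $\zeta \colon p \ra q$ in $\pi^{-1}(y)$ is a quick verification using $\mc{C}(\gamma^{-1},q)\circ\mc{C}(\gamma^{-1},q)^{-1} = 1$ and associativity of composition. Smoothness of $T_{\mc{C}}(\gamma)$ on both objects and morphisms follows immediately from the smoothness of $\mc{C}$ and of the structure maps of $\mb{E}$. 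For $\mb{G}$-equivariance, the key ingredient is the defining property of a quasi connection, namely that $\mc{C}$ is a morphism of principal bundles along the unit map: $\mc{C}(\gamma,pg) = \mc{C}(\gamma,p)\cdot 1_g$. On objects this gives $T_{\mc{C}}(\gamma)(pg) = t(\mc{C}(\gamma^{-1},p)\cdot 1_g) = T_{\mc{C}}(\gamma)(p)\cdot g$, while on morphisms, writing $\zeta\cdot\phi$ for $\phi \in \pi_1^{-1}(1_y) \subseteq G_1$-orbit and using that the $\mb{G}$-action is functorial (hence commutes with composition and with taking inverses), one obtains $T_{\mc{C}}(\gamma)(\zeta\cdot\phi) = T_{\mc{C}}(\gamma)(\zeta)\cdot\phi$ after cancelling $1_{t(\phi)}\circ\phi\circ 1_{s(\phi)} = \phi$. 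That $T_{\mc{C}}(\gamma)$ is an isomorphism of $\mb{G}$-torsors then follows because $T_{\mc{C}}(\gamma^{-1})$ together with the 2-cells $I_x, \alpha_{\gamma,\gamma^{-1}}$ provides an inverse up to natural isomorphism; alternatively one checks directly that it is fully faithful and essentially surjective using the torsor property of the fibres.

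For (iii), naturality of $I_x$ and $\alpha_{\gamma_1,\gamma_2}$ is part of the Grothendieck correspondence. Their smoothness is immediate from that of $\mc{C}$ and of the inverse and composition maps in $\mb{E}$. Their $\mb{G}$-equivariance is again a direct consequence of $\mc{C}(\gamma,pg)=\mc{C}(\gamma,p)\cdot 1_g$ together with the fact that the $\mb{G}$-action respects inverses in the groupoid $\mb{E}$ (since it is a functor and $1_g^{-1}=1_g$ in the groupoid sense); for instance $I_x(pg) = \mc{C}(1_x,pg)^{-1} = (\mc{C}(1_x,p)\cdot 1_g)^{-1} = \mc{C}(1_x,p)^{-1}\cdot 1_g = I_x(p)\cdot 1_g$, and the analogous computation for $\alpha_{\gamma_1,\gamma_2}(pg)$ proceeds identically.

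The main obstacle is really just bookkeeping: verifying the coherence diagrams \Cref{Coherence diagram 1} and \Cref{Coherence diagram 2} by hand reduces, after cancelling telescoping factors of $\mc{C}(\cdot,\cdot)\circ\mc{C}(\cdot,\cdot)^{-1}$, to tautological identities. While one could carry this out by a direct (if tedious) computation, the cleanest route is to observe that the prescription in (b)--(d) is precisely the cleavage-induced pseudofunctor of a general fibered category, for which the coherence is classical; our task is only to note that all the maps involved are smooth and $\mb{G}$-equivariant, which has been established above.
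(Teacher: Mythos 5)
Your proposal is correct and follows essentially the same route as the paper, which states this proposition as a direct consequence of \Cref{Lemma: Quasiprincipal 2-bundle is a fibered category} together with the classical cleavage-to-pseudofunctor correspondence recalled in \Cref{subsection fibered categories and pseudofunctors}, leaving the smoothness and $\mb{G}$-equivariance checks implicit. You have simply spelled out those routine verifications (using $\mc{C}(\gamma,pg)=\mc{C}(\gamma,p)1_g$ and the functoriality of the $\mb{G}$-action), and they are all sound.
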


\textbf{Step 2:} 

Let $A$ be a connection on an ordinary principal $G$-bundle $\pi : E \ra M$ over a smooth manifold $M$. Then,  given a  smooth path  $\alpha:[0,1] \ra M$, for each point $p \in \pi^{-1}(\alpha(0))$, the unique horizontal lift of the path $\alpha$ starting from $p$ will be denoted by $\tilde{\alpha}_{A}^{p}$. In turn we have a map ${\rm{Tr}}_A^{\alpha} \colon \pi^{-1}(\alpha(0)) \ra \pi^{-1}(\alpha(1))$ defined by ${\rm{Tr}}_A^{\alpha}(p):=\tilde{\alpha}_{A}^{p}(1)$, the so called \textit{parallel transport map}, given by $p \mapsto \tilde{\alpha}_{A}^{p}(1)$.

Next, we observe a consequence of the functoriality of strict connections (\Cref{strict and semistrict connections}).
\begin{lemma}\label{Lemma: source-target strict transport}
	For a Lie 2-group $\mb{G}$, let $\pi: \mb{E} \ra \mb{X}$ be a prinicpal $\mb{G}$-bundle over a Lie groupoid $\mb{X}$. Any strict connection $\omega: T\mb{E} \ra L(\mb{G})$ induces the follwoing:
	
	For any path $\zeta \colon [0,1] \ra X_1$ and $\alpha:[0,1] \ra X_0$, we have the following identities:
	\begin{itemize}
		\item[(1)] ${\rm{Tr}}_{\omega_0}^{s \circ \zeta}(s (\delta)) = s ({\rm{Tr}}_{\omega_1}^{\zeta}( \delta))$ for each $\delta \in \pi_1^{-1}(\zeta(0))$.
		\item[(2)]  ${\rm{Tr}}_{\omega_0}^{t \circ \zeta}(t (\delta)) = t ({\rm{Tr}}_{\omega_1}^{\zeta}( \delta))$ for each $\delta \in \pi_1^{-1}(\zeta(0))$.
		\item[(3)] ${\rm{Tr}}_{\omega_1}^{u \circ \alpha}(u(p))= u ({{\rm{Tr}}_{\omega_0}^{\alpha}(p)})$ for each $p \in \pi_0^{-1}(\alpha(0))$.
	\end{itemize}
\end{lemma}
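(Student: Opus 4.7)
The plan is to deduce all three identities uniformly from a single principle: because $\omega=(\omega_1,\omega_0)$ is a \emph{strict} (on-the-nose) morphism of Lie groupoids, its component forms intertwine with the structure maps $s,t,u$, and this intertwining, combined with uniqueness of horizontal lifts on an ordinary principal bundle, forces the parallel transport maps to commute with $s$, $t$ and $u$.

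First I would unpack what it means for $\omega\colon T\mb{E}\to L(\mb{G})$ to be a morphism of Lie groupoids. Since the structure maps of $T\mb{E}$ and $L(\mb{G})$ are obtained by differentiating those of $\mb{E}$ and $\mb{G}$, functoriality delivers the pointwise identities
\begin{equation}\nonumber
L(s)\circ\omega_1 = \omega_0\circ s_{*},\qquad L(t)\circ\omega_1=\omega_0\circ t_{*},\qquad \omega_1\circ u_{*} = L(u)\circ\omega_0,
\end{equation}
as forms on $E_1$ (first two) and $E_0$ (third). These are exactly the naturality statements that upgrade the three classical parallel transports to 2-bundle level. I would also record that $\pi$ itself is a morphism of Lie groupoids, so $\pi_0\circ s = s\circ\pi_1$, $\pi_0\circ t = t\circ\pi_1$, $\pi_1\circ u = u\circ\pi_0$.

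For (1), fix a path $\zeta\colon[0,1]\to X_1$ and $\delta\in\pi_1^{-1}(\zeta(0))$, and let $\widetilde{\zeta}^{\delta}_{\omega_1}$ be the horizontal lift of $\zeta$ with respect to $\omega_1$ starting at $\delta$. Consider the candidate path $s\circ\widetilde{\zeta}^{\delta}_{\omega_1}\colon[0,1]\to E_0$. It starts at $s(\delta)$, and projects via $\pi_0$ onto $s\circ\zeta$ since $\pi_0\circ s=s\circ\pi_1$. For horizontality, differentiate and apply the intertwining relation:
\begin{equation}\nonumber
\omega_0\Bigl(\tfrac{d}{dr}(s\circ\widetilde{\zeta}^{\delta}_{\omega_1})(r)\Bigr) \;=\; \omega_0\bigl(s_{*}(\dot{\widetilde{\zeta}}^{\delta}_{\omega_1}(r))\bigr) \;=\; L(s)\bigl(\omega_1(\dot{\widetilde{\zeta}}^{\delta}_{\omega_1}(r))\bigr) \;=\; L(s)(0)=0.
\end{equation}
By uniqueness of horizontal lifts on the classical principal $G_0$-bundle $\pi_0\colon E_0\to X_0$, we conclude $s\circ\widetilde{\zeta}^{\delta}_{\omega_1}=\widetilde{(s\circ\zeta)}^{\,s(\delta)}_{\omega_0}$. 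Evaluating at $r=1$ yields $s({\rm Tr}_{\omega_1}^{\zeta}(\delta))={\rm Tr}_{\omega_0}^{s\circ\zeta}(s(\delta))$. Assertion (2) follows by the identical argument with $s$ replaced by $t$, using $L(t)\circ\omega_1=\omega_0\circ t_{*}$ and $\pi_0\circ t=t\circ\pi_1$. For (3), given $\alpha\colon[0,1]\to X_0$ and $p\in\pi_0^{-1}(\alpha(0))$, the path $u\circ\widetilde{\alpha}^{p}_{\omega_0}$ starts at $u(p)$, projects via $\pi_1$ onto $u\circ\alpha$, and is horizontal with respect to $\omega_1$ by the third intertwining identity; uniqueness of horizontal lifts on $\pi_1\colon E_1\to X_1$ closes the argument.

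Since the argument in each case is just ``strict naturality $+$ uniqueness of horizontal lifts,'' I do not expect a genuine obstacle. The only point that needs care is to ensure that horizontal lifts exist along the projected paths, i.e.\ that $s\circ\zeta$, $t\circ\zeta$ and $u\circ\alpha$ are paths with sitting instants in the relevant base, which is automatic from our standing conventions in \Cref{Notations and Conventions}. Everything else is a direct consequence of the strict morphism condition, which is precisely what distinguishes strict connections from the semistrict ones where the diagram only commutes up to a natural isomorphism.
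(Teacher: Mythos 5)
Your proof is correct and follows essentially the same route as the paper: the paper also reduces the three identities to equalities of horizontal lifts ($s\circ\tilde{\zeta}^{\delta}_{\omega_1}=\widetilde{s\circ\zeta}^{\,s(\delta)}_{\omega_0}$, etc.) and cites the functoriality of $\omega$ and $\pi$, which is precisely the intertwining-plus-uniqueness argument you spell out in detail.
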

\begin{proof}
	Observe that to prove the above identities, it is sufficient to show the following:
		\begin{enumerate}[(i)]
			\item $\widetilde{s \circ \zeta}_{\omega_0}^{s(\delta)}= s \circ \tilde{\zeta}_{\omega_1}^{\delta}$,
			\item $\widetilde{t \circ \zeta}_{\omega_0}^{t(\delta)}= t \circ \tilde{\zeta}_{\omega_1}^{\delta}$,
			\item $\widetilde{u \circ \alpha}_{\omega_1}^{1_{p}}= u \circ {\tilde{\alpha}_{\omega_0}^{p}}$,
	\end{enumerate}
	which one can verify from the functoriality of $\omega$ and $\pi$. 
%	Observe that to prove the above identities, it is sufficient to show the following:
%	\begin{itemize}
%		\item[(i)]$\widetilde{s \circ \zeta}_{\omega_0}^{s(\delta)}= s \circ \tilde{\zeta}_{\omega_1}^{\delta}$;
%		\item[(ii)]$\widetilde{t \circ \zeta}_{\omega_0}^{t(\delta)}= t \circ \tilde{\zeta}_{\omega_1}^{\delta}$;
%		\item[(iii)]$\widetilde{u \circ \alpha}_{\omega_1}^{1_{p}}= u \circ {\tilde{\alpha}_{\omega_0}^{p}}$.
%	\end{itemize}
%	\subsection*{Proof of (i)} 
%	Note that $s \circ \tilde{\zeta}_{\omega_1}^{\delta}(0)= s(\delta)$ and $\pi_0(s \circ \tilde{\zeta}_{\omega_1}^{\delta})= s(\pi_1(\tilde{\zeta}_{\omega_1}^{\delta}))= s \circ \zeta$.  From the functoriality of $\omega$ it is immediate that for any $r \in [0,1]$, $(s \circ \tilde{\zeta}_{\omega_1}^{\delta})_{*,r}(\frac{d}{dt}|_r) $ is horizontal. Hence, by uniqueness of horizontal path lifting, we have $\widetilde{s \circ \zeta}_{\omega_0}^{s(\delta)}= s \circ \tilde{\zeta}_{\omega_1}^{\delta}$.
%	
%	One can prove \textbf{(ii)} and \textbf{(iii)} using exactly similar techniques as in \textbf{(i)}.
\end{proof}
We obtain the following consequence of the above lemma.
\begin{proposition}\label{Proposition: Parallel transport on principal 2-bundles}
	For a Lie 2-group $\mb{G}$, let $\pi: \mb{E} \ra \mb{X}$ be a prinicpal $\mb{G}$-bundle over a Lie groupoid $\mb{X}$  with a strict connection $\omega$. Then for any given  path $\alpha: [0,1] \ra X_0$, there is a $\mb{G}$-equivariant isomorphism of Lie groupoids
	\begin{equation}\nonumber
		\begin{split}
			T^{\alpha}_{\omega} \colon & \pi^{-1}(x) \ra \pi^{1}(y)\\
			& p \mapsto {\rm{Tr}}_{\omega_0}^\alpha(p)\\
			& \gamma \mapsto {\rm{Tr}}_{\omega_1}^ {u \circ \alpha}(\gamma)
		\end{split}
	\end{equation}
	for all $p \in \pi_{0}^{-1}(x)$ and $\gamma \in \pi_{1}^{-1}(1_{x})$, where $\alpha(0)=x$ and $\alpha(1)=y$.
\end{proposition}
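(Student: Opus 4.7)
The definition of $T^{\alpha}_{\omega}$ on objects and morphisms is already forced upon us: on the object level, use the classical parallel transport $\mathrm{Tr}^{\alpha}_{\omega_0}\colon \pi_0^{-1}(x)\to \pi_0^{-1}(y)$ associated to the ordinary connection $\omega_0$ on the principal $G_0$-bundle $\pi_0\colon E_0\to X_0$, and on morphisms use $\mathrm{Tr}^{u\circ\alpha}_{\omega_1}\colon \pi_1^{-1}(1_x)\to \pi_1^{-1}(1_y)$ associated to $\omega_1$ along the path $u\circ\alpha$ in $X_1$. The plan is to verify that this pair indeed assembles into a $\mb{G}$-equivariant morphism of Lie groupoids between the fibre torsors, and that it is invertible.

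First, I would invoke the classical theory to get that each $\mathrm{Tr}^{\alpha}_{\omega_0}$ and $\mathrm{Tr}^{u\circ\alpha}_{\omega_1}$ is a $G_0$-equivariant (resp. $G_1$-equivariant) diffeomorphism between the fibres, with inverse given by parallel transport along $\alpha^{-1}$ (resp. $u\circ\alpha^{-1}$). The $\mb{G}$-equivariance of the pair then follows from the $\mb{G}$-equivariance of $\omega$ (built into the definition of a strict connection, \Cref{strict and semistrict connections}), which ensures that the horizontal distributions on $E_0$ and $E_1$ are $G_0$- and $G_1$-invariant.

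Next, I would check that $T^{\alpha}_{\omega}=(\mathrm{Tr}^{u\circ\alpha}_{\omega_1},\mathrm{Tr}^{\alpha}_{\omega_0})$ is in fact a functor. Compatibility with source, target and unit maps is precisely what \Cref{Lemma: source-target strict transport} provides: applied to $\zeta=u\circ\alpha$, items (1) and (2) give $s\circ T^{\alpha}_{\omega}=T^{\alpha}_{\omega}\circ s$ and $t\circ T^{\alpha}_{\omega}=T^{\alpha}_{\omega}\circ t$, while item (3) gives $T^{\alpha}_{\omega}\circ u=u\circ T^{\alpha}_{\omega}$. Compatibility with the inverse will then be automatic.

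The one point that requires genuine argument, and which I regard as the main obstacle, is compatibility with the groupoid composition: given $\delta_2,\delta_1\in\pi_1^{-1}(1_x)$ with $s(\delta_2)=t(\delta_1)$, one needs
\[
\mathrm{Tr}^{u\circ\alpha}_{\omega_1}(\delta_2\circ\delta_1)=\mathrm{Tr}^{u\circ\alpha}_{\omega_1}(\delta_2)\circ \mathrm{Tr}^{u\circ\alpha}_{\omega_1}(\delta_1).
\]
To handle this, I would form the smooth path $\eta(r):=\widetilde{u\circ\alpha}^{\delta_2}_{\omega_1}(r)\circ\widetilde{u\circ\alpha}^{\delta_1}_{\omega_1}(r)$, which is well defined on all of $[0,1]$ by the source/target statements of \Cref{Lemma: source-target strict transport} together with the hypothesis $s(\delta_2)=t(\delta_1)$. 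It starts at $\delta_2\circ\delta_1$ and projects to $u\circ\alpha$. The key point is that $\eta$ is horizontal: since $\omega=(\omega_1,\omega_0)$ is a \emph{morphism} of Lie groupoids $T\mb{E}\to L(\mb{G})$, the composition map of $T\mb{E}$ is intertwined with that of $L(\mb{G})$, so if $\omega_1(\dot{\tilde\delta}_2)=0$ and $\omega_1(\dot{\tilde\delta}_1)=0$ then $\omega_1(m_{\ast}(\dot{\tilde\delta}_2,\dot{\tilde\delta}_1))=0$ in $L(G_1)$. Uniqueness of horizontal lifts then forces $\eta=\widetilde{u\circ\alpha}^{\delta_2\circ\delta_1}_{\omega_1}$, which is the desired identity at $r=1$. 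Finally, the two-sided inverse of $T^{\alpha}_{\omega}$ is $T^{\alpha^{-1}}_{\omega}$, by reversibility of classical parallel transport on each of $E_0$ and $E_1$, so $T^{\alpha}_{\omega}$ is a $\mb{G}$-equivariant isomorphism of Lie groupoids $\pi^{-1}(x)\to\pi^{-1}(y)$.
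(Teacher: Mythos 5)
Your proposal is correct and follows the route the paper intends: the paper offers no explicit proof, merely asserting the proposition as a consequence of \Cref{Lemma: source-target strict transport}, and your argument supplies exactly the details that assertion leaves implicit. In particular, your treatment of compatibility with composition --- forming the pointwise composite $\eta(r)=\widetilde{u\circ\alpha}^{\delta_2}_{\omega_1}(r)\circ\widetilde{u\circ\alpha}^{\delta_1}_{\omega_1}(r)$ (well defined by the path-level identities (i)--(ii) established in the proof of that lemma), observing that it is horizontal because $\omega\colon T\mb{E}\to L(\mb{G})$ is a functor and $m_{*}(0,0)=0$ in $L(\mb{G})$, and invoking uniqueness of horizontal lifts --- is the one genuinely non-routine step, and it is handled correctly.
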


\textbf{Step 3:}
Combining the results of step 1 and step 2, we arrive at the following definition:
\begin{definition}\label{Definition: Parallel transport of X-paths}
	Given a quasi-principal $\mb{G}$-bundle  $(\pi: \mb{E} \ra \mb{X}, \mc{C})$, a strict connection $\omega$ and a lazy $\mb{X}$-path $\Gamma :=(\gamma_0, \alpha_1, \gamma_1, \cdots, \alpha_n, \gamma_n)$ from $x$ to $y$,  the $\mb{G}$-equivariant isomorphism of Lie groupoids $T_{(\Gamma, \mc{C}, \omega)}: = T_{\mc{C}, \pi}(\gamma_n^{-1}) \circ  T_{\omega}^{\alpha_n} \circ \cdots \circ T_{\omega}^{\alpha_1} \circ  T_{\mc{C}, \pi}(\gamma_0^{-1}) $ is defined as the \textit{$(\mc{C}, \omega)$-parallel transport along the lazy $\mb{X}$-path $\Gamma$.}
\end{definition}
This parallel transport enjoys the crucial properties of functoriality and the lazy $\mb{X}$-path thin homotopy, which we will establish in the following subsection.
\begin{example}[Classical principal bundle]
		Let $\pi \colon [E \rra E] \ra [M \rra M]$ be a principal $[G \rra G]$ bundle  over a discrete Lie groupoid $[M \rra M]$, equipped with the strict connection $\omega:= (\omega,\omega)$ (\Cref{Classical as 2-connection}) and the unique categorical connection $\mc{C}$ (\Cref{Cat connection over discrete base}). Then $T_{\Gamma, \mc{C}, \omega}= T^{\alpha}_{\omega}$.
		
%		 Consider the unique categorical connection $\mc{C}$, $(1_x,p) \mapsto 1_p$ for  all $(p,x)$ such that $\pi(p)=x$. Since any lazy $[M \rra M]$-path $\Gamma$ is of the form $(1_{\alpha(0)}, \alpha, 1_{\alpha(1)})$, we have $T_{\Gamma, \mc{C}, \omega}= T^{\alpha}_{\omega}$.
	\end{example}
	\begin{example}[Principal 2-bundle over a manifold]
		One can show that the principal 2-bundle defined in \Cref{Definition Principal Lie 2-group bundle over a Lie groupoid} coincides with the definition of principal 2-bundle over a manifold $M$ as defined in \textbf{Definition 3.1.1}, \cite{MR3894086} when our base Lie groupoid is of the form $[M \rra M]$. Further in \cite{MR3917427} a notion of parallel transport along a path $\alpha$ on a manifold $M$ was introduced in terms of a Lie 2-group equivariant anafunctor. In our setup the Lie 2-group equivariant anafunctor corresponding to the parallel transport on a quasi-principal $\mb{G}$-bundle (\Cref{Definition: Parallel transport of X-paths}) for a lazy $[M \rra M]$-path $(1_{\alpha(0)}, \alpha, 1_{\alpha(1)})$ relates with that of \textbf{Propoition 3.26}, \cite{MR3917427}.

\end{example}
\subsection{Parallel transport functor on a quasi-principal 2-bundle}\label{Parallel transport functor on a quasi-principal 2-bundle}
 We start by showing a lazy $\mb{X}$-path thin homotopy invariance of the parallel transport, defined in \Cref{Definition: Parallel transport of X-paths}.

%A first step towards getting an analog of \Cref{Classical parallel transport} in the set up of quasi-principal 2-bundles would be to show an appropriate lazy $\mb{X}$-path thin homotopy invariance of the notion of parallel transport in \Cref{Definition: Parallel transport of X-paths}.
%
% For that, we begin by showing its invariance under the equivalence \Cref{Definition: Equivalence of X-paths}).

\begin{proposition}\label{Proposition: Equivalence invariance of parallel transport}
	 Let $(\pi: \mb{E} \ra \mb{X}, \mc{C})$ be a quasi-principal $\mb{G}:=[H \rtimes_{\alpha} G \rra G]$-bundle with a strict connection $\omega: T\mb{E} \ra L(\mb{G})$. If a lazy $\mb{X}$-path $\Gamma  :=(\gamma_0, \alpha_1,\gamma_1, \cdots,\alpha_n, \gamma_n)$ is equivalent (see \Cref{Definition: Equivalence of X-paths}) to a lazy $\mb{X}$-path $\Gamma'$, then there is a smooth $\mb{G}$-equivariant natural isomorphism between $T_{(\Gamma, \mc{C}, \omega)}$ and $T_{(\Gamma', \mc{C}, \omega)}$.
\end{proposition}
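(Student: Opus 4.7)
The plan is to establish the isomorphism separately for each of the three elementary equivalence operations in \Cref{Definition: Equivalence of X-paths}, since any equivalence is by definition a finite composition of such moves and natural isomorphisms compose. Two recurring ingredients will be: (a) the coherence data $I_x$ and $\alpha_{\gamma_1,\gamma_2}$ of the pseudofunctor $T_{\mc{C}}$ constructed in \Cref{T_C}, both of which are smooth and $\mb{G}$-equivariant by construction; and (b) the elementary facts --- immediate from \Cref{Proposition: Parallel transport on principal 2-bundles} and the corresponding classical statements for $\omega_0$ and $\omega_1$ --- that $T^{\alpha}_{\omega}$ is the identity functor when $\alpha$ is constant, and that $T^{\alpha_2 * \alpha_1}_{\omega} = T^{\alpha_2}_{\omega}\circ T^{\alpha_1}_{\omega}$ for concatenation of paths with sitting instants.

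Operations (1) and (2) are disposed of quickly. For (1), if $\alpha_{i+1}$ is constant, then $T^{\alpha_{i+1}}_{\omega}$ is the identity functor, so the corresponding portion of $T_{(\Gamma,\mc{C},\omega)}$ collapses to $T_{\mc{C},\pi}(\gamma_{i+1}^{-1})\circ T_{\mc{C},\pi}(\gamma_i^{-1})$, and the associator
$$\alpha_{\gamma_{i+1}^{-1},\gamma_i^{-1}}\colon T_{\mc{C},\pi}(\gamma_{i+1}^{-1})\circ T_{\mc{C},\pi}(\gamma_i^{-1}) \Longrightarrow T_{\mc{C},\pi}\bigl((\gamma_{i+1}\circ\gamma_i)^{-1}\bigr)$$
from \Cref{T_C} furnishes the desired smooth $\mb{G}$-equivariant natural isomorphism. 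For (2), if $\gamma_i = 1_x$, then the unitor $I_x$ provides a smooth $\mb{G}$-equivariant isomorphism $T_{\mc{C},\pi}(1_x) \Rightarrow 1_{\pi^{-1}(x)}$, which combined with the concatenation identity $T^{\alpha_{i+1}}_{\omega}\circ T^{\alpha_i}_{\omega} = T^{\alpha_{i+1}*\alpha_i}_{\omega}$ yields the required isomorphism.

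Operation (3) is the main obstacle, being the only move where the interaction between $\omega$ and $\mc{C}$ is essential. Applying the associator twice, the $i$-th portion of $T_{(\Gamma',\mc{C},\omega)}$ is first naturally isomorphic to
$$T_{\mc{C},\pi}(\gamma_i^{-1})\circ\Bigl(T_{\mc{C},\pi}(\zeta_i(1))\circ T^{t\circ\zeta_i}_{\omega}\circ T_{\mc{C},\pi}(\zeta_i(0)^{-1})\Bigr)\circ T_{\mc{C},\pi}(\gamma_{i-1}^{-1}),$$
so that matters reduce to producing a smooth $\mb{G}$-equivariant natural isomorphism between the middle bracket and $T^{s\circ\zeta_i}_{\omega} = T^{\alpha_i}_{\omega}$. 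For each $p\in\pi_0^{-1}(s(\zeta_i(0)))$, I would consider the $\omega_1$-horizontal lift in $E_1$ of the path $\zeta_i$ starting at the cartesian morphism $\mc{C}(\zeta_i(0),p)$; by parts (1) and (2) of \Cref{Lemma: source-target strict transport}, its source and target paths in $E_0$ are precisely the $\omega_0$-horizontal lifts of $\alpha_i$ through $p$ and of $t\circ\zeta_i$ through $\mu_{\mc{C}}(\zeta_i(0),p)$, respectively. Composing the endpoint of this horizontal lift with the cartesian lift $\mc{C}(\zeta_i(1)^{-1},-)$ produces a morphism in $\pi_1^{-1}(1_{s(\zeta_i(1))})$ from the object-level value of the middle bracket at $p$ to $T^{\alpha_i}_{\omega}(p)$, and this assignment defines the desired component. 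The hardest bookkeeping step is verifying naturality in $p$ --- where part (3) of \Cref{Lemma: source-target strict transport} enters, applied to the unit paths in $X_1$ covering 2-morphisms in $\mb{E}$ --- while smoothness follows from the smooth dependence of horizontal lifts on initial conditions together with smoothness of $\mc{C}$, and $\mb{G}$-equivariance is inherited from that of $\omega$, $\mc{C}$, and the coherence data $I_x,\alpha_{\gamma_1,\gamma_2}$.
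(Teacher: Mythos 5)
Your proposal is correct and follows essentially the same route as the paper: operations (1) and (2) are handled by the associator and unitor of the pseudofunctor $T_{\mc{C}}$ from \Cref{T_C} together with the concatenation/constant-path behaviour of $T^{\alpha}_{\omega}$, and operation (3) is handled by horizontally transporting a cartesian lift along $\zeta_i$ and comparing it with the cartesian lift at the far endpoint, with \Cref{Lemma: source-target strict transport} supplying the source/target identities. The paper packages the resulting component as $\eta_p = 1_{\mu_{\mc{C}}(\gamma'^{-1},{\rm{Tr}}_{\omega_0}^{t\circ\zeta_i}(p))}(h_p,e)$ with $h_p\in H$ the discrepancy element (transporting along the pointwise-inverse path $\mathfrak{i}\circ\zeta_i$), which is the same morphism you describe as a composite, up to orientation.
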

\begin{proof}
	For our purpose, it is sufficient to verify only the following three cases, i.e :

	(A) if $\Gamma'$ is obtained from $\Gamma $ by the $(1)$ of \Cref{Definition: Equivalence of X-paths}, then there is a smooth $\mb{G}$-equivariant natural isomorphism between $T_{(\Gamma, \mc{C}, \omega)}$ and $T_{(\Gamma', \mc{C}, \omega)}$. (B) and (C) are likewise for respectively (2) and (3) in \Cref{Definition: Equivalence of X-paths}.
	
	(A) and (B) directly follows from \Cref{pseudonat2} and \Cref{Pseudonat1} respectively. (C) demands a little more work.
	\subsection*{Proof of(C)} 
	Let $\Gamma'$ be obtained from $\Gamma :=(\gamma_0, \alpha_1,\gamma_1, \cdots,\alpha_n, \gamma_n)$  by $(3) $ of \Cref{Definition: Equivalence of X-paths}. That is, given a path $\zeta_i \colon [0,1] \ra X_1$ with sitting instants, such that $s \circ \zeta_i= \alpha_i$, we replace $\alpha_i$ by $t \circ \zeta_i$,  $\gamma_{i-1}$ by $\zeta_i (0) \circ \gamma_{i-1}$ and $\gamma_{i}$ by $\gamma_i \circ (\zeta_i(1))^{-1}$, $i \in \lbrace 1,2, \cdots, n \rbrace$ to obtain $\Gamma'$. We have to show that
	\begin{equation}\label{Magic}
		T_{\omega}^{\alpha_i} \cong T_{\mc{C}}(\gamma') \circ T_{\omega}^{t \circ \zeta_i} \circ T_{\mc{C}}(\gamma^{-1}),
	\end{equation}
	where $\cong$ is a smooth $\mb{G}$-equivariant natural isomorphism, and $\gamma:= x \xrightarrow {\zeta_{i}(0)} y$, $\gamma':= x' \xrightarrow {\zeta_{i}(1)} y'$ are elements of $X_1$. By the repetitive use of \Cref{T_C}, this is equivalent to showing that  given a square
	\[
	\begin{tikzcd}
		x \arrow[r, "s \circ \zeta_i", dotted] \arrow[d, phantom] \arrow[d, "\gamma"'] & x' \arrow[d, "\gamma'"] \arrow[d] \arrow[d] \arrow[d] \\
		y \arrow[r, "t \circ \zeta_i"', dotted]                                   & y'                                             
	\end{tikzcd}\]
	
	the following square
	\[
	\begin{tikzcd}
		\pi^{-1}(y) \arrow[r, "T_{\omega}^{t \circ \zeta_i}"] \arrow[d, phantom] \arrow[d, "T_{\mc{C}}(\gamma)"'] & \pi^{-1}(y') \arrow[d, "T_{\mc{C}}(\gamma')"] \arrow[d] \arrow[d] \arrow[d] \\
		\pi^{-1}(x) \arrow[r, "T_{\omega}^{s \circ \zeta_i}"']                                   & \pi^{-1}(x')                                            
	\end{tikzcd}\]
	
	commutes upto a smooth $\mb{G}$-equivariant natural isomorphism. Here, the dotted lines represent the paths $s \circ \zeta_i: [0,1] \ra X_0$ and $t \circ \zeta_i: [0,1] \ra X_0$. We claim that the desired smooth $\mb{G}$-equivariant natural isomorphism $\eta \colon  T_{\mc{C}}(\gamma') \circ T_{\omega}^{t \circ \zeta_i} \Longrightarrow T_{\omega}^{s \circ \zeta_i} \circ T_{\mc{C}}(\gamma)$ is given by
	\begin{equation}\label{equivalence invariance}
		p \mapsto \eta_{p} := 1_{\mu_{\mc{C}} \big( \gamma'^{-1}, {\rm{Tr}}_{\omega_0}^{t \circ \zeta_i}(p) \big)} (h_p,e),
	\end{equation}
	where $h_p$ is the unique element in $H$ such that 
	\begin{equation}\label{Equivalence h}
		\mc{C}\big( \gamma'^{-1}, {\rm{Tr}}_{\omega_0}^{t \circ \zeta_i}(p) \big)(h_p,e)= {\rm{Tr}}^{\mathfrak{i} \circ \zeta_{i}}_{\omega_1}(\mc{C}(\gamma^{-1},p)),
	\end{equation}
	where $\mathfrak{i} \colon X_1 \ra X_1$ is the inverse map. 
	 Smoothness of the map $p \mapsto \eta_p$ and the source consistency are obvious. 
	
	We verify the target consistency by observing
	\begin{equation}\nonumber
		\begin{split}
			&t(\eta_p)\\
			%			&= t((1_{\mu_{\mc{C}}(\gamma'^{-1}, \rm{Tr}_{\omega_0}^{t \circ \zeta_i}(p))} (h_p,e)))\\
			%			&= \mu_{\mc{C}}(\gamma'^{-1}, \rm{Tr}_{\omega_0}^{t \circ \zeta_i}(p))) \tau(h_p) \\
			&= t(\mc{C}(\gamma'^{-1}, {\rm{Tr}}_{\omega_0}^{t \circ \zeta_i}(p))(h_p,e))\\
			&= t({\rm{Tr}}^{\mathfrak{i} \circ \zeta_{i}}_{\omega_1}(\mc{C}(\gamma^{-1},p))) \quad [\rm{by} \Cref{Equivalence h}].
		\end{split}
	\end{equation}
	Hence, using \Cref{Lemma: source-target strict transport}, we get 
	\begin{equation}\label{Target}
		t(\eta_p)= t({\rm{Tr}}^{\mathfrak{i} \circ \zeta_{i}}_{\omega_1}(\mc{C}(\gamma^{-1},p)))= {\rm{Tr}}_{\omega_0}^{s \circ \zeta_i}(\mu_{\mc{C}}(\gamma^{-1},p)).
		\end{equation}
	 Since
	\begin{equation}\label{E11}
		(h_p,e)=1_g(h_{pg},e)1_g^{-1}
	\end{equation}
	by \Cref{Equivalence h} and $\eta_{pg}= 1_{\mu_{\mc{C}}(\gamma'^{-1}, {\rm{Tr}}_{\omega_0}^{t \circ \zeta_i}(p))}1_g (h_{pg},e)$ by \Cref{equivalence invariance}, we have $\eta_{pg}=\eta_p1_g$ by \Cref{E11}.
	
	Next, we ensure that $\eta$ satisfies the naturality square, that is for every $  p \xrightarrow {\delta} q \in \pi^{-1}(y)$, 
%	\begin{equation}\nonumber
	
			$\begin{tikzcd}\label{Natural diagram}
					T_{\mc{C}}(\gamma{\rm{'}}) \circ T_{\omega}^{t \circ \zeta_i}(p) \arrow[r, "\eta_{p}"] \arrow[d, " T_{\mc{C}}(\gamma{\rm{'}}) \circ T_{\omega}^{t \circ \zeta_i}(\delta)"'] & T_{\omega}^{s \circ \zeta_i} \circ T_{\mc{C}}(\gamma)(p) \arrow[d, "T_{\omega}^{s \circ \zeta_i} \circ T_{\mc{C}}(\gamma)(\delta)"] \\
					T_{\mc{C}}(\gamma{\rm{'}}) \circ T_{\omega}^{t \circ \zeta_i}(q) \arrow[r, "\eta_{q}"']                & T_{\omega}^{s \circ \zeta_i} \circ T_{\mc{C}}(\gamma)(q)              
				\end{tikzcd}\,\, {\rm{commutes,}}$
%		\end{equation}
		\begin{equation}\label{121}
			\eta_q \circ \big( T_{\mc{C}}(\gamma') \circ T_{\omega}^{t \circ \zeta_i}(\delta) \big)= \big(T_{\omega}^{s \circ \zeta_i} \circ T_{\mc{C}}(\gamma)(\delta) \big) \circ \eta_p.
			\end{equation}
%	that is for every $\delta: p \ra q \in \pi^{-1}(y)$, 
%	\begin{equation}\label{121}
%			\eta_q \circ \big( T_{\mc{C}}(\gamma') \circ T_{\omega}^{t \circ \zeta_i}(\delta) \big)= \big(T_{\omega}^{s \circ \zeta_i} \circ T_{\mc{C}}(\gamma)(\delta) \big) \circ \eta_p.
%		\end{equation}
Since $\delta=1_p(h,e)$ for a unique $h \in H$, we have
\begin{equation}\label{111}
	q= p \tau(h),
\end{equation}
\begin{equation}\label{211}
	T_{\mc{C}}(\gamma') \circ T_{\omega}^{t \circ \zeta_i}(\delta) \big)= 1_{\mu_{C}(\gamma'^{-1}, {\rm{Tr}}_{\omega_0}^{t \circ \zeta_i}(p))}(h, e),
\end{equation} and
\begin{equation}\label{311}
	{\rm{Tr}}_{\omega_0}^{s \circ \zeta_i} \circ T_{\mc{C}}(\gamma)(\delta)= 1_{{\rm{Tr}}_{\omega_0}^{s \circ \zeta_i}(\mu_{\mc{C}}(\gamma^{-1},p))}(h,e) .
\end{equation} 
	By comparing the left-hand 
	\begin{equation}\nonumber
		\begin{split}
			&\eta_q \circ \big( T_{\mc{C}}(\gamma') \circ T_{\omega}^{t \circ \zeta_i}(\delta) \big)\\
			&= \underbrace{1_{\mu_{\mc{C}}(\gamma'^{-1}, {\rm{Tr}}_{\omega_0}^{t \circ \zeta_i}(p))} (h_p,e) (e, \tau(h))}_{\quad \eta_q= \eta_p 1_{\tau(h)} [\textit{\Cref{111}}]} \circ \underbrace{1_{\mu_{C}(\gamma'^{-1}, {\rm{Tr}}_{\omega_0}^{t \circ \zeta_i}(p))}(h, e)}_{{\textit{by \Cref{211}}.}} \\
			&= 1_{\mu_{\mc{C}}(\gamma'^{-1}, {\rm{Tr}}_{\omega_0}^{t \circ \zeta_i}(p))}(h_ph,e) \quad [\textit{by functoriality of the action}]
		\end{split}
	\end{equation}
	and the right-hand sides
	\begin{equation}\nonumber
		\begin{split}
			& {\rm{Tr}}_{\omega_0}^{s \circ \zeta_i} \circ T_{\mc{C}}(\gamma)(\delta) \circ \eta_p\\
			&= \underbrace{1_{{\rm{Tr}}_{\omega_0}^{s \circ \zeta_i}(\mu_{\mc{C}}(\gamma^{-1},p))}(h,e)}_{ {\textit{by \Cref{311}}}} \circ \underbrace{1_{\mu_{\mc{C}}(\gamma'^{-1}, {\rm{Tr}}_{\omega_0}^{t \circ \zeta_i}(p))} (h_p,e)}_{\textit{by the definition in \Cref{equivalence invariance}}} \\
			&= 1_{t(\eta(p))}(h,e) \circ 1_{\mu_{\mc{C}}(\gamma'^{-1}, {\rm{Tr}}_{\omega_0}^{t \circ \zeta_i}(p))} (h_p,e)  \quad [\textit{As} \, \, t(\eta_p)= {\rm{Tr}}_{\omega_0}^{s \circ \zeta_i}(\mu_{\mc{C}}(\gamma^{-1},p)), \textit{by \Cref{Target}} ]\\
			&=  \underbrace{1_{ \mu_{\mc{C}}(\gamma'^{-1}, \rm{Tr}_{\omega_0}^{t \circ \zeta_i}(p))) \tau(h_p)}}_{{\textit{by \Cref{equivalence invariance}}}}(h,e) \circ 1_{\mu_{\mc{C}}(\gamma'^{-1}, \rm{Tr}_{\omega_0}^{t \circ \zeta_i}(p))} (h_p,e) \\
			&= \underbrace{1_{ \mu_{\mc{C}}(\gamma'^{-1}, \rm{Tr}_{\omega_0}^{t \circ \zeta_i}(p)))}(e,\tau(h_p))}_{\textit{by functoriality of the action.}}(h,e) \circ 1_{\mu_{\mc{C}}(\gamma'^{-1}, \rm{Tr}_{\omega_0}^{t \circ \zeta_i}(p))}(h_p,e) \\
			&= 1_{ \mu_{\mc{C}}(\gamma'^{-1}, \rm{Tr}_{\omega_0}^{t \circ \zeta_i}(p)))}\underbrace{(h_phh_p^{-1}, \tau(h_p))}_{{\textit{by \Cref{E:Peiffer}.}}} \circ 1_{\mu_{\mc{C}}(\gamma'^{-1}, \rm{Tr}_{\omega_0}^{t \circ \zeta_i}(p))} (h_p,e)\\
			&= 1_{\mu_{\mc{C}}(\gamma'^{-1}, \rm{Tr}_{\omega_0}^{t \circ \zeta_i}(p))}(h_ph,e) [\textit{by functoriality of the action}]
		\end{split}
	\end{equation}
	of the \Cref{121}, we conclude the naturality of $\eta$.
\end{proof}
To establish the lazy $\mb{X}$-path thin homotopy invariance of the parallel transport (\Cref{Definition: Parallel transport of X-paths}), it remains to show that it is thin deformation invariant (\Cref{Definition: Thin deformation}).
\begin{proposition}\label{Proposition:  X-path thin homotopy invariance of parallel transport}
	Let $(\pi: \mb{E} \ra \mb{X}, \mc{C})$ be a quasi-principal $\mb{G}:=[H \rtimes_{\alpha} G \rra G]$-bundle with a strict connection $\omega: T\mb{E} \ra L(\mb{G})$. If a lazy $\mb{X}$-path $\Gamma'  :=(\gamma_0, \alpha_1,\gamma_1,\cdots, \alpha_n, \gamma_n)$  is obtained from a lazy $\mb{X}$-path $\Gamma$ via thin deformation, then there is a smooth $\mb{G}$-equivariant natural isomorphism between $T_{(\Gamma, \mc{C}, \omega)}$ and $T_{(\Gamma', \mc{C}, \omega)}$.
\end{proposition}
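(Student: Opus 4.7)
My strategy is to assemble the required natural isomorphism out of two ingredients: the thin homotopy invariance of ordinary parallel transport for the connection $\omega$, together with the local natural isomorphisms already constructed in case (C) of the proof of \Cref{Proposition: Equivalence invariance of parallel transport}, one for each path $\zeta_i$ supplied by the thin deformation.

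First, since the parallel transport induced by a classical connection on a principal bundle over a manifold depends only on the thin homotopy class of the underlying path (\textbf{Lemma 2.3, \cite{MR2520993}}), applied separately to $\omega_0$ on $E_0$ and to $\omega_1$ on $E_1$, condition (ii) of \Cref{Definition: Thin deformation} yields the equality of functors
\[
T_\omega^{\alpha_i} \;=\; (T_\omega^{s \circ \zeta_i})^{-1} \circ T_\omega^{\alpha'_i} \circ T_\omega^{t \circ \zeta_{i-1}}, \qquad i = 1, \ldots, n.
\]
Substituting these into $T_{(\Gamma, \mc{C}, \omega)}$ rewrites it as an alternating composite of $T_{\mc{C}}(\gamma_j^{-1})$'s and of $T_\omega$'s along the paths $s \circ \zeta_i$, $t \circ \zeta_i$, and $\alpha'_i$. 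The problem then reduces to producing, for each $i \in \{0, 1, \ldots, n\}$, a smooth $\mb{G}$-equivariant natural isomorphism
\[
\eta^{(i)} : T_\omega^{t \circ \zeta_i} \circ T_{\mc{C}}(\gamma_i^{-1}) \;\Lrrw\; T_{\mc{C}}(\gamma_i'^{-1}) \circ T_\omega^{s \circ \zeta_i}.
\]
Inserting the $\eta^{(i)}$'s and invoking the functorial cancellations $T_\omega^{s \circ \zeta_i} \circ (T_\omega^{s \circ \zeta_i})^{-1} = \mathrm{id}$ reorganises $T_{(\Gamma, \mc{C}, \omega)}$ into $T_{(\Gamma', \mc{C}, \omega)}$. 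Condition (iii) of \Cref{Definition: Thin deformation} guarantees that $T_\omega^{s \circ \zeta_0}$ and $T_\omega^{t \circ \zeta_n}$ are identity functors, so the boundary pieces $\eta^{(0)}$ and $\eta^{(n)}$ have precisely the shape required to absorb the outermost $T_{\mc{C}}(\gamma_0^{-1})$ and $T_{\mc{C}}(\gamma_n^{-1})$ into their primed counterparts.

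Each local isomorphism $\eta^{(i)}$ is constructed by applying case (C) of \Cref{Proposition: Equivalence invariance of parallel transport} to the inverse path $\mathfrak{i} \circ \zeta_i : [0,1] \to X_1$, which connects $\gamma_i^{-1}$ to $\gamma_i'^{-1}$, has sitting instants, and satisfies $s \circ (\mathfrak{i} \circ \zeta_i) = t \circ \zeta_i$ and $t \circ (\mathfrak{i} \circ \zeta_i) = s \circ \zeta_i$. That argument produces directly $(\eta^{(i)})^{-1} : T_{\mc{C}}(\gamma_i'^{-1}) \circ T_\omega^{s \circ \zeta_i} \Lrrw T_\omega^{t \circ \zeta_i} \circ T_{\mc{C}}(\gamma_i^{-1})$, with pointwise formula given by \Cref{equivalence invariance} and the defining equation \Cref{Equivalence h} (applied to $\mathfrak{i} \circ \zeta_i$), while \Cref{Lemma: source-target strict transport} is used to match sources and targets. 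Smoothness, $\mb{G}$-equivariance, and naturality of $\eta^{(i)}$ transfer directly from the properties already verified in case (C), so no new pointwise computation is required.

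The main obstacle is purely bookkeeping: one must carefully track the sources, targets, and fiber placements as the local pieces $\eta^{(i)}$ are threaded through the long composite parallel transport and verify that all intermediate cancellations line up, especially at the two endpoints where the constancy of $s \circ \zeta_0$ and $t \circ \zeta_n$ is the only way the outer $T_{\mc{C}}$'s can be matched. Once this is done, the resulting composite $\mb{G}$-equivariant natural isomorphism $T_{(\Gamma, \mc{C}, \omega)} \Lrrw T_{(\Gamma', \mc{C}, \omega)}$ is smooth by smooth dependence of each $\eta^{(i)}$ on its base point, completing the proof.
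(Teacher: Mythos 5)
Your proposal is correct and follows essentially the same route as the paper: the paper's Equation (4.10) is your rearranged identity $T_\omega^{\alpha_i} = (T_\omega^{s\circ\zeta_i})^{-1}\circ T_\omega^{\alpha'_i}\circ T_\omega^{t\circ\zeta_{i-1}}$ obtained from classical thin-homotopy invariance (the paper additionally notes that composing the homotopies with the unit map $u$ keeps them thin, which handles the morphism-level transport), and the paper's Equation (4.11) is exactly your $\eta^{(i)}$, likewise obtained "by the same argument as for (4.4)" in the equivalence-invariance proposition. The only cosmetic difference is that you are slightly more explicit about the endpoint degeneration forced by condition (iii) of the thin-deformation definition.
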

\begin{proof}
	Consider the case when $\Gamma'$ is obtained from $\Gamma$ by a thin deformation. Let $ \lbrace \zeta_i: I \ra X_1 \rbrace_{i =0,1,\cdots, n}$  be a thin deformation from lazy $\mb{X}$-path $\Gamma =(\gamma_0, \alpha_1,\gamma_1,\cdots, \alpha_n, \gamma_n)$ to a lazy $\mb{X}$-path $\Gamma' =(\gamma_0', \alpha_1',\gamma_1',\cdots, \alpha_n', \gamma_n')$ such that $s(\Gamma)=s(\Gamma')=x$ and $t(\Gamma)=t(\Gamma')=y$, represented by the following diagram:
	\[\begin{tikzcd}
		& {\cdot} \arrow[r, "\alpha_1", dotted] \arrow[dd, "t \circ \zeta_0"', dotted] & {\cdot} & {\cdot} \arrow[dd, "s \circ \zeta_{i-1}"', dotted] \arrow[r, "\gamma_{i-1}"] & {\cdot} \arrow[r, "\alpha_i", dotted] \arrow[dd, "t \circ \zeta_{i-1}"', dotted] & {} \arrow[dd, "s \circ \zeta_i"', dotted] \arrow[r, "\gamma_i"] & {\cdot} \arrow[dd, "t \circ \zeta_i", dotted] & {\cdot} \arrow[rd, "\gamma_n"] \arrow[dd, "s \circ \zeta_n", dotted] &   \\
		x \arrow[ru, "\gamma_0"'] \arrow[rd, "\gamma_0'"'] &                                                    &    &                                            &                                                    &                                            &                            &                                            & y \\
		& {\cdot} \arrow[r, "\alpha_1'"', dotted]                         & {\cdot} & {\cdot} \arrow[r, "\gamma_{i-1}'"']                         & {\cdot} \arrow[r, "\alpha_i'"', dotted]                         & {\cdot} \arrow[r, "\gamma_i'"']                         & {\cdot}                         & {\cdot} \arrow[ru, "\gamma_n'"']                        &  
	\end{tikzcd},\]
	
	where the solid arrows are elements of $X_1$, and the dotted arrows are paths in $X_0$.  
	
	Let $H_i: I \times I \ra X_0$  be thin homotopies from $\alpha_i$ to $ (s \circ \zeta_i)^{-1}* \alpha'_i *(t \circ \zeta_{i-1})$  for all $i=1,\cdots, n$. Consider, $$u \circ H_i : I \times I \ra X_1,$$ which is a thin homotopy from $u \circ \alpha_i$ to $u \circ \big( (s \circ \zeta_i)^{-1}* \alpha'_i *(t \circ \zeta_{i-1}) \big)$ in $X_1$ for each $i$, as the rank of $u \circ H_i $ is less than rank of $H_i$ at all points. From the thin homotopy invariance of  the parallel transport in classical principal bundles and by the same argument as in for deriving \Cref{Magic} in \Cref{Proposition: Equivalence invariance of parallel transport}, we immediately obtain the following family of equations and smooth $\mb{G}$-equivariant  natural isomorphisms:
	\begin{equation}\label{Magic1}
		T_{\omega}^{\alpha'_i}=T_{\omega}^{( s \circ \zeta_i)} \circ T_{\omega} ^{\alpha_i} \circ T_{\omega}^{(t \circ \zeta_{i-1})^{-1}}
	\end{equation} 
	for all $i=1,2\cdots, .,n,$ and 
	\begin{equation}\label{Magic2} 
		T_{\mc{C}}(\gamma_i'^{-1}) \cong T_{\omega}^{t \circ \zeta_{i}} \circ T_{\mc{C}}(\gamma_i^{-1}) \circ T_{\omega}^{(s \circ \zeta_i)^{-1}},  
	\end{equation} 
	for all $i=1,..,n-1$ respectively. As a consequence of \Cref{Magic1} and \Cref{Magic2}, we conclude $T_{(\Gamma, \mc{C}, \omega)} \cong T_{(\Gamma', \mc{C}, \omega)}$.
\end{proof}
To define our desired parallel transport functor for a quasi principal 2-bundle, we need to introduce a quotient category $\overline{\mb{G} {\rm{-Tor}}}$ of $\mb{G}$-Tor.

%In order to state an analog of \Cref{Classical parallel transport} in the framework of quasi-principal 2-bundles, we need to introduce a quotient category $\bar{\mb{G}} {\rm{-Tor}}$ of $\mb{G}$-Tor which we define below:
\begin{definition}\label{Quotiented G-tor}
	For a Lie 2-group $\mb{G}$, we define the category	$\overline{\mb{G} {\rm{-Tor}}}$ as the quotient category of $\mb{G}$-Tor obtained from the congruence relation given as follows: For each pair of $\mb{G}$-torsors $\mb{X}, \mb{Y}$, the equivalence relation on  $\rm{Hom}_{\mb{G} {\rm{-}} {\rm{Tor}}}(\mb{X}, \mb{Y})$ is given by the existence of a smooth $\mb{G}$-equiavriant natrural isomorphism.
\end{definition}
With the aid of \Cref{Proposition: Equivalence invariance of parallel transport}, \Cref{Proposition:  X-path thin homotopy invariance of parallel transport}, we derive the parallel transport functor.

%to state and prove a higher analog of \Cref{Classical parallel transport}.
\begin{theorem}\label{Theorem: Parallel transport on 2-bundles}
	Given a quasi-principal $\mb{G}:=[H \rtimes_{\alpha} G \rra G]$-bundle $(\pi: \mb{E} \ra \mb{X}, \mc{C})$  with a strict connection $\omega: T\mb{E} \ra L(\mb{G})$, there is a functor
	\begin{equation}\nonumber
		\begin{split}
			\mc{T}_{\mc{C}, \omega} \colon & \Pi_{\rm{thin}}(\mb{X}) \ra \overline{\mb{G} \rm{-Tor}}\\
			& x \mapsto \pi^{-1}(x),\\
			& [\Gamma] \mapsto [T_{(\Gamma, \mc{C}, \omega)}].
		\end{split}
	\end{equation}
\end{theorem}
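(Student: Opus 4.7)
The plan is to verify three things: that the assignment on objects lands in $\overline{\mb{G} \rm{-Tor}}$, that the assignment on morphisms is independent of the chosen representative of the thin-homotopy class, and that the resulting map preserves identities and composition. The first is immediate, since for each $x \in X_0$ the fibre $\pi^{-1}(x) = [\pi_1^{-1}(1_x) \rra \pi_0^{-1}(x)]$ is a $\mb{G}$-torsor by the principal $\mb{G}$-bundle structure of $\pi$, hence an object of $\overline{\mb{G} \rm{-Tor}}$.

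For the second, recall that by \Cref{Definition: Thin homotopy of X-paths} the lazy $\mb{X}$-path thin homotopy is the equivalence relation on $P\mb{X}$ generated by the equivalences (1)--(3) of \Cref{Definition: Equivalence of X-paths} together with the thin deformations of \Cref{Definition: Thin deformation}. \Cref{Proposition: Equivalence invariance of parallel transport} supplies, for each of the former, a smooth $\mb{G}$-equivariant natural isomorphism between $T_{(\Gamma, \mc{C}, \omega)}$ and $T_{(\Gamma', \mc{C}, \omega)}$; \Cref{Proposition:  X-path thin homotopy invariance of parallel transport} does the same for the latter. Since the congruence defining $\overline{\mb{G} \rm{-Tor}}$ collapses precisely these natural isomorphisms, the class $[T_{(\Gamma, \mc{C}, \omega)}]$ depends only on $[\Gamma]$.

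For functoriality, I first check identities. The identity at $x$ in $\Pi_{\rm{thin}}(\mb{X})$ is $[(1_x, c_x, 1_x)]$ with $c_x$ the constant path at $x$. Since the horizontal lift of a constant path is itself constant, $T^{c_x}_\omega = 1_{\pi^{-1}(x)}$ strictly, so unwinding \Cref{Definition: Parallel transport of X-paths} yields $T_{((1_x, c_x, 1_x), \mc{C}, \omega)} = T_{\mc{C}}(1_x) \circ T_{\mc{C}}(1_x)$; applying the smooth $\mb{G}$-equivariant natural isomorphism $I_x \colon T_{\mc{C}}(1_x) \Longrightarrow 1_{\pi^{-1}(x)}$ from part (c) of \Cref{T_C} twice then gives the class $[1_{\pi^{-1}(x)}]$ in $\overline{\mb{G} \rm{-Tor}}$. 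For composition, given composable lazy paths $\Gamma = (\gamma_0, \alpha_1, \dots, \alpha_m, \gamma_m)$ and $\Gamma' = (\gamma'_0, \alpha'_1, \dots, \alpha'_n, \gamma'_n)$ whose composite is $(\gamma_0, \alpha_1, \dots, \alpha_m, \gamma'_0 \circ \gamma_m, \alpha'_1, \dots, \alpha'_n, \gamma'_n)$, expanding both $T_{(\Gamma' \circ \Gamma, \mc{C}, \omega)}$ and $T_{(\Gamma', \mc{C}, \omega)} \circ T_{(\Gamma, \mc{C}, \omega)}$ shows that they coincide on all factors except the middle, where the former contains $T_{\mc{C}}((\gamma'_0 \circ \gamma_m)^{-1}) = T_{\mc{C}}(\gamma_m^{-1} \circ \gamma'^{-1}_0)$ and the latter contains $T_{\mc{C}}(\gamma'^{-1}_0) \circ T_{\mc{C}}(\gamma_m^{-1})$. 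Part (d) of \Cref{T_C}, applied with the pair $(\gamma'^{-1}_0, \gamma_m^{-1})$, supplies precisely the natural isomorphism $\alpha_{\gamma'^{-1}_0, \gamma_m^{-1}}$ between these two, and horizontal composition with the surrounding smooth $\mb{G}$-equivariant factors yields the required equivalence in $\overline{\mb{G} \rm{-Tor}}$.

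The main conceptual obstacle is ensuring that the associator and unitor natural isomorphisms of the pseudofunctor from \Cref{T_C} assemble compatibly with the strict parallel-transport factors $T^\alpha_\omega$ along paths, so that the natural isomorphisms produced by \Cref{Proposition: Equivalence invariance of parallel transport} and \Cref{Proposition:  X-path thin homotopy invariance of parallel transport} descend coherently to a well-defined functor valued in the quotient category. Once this bookkeeping is unpacked, the actual calculations are short and follow directly from the results already established.
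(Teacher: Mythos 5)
Your proposal is correct and follows essentially the same route as the paper: well-definedness on thin-homotopy classes via \Cref{Proposition: Equivalence invariance of parallel transport} and \Cref{Proposition:  X-path thin homotopy invariance of parallel transport}, unit compatibility via the unitor $I_x$ of \Cref{Pseudonat1}, and compositional compatibility via the associator of \Cref{pseudonat2}. The paper's own proof is only a two-line sketch citing these same results; you have merely unpacked the bookkeeping (correctly, including the orientation of $\alpha_{\gamma'^{-1}_0,\gamma_m^{-1}}$ matching $(\gamma'_0\circ\gamma_m)^{-1}=\gamma_m^{-1}\circ\gamma'^{-1}_0$).
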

\begin{proof}
	Well-definedness of $\mc{T}_{\mc{C}, \omega}$ follows from  \Cref{Proposition: Equivalence invariance of parallel transport} and \Cref{Proposition:  X-path thin homotopy invariance of parallel transport}. Source and target consistencies of $\mc{T}_{\mc{C}, \omega} $ are obvious. Compatibility with the unit map and the composition follows from  \Cref{Pseudonat1} and  \Cref{pseudonat2}, respectively.
\end{proof}
\begin{definition}\label{Comega parallel transport}
	Given a quasi-principal $\mb{G}:=[H \rtimes_{\alpha} G \rra G]$-bundle $(\pi: \mb{E} \ra \mb{X}, \mc{C})$  with a strict connection $\omega: T\mb{E} \ra L(\mb{G})$, the functor $\mc{T}_{\mc{C}, \omega}$ is defined  as the $(\mc{C}, \omega)$-\textit{parallel transport functor of $(\pi \colon \mb{E} \ra \mb{X}, \mc{C})$}.
\end{definition}
\begin{remark}\label{Classical transport}
For a principal $[G \rra G]$ bundle $\pi \colon [E \rra E] \ra [M \rra M]$ over a discrete Lie groupoid $[M \rra M]$, equipped with the strict connection of the form $\omega:= (\omega,\omega)$ (\Cref{Classical as 2-connection}) and the unique categorical connection $\mc{C}$ (\Cref{Cat connection over discrete base}) the functor $\mc{T}_{\mc{C}, \omega}$, coincides with the classical one.
	
%	For a principal $[G \rra G]$ bundle $\pi \colon [E \rra E] \ra [M \rra M]$ over a discrete Lie groupoid $[M \rra M]$, equipped with the strict connection of the form $\omega:= (\omega,\omega)$ (\Cref{Classical as 2-connection}), the functor $\mc{T}_{\mc{C}, \omega} \colon \Pi([M \rra M]) \ra \overline{[G \rra G]} \rm{-Tor}$ can be identified with the classical parallel transport functor  $T_{\omega} \colon \Pi_{\rm{thin}}(M) \ra G$-Tor,
%	 from the thin fundamental groupoid of the manifold $M$ to the categorgy of $G$-torsors, where $\mc{C}$ is the unique categorical connection on $\pi \colon [E \rra E] \ra [M \rra M]$ (see \Cref{Cat connection over discrete base}). {\color{blue}\textbf{[R]}}
\end{remark}
\subsection{Naturality of the parallel transport functor on a quasi-principal 2-bundle}\label{Naturality of the parallel transport functor}
We start by showing the naturality of  \Cref{Comega parallel transport} with respect to the connection preserving morphisms. 
\begin{proposition}\label{Naturality of parallel transport}
	For any morphism of quasi-principal $\mb{G}$-bundles $$F \colon (\pi \colon \mb{E}' \ra \mb{X}, \mc{C}' ) \ra (\pi \colon \mb{E} \ra \mb{X}, \mc{C})$$ over a Lie groupoid $\mb{X}$, equipped with strict conections $\omega$ and pullback connection $F^{*}\omega$ (See \Cref{Lemma:Pullback connection}) respectively, the functors $\mc{\tau}_{\mc{C}, \omega}$ and $\mc{\tau}_{\mc{C}', F^{*}\omega}$ are naturally isomorphic.
\end{proposition}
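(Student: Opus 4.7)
The plan is to define, for each $x\in X_0,$ the component $\eta_x$ of the desired natural isomorphism as the restriction $F|_{\pi'^{-1}(x)}\colon \pi'^{-1}(x)\to \pi^{-1}(x).$ Since $F=(F_1,F_0)$ is a $\mb{G}$-equivariant morphism of Lie groupoids satisfying $\pi\circ F=\pi',$ the maps $F_0\colon \pi'^{-1}_0(x)\to \pi^{-1}_0(x)$ and $F_1\colon \pi'^{-1}_1(1_x)\to \pi^{-1}_1(1_x)$ are $G_0$- and $G_1$-equivariant between free and transitive actions, hence bijections; so each $\eta_x$ is an isomorphism of $\mb{G}$-torsors, giving a well-defined morphism in $\overline{\mb{G}\text{-}{\rm Tor}}.$

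The heart of the argument is to verify that for every $[\Gamma]\colon x\to y$ in $\Pi_{\rm thin}(\mb{X}),$ the naturality square
\begin{equation}\nonumber
\begin{tikzcd}
\pi'^{-1}(x) \arrow[r, "T_{(\Gamma,\mc{C}',F^{*}\omega)}"] \arrow[d, "\eta_x"'] & \pi'^{-1}(y) \arrow[d, "\eta_y"] \\
\pi^{-1}(x) \arrow[r, "T_{(\Gamma,\mc{C},\omega)}"'] & \pi^{-1}(y)
\end{tikzcd}
\end{equation}
commutes in $\overline{\mb{G}\text{-}{\rm Tor}}.$ By the inductive definition of $T_{(\Gamma,\mc{C},\omega)}$ as an alternating composite of the cleavage maps $T_{\mc{C}}(\gamma^{-1})$ (see \Cref{T_C}) and the horizontal transports $T^{\alpha}_{\omega}$ (\Cref{Proposition: Parallel transport on principal 2-bundles}), it suffices to check strict intertwining of $F$ with each atomic piece.

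First I would verify the cleavage step. A morphism in ${\rm Bun}_{\rm quasi}(\mb{X},\mb{G})$ satisfies $F_1(\mc{C}'(\gamma,p))=\mc{C}(\gamma,F_0(p))$ for all $(\gamma,p)\in s^{*}E'_0$ by \Cref{Groupoid of quasi principal 2-bundles}. Applying $t$ gives $F_0(\mu_{\mc{C}'}(\gamma^{-1},p))=\mu_{\mc{C}}(\gamma^{-1},F_0(p)),$ and expanding the formula \Cref{Pseudomor} on morphisms yields strict commutativity $F\circ T_{\mc{C}'}(\gamma^{-1})=T_{\mc{C}}(\gamma^{-1})\circ F$ on both objects and arrows of the fiber groupoids. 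Next I would treat the horizontal step. By \Cref{Lemma:Pullback connection}, $F^{*}\omega=(F_1^{*}\omega_1,F_0^{*}\omega_0),$ so $F_0$ (resp.\ $F_1$) sends $(F^{*}\omega)_0$-horizontal (resp.\ $(F^{*}\omega)_1$-horizontal) curves to $\omega_0$- (resp.\ $\omega_1$-)horizontal curves. Consequently $F_0\circ\widetilde{\alpha}^{p'}_{(F^{*}\omega)_0}=\widetilde{\alpha}^{F_0(p')}_{\omega_0}$ and $F_1\circ\widetilde{u\circ\alpha}^{1_{p'}}_{(F^{*}\omega)_1}=\widetilde{u\circ\alpha}^{1_{F_0(p')}}_{\omega_1},$ and evaluating at $t=1$ gives $F\circ T^{\alpha}_{F^{*}\omega}=T^{\alpha}_{\omega}\circ F.$

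Composing these two strict identities along the decomposition $\Gamma=(\gamma_0,\alpha_1,\gamma_1,\dots,\alpha_n,\gamma_n)$ produces the strict equality $\eta_y\circ T_{(\Gamma,\mc{C}',F^{*}\omega)}=T_{(\Gamma,\mc{C},\omega)}\circ \eta_x$ of $\mb{G}$-equivariant functors, which a fortiori is a smooth $\mb{G}$-equivariant natural isomorphism, hence an equality in $\overline{\mb{G}\text{-}{\rm Tor}}.$ Since $\mc{T}_{\mc{C},\omega}$ and $\mc{T}_{\mc{C}',F^{*}\omega}$ were already shown to be well defined on thin homotopy classes (\Cref{Theorem: Parallel transport on 2-bundles}), this settles the naturality. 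The only point requiring care is the pullback identity for horizontal lifts; everything else is a direct unravelling. Thus I expect no essential obstacle beyond a clean bookkeeping of the two atomic intertwiners.
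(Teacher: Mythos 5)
Your proposal is correct and follows essentially the same route as the paper: the paper also takes $\eta_x=F|_{\pi'^{-1}(x)}$ and reduces the naturality square to the two atomic commuting squares, one for the cleavage maps $T_{\mc{C}}(\gamma)$ (using the compatibility $F_1(\mc{C}'(\gamma,p))=\mc{C}(\gamma,F_0(p))$ from \Cref{Groupoid of quasi principal 2-bundles}) and one for the horizontal transports $T^{\alpha}_{\omega}$ (using the pullback behaviour of classical parallel transport). Your additional observation that both squares commute strictly, not merely up to natural isomorphism, is consistent with the paper's phrasing that they commute in $\mb{G}$-Tor before passing to the quotient.
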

\begin{proof}
	Follows from the observation that for every $x \xrightarrow {\gamma} y \in X_1$ and for every path $\alpha$ in $X_0$ (with sitting instants) from $p$ to $q$ respectively, the following two diagrams commute in the category of $\mb{G}$-torsors:
	
	\begin{tikzcd}\label{47}
		\pi'^{-1}(y) \arrow[r, "T_{\mc{C}'}(\gamma)"] \arrow[d, "F|_{\pi'^{-1}(y)}"'] & \pi'^{-1}(x) \arrow[d, "F|_{\pi'^{-1}(x)}"] \\
		\pi^{-1}(y) \arrow[r, "T_{\mc{C}}(\gamma)"']                & \pi^{-1}(x)               
	\end{tikzcd}
	\begin{tikzcd}\label{48}
		\pi'^{-1}(p) \arrow[r, "T_{F^{*}\omega}^{\alpha}"] \arrow[d, "F|_{\pi'^{-1}(p)}"'] & \pi'^{-1}(q) \arrow[d, "F|_{\pi'^{-1}(q)}"] \\
		\pi^{-1}(p) \arrow[r, "T_{\omega}^{\alpha}"']                & \pi^{-1}(q).               
	\end{tikzcd}
	
	The square on the right commutes as classical parallel transports are well behaved with respect to a pullback (For instance, see \textbf{Lemma 3.11}, \cite{MR3521476} ). Whereas the compatibility of $F$ with $\mc{C}$ and $\mc{C}'$ (see \Cref{Groupoid of quasi principal 2-bundles}) ensures that the square on the left commutes.
\end{proof}
For a Lie 2-group $\mb{G}$ and a Lie groupoid $\mb{X}$, let $\rm{Bun}_{\rm{quasi}}^{\nabla}(\mb{X}, \mb{G})$ be the category whose objects are quasi-principal $\mb{G}$-bundles equipped with strict connections over the Lie groupoid $\mb{X}$, and arrows are connection preserving morphisms. Let $\rm{Trans}(\mb{X},\mb{G})$ be the category whose objects are functors $T \colon \Pi_{\rm{thin}}(\mb{X}) \ra \overline{\mb{G} {\rm{-Tor}}}$ and arrows are natural transformations. Then the following is a direct consequence of  \Cref{Naturality of parallel transport}.
\begin{theorem}\label{Equivalence of quasi and functors}
	The map $\big((\pi \colon \mb{E} \ra \mb{X}, \mc{C} \big), \omega ) \mapsto \mc{T}_{\mc{C}, \omega}$ defines a functor
	\begin{equation}\nonumber
			\mc{F} \colon  \rm{Bun}_{\rm{quasi}}^{\nabla}(\mb{X}, \mb{G}) \ra \rm{Trans}(\mb{X},\mb{G}),
		\end{equation}
	where $\omega$ is the strict connection on $\pi \colon \mb{E} \ra \mb{X}$.

	\end{theorem}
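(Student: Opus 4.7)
The plan is to define $\mc{F}$ on morphisms and then verify the two functor axioms; the object assignment is already justified by \Cref{Theorem: Parallel transport on 2-bundles}. Given an arrow
\[
F\colon \bigl((\pi'\colon\mb{E}'\to\mb{X},\mc{C}'),\, F^{*}\omega\bigr) \longrightarrow \bigl((\pi\colon\mb{E}\to\mb{X},\mc{C}),\, \omega\bigr)
\]
in $\rm{Bun}_{\rm{quasi}}^{\nabla}(\mb{X},\mb{G})$, I would declare $\mc{F}(F)\colon \mc{T}_{\mc{C}',F^{*}\omega}\Lrrw \mc{T}_{\mc{C},\omega}$ to be the natural transformation whose component at $x\in X_0$ is the class in $\overline{\mb{G} \rm{-Tor}}$ of the fiberwise restriction $F|_{\pi'^{-1}(x)}\colon \pi'^{-1}(x)\to \pi^{-1}(x)$. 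This restriction is a $\mb{G}$-equivariant morphism of $\mb{G}$-torsors because $F$ is an equivariant morphism of principal $\mb{G}$-bundles over $\mb{X}$ lying above the identity of $\mb{X}$.

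The substance of the argument is naturality. For a lazy $\mb{X}$-path thin homotopy class $[\Gamma]$ from $x$ to $y$ with representative $\Gamma=(\gamma_0,\alpha_1,\gamma_1,\dots,\alpha_n,\gamma_n)$, I need to establish the identity
\[
[F|_{\pi'^{-1}(x)}]\circ \mc{T}_{\mc{C}',F^{*}\omega}([\Gamma]) \;=\; \mc{T}_{\mc{C},\omega}([\Gamma])\circ [F|_{\pi'^{-1}(y)}]
\]
in $\overline{\mb{G} \rm{-Tor}}$. By construction, both sides decompose as alternating compositions of the elementary factors $T_{\mc{C}'}(\gamma_i^{-1}), T_{F^{*}\omega}^{\alpha_i}$ and $T_{\mc{C}}(\gamma_i^{-1}), T_{\omega}^{\alpha_i}$ respectively. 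The two elementary squares exhibited in the proof of \Cref{Naturality of parallel transport}---the left square intertwining $T_{\mc{C}'}(\gamma^{-1})$ with $T_{\mc{C}}(\gamma^{-1})$ via fiberwise $F$, the right square intertwining $T_{F^{*}\omega}^{\alpha}$ with $T_{\omega}^{\alpha}$ via fiberwise $F$---commute strictly in $\mb{G} \rm{-Tor}$. Pasting these squares in order along the word $\Gamma$ yields a strictly commuting rectangle in $\mb{G} \rm{-Tor}$, which a fortiori descends to the required equality in the quotient $\overline{\mb{G} \rm{-Tor}}$. Independence of the chosen representative $\Gamma$ is automatic from \Cref{Proposition: Equivalence invariance of parallel transport} and \Cref{Proposition:  X-path thin homotopy invariance of parallel transport}.

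The two functoriality axioms $\mc{F}(\rm{id})=\rm{id}$ and $\mc{F}(G\circ F)=\mc{F}(G)\circ\mc{F}(F)$ follow at once from the fact that fiberwise restriction preserves identities and composition of bundle morphisms. I expect the only genuinely work-requiring step to be the pasting bookkeeping in the naturality verification; no new geometric input is needed beyond the two-square lemma already contained in \Cref{Naturality of parallel transport}, but one must be attentive in handling the alternation between the quasi-connection factors and the holonomy factors, and in checking that each intermediate restriction $F|_{\pi'^{-1}(z)}$ at an intermediate endpoint $z$ lands in the correct fiber so that successive squares compose correctly.
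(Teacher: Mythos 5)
Your proposal is correct and follows essentially the same route as the paper: the paper derives this theorem directly from \Cref{Naturality of parallel transport}, whose proof is exactly the two commuting squares (one for the quasi-connection factors, one for the holonomy factors) that you paste along the word $\Gamma$. You have merely made explicit the morphism assignment and the functoriality checks that the paper leaves implicit.
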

 For any principal $\mb{G}$-bundle  $\pi: \mb{E} \ra \mb{X}$ and a morphism of Lie groupoids $F : \mb{Y} \ra \mb{X}$, the pair of pullback projections defines a principal $\mb{G}$ bundle $F^{*}\mb{E} \colon \mb{Y} \times_{F,\mb{X},\pi} \mb{E} \ra \mb{Y}$ over $\mb{Y}$, where $\mb{Y} \times_{F,\mb{X},\pi} \mb{E}$ is the usual strong fibered products of Lie groupoids along the morphisms $F$ and $\pi$ (see \textbf{Section 5.3}, \cite{MR2012261} for details on strong fibered products of Lie groupoids). Now, the following observation is obvious.
	
%Given a pair of morphisms of Lie groupoids $\phi: \mb{X} \ra \mb{Z}$ and $\psi: \mb{Y} \ra \mb{Z}$ the pullback manifolds  $X_0 \times_{\phi_0, Z_0, \psi_0} Y_0$ and $X_1 \times_{\phi_1, Z_1, \psi_1} Y_1$ forms a sub Lie groupoid $\mb{X} \times_{\phi, \mb{Z}, \psi} \mb{Y}$ of the product Lie groupoid  of $\mb{X} \times \mb{Y}$  with evident structure maps, which satisfies the usual universal property of the pullback in the category of Lie groupoids. {\color{blue}\textbf{[R]}}
%Recall that we already discussed a weaker version of this notion in \Cref{Homotopy}.
%For any principal $\mb{G}$-bundle  $\pi: \mb{E} \ra \mb{X}$ and a morphism of Lie groupoids $F : \mb{Y} \ra \mb{X}$, the morphism of Lie groupoids given by pullback projections $F^{*}\mb{E} \colon \mb{Y} \times_{F,\mb{X},\pi} \mb{E} \ra \mb{Y}$ is a principal $\mb{G}$-bundle over $\mb{Y}$ and we call it the \textit{pull-back principal $\mb{G}$-bundle of $\pi \colon \mb{E} \ra \mb{X}$ along the morphism of a Lie groupoids $F: \mb{Y} \ra \mb{X}$.} 
%We will denote it  by the notation $F^{*}\pi: F^{*}\mb{E} \ra \mb{Y}$. {\color{blue}Also, this notion has recently been mentioned in \cite{herreracarmona2023chernweillecomte}}

\begin{lemma}
	If $(\pi \colon \mb{E} \ra \mb{X}, \mc{C})$ is a quasi-principal $\mb{G}$-bundle equipped with a strict connection $\omega$ and $F \colon \mb{Y} \ra \mb{X}$ is any morphism of Lie groupoids, then $(F^{*} \pi \colon \mb{F^{*}\mb{E}} \ra \mb{Y}, F^{*}\mc{C})$ is a quasi-principal $\mb{G}$-bundle with strict connection $\rm{pr}_2^{*}\omega$, where $F^{*}\mc{C} \colon  s^{*}(F_0^{*}E_0) \ra F_1^{*}E_1$ is given by $(\gamma,(x,p)) \ra (\gamma, C \big( F_1(\gamma),p) \big)$ for $\gamma \in X_1$ and $p \in E_0$ such that $F_0(s(\gamma))= \pi_0(p)$.
\end{lemma}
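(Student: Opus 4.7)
The plan is to verify three items in sequence: that $F^{*}\pi \colon F^{*}\mb{E} \ra \mb{Y}$ is a principal $\mb{G}$-bundle over $\mb{Y}$, that $F^{*}\mc{C}$ defines a quasi connection on it, and that $\rm{pr}_2^{*}\omega$ is a strict connection.

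For the first item, I will use the fact that since $\pi_0 \colon E_0 \ra X_0$ and $\pi_1 \colon E_1 \ra X_1$ are principal bundle projections (in particular surjective submersions), the strong fibered products $F_0^{*}E_0=Y_0\times_{F_0,X_0,\pi_0}E_0$ and $F_1^{*}E_1=Y_1\times_{F_1,X_1,\pi_1}E_1$ exist as smooth manifolds and are respectively principal $G_0$- and $G_1$-bundles over $Y_0$ and $Y_1$ via the first projection. The groupoid structure on $F^{*}\mb{E}$ is that of the strong fibered product of Lie groupoids, and the componentwise $\mb{G}$-action $(y,p)\cdot g:=(y,pg)$, $(\eta,\delta)\cdot\phi:=(\eta,\delta\phi)$ is manifestly functorial, so $F^{*}\mb{E}\ra\mb{Y}$ satisfies \Cref{Definition Principal Lie 2-group bundle over a Lie groupoid}.

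For the second item, I first observe that $F^{*}\mc{C}$ is well-defined: given $(\eta,(s(\eta),p))\in s^{*}(F_0^{*}E_0)$, the compatibility $F_0(s(\eta))=\pi_0(p)$ ensures $(F_1(\eta),p) \in s^{*}E_0$, so $\mc{C}(F_1(\eta),p) \in E_1$ exists and satisfies $\pi_1(\mc{C}(F_1(\eta),p))=F_1(\eta)$ and $s(\mc{C}(F_1(\eta),p))=p$. Hence $(\eta,\mc{C}(F_1(\eta),p)) \in F_1^{*}E_1$ is a valid element, and smoothness follows from that of $\mc{C}$ and $F_1$. The section property for the canonical map $P\colon F_1^{*}E_1 \ra s^{*}(F_0^{*}E_0)$ given by $(\eta,\delta)\mapsto(\eta,(s(\eta),s(\delta)))$, together with the $G_0$-equivariance condition $F^{*}\mc{C}(\eta,(s(\eta),pg))=F^{*}\mc{C}(\eta,(s(\eta),p))\cdot 1_g$ along the unit $u\colon G_0 \ra G_1$, both reduce immediately to the corresponding properties of $\mc{C}$ itself (\Cref{Definition:Quasicategorical Connection}).

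For the third item, I note that $\rm{pr}_2 \colon F^{*}\mb{E}\ra\mb{E}$ is a $\mb{G}$-equivariant morphism of Lie groupoids by the very definition of the action on $F^{*}\mb{E}$. Consequently $\rm{pr}_2^{*}\omega = \omega\circ T\rm{pr}_2$ is a $\mb{G}$-equivariant morphism $T(F^{*}\mb{E})\ra L(\mb{G})$. To verify the strict connection condition of \Cref{strict and semistrict connections}, the main point is that the fundamental vertical vector field at $(y,p)\in F_0^{*}E_0$ corresponding to $A \in L(G_0)$ is the tangent vector $(0_y,\delta_p(A))$; applying $T\rm{pr}_2$ yields $\delta_p(A)$, and then $\omega$ sends this to $A$. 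The corresponding check at the level of morphisms is identical. The main technical point throughout is that every piece of structure on $F^{*}\mb{E}$ is obtained by composing with $\rm{pr}_2$ or pulling back along $F$, so the verifications reduce systematically to properties already established for $\pi\colon\mb{E}\ra\mb{X}$, $\mc{C}$, and $\omega$; there is no substantive obstacle beyond careful bookkeeping of the fibered-product coordinates.
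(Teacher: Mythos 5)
Your proposal is correct. The paper offers no proof of this lemma at all --- it is introduced with ``the following observation is obvious'' --- and your three-step verification (pullback principal bundle structure via the strong fibered product, reduction of the section and $u$-equivariance properties of $F^{*}\mc{C}$ to those of $\mc{C}$, and the direct check that $\rm{pr}_2^{*}\omega$ reproduces $A$ on fundamental vertical vectors $(0_y,\delta_p(A))$) is precisely the routine bookkeeping the authors are suppressing. One point in your favour worth noting: you rightly do \emph{not} invoke the paper's pullback-connection statement (\Cref{Lemma:Pullback connection}), which only covers bundle morphisms over a fixed base $\mb{X}$, whereas $\rm{pr}_2$ covers the nontrivial morphism $F$; your direct verification of the strict-connection condition is therefore the correct route.
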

The result below establishes the naturality of  \Cref{Comega parallel transport} with respect to the pullback.
\begin{proposition}\label{Pullback naturality}
	Given a quasi-principal $\mb{G}$-bundle $(\pi \colon \mb{E} \ra \mb{X}, \mc{C})$ equipped with a strict connection $\omega$ and a morphism of Lie groupoids $F \colon \mb{Y} \ra \mb{X}$, the functors $\mc{T}_{F^{*}\mc{C}, \rm{pr_2}^{*}\omega}$ and  $\mc{T}_{\mc{C}, \omega} \circ F_{\rm{thin}}$ (see \Cref{Morphism of thin fundamental groupoid}) are naturally isomorphic.
\end{proposition}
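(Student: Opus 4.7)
The plan is to construct a natural isomorphism $\eta \colon \mc{T}_{F^*\mc{C}, \rm{pr}_2^*\omega} \Longrightarrow \mc{T}_{\mc{C}, \omega} \circ F_{\rm{thin}}$ using the canonical second projection from the strong fibered product, and then verify that the parallel transport along a lazy $\mb{Y}$-path $\Gamma$ in $F^*\mb{E}$ is intertwined via $\rm{pr}_2$ with the parallel transport along $F(\Gamma)$ in $\mb{E}$. Concretely, for each $y \in Y_0$, the second projection restricts to a smooth $\mb{G}$-equivariant isomorphism of $\mb{G}$-torsors $\eta_y \colon (F^*\pi)^{-1}(y) \ra \pi^{-1}(F(y))$, which supplies the component of $\eta$ at $y$.

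The argument decomposes along the three layers used to build parallel transport in \Cref{Definition: Parallel transport of X-paths}. First, for each $\gamma \in Y_1$ the defining formula $F^*\mc{C}(\gamma, (x,p)) = (\gamma, \mc{C}(F_1(\gamma), p))$ shows immediately that the fibered-category isomorphisms produced by $F^*\mc{C}$ via \Cref{Pseudomor} correspond, under $\eta$, to the isomorphisms produced by $\mc{C}$ for $F_1(\gamma)$. Second, since $\rm{pr}_2 \colon F^*\mb{E} \ra \mb{E}$ is a morphism of principal $\mb{G}$-bundles and $\rm{pr}_2^*\omega$ is the pullback of $\omega$ along it, the classical naturality of parallel transport for ordinary principal bundles (as invoked in the proof of \Cref{Naturality of parallel transport}) gives
\begin{equation}\nonumber
\rm{pr}_2 \circ T_{\rm{pr}_2^*\omega}^{\alpha} = T_\omega^{F_0 \circ \alpha} \circ \rm{pr}_2
\end{equation}
on the relevant fibres, at both the object and morphism levels of the tangent 2-space, the latter following from \Cref{Lemma: source-target strict transport} and the functoriality of $\rm{pr}_2$. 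Composing these two correspondences along the sequence defining $T_{(\Gamma, F^*\mc{C}, \rm{pr}_2^*\omega)}$, and unwinding the definition $F(\Gamma) = (F(\gamma_0), F \circ \alpha_1, \ldots, F(\gamma_n))$ of $F_{\rm{thin}}[\Gamma]$ (\Cref{Morphism of thin fundamental groupoid}), yields
\begin{equation}\nonumber
\eta_{t(\Gamma)} \circ T_{(\Gamma, F^*\mc{C}, \rm{pr}_2^*\omega)} = T_{(F(\Gamma), \mc{C}, \omega)} \circ \eta_{s(\Gamma)}
\end{equation}
as strict equalities of $\mb{G}$-equivariant functors between fibres.

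This last identity is already stronger than what we need, since in the target $\overline{\mb{G}\text{-}\rm{Tor}}$ we are only required to match morphism classes up to smooth $\mb{G}$-equivariant natural isomorphism. Hence $\eta$ descends to a well-defined natural transformation, and since each $\eta_y$ is an isomorphism of $\mb{G}$-torsors it is a natural isomorphism. I expect the main subtlety to lie not in the geometric content but in tracking bookkeeping: one must check that the quasi connection $F^*\mc{C}$ (which is not closed under composition) is handled consistently across the cleavage data of \Cref{T_C}, so that the pseudofunctorial isomorphisms $I_x$ and $\alpha_{\gamma_1,\gamma_2}$ on the two sides match under $\eta$ up to the quotient in \Cref{Quotiented G-tor}. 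Once the inter-relation of $F^*\mc{C}$ with $\mc{C}$ under $F_1$ is recorded at the level of $\mc{H}_{u}$ and $\mc{H}_{m}$ via \Cref{Lemma: cohenrence of canonical quasi action}, this last compatibility is automatic, concluding the proof.
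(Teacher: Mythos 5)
Your proposal is correct and follows essentially the same route as the paper: the components of the natural isomorphism are the classes of the restricted second projections $\eta_y := [{\rm{pr}}_2|_{(F^{*}\pi)^{-1}(y)}]$, and naturality is checked by intertwining, on the one hand, the cleavage-induced isomorphisms $T_{F^{*}\mc{C}}(\gamma)$ with $T_{\mc{C}}(F(\gamma))$ (immediate from the defining formula for $F^{*}\mc{C}$) and, on the other, the path-level transports $T^{\alpha}_{{\rm{pr}}_2^{*}\omega}$ with $T^{F_0\circ\alpha}_{\omega}$ (classical compatibility of parallel transport with pullback), then composing along the lazy path. Your additional observation that these intertwinings already hold strictly in $\mb{G}$-Tor, before passing to the quotient, is accurate and consistent with the paper's two displayed identities.
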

\begin{proof}
	We claim that $\eta \colon Y_0 \ra (\overline{\mb{G} {\rm{-Tor}}})_1$ defined by $y \mapsto \eta_y := [{\rm{pr}}_2|_{(F^{*}\pi)^{-1}(y)}]$ is the required natural isomorphism, where ${\rm{pr}}_2 \colon F^{*}\mb{E} \ra \mb{E}$ is the 2nd projection functor from the pull-back Lie groupoid. Our claim is a consequence of the following two easy observations:
	\begin{itemize}
		\item[(i)] For every $x \xrightarrow {\gamma} y \in Y_1$, we have
		\begin{equation}\nonumber
			[T_{\mc{C}}(F(\gamma))] \circ \eta_{y}= \eta_x \circ [T_{F^{*}\mc{C}}(\gamma)],
		\end{equation}
		\item[(ii)]for every path (with sitting instants)  $\alpha \colon [0,1] \ra Y_0$ such that $\alpha(0)=a$ and $\alpha(1)=b$, we have 
		\begin{equation}\nonumber
			[T_{\omega}^{F(\alpha)^{-1}}] \circ \eta_{b}=\eta_a \circ [T_{{\rm{pr}}_2^{*}\omega}^{\alpha^{-1}}].
		\end{equation}
	\end{itemize}
\end{proof}
\subsection{Smoothness of the Parallel transport functor on a quasi-principal 2-bundle}\label{Smoothness of parallel transport}
We begin with the following observation.
\begin{lemma}\label{diffeology on AutE}
	For any $\mb{G}:=[H \rtimes_{\alpha}G \rra G]$-torsor $\mb{E}$, the group of automorphisms ${\rm{Aut}}(\mb{E}):= {\rm{Hom}}_{\mb{G}-{\rm{Tor}}} ( \mb{E}, \mb{E})$ is  canonicially isomorphic to the Lie group $G$.
\end{lemma}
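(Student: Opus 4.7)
The plan is to exhibit a group isomorphism $G \xrightarrow{\sim} {\rm Aut}(\mb{E})$ by constructing its inverse explicitly. First I will fix a base point $e_0 \in E_0$. Since $E_0$ is a $G$-torsor, for any $F = (F_1, F_0) \in {\rm Aut}(\mb{E})$ there is a unique $g_F \in G$ with $F_0(e_0) = e_0 \cdot g_F$; $G$-equivariance then forces $F_0(e_0 \cdot g) = e_0 \cdot g_F g$ on all of $E_0$. Functoriality imposes
\[
F_1(1_{e_0}) = 1_{F_0(e_0)} = 1_{e_0} \cdot (e, g_F),
\]
and since $E_1$ is an $H \rtimes_\alpha G$-torsor for which $1_{e_0}$ serves as a reference point, the $H \rtimes_\alpha G$-equivariance of $F_1$ determines it completely by
\[
F_1\bigl(1_{e_0} \cdot (h, a)\bigr) = 1_{e_0} \cdot (e, g_F)(h, a) = 1_{e_0} \cdot \bigl(\alpha_{g_F}(h),\, g_F a\bigr).
\]
This already proves injectivity of the assignment $F \mapsto g_F$.

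For surjectivity, given an arbitrary $g \in G$, I will take the formulas above as the definition of $F_0$ and $F_1$ and verify that the resulting pair $F = (F_1, F_0)$ is a $\mb{G}$-equivariant morphism of Lie groupoids. Source compatibility is immediate; target compatibility invokes the Peiffer identity $\tau(\alpha_g(h)) = g\tau(h)g^{-1}$ from \Cref{E:Peiffer}; preservation of composition follows from the composition rule $(h_2, g_2) \circ (h_1, g_1) = (h_2 h_1, g_1)$ in $H \rtimes_\alpha G$ combined with functoriality of the action $\rho\colon \mb{E} \times \mb{G} \to \mb{E}$; and preservation of units is trivial. The $\mb{G}$-equivariance of $F_1$ reduces to the identity $\alpha_g(h\, \alpha_a(k)) = \alpha_g(h)\, \alpha_{ga}(k)$, which holds because $\alpha$ is a $G$-action on $H$. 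Smoothness is automatic from the explicit formulas.

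To close, I will check that $g \mapsto F_g$ is a group homomorphism by evaluating $F_g(F_{g'}(e_0)) = F_g(e_0 \cdot g') = e_0 \cdot g g'$, giving $F_g \circ F_{g'} = F_{gg'}$; combined with the bijection just established this yields the desired group isomorphism, and the \emph{canonicity} can be read in the sense that a change of base point from $e_0$ to $e_0 \cdot a$ merely replaces $g_F$ by its conjugate $a^{-1} g_F a$, so the Lie group structure transported from $G$ to ${\rm Aut}(\mb{E})$ is independent of the choice. The main book-keeping obstacle I expect is in verifying functoriality of $F_g$ on a composable pair in $E_1$: writing both morphisms in the form $1_{e_0} \cdot (h_i, a_i)$, the composability constraint $a_2 = \tau(h_1)\, a_1$ must be threaded through the Peiffer identity before the two sides can be matched; once this is pinned down, every remaining verification collapses to an identity inside the crossed module.
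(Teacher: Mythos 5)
Your proof is correct and is essentially the paper's argument in different packaging: the paper factors the isomorphism as ${\rm{Aut}}(\mb{E})\xrightarrow{\ F\mapsto F_0\ }{\rm{Aut}}(E_0)\cong G$ and shows the first map is bijective by writing each $\delta\in E_1$ uniquely as $1_{s(\delta)}(h_\delta,e)$, whereas you build the inverse $G\to{\rm{Aut}}(\mb{E})$ directly from a basepoint $e_0$ using the unique decomposition $1_{e_0}\cdot(h,a)$ — the same torsor-triviality of $E_1$ over its units, the same Peiffer-identity check for target compatibility, and the same composability bookkeeping $a_2=\tau(h_1)a_1$. Both routes rely on identical computations inside the crossed module, so no substantive difference or gap remains.
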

\begin{proof}
	As for any  Lie group $G$, a $G$-torsor $E$ (\Cref{Notations and Conventions}) and a point $z \in G$, we have a group isomorphism given by 
	\begin{equation}\label{canonical Lie group structure on Aut}
		\begin{split}
			\psi_z \colon & {\rm{Aut}}(E):= {\rm{Hom}}_{G-{\rm{Tor}}}( E, E) \ra G\\
			& f \mapsto \delta(z,f(z)),
		\end{split}
	\end{equation}
	where $\delta \colon E \times E \ra G$ is a smooth map defined implicitly as $x \cdot \delta(x,y)=y$. The isomorphism does not depend on the choice of $z$, and thus ${\rm{Aut}}(E)$ can be canonically identified as a Lie group (see \textbf{Lemma 3.4} in \cite{MR3521476}). Hence, it is sufficient to show that the map 
	\begin{equation}\label{AutMbE}
	\begin{split}
		\theta \colon& {\rm{Aut}}(\mb{E}) \ra {\rm{Aut}}(E_0)\\
		& F:=(F_1,F_0) \mapsto F_0
	\end{split}
	\end{equation}
	is an isomorphism of groups. $\theta$ is obviously a group homomorphism.
	Now, let $\theta(F)=\theta(F')$ for $F, F' \in {\rm{Aut}}(\mb{E})$. Let $\delta \in E_1$. Then there exists unique $h_{\delta} \in H$, such that $\delta=1_{s(\delta)}(h_{\delta},e)$. Hence, $F_1(\delta)=F_1(1_{s(\delta)}(h_{\delta},e))=1_{F'_0(s(\delta))}(h_{\delta},e)=F'_1(\delta)$. So, $\theta$ is injective. Now, suppose $f \in {\rm{Aut}}(E_0)$. For $\delta \in E_1$, define $F_1(\delta) :=1_{f(s(\delta))}(h_{\delta},e)$. Observe that as for any $(h,g) \in H \rtimes_{\alpha}G$ the following identity holds
	\begin{equation}\nonumber
	(h_{\delta (h,g)},e)=(\alpha_{g^{-1}}(h_{\delta}h),e).
	\end{equation}
	Then it follows $F_1$ is a  morphism of $H \rtimes_{\alpha}G$-torsor.
	Hence, to show $\theta$ is onto, it is enough to prove $(F_1,f)$ is a functor. Consistencies with the source, target and unit maps are obvious.
	
%	follows from the definition itself. Target map consistency follows from
%	\begin{equation}
%	f(t(\delta)) = f \big(s(\delta) \big)\tau(h_{\delta}).
%	\end{equation}	
%	Consistency with unit maps is obvious. 
Since for any composable $\delta_2,\delta_1 \in E_1$, we have 
	\begin{equation}\nonumber
	h_{\delta_2 \circ \delta_1} = h_{\delta_1} h_{\delta_2},
	\end{equation}
	it follows $(F_1, f)$ is consitent with the composition and hence, $\theta$ is onto.	
	\end{proof}

Let $\overline{\rm{Aut}(\mb{E})}$ denote the automorphism group of the $\mb{G}$-torsor $\mb{E}$ in the groupoid $\overline{\mb{G} {\rm{-Tor}}}$ (\Cref{Quotiented G-tor}). Observe that the quotient functor $\mb{G}-{\rm{Tor}} \ra \overline{\mb{G}-{\rm{Tor}}}$ descends to a quotient map $q \colon {\rm{Aut}}(\mb{E}) \ra \overline{\rm{Aut}(\mb{E})}$.
\begin{proposition}
For any $\mb{G}:=[H \rtimes_{\alpha}G \rra G]$-torsor $\mb{E}$, the group $\overline{\rm{Aut}}(\mb{E})$ is isomorphic to the quotient group $G/\tau(H)$. Hence, $\overline{\rm{Aut}}(\mb{E})$ is a diffeologial group.
\end{proposition}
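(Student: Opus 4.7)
The plan is to descend the group isomorphism $\iota \colon \mathrm{Aut}(\mathbb{E}) \xrightarrow{\sim} G$ from the previous lemma through the quotient map $q$ and identify its image as $G/\tau(H)$. Fix a base point $p_0 \in E_0$; since $E_0$ is a free transitive right $G$-space, for each $F \in \mathrm{Aut}(\mathbb{E})$ there is a unique $g_F \in G$ with $F_0(p_0) = p_0 \cdot g_F$, and I would take $\iota(F) := g_F$.

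First I would characterize when a smooth $\mathbb{G}$-equivariant natural isomorphism $\eta \colon F \Longrightarrow F'$ exists. Because $E_1$ is a free transitive $H \rtimes_\alpha G$-torsor and every arrow with a fixed source is uniquely a translation of the corresponding identity by an element of $H$, each component $\eta_p \colon F_0(p) \to F'_0(p)$ has a unique expression $\eta_p = 1_{F_0(p)} \cdot (h_p, e)$ for a smooth map $p \mapsto h_p \in H$. Matching targets gives $F'_0(p) = F_0(p)\tau(h_p)$, and evaluating at $p = p_0$ yields the necessary condition $g_F^{-1} g_{F'} = \tau(h_{p_0}) \in \tau(H)$.

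For sufficiency, given $g_F^{-1} g_{F'} = \tau(h_0)$ for some $h_0 \in H$, I would define $h_{p_0 \cdot g} := \alpha(g^{-1}, h_0)$ (extending from $p_0$ to all of $E_0$ via the transitive $G$-action) and verify three conditions: (i) target consistency, which reduces to the Peiffer identity $\tau(\alpha(g^{-1}, h_0)) = g^{-1}\tau(h_0) g$ matching the conjugate of $g_F^{-1}g_{F'}$ that appears when comparing $F_0$ and $F'_0$ away from $p_0$; (ii) $\mathbb{G}$-equivariance $\eta_{p g'} = \eta_p \cdot 1_{g'}$, which reduces after a direct semidirect-product computation to the action axiom $\alpha((g g')^{-1}, h_0) = \alpha(g'^{-1}, \alpha(g^{-1}, h_0))$; and (iii) naturality $\eta_{t(\delta)} \circ F_1(\delta) = F'_1(\delta) \circ \eta_{s(\delta)}$, which via the composition rule $h_{\delta_2 \circ \delta_1} = h_{\delta_1} h_{\delta_2}$ recalled in the proof of the preceding lemma reduces to the second Peiffer identity $\alpha(\tau(h_\delta)^{-1}, h_p) = h_\delta^{-1} h_p h_\delta$.

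Combining these shows $F \sim F'$ if and only if $\iota(F)^{-1}\iota(F') \in \tau(H)$. The Peiffer relation $\tau(\alpha(g, h)) = g\tau(h) g^{-1}$ also makes $\tau(H)$ normal in $G$, so the quotient $G/\tau(H)$ is a well-defined group and $\iota$ descends to an isomorphism $\bar\iota \colon \overline{\mathrm{Aut}}(\mathbb{E}) \xrightarrow{\sim} G/\tau(H)$. Equipped with the quotient diffeology of \Cref{quotient diffeology}, $G/\tau(H)$ is a diffeological group, yielding the claim. The main obstacle I anticipate is the naturality verification in (iii): the order-reversal in the composition rule and the $p$-dependence of the $h_p$'s demand careful bookkeeping via the Peiffer identity, and it is precisely this identity which converts the apparent need for centrality of $g_F^{-1}g_{F'}$ into a relation modulo the full normal subgroup $\tau(H)$.
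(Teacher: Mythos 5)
Your proposal is correct and follows essentially the same route as the paper: both identify $\mathrm{Aut}(\mathbb{E})\cong G$ via the preceding lemma, extract $\tau(h_{p_0})$ from the component of a natural isomorphism at a base point for necessity, and for sufficiency build the natural isomorphism by the same formula $\eta_{p_0g}=1_{F_0(p_0g)}(\alpha(g^{-1},h_0),e)$, with the Peiffer identities doing the work. The only cosmetic difference is that you characterize the equivalence relation on all of $\mathrm{Hom}$ directly rather than computing $\ker(q)$ and invoking the first isomorphism theorem, and you spell out the naturality check that the paper leaves implicit.
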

\begin{proof}

		Consider the quotient map $q \colon {\rm{Aut}}(\mb{E}) \ra \overline{\rm{Aut}(\mb{E})}$. Note that to show $\overline{{\rm{Aut}}(\mb{E})} \cong G/ \tau(H)$,  by the first isomorphism theorem it is sufficient to prove
		\begin{equation}\nonumber
			\psi_z \circ \theta (\ker(q))= \tau(H),
		\end{equation}
		for some $z \in E_0$, where $\psi_z$ and $\theta$ are maps as defined in \Cref{diffeology on AutE}.
		The inclusion $\psi_z \circ \theta (\ker(q)) \subseteq \tau(H)$ follows, since for any $F \in \ker(q)$, there is a smooth $\mb{G}$-equivariant natural isomorphism $\eta \colon \rm{Id}_{\mb{E}} \Longrightarrow F$ and in turn we get the unique element $h_z \in H$ such that $\eta(z)=1_z(h_z,e)$, for which $\psi_z \circ \theta(F)= \tau(h_z)$. On the otherhand, for any $h \in H$, one can define $f \colon E_0 \ra E_0$ as $z.g \mapsto z\tau(h)g$ for each $g \in G$, and thus we get an element $(F_1, f)  \in \rm{Aut}(\mb{E})$ (as in \Cref{AutMbE}). Then one sees that $(F_1, f) \in \ker(q)$ as the prescription $z.g \mapsto 1_z(h,e)(e,g)$ for each $g \in G$ defines a smooth $\mb{G}$-equivariant natural isomorphim $\eta \colon {\rm{id}}_{\mb{E}} \Longrightarrow (F_1, f)$.
Finally, as $G$ is a Lie group, $\overline{\rm{Aut}(\mb{E})}$ is a diffeological group (see \textbf{7.3}, \cite{iglesias2013diffeology}) equipped with quotient diffeology (\Cref{quotient diffeology}). 
\end{proof}
Now, we are ready to show that parallel transport functor \Cref{Comega parallel transport} of a quasi-principal 2-bundle is smooth in an appropriate sense.
\begin{theorem}\label{Smoothness of parallel transport functor}
	Let $(\pi: \mb{E} \ra \mb{X}, \mc{C})$ be a quasi-principal $\mb{G}:=[H \rtimes_{\alpha} G \rra G]$-bundle with a strict connection $\omega: T\mb{E} \ra L(\mb{G})$. Then for each $x \in X_0$, the restriction map $\mc{T}_{{\mc{C}, \omega}}|_{{\Pi_{\rm{thin}}(\mb{X},x)}} \colon \Pi_{\rm{thin}}(\mb{X},x) \ra \overline{\rm{Aut}(\pi^{-1}(x))}$ is a map of diffeological spaces, where $\Pi_{\rm{thin}}(\mb{X},x)$ is the automorphiosm group of $x$ in the diffeological groupoid $\Pi_{\rm{thin}}(\mb{X})$.
\end{theorem}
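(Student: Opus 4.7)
The plan is to unwind the diffeologies on $\Pi_{\rm{thin}}(\mb{X},x)$ and on $\overline{\rm{Aut}(\pi^{-1}(x))}$, and then reduce smoothness of $\mc{T}_{\mc{C},\omega}|_{\Pi_{\rm{thin}}(\mb{X},x)}$ to smoothness of $\mc{C}$ combined with the classical smooth dependence of parallel transport on paths. Since the identification $\overline{\rm{Aut}(\pi^{-1}(x))} \cong G/\tau(H)$ established in the preceding proposition gives the left hand side the quotient diffeology from $G$, it suffices to show that for every plot $p \colon U \to \Pi_{\rm{thin}}(\mb{X},x)$ and every $u \in U$, there is a neighbourhood $V$ of $u$ in $U$ and a smooth lift $\hat g \colon V \to G$ of $\mc{T}_{\mc{C},\omega} \circ p|_V$. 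By \Cref{Technical 1} applied to the quotient $q \colon P\mb{X} \to \frac{P\mb{X}}{\sim}$, together with the sum diffeology on $P\mb{X} = \sqcup_n P\mb{X}_n$, this further reduces to the case where the plot is of the form $q \circ \tilde p$ for a plot $\tilde p \colon V \to P\mb{X}_n$ landing in the loop component $s^{-1}(x) \cap t^{-1}(x)$. Such a $\tilde p$ amounts to a smooth family $(\gamma_0(v), \alpha_1(v), \gamma_1(v), \ldots, \alpha_n(v), \gamma_n(v))$, with $v \mapsto \gamma_i(v)$ smooth $V \to X_1$, $(v,r) \mapsto \alpha_i(v)(r)$ smooth $V \times [0,1] \to X_0$, and $s(\gamma_0(v)) = t(\gamma_n(v)) = x$.

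Next, I would build the parallel transport as a smoothly varying point in the fibre $\pi_0^{-1}(x)$. Fix a base point $z \in \pi_0^{-1}(x)$ and define inductively
\begin{equation*}
p_0(v) := z, \qquad p_i^{-}(v) := t\bigl(\mc{C}(\gamma_{i-1}(v), p_{i-1}(v))\bigr), \qquad p_i(v) := \mathrm{Tr}_{\omega_0}^{\alpha_i(v)}\bigl(p_i^{-}(v)\bigr)
\end{equation*}
for $i = 1, \ldots, n$, and set $\bar p(v) := t\bigl(\mc{C}(\gamma_n(v), p_n(v))\bigr) \in \pi_0^{-1}(x)$. By \Cref{Definition: Parallel transport of X-paths}, $\bar p(v) = T_{(\Gamma_v, \mc{C}, \omega)}(z)$. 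Smoothness of $v \mapsto p_i^{-}(v)$ is immediate from smoothness of $\mc{C}$ and of $t$; smoothness of $v \mapsto p_i(v)$ is the smooth dependence of parallel transport of the principal $G$-connection $\omega_0$ along a smoothly varying path with sitting instants on both the path and the initial data, used in the same spirit in \textbf{Lemma 3.4} of \cite{MR3521476}. Thus $v \mapsto \bar p(v)$ is a smooth map $V \to \pi_0^{-1}(x)$.

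Since $\pi_0 \colon E_0 \to X_0$ is a classical principal $G$-bundle, the division map $\delta \colon E_0 \times_{X_0} E_0 \to G$ determined by $p \cdot \delta(p,q) = q$ is smooth, and $\hat g(v) := \delta(z, \bar p(v))$ is the required smooth lift $V \to G$. Under the isomorphism $\overline{\rm{Aut}(\pi^{-1}(x))} \cong G/\tau(H)$, the class $[\hat g(v)]$ coincides with $\mc{T}_{\mc{C},\omega} \circ q \circ \tilde p\,(v)$, completing the reduction.

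The technical heart of the argument is the diffeological smoothness of the parallel transport step $(v, q) \mapsto \mathrm{Tr}_{\omega_0}^{\alpha_i(v)}(q)$. I plan to treat this by pulling back $\omega_0$ along the smooth evaluation $(v, r) \mapsto \alpha_i(v)(r)$ to obtain a smoothly parameterised horizontal-lift ODE on $V \times [0,1]$, and then invoking standard smooth dependence of ODE solutions on initial data and parameters; the sitting-instants hypothesis from \Cref{Notations and Conventions} ensures that the lift at the endpoints $r = 0, 1$ glues consistently with the output of the adjacent $\mc{C}$-step, so that the inductive definition above really yields a smooth map. All remaining steps are routine bookkeeping, aided by the source/target compatibilities of \Cref{Lemma: source-target strict transport} that guarantee $\bar p(v)$ lies in $\pi_0^{-1}(x)$.
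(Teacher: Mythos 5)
Your proposal is correct and follows essentially the same route as the paper: reduce via \Cref{Technical 1} and the sum diffeology to a plot into some $P\mb{X}_n$, evaluate the resulting automorphism at a fixed point $z$ of the fibre to land in $\pi_0^{-1}(x)$, and then induct by alternating the (manifestly smooth) $\mc{C}$-steps with the smooth dependence of classical parallel transport on the path, before descending through $\rm{Aut}(\pi^{-1}(x))\cong G \to G/\tau(H)$. The only differences are cosmetic: the paper cites \textbf{Lemma 3.13} of \cite{MR3521476} for the smooth dependence of ${\rm{Tr}}_{\omega_0}$ on the path rather than re-deriving it by an ODE argument, and it phrases the final descent as smoothness of the quotient map rather than via an explicit lift $\hat g$.
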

\begin{proof}
Let $P\mb{X}_x$ denote the set of lazy $\mb{X}$-paths which start and end at $x \in X_0$. $P\mb{X}_x$ being a subset of $P\mb{X}$, is a diffeological space by (\Cref{subsapce diffeology}). Similarly, $\Pi_{\rm{thin}}(\mb{X},x)$ is also equipped with the subspace diffeology induced from the diffeology on $\frac{P\mb{X}}{\sim}$ (see  \Cref{quotient diffeology}). Let $q^{P\mb{X}_x} \colon P\mb{X}_x  \ra \Pi_{\rm{thin}}(\mb{X},x)$ be the quotient map. Note that from \Cref{Technical 1}, it suffices to show that for any plot $ \big( p \colon U \ra P\mb{X}_x \big) \in D_{P\mb{X}_x}$, $\mc{T}_{{\mc{C}, \omega}}|_{\Pi_{\rm{thin}}(\mb{X},x)} \circ  q^{P\mb{X}_x} \circ p \in  D_{\overline{\rm{Aut}(\pi^{-1}(x))}}$.  Suppose $x \in U$, then by \Cref{sum diffeology}, there exists an open neighbourhood $U_x$ around $x$ such that $p|_{U_x}$ is of the form 
		\begin{equation}\nonumber
		p|_{U_x}=(p^0_{X_1}, p^{1}_{PX_0},p^1_{X_1},\cdots, p^{n}_{PX_0}, p^n_{X_1}) \colon U \ra P\mb{X}_n 
	\end{equation}
	for some $n \in \mb{N} \cup \lbrace 0 \rbrace$. Observe that the smoothness of the map 
	\begin{equation}\nonumber
		\begin{split}
			\theta \colon & U_x \ra {\rm{Aut}}(\pi^{-1}(x))\\
			& u \mapsto T_{ \big( p|_{U_x}(u), \mc{C}, \omega \big)} \quad [\rm{see} \,\, \Cref{Definition: Parallel transport of X-paths}.]
		\end{split}
		\end{equation}
will imply $\mc{T}_{{\mc{C}, \omega}}|_{\Pi_{\rm{thin}}(\mb{X},x)} \circ  q^{P\mb{X}_x} \circ p \in  D_{\overline{\rm{Aut}}(\pi^{-1}(x))}$.		
		
Due to the smooth structure on ${\rm{Aut}}(\pi^{-1}(x))$ (\Cref{diffeology on AutE}), $\theta$ is smooth if and only if the map 
\begin{equation}\nonumber
	\begin{split}
		\bar{\theta} \colon & U_x \ra \pi_{0}^{-1}(x)\\
		& u \mapsto  \Big( T_{ \big( p|_{U_x}(u), \mc{C}, \omega \big)} \Big)_{0}(z)
	\end{split}
\end{equation}
is smooth for some choice of $z \in \pi^{-1}(x)$. But, the smoothness of $\bar{\theta}$ follows from the following sequence of facts:
\begin{equation}\nonumber
	\begin{split}
		& 	U_x \ra \pi_{0}^{-1} \big( t(p^0_{X_1}(u)) \big), u \mapsto t \big( \mc{C}(p^0_{X_1}(u),z \big) \,\, {\rm{is}} \, \, {\rm{smooth}},\,\, {\rm{and}}\\
		& U_x \ra \pi_0^{-1} \big(ev_0(p^{1}_{PX_0}) \big), u \mapsto {\rm{Tr}}_{\omega}^{p^{1}_{PX_0}(u)}\Big(t \big( \mc{C}(p^0_{X_1}(u),z \big) \Big)
		\end{split}
	\end{equation}
	is smooth due to \textbf{Lemma 3.13,} \cite{MR3521476}. Proceeding in this fashion for the sequence of maps in $p|_{U_x}=(p^0_{X_1}, p^{1}_{PX_0},p^1_{X_1},\cdots, p^{n}_{PX_0}, p^n_{X_1}) \colon U \ra P\mb{X}_n $, we complete the proof.
\end{proof}

\begin{remark}
The smoothness of $\mc{T}_{\mc{C}, \omega}$ in \Cref{Classical transport} obtained from \Cref{Smoothness of parallel transport functor} coincides with that of (\textbf{Theorem 3.9} of \cite{MR3521476})  for the parallel transport functor of the classical principal $G$-bundle $\pi \colon E \ra M$ over the manifold $M$. 	Recall in \Cref{Equivalence of quasi and functors}, we defined a functor $\mc{F} \colon \rm{Bun}_{\rm{quasi}}^{\nabla}(\mb{X}, \mb{G}) \ra \rm{Trans}(\mb{X},\mb{G})$. At the moment, it is not conclusive whether $\mc{F}$ provides a higher analog of \textbf{Theorem 4.1 }of \cite{MR3521476} or not. In an ongoing work, we are investigating in the said direction.
\end{remark}
\section{Induced parallel transport on VB-groupoids along lazy Haefliger paths}\label{Associated PAPER VERSION}
As an application of the theory developed in the precdeding sections, we investigate parallel transports on VB-groupoids along lazy Haefliger paths. For that we consider the associated VB-groupoid of a quasi-principal 2-bundle with respect to an action of the Lie 2-group on a Baez-Crans 2-vector space. For a detailed account on VB-groupoiods  we refer to  \cite{Bursztyn2016163, MR2157566, MR3696590} and for the 2-vector spaces to \cite{MR2068522}.
\begin{definition}[Definition 3.1, \cite{MR3696590}]\label{Definition of VB groupoidsPaper}
	A \textit{VB-groupoid} over  a Lie groupoid $\mb{X}$ is given by a morphism of  Lie groupoids $\pi \colon \mb{D} \ra \mb{X}$
	\[
	\begin{tikzcd}[sep=small]
		D_1 \arrow[rr,"\pi_1"] \arrow[dd,xshift=0.75ex,"t_D"]
		\arrow[dd,xshift=-0.75ex,"s_D"'] &  & X_1 \arrow[dd,xshift=0.75ex,"t_X"].  		\arrow[dd,xshift=-0.75ex,"s_X"'] \\
		&  &                \\
		D_0 \arrow[rr,"\pi_0"]            &  & X_0           
	\end{tikzcd},\]
	such that the following conditions are satisfied:
	\begin{enumerate}[(i)]
		\item the maps $\pi_1\colon D_1\ra X_1$ and $\pi_0\colon D_0\ra X_0$ are vector bundles,
		\item the maps $(s_D,s_X),(t_D, t_X)$ are morphisms of vector bundles,
		\item  for appropriate $\gamma_1,\gamma_2,\gamma_3,\gamma_4\in V_1$, we have 
		$(\gamma_3\circ \gamma_1)+(\gamma_4\circ \gamma_2)=
		(\gamma_3+\gamma_4)\circ (\gamma_1+\gamma_2)$.
	\end{enumerate}
\end{definition}
A \textit{(linear) cleavage} (see \cite{MR4126305}) on a VB-groupiod $\pi \colon \mb{D} \ra \mb{X}$ is a smooth section $\mc{C}$ of the map $P^{\mb{D}} \colon D_1 \ra X_1 \times_{s,X_0, \pi_0} D_0$, defined by $\delta \mapsto \big( \pi_1(\delta), s(\delta) \big)$, such that $\mc{C}$ is also a morphism of vector bundles. A linear cleavage that satisfies the condition $\mc{C}(1_{\pi(p)},p)=1_p$, for all $p \in D_0$ is either called \textit{unital} \cite{MR4126305} or \textit{right-horizontal lift} \cite{MR3696590}. A (linear) cleavage is called \textit{flat} if it satisfies the condition that for any pair $(\gamma_2, p_2), (\gamma_1, p_1) \in X_1 \times_{s,X_0, \pi_0} D_0$ satisfying ${s}(\gamma_2)={t}(\gamma_1)$ and $p_2=t\bigl({\mathcal C}(\gamma_1, p_1)\bigr)$, we have $\mathcal{C}(\gamma_2 \circ \gamma_1 , p_1)= \mathcal{C}(\gamma_2, p_2) \circ \mathcal{C}(\gamma_1, p_1)$. 
\begin{definition}[Definition 3.1, \cite{MR2068522}]\label{Vector 2-spaces}
	A \textit{2-vector space} is defined as a category $\mb{V}:=[V_1 \rra V_0]$ internal to Vect, the category of finite dimensional vector spaces over a field $\mb{R}$.
\end{definition}
In other words, a 2-vector space $\mb{V}$ is a category such that both $V_1$, $V_0$ are vector spaces and all structure maps are linear. Likewise, we have the notion of a functor internal to Vect between a pair of vector 2-spaces and a natural transformation internal to Vect between a pair of such functors internal to Vect. These data form a strict 2-category in the usual way and is denoted by 2Vect, see \textbf{Section 3} of \cite{MR2068522}. In literature, a different notion of a 2-vector space exists, namely \textit{Kapranov-Voevodsky 2-vector space} \cite{kapranov19942}. 
\begin{example}
	Given a VB-groupiod $\pi \colon \mb{D} \ra \mb{X}$, the groupoid $\pi^{-1}(x):=[\pi_1^{-1}(1_x) \rra \pi_0^{-1}(x)]$ is a 2-vector space for every $x \in X_0$.
\end{example}
\begin{example}\label{Lie 2-algebra}
	Given a Lie 2-group $\mb{G}$, the Lie groupoid $L(\mb{G}):=[L(G_1) \rra L(G_0)]$ is a 2-vector space.
\end{example}
Next, we prescribe a construction of a VB-groupoid from a principal 2-bundle over a Lie groupoid. We define a notion of the left action of a Lie 2-group on a  2- vector space.
\begin{definition}[Section 11, \cite{MR3126940}]\label{Action of a Lie 2-group on a vector 2-space}
	A \textit{left action of a Lie 2-group 	$\mb{G}:=[G_1 \rra G_0]$ on a vector 2-space $\mb{V}:=[V_1 \rra V_0]$} is defined as a functor $\rho \colon \mb{G} \times \mb{V} \ra \mb{V}$, such that the maps $\rho_1 \colon G_1 \times V_1 \ra V_1$ and $\rho_0 \colon G_0 \times V_0 \ra V_0$ are left Lie group actions which induce linear representations of $G_1$ and $G_0$ on $V_1$ and $V_0$ respectively. $\rho_i(g,v)$ will be be denoted as $gv$ for all $g \in G_i, v \in V_i$ and $i=0,1$.
\end{definition}
A weaker version of this action was studied in \cite{MR3556124, Baez_2012}, and for the representation theory of 2-groups we refer to 
\cite{elgueta2007representation, MR3213404, huan20222representations}.
\subsection*{Construction of a VB-groupoid associated to a principal 2-bundle over a Lie groupoid}
For a Lie 2-group $\mb{G}:=[G_1 \rra G_0]$, let $\pi \colon \mb{E} \ra \mb{X}$ be a principal $\mb{G}$-bundle over a Lie groupoid $\mb{X}$. Suppose there is a left action of $\mb{G}$ on a 2-vector space $\mb{V}=[V_1 \rra V_0]$ as in \Cref{Action of a Lie 2-group on a vector 2-space}. Then by the  usual associated vector bundle construction (see \textbf{Chapter 1},\textbf{ Section 5} of \cite{MR1393940}), we get a pair of vector bundles $\lbrace \pi^{\mb{V}}_i \colon \frac{E_i \times V_i}{G_i} \ra X_i \rbrace_{i=0,1}$, defined by $[p_i,v_i] \mapsto \pi_i(p)$ respectively. It is a straightforward, but tedious verifiation that the pair of  manifolds $\bigg\{ \frac{E_i \times V_i}{G_i} \bigg\}_{i=0,1}$  define a Lie groupoid $\frac{\mb{E} \times \mb{V}}{\mb{G}}:= \big[\frac{E_1 \times V_1}{G_1} \rra \frac{E_0 \times V_0}{G_0} \big]$ with obvious structure maps. We call it an \textit{associated VB-groupoid of $\pi \colon \mb{E} \ra \mb{X}$}.

\begin{remark}\label{general associated groupoid bundle construction}
	One can consider the above construction as a special case of the associated groupoid bundle construction mentioned in the \textbf{Remark 3.13} of \cite{herrera2023isometric}, where instead of a vector 2-space, the authors considered an ordinary Lie groupoid.
\end{remark}
\begin{example}[Adjoint VB-groupoid]
It is easy to verify that the adjoint VB-groupoid $\rm{Ad}(\mb{E})$ of a principal $\mb{G}$-bundle $\pi \colon \mb{E} \ra \mb{X}$,  as defined in the \textbf{Section 4} of \cite{chatterjee2022atiyah}, can be realized as an associated VB groupoid $\pi^{L(\mb{G})} \colon \frac{\mb{E} \times L(\mb{G})}{\mb{G}} \ra \mb{X}$ of $\pi \colon \mb{E} \ra \mb{X}$, with respect to the usual adjoint action of $\mb{G}$ on $L(\mb{G})$ (\Cref{Lie 2-algebra}).
	%	Suppose $\pi \colon \mb{E} \ra \mb{X}$ be a principal $\mb{G}$-bundle over a Lie groupoid $\mb{X}$. Consider the adjoint action of $\mb{G}$ on the vector 2-space (\Cref{Lie 2-algebra}) $L(\mb{G}):=[L(G_1) \rra L(G_0)]$ by $g a:= ad_{g^{-1}}(a)$ for $g \in G_0 , a \in L(G_0)$ and $\phi A:= ad_{\phi^{-1}}(A)$ for $\phi \in G_1, A \in L(G_1)$. It is easy to verify that these actions define a left action of $\mb{G}$ on $L(\mb{G})$ and the resulting associated VB groupoid $\pi^{L(\mb{G})} \colon \frac{\mb{E} \times L(\mb{G})}{\mb{G}} \ra \mb{X}$ is same as the adjoint VB-groupoid $\rm{Ad}(\mb{E})$ of $\pi \colon \mb{E} \ra \mb{X}$,  as defined in the \textbf{Section 4} of \cite{chatterjee2022atiyah}.
\end{example}
The following observation is immediate.
\begin{proposition}\label{Proposition: Associated VB-groupoidPaper}
	For a Lie 2-group $\mb{G}$, let $(\pi \colon \mb{E} \ra \mb{X}, \mc{C})$ be a  quasi-principal $\mb{G}$-bundle over a Lie groupoid $\mb{X}$. Suppose there is a left action of $\mb{G}$ on a 2-vector space $\mb{V}$. Then the associated VB-groupoid $\pi^{\mb{V}} \colon \frac{\mb{E} \times \mb{V}}{\mb{G}} \ra \mb{X}$  over $\mb{X}$ admits a linear cleavage, 
	\begin{equation}\nonumber
		\begin{split}
			\mc{C}^{\mb{V}} \colon & \biggl( X_1 \times_{s, X_0, \pi^{\mb{V}}_0} \frac{E_0 \times V_0}{G_0}\biggr) \ra \frac{E_1 \times V_1}{G_1}\\
			& \quad \hskip 2cm \big(\gamma, [p,v] \big) \mapsto [\mc{C}(\gamma,p), 1_v].
		\end{split}
	\end{equation}
	Moreover,  if $\mc{C}$ is a unital, then so is $\mc{C}^{\mb{V}}$ and likewise if $\mc{C}$ is a categorical connection then $\mc{C}^{\mb{V}}$ is flat.
\end{proposition}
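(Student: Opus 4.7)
The plan is to verify the claim in three stages: first, well-definedness and functoriality of $\mc{C}^{\mb{V}}$ as a set map between the relevant total spaces; second, that it is a linear cleavage in the sense described before the statement; and third, that the unital and categorical conditions are inherited. Throughout, I will exploit the principal bundle structure of $\pi_0, \pi_1$, the $\mb{G}$-equivariance of $\mc{C}$ (condition of \Cref{Definition:Quasicategorical Connection}), and the fact that $\mb{V}$ is a category internal to vector spaces, so that the unit map $v \mapsto 1_v$ is linear and intertwines $G_0$-action with the $G_1$-action along $u \colon G_0 \to G_1$.

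The first step is well-definedness. A different representative of $[p,v]$ has the form $(pg, g^{-1}v)$ for $g \in G_0$. Using $\mc{C}(\gamma, pg) = \mc{C}(\gamma,p)\cdot 1_g$ (this is exactly the equivariance along $u \colon G_0 \to G_1$ required in \Cref{Definition:Quasicategorical Connection}) and the compatibility $1_g \cdot 1_{g^{-1}v} = 1_v$ (which follows because the action $\rho_1$ is a functor and sends identity arrows to identity arrows), one computes
\[
[\mc{C}(\gamma, pg), 1_{g^{-1}v}] = [\mc{C}(\gamma,p)\cdot 1_g, 1_{g^{-1}v}] = [\mc{C}(\gamma,p), 1_v],
\]
so $\mc{C}^{\mb{V}}$ is well-defined. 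Smoothness is immediate since $\mc{C}$ and the unit map of $\mb{V}$ are smooth and the quotient by the $G_i$-actions is a submersion. That $\mc{C}^{\mb{V}}$ is a section of $P^{\mb{D}}$ follows by applying $\pi_1^{\mb{V}}$ and $s$: we get $\pi_1(\mc{C}(\gamma,p))=\gamma$ and $[s(\mc{C}(\gamma,p)), s(1_v)] = [p,v]$ from \Cref{Definition:Quasicategorical Connection}.

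The second step is to check that $\mc{C}^{\mb{V}}$ is a morphism of vector bundles over $X_1$; i.e., for a fixed $\gamma \in X_1$ with $x = s(\gamma)$, the induced fiberwise map $(\pi_0^{\mb{V}})^{-1}(x) \to (\pi_1^{\mb{V}})^{-1}(\gamma)$, $[p,v]\mapsto[\mc{C}(\gamma,p),1_v]$ is linear. Fixing any representative $p \in \pi_0^{-1}(x)$ trivializes both fibers as $V_0$ and $V_1$ respectively, via $v\mapsto [p,v]$ and $w\mapsto[\mc{C}(\gamma,p),w]$; under these identifications the map is just $v \mapsto 1_v$, which is linear because $\mb{V}$ is a 2-vector space. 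Independence of the choice of $p$ follows from the well-definedness argument above.

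For the third step, if $\mc{C}$ is unital then $\mc{C}(1_x,p) = 1_p$, so $\mc{C}^{\mb{V}}(1_x,[p,v]) = [1_p, 1_v] = 1_{[p,v]}$ since the unit map on $\frac{\mb{E}\times\mb{V}}{\mb{G}}$ is $[p,v]\mapsto[1_p,1_v]$. If $\mc{C}$ is categorical then for any composable $(\gamma_2, p_2), (\gamma_1, p_1) \in s^* E_0$ with $p_2 = t(\mc{C}(\gamma_1,p_1))$ we have $\mc{C}(\gamma_2\circ\gamma_1, p_1) = \mc{C}(\gamma_2,p_2)\circ\mc{C}(\gamma_1,p_1)$. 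Given $(\gamma_1,[p_1,v_1])$, put $[p_2,v_2] := t(\mc{C}^{\mb{V}}(\gamma_1,[p_1,v_1])) = [t(\mc{C}(\gamma_1,p_1)), t(1_{v_1})] = [p_2, v_1]$. Then, using that composition in $\mb{V}$ satisfies $1_{v_1}\circ 1_{v_1} = 1_{v_1}$,
\[
\mc{C}^{\mb{V}}(\gamma_2,[p_2,v_1])\circ\mc{C}^{\mb{V}}(\gamma_1,[p_1,v_1]) = [\mc{C}(\gamma_2,p_2)\circ\mc{C}(\gamma_1,p_1),\, 1_{v_1}] = \mc{C}^{\mb{V}}(\gamma_2\circ\gamma_1,[p_1,v_1]),
\]
which is flatness of $\mc{C}^{\mb{V}}$.

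The main obstacle is the bookkeeping in the well-definedness step, which is the point where all the structure — the $G_0$-equivariance of $\mc{C}$ via the unit map $u\colon G_0 \to G_1$, the functoriality of the $\mb{G}$-action on $\mb{V}$, and the compatibility of equivalence classes on objects versus morphisms of the quotient groupoid — must come together consistently; once this is clear, everything else is a direct verification.
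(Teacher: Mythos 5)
Your proof is correct; the paper states this proposition without proof (it is introduced as "The following observation is immediate"), and your argument is exactly the direct verification that is being left to the reader. The key points — well-definedness via $\mc{C}(\gamma,pg)=\mc{C}(\gamma,p)1_g$ together with functoriality of the $\mb{G}$-action on $\mb{V}$, fiberwise linearity reducing to linearity of the unit map of $\mb{V}$, and the inheritance of the unital and flatness conditions — are all handled correctly.
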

As a straightforward consequence of \Cref{Proposition: Associated VB-groupoidPaper} and \Cref{T_C}, we obtain a  \textit{2Vect-valued pseudofunctor} corresponding to an associated VB-groupoid of a quasi-principal 2-bundle.
\begin{proposition}\label{KLiegrpdvalued pseudoPaper}
	For a Lie 2-group $\mb{G}$, let $(\pi \colon \mb{E} \ra \mb{X}, \mc{C})$ be a  quasi-principal $\mb{G}$-bundle over a Lie groupoid $\mb{X}$ with a left action of $\mb{G}$ on a 2-vector space $\mb{V}$. Then there is a  \textit{2-Vect-valued pseudofunctor}  $$T_{\mc{C}^{\mb{V}}} \colon \mb{X}^{\rm{op}} \ra {\rm{2}}{\rm{Vect}}.$$
\end{proposition}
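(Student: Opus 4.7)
The plan is to replicate the construction given in \Cref{T_C} in the linear setting. By \Cref{Proposition: Associated VB-groupoidPaper}, the associated VB-groupoid $\pi^{\mb{V}} \colon \frac{\mb{E} \times \mb{V}}{\mb{G}} \ra \mb{X}$ carries a linear cleavage $\mc{C}^{\mb{V}}$, and hence by the argument of \Cref{Lemma: Quasiprincipal 2-bundle is a fibered category} it is a fibered category over $\mb{X}$ with cleavage $\mc{K}_{\mc{C}^{\mb{V}}} = \{\mc{C}^{\mb{V}}(\gamma^{-1}, [p,v])^{-1}\}$. The general Grothendieck correspondence (\Cref{subsection fibered categories and pseudofunctors}) then already furnishes an abstract pseudofunctor on $\mb{X}^{\rm{op}}$; the task is to show that it lands in $\rm{2Vect}$.

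On objects, set $T_{\mc{C}^{\mb{V}}}(x) := (\pi^{\mb{V}})^{-1}(x) = \bigl[(\pi^{\mb{V}}_1)^{-1}(1_x) \rra (\pi^{\mb{V}}_0)^{-1}(x)\bigr]$, which is a 2-vector space since each $\pi^{\mb{V}}_i$ is a vector bundle. On a morphism $\gamma \colon x \ra y$ of $\mb{X}$, define $T_{\mc{C}^{\mb{V}}}(\gamma)$ by mimicking \Cref{Pseudomor}, that is $[p,v] \mapsto \mu_{\mc{C}^{\mb{V}}}(\gamma^{-1}, [p,v])$ on objects and $\zeta \mapsto \mc{C}^{\mb{V}}(\gamma^{-1}, t(\zeta)) \circ \zeta \circ \mc{C}^{\mb{V}}(\gamma^{-1}, s(\zeta))^{-1}$ on arrows. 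The natural transformations $I_x$ and $\alpha_{\gamma_1,\gamma_2}$ are defined by the same formulas as in \Cref{Pseudonat1} and \Cref{pseudonat2}, with $\mc{C}$ replaced by $\mc{C}^{\mb{V}}$; the coherence relations \Cref{Coherence diagram 1} and \Cref{Coherence diagram 2} then follow verbatim from the corresponding verification in \Cref{T_C} since they are purely categorical consequences of the cleavage being cartesian lifts in a fibered category.

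The step that requires real checking is that each such morphism and each component of $I_x$ and $\alpha_{\gamma_1,\gamma_2}$ is a morphism in $\rm{2Vect}$, i.e.\ that the underlying maps on objects and arrows are $\mb{R}$-linear. This is where the linearity of $\mc{C}^{\mb{V}}$ becomes essential: because $\mc{C}^{\mb{V}}$ is a morphism of vector bundles (over $\rm{id}_{X_1}$), the composition $\mu_{\mc{C}^{\mb{V}}} = t \circ \mc{C}^{\mb{V}}$ restricts on each fibre to an $\mb{R}$-linear map $(\pi^{\mb{V}}_0)^{-1}(y) \ra (\pi^{\mb{V}}_0)^{-1}(x)$, and similarly the arrow-level formula, being a composition of fibrewise-linear cleavage morphisms with arrows of $\frac{\mb{E}\times \mb{V}}{\mb{G}}$, is $\mb{R}$-linear on $(\pi^{\mb{V}}_1)^{-1}(1_y)$. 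The source, target and identity consistencies, together with compatibility with the composition of $\frac{\mb{E} \times \mb{V}}{\mb{G}}$, make $T_{\mc{C}^{\mb{V}}}(\gamma)$ a functor internal to $\rm{Vect}$. For $I_x$ and $\alpha_{\gamma_1,\gamma_2}$, the components are obtained by composing $\mc{C}^{\mb{V}}$ with its inverses in the groupoid $\frac{\mb{E}\times\mb{V}}{\mb{G}}$; since inversion is a vector bundle morphism and composition of composable arrows in a VB-groupoid is linear on fibres, the associated component maps land in the arrow bundle and depend linearly on the object variable.

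The main obstacle, therefore, is the bookkeeping of linearity at each step; this is essentially a specialization of the proof of \Cref{T_C} to a base change from the Lie-$2$-group-torsor setting to the vector-bundle setting, enabled by \Cref{Proposition: Associated VB-groupoidPaper}. Once linearity is established at the level of objects, morphisms, and natural transformations, the coherence diagrams $\Cref{Coherence diagram 1}$ and $\Cref{Coherence diagram 2}$ commute by the same computation as in \Cref{T_C}, completing the construction of the pseudofunctor $T_{\mc{C}^{\mb{V}}} \colon \mb{X}^{\rm{op}} \ra \rm{2Vect}$.
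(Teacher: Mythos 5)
Your proposal is correct and follows exactly the route the paper intends: the paper derives \Cref{KLiegrpdvalued pseudoPaper} as an immediate consequence of \Cref{Proposition: Associated VB-groupoidPaper} (which supplies the linear cleavage $\mc{C}^{\mb{V}}$) together with the construction of \Cref{T_C}, with the only new content being the fibrewise linearity checks you carry out. Your write-up is in fact more detailed than the paper's, which omits the proof entirely and simply cites those two results.
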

%\begin{remark}
%	In the context of categorical-principal bundles and categorical vector spaces, a cursory mention of an analog of the object level map in \Cref{Assosiate pseudo morphism} has
%	has been made in the \textbf{Section 11} of \cite{MR3126940}; however, our setup in this paper is much more general.
%\end{remark}
As a direct consequence of \Cref{Proposition: Parallel transport on principal 2-bundles} and the traditional notion of induced parallel transport on associated fibre bundles (see \textbf{Chapter (iii)}, \cite{MR1393940}), we get the following:
\begin{proposition}\label{Associated transport along path}
	For a Lie 2-group $\mb{G}$, let $(\pi \colon \mb{E} \ra \mb{X}, \mc{C})$ be a  quasi-principal $\mb{G}$-bundle over a Lie groupoid $\mb{X}$ with a strict connection $\omega \colon T \mb{E} \ra L\mb{G}$. Suppose there is a  left action of $\mb{G}$ on a 2-vector space $\mb{V}$. Then, given a  path $\alpha \colon x \ra y$ in $X_0$ there is an isomorphism of 2-vector spaces $T_{\omega, \mb{V}}^{\alpha} \colon  (\pi^{\mb{V}})^{-1}(x) \ra (\pi^{\mb{V}})^{-1}(y)$ defined as 
	\begin{equation}\nonumber
		\begin{split}
			T_{\omega, \mb{V}}^{\alpha} \colon & (\pi^{\mb{V}})^{-1}(x) \ra (\pi^{\mb{V}})^{-1}(y)\\
			& [p,v] \mapsto [{\rm{Tr}}_{\omega_0}^{\alpha}(p), v],\\
			& [\delta, \zeta] \mapsto [{\rm{Tr}}_{\omega_1}^{u \circ \alpha}(\delta), \zeta].
		\end{split}
	\end{equation}
\end{proposition}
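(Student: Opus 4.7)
The plan is to build $T^{\alpha}_{\omega,\mb{V}}$ by descending the $\mb{G}$-equivariant isomorphism of Lie groupoids $T^{\alpha}_{\omega} \colon \pi^{-1}(x) \ra \pi^{-1}(y)$ supplied by \Cref{Proposition: Parallel transport on principal 2-bundles} to the fibres of the associated VB-groupoid $\pi^{\mb{V}} \colon \frac{\mb{E}\times\mb{V}}{\mb{G}} \ra \mb{X}$. I would split the verification into three self-contained blocks: well-definedness on the $\mb{G}$-orbits, functoriality (source/target/unit/composition compatibility), and linearity together with invertibility.

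First, well-definedness. The $\mb{G}$-equivariance of $T^{\alpha}_{\omega}$ gives $\mathrm{Tr}_{\omega_0}^{\alpha}(pg) = \mathrm{Tr}_{\omega_0}^{\alpha}(p)\,g$ and $\mathrm{Tr}_{\omega_1}^{u\circ\alpha}(\delta\phi) = \mathrm{Tr}_{\omega_1}^{u\circ\alpha}(\delta)\,\phi$ for all $g\in G_0$, $\phi\in G_1$. Coupled with the linear $\mb{G}$-action on $\mb{V}$, this yields $[\mathrm{Tr}_{\omega_0}^{\alpha}(pg), g^{-1}v] = [\mathrm{Tr}_{\omega_0}^{\alpha}(p)g, g^{-1}v] = [\mathrm{Tr}_{\omega_0}^{\alpha}(p), v]$, and the analogous identity for morphisms, so both prescriptions descend to maps $\frac{E_i\times V_i}{G_i}$-level.

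Next, functoriality of the descended pair $(T^{\alpha}_{\omega,\mb{V},1}, T^{\alpha}_{\omega,\mb{V},0})$ between $(\pi^{\mb{V}})^{-1}(x)$ and $(\pi^{\mb{V}})^{-1}(y)$ reduces to the three identities of \Cref{Lemma: source-target strict transport} applied to the path $u\circ\alpha \colon [0,1]\ra X_1$. The source/target consistency follows from $s\circ(u\circ\alpha) = \alpha = t\circ(u\circ\alpha)$ through parts (1)--(2) of that lemma, the unit-map consistency is part (3), and the compositional compatibility is inherited from $T^{\alpha}_{\omega}$ already being a functor. Linearity on the $V_i$-slots is immediate since $T^{\alpha}_{\omega,\mb{V}}$ acts as the identity there; the associated VB-groupoid structure on $\pi^{\mb{V}}$ then upgrades this to the required morphism in $2\mathrm{Vect}$. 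Invertibility is obtained by applying the same descent procedure to $T^{\alpha^{-1}}_{\omega}$, which is a two-sided inverse of $T^{\alpha}_{\omega}$ by the standard reversibility of classical parallel transport.

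None of the individual verifications is genuinely deep; the only point needing care is keeping the $G_0$/$G_1$-equivariances in lock-step with the source/target identities of \Cref{Lemma: source-target strict transport} so that the orbit-quotient maps and the VB-groupoid structure maps commute simultaneously. Once the $\mb{G}$-equivariance of $\omega$ is invoked, the proof is essentially a mechanical bookkeeping exercise, matching the indicated route via \textbf{Chapter (iii)} of \cite{MR1393940} for the classical associated-bundle construction.
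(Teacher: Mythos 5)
Your proposal is correct and follows exactly the route the paper intends: the paper offers no written proof, declaring the result ``a direct consequence'' of \Cref{Proposition: Parallel transport on principal 2-bundles} and the classical associated-bundle parallel transport, and your argument is precisely the detailed verification of that claim (descent via $\mb{G}$-equivariance of $\mathrm{Tr}_{\omega_0}^{\alpha}$ and $\mathrm{Tr}_{\omega_1}^{u\circ\alpha}$, functoriality via \Cref{Lemma: source-target strict transport}, linearity and invertibility as stated). No gaps.
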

Combining \Cref{KLiegrpdvalued pseudoPaper} and \Cref{Associated transport along path}, we obtain a notion of parallel transport on an associated VB-groupoid of a quasi principal 2-bundle equipped with a strict connection along a lazy Haefliger path.
\begin{definition}\label{Comega associated transportpaper}
	Let a Lie 2-group $\mb{G}$ acts on a 2-vector space $\mb{V}$, and $(\pi \colon \mb{E} \ra \mb{X}, \mc{C})$ be a  quasi-principal $\mb{G}$-bundle over a Lie groupoid $\mb{X}$, with a strict connection  $\omega \colon T \mb{E} \ra L(\mb{G})$. Then the isomorphism of 2-vector spaces $T^{\mb{V}}_{(\Gamma, \mc{C}, \omega)}:= T_{\mc{C}^{\mb{V}}}(\gamma_n^{-1}) \circ  T_{\omega, \mb{V}}^{\alpha_n} \circ\cdots \circ T_{\omega,\mb{V}}^{\alpha_1} \circ  T_{\mc{C}^{\mb{V}}}(\gamma_0^{-1})$  will be called 
	the \textit{$(\mc{C}, \omega)$-parallel transport on the associated VB-groupoid $\pi^{\mb{V}}$  along the lazy $\mb{X}$-path $\Gamma =(\gamma_0, \alpha_1,\gamma_1,\cdots, \alpha_n, \gamma_n)$}.
\end{definition}
\begin{remark}
	Using  \Cref{Definition: Parallel transport of X-paths}, $T^{\mb{V}}_{(\Gamma, \mc{C}, \omega)}$ in the above definition can be expressed in terms of $T_{\Gamma, \mc{C}, \omega}$ as follows:
	\begin{equation}\nonumber
		\begin{split}
			T^{\mb{V}}_{(\Gamma, \mc{C}, \omega)} \colon & (\pi^{\mb{V}})^{-1}(x) \ra (\pi^{\mb{V}})^{-1}(y)\\
			& [p,v] \mapsto [T_{(\Gamma, \mc{C}, \omega)}(p), v],\\
			& [\delta,\zeta] \mapsto [T_{(\Gamma, \mc{C}, \omega)}(\delta), \zeta].
		\end{split}
	\end{equation}
\end{remark}
Suitably adapting  \Cref{Theorem: Parallel transport on 2-bundles} to \Cref{Comega associated transportpaper},
one derives the corresponding parallel transport functor.

%Moreover, the following theorem is an immediate consequence of \Cref{Theorem: Parallel transport on 2-bundles} and \Cref{Comega associated transportpaper}.
%\begin{theorem}\label{Parallel transport on associated VB}
%	For a Lie 2-group $\mb{G}$, let $(\pi \colon \mb{E} \ra \mb{X}, \mc{C})$ be a  quasi-principal $\mb{G}$-bundle over a Lie groupoid $\mb{X}$ with a strict connection $\omega$. Suppose there is a  left action of $\mb{G}$ on a 2-vector space $\mb{V}$. Then there is a functor given by
%	\begin{equation}
%		\begin{split}
%			\mc{T}_{\mc{C}, \omega} \colon & \Pi_{\rm{thin}}(\mb{X}) \ra \overline{{\rm{2Vect}}}\\
%			& x \mapsto (\pi^{\mb{V}})^{-1}(x),\\
%			&[\Gamma] \mapsto [T^{\mb{V}}_{(\Gamma, \mc{C}, \omega)}],
%		\end{split}
%	\end{equation}
%	where $\overline{{\rm{2Vect}}}$ is a category whose objects are 2-vector spaces and morphisms are functors internal to Vect identified up to a natural isomorphism internal to Vect.
%\end{theorem}
\begin{remark}
	Although we have confined our attention to the notion of parallel transport on an associated VB-groupoid of a quasi-principal 2-bundle equipped with a strict connection, it is not difficult to generalize the results obtained in this section for an associated groupoid bundle mentioned in \Cref{general associated groupoid bundle construction}.
\end{remark}
%{\color{blue}\subsection*{Concluding Remark} Semistict connection, quasi-principal 2-bundle over stack, importance of pseudo principal crossed module bundle, Parallel transport along higher dimensional objects, Reconstruction from transport data, Relation with Generalized path.}
\bibliography{references}

\begin{thebibliography}{10}

\bibitem{MR3107517}
Camilo Arias~Abad and Marius Crainic.
\newblock Representations up to homotopy and {B}ott's spectral sequence for
  {L}ie groupoids.
\newblock {\em Adv. Math.}, 248:416--452, 2013.

\bibitem{Baez_2012}
John Baez, Aristide Baratin, Laurent Freidel, and Derek Wise.
\newblock Infinite-dimensional representations of 2-groups.
\newblock {\em Memoirs of the American Mathematical Society}, 219(1032):0--0,
  2012.

\bibitem{baez2011convenient}
John Baez and Alexander Hoffnung.
\newblock Convenient categories of smooth spaces.
\newblock {\em Transactions of the American Mathematical Society},
  363(11):5789--5825, 2011.

\bibitem{baez2002higher}
John~C Baez.
\newblock Higher yang-mills theory.
\newblock {\em arXiv preprint hep-th/0206130}, 2002.

\bibitem{MR2068522}
John~C. Baez and Alissa~S. Crans.
\newblock Higher-dimensional algebra. {VI}. {L}ie 2-algebras.
\newblock {\em Theory Appl. Categ.}, 12:492--538, 2004.

\bibitem{MR2068521}
John~C. Baez and Aaron~D. Lauda.
\newblock Higher-dimensional algebra. {V}. 2-groups.
\newblock {\em Theory Appl. Categ.}, 12:423--491, 2004.

\bibitem{MR2342821}
John~C. Baez and Urs Schreiber.
\newblock Higher gauge theory.
\newblock In {\em Categories in algebra, geometry and mathematical physics},
  volume 431 of {\em Contemp. Math.}, pages 7--30. Amer. Math. Soc.,
  Providence, RI, 2007.

\bibitem{MR2817778}
Kai Behrend and Ping Xu.
\newblock Differentiable stacks and gerbes.
\newblock {\em J. Symplectic Geom.}, 9(3):285--341, 2011.

\bibitem{bridson2013metric}
Martin~R Bridson and Andr{\'e} Haefliger.
\newblock {\em Metric spaces of non-positive curvature}, volume 319.
\newblock Springer Science \& Business Media, 2013.

\bibitem{Bursztyn2016163}
Henrique Bursztyn, Alejandro Cabrera, and Matias {del Hoyo}.
\newblock Vector bundles over lie groupoids and algebroids.
\newblock {\em Advances in Mathematics}, 290:163--207, 2016.

\bibitem{chatterjee2022atiyah}
Saikat Chatterjee, Adittya Chaudhuri, and Praphulla Koushik.
\newblock Atiyah sequence and gauge transformations of a principal 2-bundle
  over a lie groupoid.
\newblock {\em Journal of Geometry and Physics}, 176:104509, 2022.

\bibitem{MR3126940}
Saikat Chatterjee, Amitabha Lahiri, and Ambar~N. Sengupta.
\newblock Path space connections and categorical geometry.
\newblock {\em J. Geom. Phys.}, 75:129--161, 2014.

\bibitem{MR3213404}
Saikat Chatterjee, Amitabha Lahiri, and Ambar~N. Sengupta.
\newblock Twisted actions of categorical groups.
\newblock {\em Theory Appl. Categ.}, 29:No. 8, 215--255, 2014.

\bibitem{MR3504595}
Saikat Chatterjee, Amitabha Lahiri, and Ambar~N. Sengupta.
\newblock Construction of categorical bundles from local data.
\newblock {\em Theory Appl. Categ.}, 31:Paper No. 14, 388--417, 2016.

\bibitem{MR3521476}
Brian Collier, Eugene Lerman, and Seth Wolbert.
\newblock Parallel transport on principal bundles over stacks.
\newblock {\em J. Geom. Phys.}, 107:187--213, 2016.

\bibitem{colman20111}
Hellen Colman.
\newblock On the 1-homotopy type of lie groupoids.
\newblock {\em Applied Categorical Structures}, 19(1):393--423, 2011.

\bibitem{MR3968895}
Matias del Hoyo and Rui Loja~Fernandes.
\newblock Riemannian metrics on differentiable stacks.
\newblock {\em Math. Z.}, 292(1-2):103--132, 2019.

\bibitem{MR4126305}
Matias del Hoyo and Cristian Ortiz.
\newblock Morita equivalences of vector bundles.
\newblock {\em Int. Math. Res. Not. IMRN}, (14):4395--4432, 2020.

\bibitem{del2008homotopy}
Matias~L del Hoyo.
\newblock On the homotopy type of a cofibred category.
\newblock {\em arXiv preprint arXiv:0810.3063}, 2008.

\bibitem{elgueta2007representation}
Josep Elgueta.
\newblock Representation theory of 2-groups on kapranov and voevodsky's
  2-vector spaces.
\newblock {\em Advances in Mathematics}, 213(1):53--92, 2007.

\bibitem{MR4598921}
Alfonso Garmendia and Sylvie Paycha.
\newblock Principal bundle groupoids, their gauge group and their nerve.
\newblock {\em J. Geom. Phys.}, 191:Paper No. 104865, 21, 2023.

\bibitem{MR3480061}
Gr\'{e}gory Ginot and Mathieu Sti\'{e}non.
\newblock {$G$}-gerbes, principal 2-group bundles and characteristic classes.
\newblock {\em J. Symplectic Geom.}, 13(4):1001--1047, 2015.

\bibitem{MR3696590}
Alfonso Gracia-Saz and Rajan~Amit Mehta.
\newblock Vb-groupoids and representation theory of {L}ie groupoids.
\newblock {\em J. Symplectic Geom.}, 15(3):741--783, 2017.

\bibitem{guruprasad2006closed}
K~Guruprasad and Andr{\'e} Haefliger.
\newblock Closed geodesics on orbifolds.
\newblock {\em Topology}, 45(3):611--641, 2006.

\bibitem{gutt2005poisson}
Simone Gutt, John Rawnsley, and Daniel Sternheimer.
\newblock {\em Poisson geometry, deformation quantisation and group
  representations}.
\newblock Number 323. Cambridge University Press, 2005.

\bibitem{haefliger1971homotopy}
A~Haefliger.
\newblock Homotopy and integrability, 133-175 in: Manifolds-amsterdam 1970, lnm
  197, 1971.

\bibitem{haefliger1982groupoides}
Andr{\'e} Haefliger.
\newblock {\em Groupo{\"\i}des d'holonomie et classifiants}.
\newblock Universit{\'e} de Gen{\`e}ve-Section de math{\'e}matiques, 1982.

\bibitem{haefliger1990orbi}
Andr{\'e} Haefliger.
\newblock Orbi-espaces.
\newblock In {\em Sur les groupes hyperboliques d’apr{\`e}s Mikhael Gromov},
  pages 203--213. Springer, 1990.

\bibitem{MR3556124}
Benjam\'{\i}n~A. Heredia and Josep Elgueta.
\newblock On the representations of 2-groups in {B}aez-{C}rans 2-vector spaces.
\newblock {\em Theory Appl. Categ.}, 31:Paper No. 32, 907--927, 2016.

\bibitem{herreracarmona2023chernweillecomte}
Juan~Sebastian Herrera-Carmona and Cristian Ortiz.
\newblock The chern-weil-lecomte characteristic map for $l_{\infty}$-algebras,
  2023.

\bibitem{herrera2023isometric}
Juan~Sebasti{\'a}n Herrera-Carmona and Fabricio Valencia.
\newblock Isometric lie 2-group actions on riemannian groupoids.
\newblock {\em The Journal of Geometric Analysis}, 33(10):323, 2023.

\bibitem{huan20222representations}
Zhen Huan.
\newblock 2-representations of lie 2-groups and 2-vector bundles, 2022.

\bibitem{iglesias2013diffeology}
Patrick Iglesias-Zemmour.
\newblock {\em Diffeology}, volume 185.
\newblock American Mathematical Soc., 2013.

\bibitem{MR4261588}
Niles Johnson and Donald Yau.
\newblock {\em 2-dimensional categories}.
\newblock Oxford University Press, Oxford, 2021.

\bibitem{kapranov19942}
Mikhail~M Kapranov and Vladimir~A Voevodsky.
\newblock 2-categories and zamolodchikov tetrahedra equations.
\newblock In {\em Proc. Symp. Pure Math}, volume~56, pages 177--260, 1994.

\bibitem{kim2020adjusted}
Hyungrok Kim and Christian Saemann.
\newblock Adjusted parallel transport for higher gauge theories.
\newblock {\em Journal of Physics A: Mathematical and Theoretical},
  53(44):445206, 2020.

\bibitem{MR1393940}
Shoshichi Kobayashi and Katsumi Nomizu.
\newblock {\em Foundations of differential geometry. {V}ol. {I}}.
\newblock Wiley Classics Library. John Wiley \& Sons, Inc., New York, 1996.
\newblock Reprint of the 1963 original, A Wiley-Interscience Publication.

\bibitem{MR2270285}
Camille Laurent-Gengoux, Jean-Louis Tu, and Ping Xu.
\newblock Chern-{W}eil map for principal bundles over groupoids.
\newblock {\em Math. Z.}, 255(3):451--491, 2007.

\bibitem{MR1950946}
Ernesto Lupercio and Bernardo Uribe.
\newblock Loop groupoids, gerbes, and twisted sectors on orbifolds.
\newblock In {\em Orbifolds in mathematics and physics ({M}adison, {WI},
  2001)}, volume 310 of {\em Contemp. Math.}, pages 163--184. Amer. Math. Soc.,
  Providence, RI, 2002.

\bibitem{MR1932333}
Marco Mackaay and Roger Picken.
\newblock Holonomy and parallel transport for abelian gerbes.
\newblock {\em Adv. Math.}, 170(2):287--339, 2002.

\bibitem{MR2157566}
Kirill C.~H. Mackenzie.
\newblock {\em General theory of {L}ie groupoids and {L}ie algebroids}, volume
  213 of {\em London Mathematical Society Lecture Note Series}.
\newblock Cambridge University Press, Cambridge, 2005.

\bibitem{MR1262213}
Kirill C.~H. Mackenzie and Ping Xu.
\newblock Lie bialgebroids and {P}oisson groupoids.
\newblock {\em Duke Math. J.}, 73(2):415--452, 1994.

\bibitem{mackenzie2000integration}
Kirill~CH Mackenzie and Ping Xu.
\newblock Integration of lie bialgebroids.
\newblock {\em Topology}, 39(3):445--467, 2000.

\bibitem{mackenzie1999symplectic}
Kirill Charles~Howard Mackenzie.
\newblock On symplectic double groupoids and the duality of poisson groupoids.
\newblock {\em International Journal of Mathematics}, 10(04):435--456, 1999.

\bibitem{martins2010two}
Jo{\~a}o Martins and Roger Picken.
\newblock On two-dimensional holonomy.
\newblock {\em Transactions of the American Mathematical Society},
  362(11):5657--5695, 2010.

\bibitem{mehta2011double}
Rajan~Amit Mehta and Xiang Tang.
\newblock From double lie groupoids to local lie 2-groupoids.
\newblock {\em Bulletin of the Brazilian Mathematical Society, New Series},
  42(4):651--681, 2011.

\bibitem{MR2166453}
I.~Moerdijk and J.~Mr\v{c}un.
\newblock Lie groupoids, sheaves and cohomology.
\newblock In {\em Poisson geometry, deformation quantisation and group
  representations}, volume 323 of {\em London Math. Soc. Lecture Note Ser.},
  pages 145--272. Cambridge Univ. Press, Cambridge, 2005.

\bibitem{MR1950948}
Ieke Moerdijk.
\newblock Orbifolds as groupoids: an introduction.
\newblock In {\em Orbifolds in mathematics and physics ({M}adison, {WI},
  2001)}, volume 310 of {\em Contemp. Math.}, pages 205--222. Amer. Math. Soc.,
  Providence, RI, 2002.

\bibitem{MR2012261}
Ieke Moerdijk and J.~Mr\v{c}un.
\newblock {\em Introduction to foliations and {L}ie groupoids}, volume~91 of
  {\em Cambridge Studies in Advanced Mathematics}.
\newblock Cambridge University Press, Cambridge, 2003.

\bibitem{MR4037666}
Jeffrey~C. Morton and Roger Picken.
\newblock 2-group actions and moduli spaces of higher gauge theory.
\newblock {\em J. Geom. Phys.}, 148:103548, 21, 2020.

\bibitem{MR2520993}
Urs Schreiber and Konrad Waldorf.
\newblock Parallel transport and functors.
\newblock {\em J. Homotopy Relat. Struct.}, 4(1):187--244, 2009.

\bibitem{schreiber2011smooth}
Urs Schreiber and Konrad Waldorf.
\newblock Smooth functors vs. differential forms.
\newblock {\em Homology, Homotopy and Applications}, 13(1):143--203, 2011.

\bibitem{MR3084724}
Urs Schreiber and Konrad Waldorf.
\newblock Connections on non-abelian gerbes and their holonomy.
\newblock {\em Theory Appl. Categ.}, 28:476--540, 2013.

\bibitem{MR3357822}
Emanuele Soncini and Roberto Zucchini.
\newblock A new formulation of higher parallel transport in higher gauge
  theory.
\newblock {\em J. Geom. Phys.}, 95:28--73, 2015.

\bibitem{MR2119241}
Jean-Louis Tu, Ping Xu, and Camille Laurent-Gengoux.
\newblock Twisted {$K$}-theory of differentiable stacks.
\newblock {\em Ann. Sci. \'{E}cole Norm. Sup. (4)}, 37(6):841--910, 2004.

\bibitem{MR3566125}
David Viennot.
\newblock Non-abelian higher gauge theory and categorical bundle.
\newblock {\em J. Geom. Phys.}, 110:407--435, 2016.

\bibitem{MR2223406}
Angelo Vistoli.
\newblock Grothendieck topologies, fibered categories and descent theory.
\newblock In {\em Fundamental algebraic geometry}, volume 123 of {\em Math.
  Surveys Monogr.}, pages 1--104. Amer. Math. Soc., Providence, RI, 2005.

\bibitem{MR3894086}
Konrad Waldorf.
\newblock A global perspective to connections on principal 2-bundles.
\newblock {\em Forum Math.}, 30(4):809--843, 2018.

\bibitem{MR3917427}
Konrad Waldorf.
\newblock Parallel transport in principal 2-bundles.
\newblock {\em High. Struct.}, 2(1):57--115, 2018.

\bibitem{MR3645839}
Wei Wang.
\newblock On the global 2-holonomy for a 2-connection on a 2-bundle.
\newblock {\em J. Geom. Phys.}, 117:151--178, 2017.

\bibitem{MR2805195}
Christoph Wockel.
\newblock Principal 2-bundles and their gauge 2-groups.
\newblock {\em Forum Math.}, 23(3):565--610, 2011.

\bibitem{MR3529236}
Roberto Zucchini.
\newblock On higher holonomy invariants in higher gauge theory {I}.
\newblock {\em Int. J. Geom. Methods Mod. Phys.}, 13(7):1650090, 59, 2016.

\end{thebibliography}
\bibliographystyle{plain}

\end{document}